\numberwithin{equation}{section}
\newcommand{\field}[1]{\mathbb{#1}} 
\newcommand{\C}{\field{C}} 
\newcommand{\N}{\field{N}} 
\newcommand{\R}{\field{R}} 
\newcommand{\Z}{\field{Z}} 
\newcommand{\T}{\field{T}} 
\DeclareMathOperator{\End}{End} 
\DeclareMathOperator{\Ker}{Ker}
\DeclareMathOperator{\spec}{Spec} 
\DeclareMathOperator{\Dom}{Dom} 
\DeclareMathOperator{\Ran}{Im}
\DeclareMathOperator{\supp}{supp}
\DeclareMathOperator{\diag}{diag}
\DeclareMathOperator{\Ad}{Ad}
\DeclareMathOperator{\im}{Im}
\newcommand{\ddbar}{\overline\partial}
\newcommand{\pr}{\partial}
\newcommand{\ol}{\overline}
\newcommand{\Td}{\widetilde}
\newcommand{\norm}[1]{\Vert#1\Vert}
\newcommand{\set}[1]{\left\{#1\right\}}
\newcommand{\To}{\rightarrow}
\newcommand{\mU}{\mathcal{U}}
\newcommand{\mL}{\mathcal{L}}
\newcommand{\mH}{\mathcal{H}}
\newcommand{\mO}{\mathcal{O}}
\newcommand{\mR}{\mathcal{R}}
\newcommand{\mP}{\mathcal{P}}
\newcommand{\mG}{\mathcal{G}}
\newcommand{\cL}{\cali{L}}
\def\cC{\mathscr{C}}
\def\cD{\mathscr{D}}
\def\cL{\mathscr{L}}
\def\cL{\mathscr{L}}
\newcommand{\goth}[1]{\mathfrak{#1}}
\newcommand{\kg}{\goth{g}}
\newtheorem{theorem}{Theorem}[section]
\newtheorem{lemma}[theorem]{Lemma}
\newtheorem{corollary}[theorem]{Corollary}
\theoremstyle{definition}
\newtheorem{definition}[theorem]{Definition}
\newtheorem{example}[theorem]{Example}
\newtheorem{remark}[theorem]{Remark}
\newtheorem{ass}[theorem]{Assumption}
\theoremstyle{definition}
\newtheorem{convention}[theorem]{Convention}
\numberwithin{equation}{section}
\newcommand{\abs}[1]{\lvert#1\rvert}
\newcommand{\comment}[1]{}
\begin{document}
\title[Geometric quantization on CR manifolds]
{Geometric quantization on CR manifolds}

\author{Chin-Yu Hsiao}

\address{Institute of Mathematics, Academia Sinica and 
National Center for Theoretical Sciences,\newline
    \mbox{\quad}\,Astronomy-Mathematics Building, 
No. 1, Sec. 4, Roosevelt Road, Taipei 10617, Taiwan}
\email{chsiao@math.sinica.edu.tw or chinyu.hsiao@gmail.com}
\author[Xiaonan Ma]{Xiaonan Ma}
\address{
Institut de Math\'ematiques de Jussieu-Paris Rive Gauche,
Universit\'e de Paris, CNRS, 
 F-75013
\newline
\mbox{\quad}\,Paris, France.
}
\email{xiaonan.ma@imj-prg.fr}

\author[George Marinescu]{George Marinescu}
\address{Universit{\"a}t zu K{\"o}ln,  Mathematisches Institut,
    Weyertal 86-90,   50931 K{\"o}ln, Germany\\
    \newline
    \mbox{\quad}\,Institute of Mathematics `Simion Stoilow', 
	Romanian Academy,
Bucharest, Romania}
\email{gmarines@math.uni-koeln.de}

\keywords{Szeg\H{o} kernel, moment map, CR manifolds} 
\subjclass[2000]{Primary: 58J52, 58J28; Secondary: 57Q10}

\begin{abstract}
Let $X$ be a compact connected orientable CR manifold
with the action of a connected compact Lie group $G$.
Under natural pseudoconvexity assumptions we show
that the CR Guillemin-Strernberg map is Fredholm at 
the level of Sobolev spaces of CR functions.
As application we study this map for holomorphic 
line bundles which are positive near the inverse image of $0$ 
by the momentum map.
We also show that ``quantization commutes with reduction''
for Sasakian manifolds.
\end{abstract}

\maketitle \tableofcontents

\section{Introduction and statement of the main results}
\label{s-gue170410}

The famous geometric quantization conjecture of 
Guillemin and Sternberg \cite{GS:82} states
that for a compact pre-quantizable symplectic manifold admitting 
a Hamiltonian action of a compact connected Lie group, the principle of 
``quantization commutes with reduction" holds.
This conjecture was first proved independently by Meinrenken \cite{M96} 
and Vergne \cite{Ver:96} for
the case where the Lie group is abelian, and  by Meinrenken \cite{M98}
 in the general case,
 then Tian-Zhang~\cite{TZ98} gave a purely analytic proof in general case
 with various generalizations,  see \cite{Ve02} for a survey
 and complete references on this subject.
In the case of a non-compact symplectic manifold $M$ 
with a compact connected Lie group action $G$, 
this question was solved by Ma-Zhang~\cite{MZ09,MZI}
as a solution to a conjecture of Vergne in her ICM 2006 plenary 
lecture \cite{Ve07}, see \cite{Ma10} for a survey. 
Paradan \cite{Par11} gave a new proof,
cf.\ also  the recent work \cite{HochsSong17}.
A natural choice for the quantum spaces  of a compact symplectic 
manifold is the kernel of the Dirac operator.

In~\cite{MZ}, Ma-Zhang established the asymptotic expansion 
of the $G$-invariant Bergman kernel for a positive line bundle $L$ over 
a compact symplectic manifold $M$ and by using the asymptotic expansion 
of $G$-invariant Bergman kernel, they could establish the
``quantization commutes with reduction'' theorem
when the power of the line bundle $L$ is high enough. 

On a compact K\"ahler manifold $M$ endowed with 
a prequantum line bundle $L$,
a natural choice for the Hilbert space of quantum states
is the space $H^0(M,L^m)$ of holomorphic sections of 
the tensor powers $L^m$.
The family of quantum spaces $H^0(M,L^m)$ indexed by $m\in\N$
plays an essential role in geometric quantization
and the semi-classical limit $m\to\infty$ allows
to recover the classical mechanics of the phase space $M$.

One can wrap up the family of spaces $H^0(M,L^m)$, $m\in\N$,
as subspaces of a single Hilbert space by considering
the $S^1$-bundle $X\subset L^*$, which is a strictly pseudoconvex
CR manifold and identifying $H^0(M,L^m)$ with the
$S^1$ isotypes of $m$-equivariant CR functions on $X$.
The Hilbert space sum $\widehat{\oplus}_{m\in\N}H^0(M,L^m)$
can be identified to the space $H^0_b(X)$ of $L^2$ CR functions on $X$
and the sum of the Bergman projections $B_m$ on $H^0(M,L^m)$
can be identified to the Szeg\H{o} projector $S$ on $H^0_b(X)$.
A fundamental fact is that the asymptotic behavior of $B_m$
is encoded in the singularities of the Szeg\H{o} kernel $S(\cdot,\cdot)$.  
We can thus think of $X$ as the quantizing principal bundle
of $M$ and of the space of $L^2$ CR functions as the quantum space
of $X$. In the presence of a $G$-action on $M$, which lifts to an action
on $L$, we have an induced $G$-action on $X$ and on $H^0_b(X)$.
 
In this way the idea emerges of quantizing a general CR manifold 
by using the analytic properties of the Szeg\H{o} kernel (cf.\ \cite{BG81}). 
In this paper we study the principle of 
``quantization commutes with reduction''  
for CR manifolds, in particular for Sasakian manifolds 
(see Theorems~\ref{t-180428zy}, \ref{t-gue180520m}). 
Sasakian geometry is an important odd-dimensional counterpart 
of K\"ahler geometry.
It is known that an irregular Sasakian manifold admits a compact CR torus
action (see~\cite[Section 3]{HHL17}, or Remark \ref{eq:t2.1a})
and the study of $G$-equivariant CR functions on 
a Sasakian manifold is important in Sasaki geometry. 
We refer to \cite{Al89,BoGa08,Gei97,Will02} for the fundamentals 
of contact and Sasakian reduction and examples.
One of our main tools will be we develop a $G$-invariant 
Fourier integral operator calculus which will be 
used to obtain the asymptotics of 
the $G$-invariant Szeg\H{o} kernel.

An important difference between the CR setting and the symplectic setting
is that the quantum spaces in the case of a compact symplectic 
manifolds are finite dimensional, whereas for 
the compact strictly pseudoconvex CR manifolds
that we consider the quantum spaces 
consist of CR functions and are infinite dimensional.


We now formulate the main results. We refer to Section~\ref{s:prelim}
for some notations and terminology used here.
Let $(X, T^{1,0}X)$ be a compact orientable 
CR manifold of dimension $2n+1$, $n\geq1$, 
where $T^{1,0}X$ denotes the CR structure of $X$.
Let $HX\subset TX$ be the Levi distribution of the CR structure
and let $\omega_0\in\cC^\infty(X,T^*X)$ be a global
non-vanishing real $1$-form annihilating $HX$, called characteristic
$1$-form.

Let $G$ be a $d$-dimensional connected compact Lie group with Lie 
algebra $\kg$ acting on $X$ by preserving $J$ and $\omega_0$.
Let $\mu: X\to \kg^{*}$ be the associated 
moment map $\mu: X\to \kg^{*}$ (cf.\ \eqref{E:cmpm}).
We will mainly work in the following setting.
\begin{ass}\label{a-gue170410I}
The $G$-action preserves the complex structure 
$J$ on $HX$ and the characteristic $1$-form $\omega_0$, 
it is free on $\mu^{-1}(0)$, and 
one of the following conditions are fulfilled:

(i) $\dim X\geq5$ and
the Levi form of $X$ is positive definite 
near $\mu^{-1}(0)$. 

(ii) $\dim X=3$,
the Levi form of $X$ is positive definite everywhere
and $\ddbar_{b}$ has closed range in $L^2$ on $X$.  
\end{ass}
Due to Lemma \ref{eq:t2.4}, Assumption~\ref{a-gue170410I}
implies that $0$ is a regular value of $\mu$, hence 
$\mu^{-1}(0)$ is a 
$d$-codimensional submanifold of $X$.
Let 
\begin{equation}\label{eq:2.25}
Y:=\mu^{-1}(0),\ \ X_{G}:=\mu^{-1}(0)/G.
\end{equation}
The space $X_{G}$ is called the CR reduction.
Under our hypotheses, if $\dim X_G\geq3$, $X_{G}$
is a strictly pseudoconvex CR manifold
with characteristic $1$-form (in this case also contact form) 
$\omega_{0,G}$ 
induced canonically by $\omega_{0}$, see \eqref{eq:2.32}.
If $\dim X_G=1$, then each of the finitely many components of $X_G$ 
is diffeomorphic to a circle.

Let $\ddbar_b:\cC^\infty(X)\To\Omega^{0,1}(X)$ 
and if $\dim X_G\geq3$,
let $\ddbar_{b,X_{G}}: \cC^\infty(X_{G})\To\Omega^{0,1}(X_{G})$ 
be the tangential Cauchy-Riemann operators on $X$ and $X_{G}$, 
respectively.
We extend $\ddbar_b$ and $\ddbar_{b,X_{G}}$ 
to $L^2$ spaces
by taking their weak maximal extension, see \eqref{eq:2.15}.
We consider the spaces of $L^2$ CR functions 
\begin{align}\label{eq:2.26a}
H^0_b(X):=\set{u\in L^2(X);\, \ddbar_bu=0},\ \ 
H^0_b(X_{G}):=\set{u\in L^2(X_{G});\, \ddbar_{b,X_{G}}u=0}.
\end{align}
If $\dim X_G=1$, so $X_G$ is a finite union of circles, we set 
$H^0_b(X_{G})$ to be the direct sum of the Hardy spaces
of the components, that is, the $L^2$ subspaces of functions
with vanishing Fourier coefficients of negative degree.
The common feature of the spaces $H^0_b(X_{G})$ for
$\dim X_G\geq3$ and $\dim X_G=1$ is the fact that they
are boundary values of holomorphic functions in a filling
of $X_G$ by a complex manifold (see Section \ref{s2.4}).

Then $H^0_b(X)$ is a (possible infinite dimensional) 
$G$-representation, its $G$-invariant part is the
$G$-invariant $L^2$ CR functions on $X$,
\begin{equation}\label{e-gue180227}
H^0_b(X)^G:=\set{u\in H^0_b(X);\, h^*u=u,\ \  
\mbox{for any $h\in G$}}. 
\end{equation} 
For every $s\in\mathbb R$, let $H^s(X)$ and $H^s(X_G)$ 
denote the Sobolev space of $X$ of order $s$ and 
the Sobolev space of $X_G$ of order $s$ respectively and 
let $(\,\cdot\,,\,\cdot\,)_s$ and $(\,\cdot\,,\,\cdot\,)_{X_G,s}$
be the inner products on $H^s(X)$ and $H^s(X_G)$ respectively 
(see \eqref{eq:ma5.3}).
 For every $s\in\mathbb R$,  put 
 \begin{equation}\label{e-gue200926yyd}
 \begin{split}
 &H^0_b(X)_s:=\set{u\in H^s(X);\,\mbox{$\ddbar_bu=0$ in the sense 
 of distributions}},\\
 &H^0_b(X_G)_s:=\set{u\in H^s(X_G);\,\mbox{$\ddbar_bu=0$
 in the sense of distributions}},\\
 &H^0_b(X)^G_s:=\set{u\in H^0_b(X)_s;\,\mbox{$h^*u=u$
 in the sense of distributions, for every $h\in G$}}.
 \end{split}
 \end{equation} 
If $\dim X_G=1$, we set 
$H^0_b(X_{G})_s$ to be the direct sum of the Hardy-Sobolev spaces
of the components, 
that is, the subspaces of $H^s(S^1)$ of distributions
with vanishing Fourier coefficients of negative degree.

Let $\iota:Y\To X$ be the natural inclusion and let 
$\iota^*:\cC^\infty(X)\To \cC^\infty(Y)$ be the pull-back by $\iota$.
Let 
$\iota_G:\cC^\infty(Y)^G\To\cC^\infty(X_{G})$
be the natural identification. Let 
\begin{equation}\label{180308Im}
\begin{split}
\sigma_{G}:H^0_{b}(X)^G\cap\cC^\infty(X)^G&\To 
H^{0}_{b}(X_{G}),\quad\sigma_{G}=\iota_G\circ \iota^*.
\end{split}
\end{equation}
The map \eqref{180308Im} is well defined, see the construction
of the CR reduction in Section \ref{s2.3}.  The map $\sigma_G$
does not extend to a bounded operator on $L^2$, so it necessary
to consider its extension to Sobolev spaces.
From Theorem~\ref{t-gue200926yyda}, $\sigma_G$ extends by 
density to a bounded operator
\begin{equation}\label{e:GSa}
\sigma_{G}=\sigma_{G,s}: H^0_b(X)^G_s\To 
H^0_b(X_{G})_{s-\frac{d}{4}},\ \ \mbox{for every $s\in\mathbb R$}. 
\end{equation}
This operator can be thought as a Guillemin-Sternberg map
in the CR setting. It maps the ``first quantize 
and then reduce'' space
(the space of $G$-invariant Sobolev CR functions on $X$)
to the ``first reduce and then quantize'' space (the space
Sobolev CR functions on $X_G$).
Indeed, from the point of view of
quantum mechanics, the Hilbert space structures play an essential role. 
It is natural, then, to investigate the extent to which the 
CR Guillemin-Sternberg map is Fredholm.
The main result of this work is the following. 

\begin{theorem}\label{t-180428zy}
Let $X$ be a compact orientable 
CR manifold 
and let $G$ be a connected compact Lie group acting on $X$
such that the $G$-action preserves $J$ and $\omega_{0}$ and
Assumption \ref{a-gue170410I} holds.
Suppose that $\ddbar_{b,X_{G}}$ has closed range in $L^2$.
Then, for every $s\in\mathbb R$, the 
CR Guillemin-Sternberg map \eqref{e:GSa}
is Fredholm. Actually, $\Ker\sigma_{G,s}$ and 
$(\Ran\sigma_{G,s})^\perp$ 
are finite dimensional subspaces of  
$\cC^\infty(X)\cap H^0_b(X)^G$ and 
$\cC^\infty(X_{G})\cap H^0_b(X_{G})$, 
respectively, $\Ker\sigma_{G,s}$ and the index 
${\rm dim\,}\Ker\sigma_{G,s}-{\rm dim\,}(\Ran\sigma_{G,s})^\perp$ 
are independent of $s$. 
\end{theorem}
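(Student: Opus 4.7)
The natural strategy is to construct a two-sided parametrix for $\sigma_{G,s}$ modulo compact (in fact smoothing) operators, using the $G$-invariant Fourier integral operator calculus developed in the earlier sections of the paper. From such a parametrix, the Fredholm property, the smoothness of representatives of $\Ker\sigma_{G,s}$ and $(\Ran\sigma_{G,s})^\perp$, and hence the $s$-independence of these spaces follow by standard elliptic regularity-type arguments.

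First, I would build an approximate inverse going the other way. Let $S^G$ denote the $G$-invariant Szeg\H{o} projector on $X$ and let $S_{X_G}$ be the Szeg\H{o} projector on $X_G$; the closed-range hypothesis on $\ddbar_{b,X_G}$ makes $S_{X_G}$ a Boutet de Monvel--Sjöstrand type complex Fourier integral operator, while $S^G$ has the microlocal description established earlier in the paper, with wavefront set concentrated near $\mu^{-1}(0)$. The candidate parametrix is an operator of the form $E=S^G\circ F\circ S_{X_G}$, where $F$ is a suitable extension-off-$Y$ map dual to the restriction-and-descent $\iota_G\circ \iota^*$. The Sobolev shift of $+d/4$ produced by $F$ (which has the microlocal nature of a delta-type pushforward in the $d$ transverse directions) exactly compensates the shift of $-d/4$ in \eqref{e:GSa}, so that $E$ maps $H^0_b(X_G)_{s-\tfrac{d}{4}}\to H^0_b(X)^G_s$.

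Next I would compose. Writing $S^G$, $S_{X_G}$, and $F$ as complex FIOs and composing, the operators $\sigma_{G,s}\circ E$ on $H^0_b(X_G)_{s-d/4}$ and $E\circ \sigma_{G,s}$ on $H^0_b(X)^G_s$ become complex FIOs whose phases, after stationary phase in the fibre variables parametrizing $G$-orbits on $Y$, reduce to the diagonal. Assumption~\ref{a-gue170410I}(i)--(ii), combined with the freeness of the $G$-action on $\mu^{-1}(0)$ and Lemma~\ref{eq:t2.4}, ensures that the critical manifold is clean and nondegenerate in the transverse directions, so stationary phase yields an elliptic leading symbol. The result is
\begin{equation*}
\sigma_{G,s}\circ E= I+K_1,\qquad E\circ\sigma_{G,s}= I+K_2,
\end{equation*}
with $K_1,K_2$ smoothing operators on Szeg\H{o} spaces. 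Hence $\sigma_{G,s}$ is Fredholm for every $s$. If $u\in\Ker\sigma_{G,s}$ then $u=-K_2u\in \cC^\infty(X)$, and analogously for $(\Ran\sigma_{G,s})^\perp$ via $\sigma_{G,s}^*$; smoothness then forces $\Ker\sigma_{G,s}$ and $(\Ran\sigma_{G,s})^\perp$ to be independent of $s$, giving also constancy of the index.

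The main obstacle is Step~3: showing that the stationary phase composition of the three FIOs actually produces an elliptic pseudodifferential-type principal symbol on the Szeg\H{o} space, with the correct Sobolev order. This requires the precise form of the $G$-invariant Szeg\H{o} kernel asymptotics near $\mu^{-1}(0)$ developed earlier, together with a careful Hessian computation along the critical manifold $Y\times_G Y$; the positivity of the Levi form near $\mu^{-1}(0)$ is what guarantees that this Hessian is nondegenerate in the complex sense needed for complex stationary phase to apply.
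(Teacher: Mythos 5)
Your overall architecture (compose $\sigma_{G,s}$ with a candidate inverse built from $S_G$, $S_{X_G}$ and an extension map, apply complex stationary phase using the $G$-invariant Szeg\H{o} kernel asymptotics near $\mu^{-1}(0)$) is the right starting point, but the central claim of Step~3 --- that the compositions equal $I+K_i$ with $K_1,K_2$ \emph{smoothing} --- is not correct, and it assumes away precisely the hardest part of the argument. After stationary phase, $\sigma^{*}\sigma$ (suitably normalized, with the order-$(-d/4)$ correction $E$ and the density factor $f_G$ built in) equals $\tfrac{1}{C_0}(I-Q)$ where $Q$ is a complex Fourier integral operator of \emph{the same order and phase as $S_G$ itself}, whose leading symbol merely vanishes on $\diag(Y\times Y)$ (the class $G_{n-\frac d2,1}(X)$ of the paper). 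Such a $Q$ is bounded on $L^2$ but is neither smoothing nor obviously compact: these operators are not pseudodifferential, so agreement of leading symbols on the diagonal of $Y\times Y$ does not make the difference regularizing, and the usual "elliptic parametrix $\Rightarrow$ compact remainder $\Rightarrow$ Fredholm" chain breaks down. The paper's substitute is a dedicated calculus: one shows that composing two such operators multiplies the vanishing orders of the leading symbols on $\diag(Y\times Y)$, that the imaginary part of the phase dominates $d(x,Y)^2$, and hence that high powers $Q^N$ map $L^2$ into $H^s$; only from this iteration does one get that $\Ker(I-Q)$ is finite dimensional, consists of smooth functions, and that $I-Q$ is bounded below off its kernel. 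Without an argument of this type your proposal has no route to Fredholmness.

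Two further points. First, the theorem concerns the bare restriction map $\sigma_{G,s}=\iota_G\circ\iota^{*}$, not the corrected map $\sigma=S_{X_G}\circ E\circ\iota_G\circ f_G\circ\iota^{*}$; the passage between them is not automatic. The paper handles the kernel via the inclusion $\Ker\sigma_{G,s}\subset\Ker(S_{X_G}\circ E\circ f_G\circ\sigma_{G,s})$, and the cokernel via an auxiliary operator $F_s$ on $X_G$ which is again of the form $C(I-R)S_{X_G}$ with $R$ a Szeg\H{o}-type FIO with vanishing leading term on the diagonal; the Sobolev shift $d/4$ is controlled by conjugating with $\Lambda_s$ and $\widehat\Lambda_{s-d/4}$ rather than by your heuristic "delta-type pushforward'' $F$. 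Second, your closing claim that "smoothness forces independence of $s$'' is too quick for the cokernel: $(\Ran\sigma_{G,s})^{\perp}$ is taken with respect to the $s$-dependent inner product $(\cdot,\cdot)_{X_G,s-d/4}$, so the spaces genuinely vary with $s$ and only their dimension is asserted to be constant; this requires a separate argument (the paper constructs an explicit injection of $(\Ran\sigma_{G,s})^{\perp}$ into $(\Ran\sigma_{G,d/4})^{\perp}$ and conversely).
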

It should be noticed that $(\Ran\sigma_{G,s})^\perp$ is given by 
\begin{equation}\label{eq:ma1.8}
(\Ran\sigma_{G,s})^\perp:=\set{u\in H^0_b(X_G)_{s-\frac{d}{4}};\, 
\mbox{$(\,\sigma_{G,s}v\,,u\,)_{X_G,s-\frac{d}{4}}=0$, 
for every $v\in H^0_b(X)^G_s$}}.
\end{equation}
Under Assumption \ref{a-gue170410I} (i) 
the hypothesis that $\dim X\geq5$ 
is used in order to have local subelliptic Sobolev estimates on the
set where the Levi form is positive definite 
(Theorem \ref{t-gue181012}) and leads to the fact that
the $G$-invariant Kohn Laplacian \eqref{e-gue181012}
has closed range in $L^2$. Note also that the Kohn Laplacian
on strictly pseudoconvex CR manifolds of dimension greater than 
or equal to five
has always closed range in $L^2$ but this is not true for all three 
dimensional strictly pseudoconvex CR manifolds 
(a detailed discussion about
the closed range of $\ddbar_{b}$ in $L^2$ can be 
found in Section \ref{s2.4}). In the case when $\dim X=3$ 
we will state in Theorem \ref{t:ma5.7} a version of 
Theorem \ref{t-180428zy} under
weaker hypotheses as Assumption \ref{a-gue170410I} (ii), namely that
$X$ is pseudoconvex of finite type and
$\ddbar_{b,X}$ has closed range in $L^2$.

We turn now our attention to Sasakian manifolds.
Let  $(X, T^{1,0}X)$ be a compact connected Sasakian manifold
(see Section \ref{s2.30}). We fix a contact form $\omega_0$
and an associated Reeb vector field $R$ as in \eqref{e:reeb}.
We assume that 
\begin{equation}\label{e-gue180520}
\mbox{$R$ is $G$-invariant}. 
\end{equation}
By Remark \ref{eq:t2.1a}, the $R$-action preserves $HX$, $J$ 
and the natural metric $g_{\omega_{0}}$ on $TX$. 
As $X$ is compact, this implies the $R$-action generates a 
compact torus $\T$-action on $X$ and this $\T$-action commutues with 
the $G$-action. Thus it naturally induces a $\T$-action on $X_{G}$ 
and the generator $R$ induces the Reeb vector field $\widehat R$ on 
$X_{G}$.
It is clear that $X_{G}$ is also a compact  
Sasakian manifold and 
$\widehat R$ preserves the CR structure $T^{1,0}X_{G}$, and 
$\widehat R$, $T^{1,0}X_{G}\oplus T^{0,1}X_{G}$ generate the 
complex tangent bundle of $X_{G}$. 

Now $H^0_{b}(X)^G$ and $H^0_{b}(X_{G})$ are both 
$\T$-Hilbert spaces, 
thus we have the decomposition of Hilbert spaces via the weight 
$\alpha\in \T^{*}(\simeq \Z^{\dim \T})$ of $\T$-action:
\begin{align}\label{e-180520mI}
H^0_{b}(X)^G= \oplus_{\alpha\in \T^{*}}
H^0_{b,\alpha}(X)^G, \qquad
H^0_{b}(X_{G})= \oplus_{\alpha\in \T^{*}}H^0_{b,\alpha}(X_{G}).
\end{align}
Both $H^0_{b,\alpha}(X)^G$ and $H^0_{b,\alpha}(X_{G})$ 
are finite dimensional
subspaces of $\cC^\infty(X)^G$ and $\cC^\infty(X_{G})$, 
respectively, as subspaces of the eigenspaces of the elliptic
operators $\ddbar_{b,X}^{*}\ddbar_{b,X}-R^2$, 
$\ddbar_{b,X_G}^{*}\ddbar_{b,X_{G}}-\widehat{R}^2$,
respectively, of eigenvalues $|\alpha(R)|^2$.
From the definition \eqref{180308Im} of the map $\sigma_{G}$ 
we see that
\begin{align}\label{eq:2.30a}
\sigma_{G} Ru=\widehat R\sigma_{G} u,\
\  \text{ for any } u\in H^0_b(X)^G,
\end{align}
and hence $\sigma_{G}$ maps $H^0_{b,\alpha}(X)^G$
to $H^0_{b,\alpha}(X_{G})$.
From this observation, Theorem~\ref{t-180428zy} and the fact that
$\ddbar_{b}$ has  
closed range in $L^2$ on Sasakian manifolds
(see~\cite{MY07}, also \S \ref{s2.4}), we deduce: 

\begin{theorem}[quantization commutes with 
reduction for Sasakian manifolds]\label{t-gue180520m}\
Let $X$ be a Sasakian manifold. 
Assume that the Reeb vector field is $G$-invariant.
Then with the exception of finitely many $\alpha$
the map
\begin{align}\label{eq:2.31a}
\sigma_{G}: H^0_{b,\alpha}(X)^G\To H^0_{b,\alpha}(X_{G})
\end{align}
is an isomorphism.
\end{theorem}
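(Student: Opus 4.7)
The plan is to combine the Fredholm statement of Theorem \ref{t-180428zy} with the $\T$-equivariance relation \eqref{eq:2.30a}, exploiting that the kernel and cokernel of $\sigma_{G,s}$ are finite dimensional and $\T$-stable.

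First I would verify that Theorem \ref{t-180428zy} applies. On a compact Sasakian manifold the Levi form is positive definite everywhere, so Assumption \ref{a-gue170410I}(i) holds when $\dim X\ge 5$, and (ii) holds when $\dim X=3$ since $\ddbar_b$ has closed range in $L^2$ on Sasakian manifolds by \cite{MY07} (see also Section \ref{s2.4}). The same closed range property is inherited by the Sasakian reduction $X_G$. Therefore, for every $s\in\R$ the map
\begin{equation*}
\sigma_{G,s}\colon H^0_b(X)^G_s\To H^0_b(X_G)_{s-\frac{d}{4}}
\end{equation*}
is Fredholm, with $\Ker\sigma_{G,s}\subset\cC^\infty(X)\cap H^0_b(X)^G$ and $(\Ran\sigma_{G,s})^{\perp}\subset\cC^\infty(X_G)\cap H^0_b(X_G)$ both finite dimensional.

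Next I would invoke the intertwining relation $\sigma_G R=\widehat R\,\sigma_G$ from \eqref{eq:2.30a}: since $R$ commutes with the $G$-action and with $\ddbar_b$, the induced $\T$-action preserves both $H^0_b(X)^G_s$ and $H^0_b(X_G)_{s-d/4}$, and $\sigma_{G,s}$ carries the $\alpha$-weight subspace of the former into the $\alpha$-weight subspace of the latter. Hence $\Ker\sigma_{G,s}$ and $(\Ran\sigma_{G,s})^{\perp}$ are $\T$-invariant, and finite-dimensionality forces each of them to have non-zero components at only finitely many weights; let $F\subset\T^*$ be the finite union of these weights. For $\alpha\notin F$ the $\alpha$-component of $\Ker\sigma_{G,s}$ vanishes, giving injectivity of \eqref{eq:2.31a}. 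For surjectivity, let $u\in H^0_{b,\alpha}(X_G)$; since the $\alpha$-component of $(\Ran\sigma_{G,s})^{\perp}$ is zero, $u$ lies in the closed range of $\sigma_{G,s}$, so $u=\sigma_{G,s}v$ for some $v\in H^0_b(X)^G_s$. Replacing $v$ by its $\alpha$-weight component $v_\alpha$ (obtained by $\T$-averaging), the equivariance yields $\sigma_{G,s}v_\alpha=u_\alpha=u$.

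The main delicate point is to justify the weight-decomposition argument at the Sobolev level. This is in fact routine: each $H^0_{b,\alpha}(X)^G$ and $H^0_{b,\alpha}(X_G)$ is a finite dimensional space of smooth functions, being contained in the $|\alpha(R)|^2$-eigenspace of the elliptic operator $\ddbar_b^{*}\ddbar_b-R^2$ (respectively $\ddbar_{b,X_G}^{*}\ddbar_{b,X_G}-\widehat R^{\,2}$), so the projections to weight spaces extend boundedly to every Sobolev space and commute with $\sigma_{G,s}$ by the intertwining relation. A minor point is to ensure the isomorphism does not depend on the choice of $s$: for $\alpha\notin F$ every element of $H^0_{b,\alpha}(X)^G$ and $H^0_{b,\alpha}(X_G)$ is smooth, and the identification of these weight spaces independently of $s$ is immediate from the elliptic regularity above.
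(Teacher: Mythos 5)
Your proposal is correct and follows essentially the same route as the paper: the paper likewise deduces the theorem from the $\T$-weight decomposition \eqref{e-180520mI}, the intertwining relation \eqref{eq:2.30a}, the finite dimensionality of each weight space (as eigenspaces of the elliptic operators $\ddbar_{b}^{*}\ddbar_{b}-R^2$ and $\ddbar_{b,X_G}^{*}\ddbar_{b,X_{G}}-\widehat{R}^2$), and the Fredholm property of Theorem \ref{t-180428zy}. You simply spell out the finitely-many-bad-weights argument and the Sobolev-level bookkeeping that the paper leaves implicit.
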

 
We now apply Theorem~\ref{t-180428zy} to the case of 
complex manifolds. 
Let $(L,h^L)$ be a Hermitian holomorphic line bundle over a connected 
compact complex manifold $(M,J)$ with ${\rm dim\,}_{\mathbb C}M=n$, 
where $J$ denotes the complex structure of $TM$ and $h^L$ 
is a Hermitian metric of $L$. 
We denote by $R^L$ the Chern curvature of $(L,h^L)$. 
We assume that $G$ acts holomorphically on $(M,J)$, 
and that the action lifts to a holomorphic action on $L$. 
We assume further that $h^L$ is preserved by the $G$-action. 
Then $R^L$ is a $G$-invariant form. Let 
$\tilde{\mu}: M \to \mathfrak{g}^*$ be the moment map
defined by the Kostant formula \eqref{0.3}.

Assume that $0 \in \mathfrak{g}^*$ is regular and the action of $G$ on 
$\tilde{\mu}^{-1}(0)$ is free. If $iR^L$ is positive near 
$\tilde{\mu}^{-1}(0)$, 
then the analogue of the Marsden-Weinstein reduction holds. 
More precisely, the complex structure $J$ on $M$ induces 
a complex structure $J_G$ 
on $M_G:=\tilde{\mu}^{-1}(0)/G$, for which the line bundle 
$L_G:=L/G$ is a 
holomorphic line bundle over $M_G$. For $m\in\mathbb N$, 
let $H^0(M,L^m)^G$ denote the space of $G$-invariant holomorphic 
sections with values in $L^m$. 

Let $X:=\set{v\in L^*;\, \abs{v}^2_{h^{L^*}}=1}$ be the circle bundle 
of $L^*$, where $h^{L^*}$ is the Hermitian metric 
on $L^*$ induced by $h^L$. 
Let $e^{i\theta}$ be the natural $S^1$-action on the fibers of $X$.
The action of $G$ on $M$ lifts to a CR action on $X$. 
For every $m\in\mathbb N$, put 
\begin{align}\label{eq:2.32a}
H^0_{b,m}(X)^G:=\set{u\in H^0_b(X)^G;\, 
\mbox{$(e^{i\theta})^*u=e^{im\theta}u$ 
on $X$, for every 
$e^{i\theta}\in S^1$}}.
\end{align}
 It is easy to check that for every 
$m\in\mathbb N$ there are canonical isomorphisms
\begin{equation}\label{eq:2.32b}
H^0_{b,m}(X)^G\cong H^0(M,L^m)^G\:\:, \quad
H^0_{b,m}(X_G)\cong H^0(M_G,L_G^m). 
\end{equation}
On account of \eqref{eq:2.32b}, Theorem~\ref{t-180428zy} 
(under Assumption \ref{a-gue170410I} (i))
and the correspondence between the curvature
$iR^L$ and the Levi form of $X$ (cf.\ Section \ref{s2.30})
we deduce: 
\begin{theorem}\label{t-gue181211}
Let $M$ be a compact connected complex manifold, $\dim_\C M\geq2$,
and $(L,h^L)$ be a Hermitian holomorphic line bundle over $M$.
Let $G$ be a connected compact Lie group 
acting holomorphically on $M$ and whose action lifts to $(L,h^L)$.
Suppose that $iR^L$ is positive near $\tilde{\mu}^{-1}(0)$
and $G$ acts freely on $\tilde{\mu}^{-1}(0)$.
Then for $m$ large enough, 
the canonical map between $H^0(M,L^m)^G$ 
and $H^{0}(M_G,L_G^m)$ by restriction
 is an isomorphism,  in particular 
\begin{equation}\label{e-gue181211a}
\dim H^0(M,L^m)^G=\dim H^{0}(M_G,L_G^m).
\end{equation}
\end{theorem}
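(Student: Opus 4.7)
\medskip

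The plan is to reduce the theorem to Theorem \ref{t-180428zy} via the circle-bundle construction, and then to use the $S^1$-weight decomposition to upgrade the Fredholm property to an isomorphism for large $m$.

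\smallskip

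First, I would set up the CR side. Let $X\subset L^*$ be the unit circle bundle of $(L^*,h^{L^*})$, which is a compact strictly pseudoconvex CR manifold of dimension $2n+1\geq 5$ (since $\dim_\C M\geq 2$) precisely where $iR^L>0$; the correspondence between $iR^L$ and the Levi form of $X$ (Section \ref{s2.30}) shows that the Levi form of $X$ is positive definite near the preimage under $\mu$ of $0$, because $iR^L$ is positive near $\tilde\mu^{-1}(0)$ and, via the $S^1$-bundle projection, $\mu^{-1}(0)$ projects exactly onto $\tilde\mu^{-1}(0)$. The holomorphic lift of the $G$-action to $L$ induces a CR action on $X$ commuting with the $S^1$-action on the fibers, and the freeness of $G$ on $\tilde\mu^{-1}(0)$ translates into freeness on $\mu^{-1}(0)$. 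Thus Assumption \ref{a-gue170410I}(i) is satisfied.

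\smallskip

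Second, I would check that the hypothesis on $\ddbar_{b,X_G}$ is automatic. Reducing $(L,h^L)$ yields the Hermitian holomorphic line bundle $(L_G,h^{L_G})$ over $M_G$ with $iR^{L_G}>0$; therefore $X_G$ is the unit circle bundle of $L_G^*$, a compact strictly pseudoconvex CR manifold (or a disjoint union of circles if $\dim M_G=0$). In either case $\ddbar_{b,X_G}$ has closed range in $L^2$ (by the discussion in Section \ref{s2.4}, or in the zero-dimensional case trivially from the Hardy space description). Theorem \ref{t-180428zy} then applies and gives, for each $s\in\R$, a Fredholm map
\begin{equation*}
\sigma_{G,s}: H^0_b(X)^G_s\To H^0_b(X_G)_{s-\frac{d}{4}},
\end{equation*}
whose kernel and cokernel are finite dimensional and contained in the smooth CR functions.

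\smallskip

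Third, I would exploit the $S^1$-action. Since the $S^1$-action on the fibers commutes with $G$ and with $\ddbar_b$, the spaces $H^0_b(X)^G_s$ and $H^0_b(X_G)_{s-d/4}$ split as Hilbert direct sums of isotypes indexed by $m\in\Z$, and $\sigma_{G,s}$ preserves this grading. By \eqref{eq:2.32b}, the $m$-isotype on the source is $H^0(M,L^m)^G$ and on the target is $H^0(M_G,L_G^m)$, and the restriction of $\sigma_{G,s}$ to the $m$-th piece is exactly the canonical restriction map; in particular it is independent of $s$. Now $\Ker\sigma_{G,s}$ and $(\Ran\sigma_{G,s})^\perp$ are finite-dimensional $S^1$-invariant smooth subspaces, hence split as finite direct sums of isotypes, so their $m$-isotype vanishes for all but finitely many $m$. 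Since $iR^L>0$ near $\tilde\mu^{-1}(0)$ forces the weights of nontrivial sections to be non-negative for large $m$, we conclude that for $m$ sufficiently large the restriction
\begin{equation*}
H^0(M,L^m)^G\To H^0(M_G,L_G^m)
\end{equation*}
is an isomorphism, which gives \eqref{e-gue181211a}.

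\smallskip

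The main obstacle I expect is the bookkeeping between the CR moment map $\mu$ on $X$ and the Kostant moment map $\tilde\mu$ on $M$, and the verification that $\sigma_{G,s}$ restricted to an $S^1$-isotype really agrees, after the identifications \eqref{eq:2.32b}, with the naive restriction $H^0(M,L^m)^G\to H^0(M_G,L_G^m)$; once this compatibility is established, everything else is a direct consequence of the Fredholm theorem and the finiteness of the $S^1$-spectrum on the kernel and cokernel.
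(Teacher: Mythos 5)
Your proposal is correct and follows essentially the same route as the paper, which deduces Theorem \ref{t-gue181211} from Theorem \ref{t-180428zy} via the circle-bundle identifications \eqref{eq:2.32b} and the correspondence between $iR^L$ and the Levi form of $X$; your $S^1$-isotype argument (the finite-dimensional kernel and cokernel of the Fredholm map are supported on finitely many weights) is exactly the implicit final step. The compatibility you flag does hold: $\sigma_G=\iota_G\circ\iota^*$ is literally restriction to $\mu^{-1}(0)=\pi^{-1}(\tilde\mu^{-1}(0))$ followed by descent, which under \eqref{eq:2.32b} is the canonical restriction $H^0(M,L^m)^G\To H^0(M_G,L_G^m)$ on each isotype.
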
 
If $L$ is positive on the whole $M$, 
this canonical isomorphisms 
between $H^0(M,L^m)^G$ and $H^{0}(M_G,L_G^m)$ was constructed 
in \cite{GS:82,Z} for $m=1$
(see also for the metric aspect of this isomorphism
\cite[(0.27), Corollary 4.13]{MZ} for $m$ large enough).
This implies that the map $\sigma_G$ in 
Theorem \ref{t-180428zy} is actually an isomorphism in the case
of the circle bundle $X$ of $L^*$. Thus we expect in many 
situations (in particular, if additionally $X$ is strictly pseudoconvex), 
that $\sigma_G$ in Theorem \ref{t-180428zy} is an isomorphism.
Theorem \ref{t-gue181211} gives a version of the result of \cite{GS:82,Z}
for large enough powers of $L$ by requiring the positivity of $iR^L$ 
only on $\tilde{\mu}^{-1}(0)$.
In the case $\dim_C M=1$, hence $\dim_C X=3$, 
Assumption \ref{a-gue170410I} (ii) corresponds to $L$ being positive 
everywhere on $M$, so an application of Theorem \ref{t-180428zy}
does not bring anything new.
We will give in Theorem \ref{eq:t6.1} a version for 
almost complex manifolds of Theorem \ref{t-gue181211}.\\

In the rest of Introduction we explain some technical aspects,
in particular some results on the $G$-invariant Szeg\H{o} projection,
to establish Theorem \ref{t-180428zy}.

We introduce a $G$-invariant Hermitian metric $g=g^{\C TX}$
on $X$ as in Lemma \ref{l-gue181212}
which fixes the $L^2$ spaces on $X$. 
The \emph{$G$-invariant Szeg\H{o} projection} is
the orthogonal projection
\begin{equation}\label{e-gue181008}
S_G: L^2(X)\To H^0_b(X)^G
\end{equation} 
with respect to $(\,\cdot\,,\,\cdot\,)$.  
The \emph{$G$-invariant Szeg\H{o} kernel} 
$S_G(x,y)\in \cD'(X\times X)$ is the distribution kernel of $S_G$.  
In Theorem~\ref{t-gue180505I}, we will prove that $S_G$
is a complex Fourier integral operator and in 
Theorem~\ref{t-gue200211yyd}, we will show the regularity property
\begin{equation}\label{e-gue200211yyd}
S_G: \cC^\infty(X)\To H^0_b(X)^G\cap\cC^\infty(X).
\end{equation}
From \eqref{e-gue200211yyd} we conclude that 
$H^0_b(X)^G\cap\cC^\infty(X)$ is dense in $H^0_b(X)^G$. 


Let $S_{X_{G}}:L^2(X_{G})\To H^{0}_{b}(X_{G})$ 
be the orthogonal projection 
with respect to $(\,\cdot\,,\,\cdot\,)_{X_{G}}$ (cf.\ Convention
\ref{conv_met}). 
It follows from general results~\cite{BouSj76,Hsiao08} that
if $\ddbar_{b,X_{G}}$
has closed range in $L^2$,
then $S_{X_G}$ is a Fourier integral operator with complex phase and 
also a pseudodifferential operator on $X_{G}$. In particular, 
\begin{equation}\label{e-gue200212yyd}
S_{X_{G}}: \cC^\infty(X_{G})\To H^0_b(X_{G})\cap\cC^\infty(X_{G}).
\end{equation}
which implies that 
$H^0_b(X_{G})\cap\cC^\infty(X_{G})$ is dense in $H^0_b(X_{G})$
if $\ddbar_{b,X_{G}}$ has closed range in $L^2$.

We consider the linear map 
\begin{equation}\label{e:rx}
\mR_x:\underline{\kg }_x\To\underline{\kg }_x,\quad
u\mapsto \mR_xu,\ \ \langle\,\mR_xu,v\,\rangle
=\langle\,d\omega_0(x),Ju\wedge v\,\rangle.
\end{equation}
For $x\in Y$ we denoty by $Y_x=\set{h.x;\, h\in G}$ 
the $G$-orbit of $Y$; then $Y_x$ is a 
$d$-dimensional submanifold of $X$.
The $G$-invariant Hermitian metric 
$g$ induces a volume form $dv_{Y_x}$ on $Y_x$. Put 
\begin{equation}\label{e-gue180308m}
f_G(x)=\abs{\det\mR_x}^{-\frac{1}{4}}\sqrt{V_{{\rm eff\,}}(x)}\in
\cC^\infty(Y)^G \, \, \,  \text{  with  }
V_{{\rm eff\,}}(x):=\int_{Y_x}dv_{Y_x},
\end{equation}
where 
$\cC^\infty(Y)^G$ 
denotes the space of $G$-invariant smooth functions on
$Y=\mu^{-1}(0)$.
 
Let $E: \cC^\infty(X_{G})\To\cC^\infty(X_{G})$ be a classical
elliptic pseudodifferential operator with principal symbol 
$p_E(x,\xi)=\abs{\xi}^{-d/4}$.
Let 
\begin{equation}\label{e-gue180308Im}
\begin{split}
\sigma:H^0_{b}(X)^G\cap\cC^\infty(X)^G&\To 
H^{0}_{b}(X_{G}),
\quad\sigma=S_{X_{G}}\circ E\circ \iota_G\circ f_G\circ \iota^*\,.
\end{split}
\end{equation}
It turns out that 
the operator $\sigma$ in \eqref{e-gue180308Im} is bounded, 
see Corollary~\ref{c-gue180515p}, and thus
extends by density to a bounded operator
\begin{equation}\label{e:GS}
\sigma: H^0_b(X)^G\To H^0_b(X_{G}).
\end{equation}

We have encoded in the definition \eqref{e-gue180308Im} some
corrections in order to obtain good analytic properties of $\sigma$. 
One correction is the multiplication with the function $f_G$
from \eqref{e-gue180308m}; this reflects the need to
reconcile the volume forms on $\mu^{-1}(0)$ and on $X_G$\,.
The multiplication by $f_{G}$ changes the CR character of the result,
therefore the need to project back to the CR space by $S_{X_G}$.
Here comes the role of $E$, which is more subtle
(see also Remark~\ref{r-gue181220mp} below). 
Ideally, the map $\sigma$ should be unitary.
But we can content ourselves to 
require that $\sigma^{\,*}\sigma$ is ``microlocally close" to 
$S_G$, where $\sigma^{\,*}:  H^0_b(X_{G})\To\cD'(X)$ 
is the adjoint of $\sigma$. In other words, we want 
$\sigma^{\,*}\sigma$ to be a complex Fourier integral 
operator with the same phase, the same order and the same leading 
symbol as $S_G$. To achieve this, we need to take $E$ to be
a classical elliptic pseudodifferential operator with principal symbol 
$p_E(x,\xi)=\abs{\xi}^{-d/4}$.

The main  technical result of this work is the following. 

\begin{theorem}\label{t-gue180428zy}
Under the assumption of Theorem \ref{t-180428zy}, 
the map $\sigma$ is Fredholm. Actually, $\Ker\sigma$ and 
$(\Ran\sigma)^\perp$ 
are finite dimensional subspaces of  
$\cC^\infty(X)\cap H^0_b(X)^G$ and 
$\cC^\infty(X_{G})\cap H^0_b(X_{G})$, 
respectively. 
\end{theorem}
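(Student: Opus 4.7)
The proof is a parametrix argument built on the complex Fourier integral operator descriptions of $S_G$ (Theorem~\ref{t-gue180505I}) and of $S_{X_G}$ (from \cite{BouSj76,Hsiao08}, using the hypothesis that $\ddbar_{b,X_G}$ has closed range in $L^2$). The plan is to establish that
\[
\sigma^{\,*}\sigma \equiv S_G \pmod{\text{smoothing}} \quad\text{on } X,
\qquad
\sigma\sigma^{\,*} \equiv S_{X_G} \pmod{\text{smoothing}} \quad\text{on } X_G,
\]
and then deduce Fredholmness from a standard Riesz--Schauder argument combined with the regularity statements \eqref{e-gue200211yyd} and \eqref{e-gue200212yyd}.

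The main technical step is the composition calculus for $\sigma^{\,*}\sigma$. I would expand
\[
\sigma^{\,*}\sigma = \iota_{\,*}\circ f_G\circ \iota_G^{-1}\circ E^*\circ S_{X_G}\circ E\circ \iota_G\circ f_G\circ \iota^*,
\]
where $\iota_{\,*}$ is the formal adjoint of $\iota^*$. Since the $G$-action is free on $Y=\mu^{-1}(0)$ and the Levi form is positive near $Y$ by Assumption~\ref{a-gue170410I}, the characteristic manifolds of $\ddbar_b$ near $Y$ and of $\ddbar_{b,X_G}$ are related through the reduction in a transverse way that makes the composition clean: a Melin--Sj\"ostrand complex stationary phase in the directions along the $G$-orbits and transverse to $Y$ can be carried out. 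The corrections built into the definition of $\sigma$ are tailored to this computation. The factor $f_G = |\det\mR_x|^{-1/4}\sqrt{V_{\mathrm{eff}}}$, appearing squared, contributes exactly the Jacobian $|\det\mR_x|^{-1/2}V_{\mathrm{eff}}$ needed to reconcile the $G$-invariant volume on $X$ with the product of the volume on $X_G$ and the volume along the $G$-orbits; while $E^*E$, a classical pseudodifferential operator with principal symbol $|\xi|^{-d/2}$, supplies the order correction matching the half-dimensional drop $d/2$ arising from integrating out the $G$-fiber. These factors combine so that $\sigma^{\,*}\sigma$ is a complex Fourier integral operator with the same complex canonical relation, the same order, and the same leading symbol as the $G$-invariant Szeg\H o kernel $S_G$ identified in Theorem~\ref{t-gue180505I}, so their difference is smoothing. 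A symmetric computation on $X_G$ yields $\sigma\sigma^{\,*}\equiv S_{X_G}$ modulo smoothing.

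Granting these parametrix relations, the Fredholm conclusion is immediate. For $u\in H^0_b(X)^G$ one has $S_Gu=u$, so $\sigma u=0$ forces $u=(S_G-\sigma^{\,*}\sigma)u$; since $S_G-\sigma^{\,*}\sigma$ is smoothing, its restriction to $H^0_b(X)^G$ is compact, whence $\Ker\sigma$ is finite dimensional by Riesz--Schauder, and the smoothing property combined with \eqref{e-gue200211yyd} places its elements in $\cC^\infty(X)\cap H^0_b(X)^G$. The cokernel $(\Ran\sigma)^\perp$ is $\Ker\sigma^{\,*}$ restricted to $H^0_b(X_G)$ and is handled identically using $\sigma\sigma^{\,*}\equiv S_{X_G}$ and \eqref{e-gue200212yyd}. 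The principal obstacle is precisely the symbol computation above: one must carry out the complex stationary phase along $G$-orbits in the Melin--Sj\"ostrand almost-analytic framework, identify the resulting phase with the one produced by $S_G$, and verify that the geometric factors $f_G$, $E$, and the orbit Jacobian cancel exactly at the level of leading symbols. It is this computation that retroactively justifies the specific corrections inserted into the definition \eqref{e-gue180308Im} of $\sigma$.
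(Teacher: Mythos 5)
Your overall architecture (study $\sigma^{*}\sigma$ and $\sigma\sigma^{*}$ as complex Fourier integral operators via Melin--Sj\"ostrand stationary phase along the $G$-orbits, check that $f_G$ and $E$ produce the right Jacobian and order corrections, then run a Riesz--Schauder argument) matches the paper's strategy up to its central analytic step, but that step as you state it contains a genuine gap. You claim $\sigma^{*}\sigma\equiv c\,S_G$ and $\sigma\sigma^{*}\equiv c'\,S_{X_G}$ \emph{modulo smoothing operators}. This is not what the composition calculus delivers, and it is not achievable by the corrections $f_G$ and $E$: what one actually obtains (Theorem \ref{t-gue170305a}) is that $C_0\sigma^{*}\sigma$ and $S_G$ are Fourier integral operators with the same phase $\Phi$, the same order $n-\tfrac{d}{2}$, and leading symbols that agree \emph{only on the diagonal of $Y\times Y$}. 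The subleading symbols, and the leading symbols away from $\diag(Y\times Y)$, do not agree, so the difference $Q$ with $C_0\sigma^{*}\sigma=(I-Q)S_G$ is again an operator of full order $n-\tfrac{d}{2}$ whose leading symbol merely vanishes on $\diag(Y\times Y)$ --- an element of the class $G_{n-\frac{d}{2},1}(X)$ of Definition \ref{d-gue180514}, not a smoothing (nor obviously compact) operator. Remark \ref{r-gue181220mp} makes exactly this point: one can only make $\sigma^{*}\sigma$ ``microlocally close'' to $S_G$ at the level of phase, order and leading symbol, not modulo $\cC^\infty$. Consequently your appeal to Riesz--Schauder via compactness of $S_G-\sigma^{*}\sigma$ on $H^0_b(X)^G$ does not go through as written.

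The missing ingredient is the calculus that replaces ``smoothing'' by ``negligible after iteration.'' The paper introduces the classes $G_{k,\ell}(X)$ and $\widehat G_{k,\ell}(X)$ and proves (Theorems \ref{t-gue180514}--\ref{t-gue180514mpq}) that for $H\in G_{n-\frac{d}{2},1}(X)$ the powers decompose as $H^{N}=H_{1,N}+H_{2,N}$ with $H_{1,N}\in\widehat G_{n-\frac{d}{2},N}(X)$ (full symbol vanishing to order $N$ on $\diag(Y\times Y)$, hence with kernel in $\cC^{s}$ for $N$ large by integration by parts in $t$) and $H_{2,N}\in G_{n-\frac{d}{2}-1,0}(X)$ (strictly lower order, whose powers gain Sobolev regularity); combined with the interpolation inequality $\norm{Hu}^2\leq\norm{(H^{*}H)^{2^{\ell}}u}^{2^{-\ell}}\norm{u}^{2-2^{-\ell}}$ this yields $L^2$-boundedness of $H$, the smoothing property of a sufficiently high power, and then finite-dimensionality and smoothness of $\Ker(I-H)$ via $(I-H^{N_s})=(I+H+\dots+H^{N_s-1})(I-H)$ and Rellich. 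Only with this in hand does $\Ker\sigma\subset H^0_b(X)^G\cap\Ker(I-Q)$ give the finite-dimensional, smooth kernel, and the analogous argument for $\sigma\sigma^{*}$ on $X_G$ gives $(\Ran\sigma)^{\perp}\subset\Ker\sigma^{*}$. So your symbol computation and the role you assign to $f_G$ and $E$ are on target, but you need to replace the false parametrix relation by the weaker statement ``$C_0\sigma^{*}\sigma-S_G\in G_{n-\frac{d}{2},1}(X)$'' and supply the iteration argument that makes such operators harmless.
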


\begin{remark}\label{r-gue181220mp}
Note that the definition of $\sigma$ depends on the choice of the
elliptic pseudodifferential operator $E$. We actually show that for 
any classical elliptic pseudodifferential operator $E$ with the same
principal symbol 
$p_E(x,\xi)=\abs{\xi}^{-d/4}$, the map 
$\sigma: H^0_b(X)^G\To H^0_b(X_{G})$ is Fredholm. 
Up to lower order terms of $E$, the map $\sigma$ is 
a canonical choice. 
The  elliptic pseudodifferential operator $E$ corresponds to the power 
$m^{-d/4}$ in the isomorphism map between 
$H^0(M,L^m)^G$ and $H^{0}(M_G,L_G^m)$ in complex case. 
Here we use the same notations as in the discussion after 
Theorem~\ref{t-gue180520m}. More precisely,  
Ma-Zhang \cite[Theorem 0.9]{MZ} showed that the map 
\begin{equation}\label{e-gue181214my}
\begin{split}
\sigma_m: H^0(M,L^m)^G\To H^{0}(M_G,L_G^m),
\quad\sigma_m=m^{-d/4}B^m_{M_G}
\circ\iota_G\circ f_{G}\circ \iota^*,
\end{split}
\end{equation}
is an asymptotic isometry if $m$ is large enough, where $\iota_G$, 
$\iota^*$ and $f_{G}\in\cC^\infty(M)^G$ 
are defined as in the discussion before \eqref{e-gue180308Im} 
and $B^m_{M_G}:L^2(M_G, L^m_G)\To H^{0}(M_G,L_G^m)$ 
is the orthogonal projection. When we change $m^{-d/4}$ in 
\eqref{e-gue181214my} to any $m$-depend function with order 
$m^{-d/4}+O(m^{-\frac{d}{4}-1})$, 
we still have an isomorphism between $H^0(M,L^m)^G$ and 
$H^{0}(M_G,L_G^m)$ 
for $m$ large. Moreover, in view of Theorem~\ref{t-180428zy}, 
to get $L^2$ isomorphism it makes sense to take 
an elliptic pseudodifferential operator $E$ of order $-\frac{d}{4}$. 
\comment{Comparing \eqref{e-gue181214my} with our map 
\eqref{e-gue180308Im}, the power
$m^{-d/4}+O(m^{-\frac{d}{4}-1})$ 
corresponds to an elliptic pseudodifferential operator $E$. 
Roughly speaking, 
when we sum over all $m$, the power 
$m^{-d/4}+O(m^{-\frac{d}{4}-1})$ 
will become an elliptic pseudodifferential operator $E$ with 
the principal symbol 
$p_E(x,\xi)=\abs{\xi}^{-d/4}$ and different lower order terms 
$O(m^{-\frac{d}{4}-1})$ will only change lower symbols of $E$.}
\end{remark}

\begin{remark}\label{r-gue181023}
In this work, we do not assume that $\ddbar_b$ has closed range in 
$L^2$ on $X$. 
We will show in Section~\ref{s-gue181012} that under the assumption 
that the Levi form is positive on $Y=\mu^{-1}(0)$,
the $G$-invariant Kohn Laplacian has closed range in $L^2$ 
and this is enough to obtain a full asymptotic expansion 
for the $G$-invariant Szeg\H{o} kernel $S_G(x,y)$ 
(see Theorem~\ref{t-gue180505I}). 
In order to show that the $G$-invariant Kohn Laplacian 
has closed range in $L^2$ we need the hypothesis that the dimension
of $X$ is greater than or equal to five.

The asymptotic expansion for 
$S_G$ is also a new result. In~\cite{HsiaoHuang17}, 
Hsiao and Huang obtained an asymptotic expansion for $S_G$ 
under the assumption that $\ddbar_b$ has closed range in $L^2$ on $X$.  
In~\cite{HsiaoHuang17}, Hsiao and Huang established 
"quantization commutes with reduction'' results for CR manifolds 
with $S^1$-action. 
The spaces considered in~\cite{HsiaoHuang17} are finite dimensional. 
To handle the infinite dimensional case, we need to develop a new kind 
of complex Fourier integral calculus.
\end{remark}

This paper is organized as follows. In Section \ref{s:prelim}, 
we review some facts on CR and Sasakian manifolds, 
CR reduction and Szeg\H{o} kernel.
In Section \ref{s-gue180430}, 
we establish asymptotic expansion for the $G$-invariant Szeg\H{o} kernel.
In Section \ref{s-gue180308}, we study the distribution kernel 
of the map $\sigma$ in \eqref{e-gue180308Im} 
and establish Theorem \ref{t-gue180428zy}.
In Section \ref{s-gue200926yyd}, we establish Theorem  \ref{t-180428zy}.
In Section \ref{s:6}, 
we establish a version for almost complex manifolds of 
Theorem \ref{t-gue181211}.
  
{\bf Notation}.  We denote by $\mathbb N=\set{0,1,2,\ldots}$ the set of
natural numbers and set  
$\mathbb N^*=\mathbb N\setminus\set{0}$, $\R_{+}=[0, +\infty[$.
We use standard notations about distributions and Sobolev spaces 
on manifolds, as in \cite{HM14,MM}.
In this paper we will systematically use the correspondence between
operators $A$ and their kernels $A(\cdot,\cdot)=A(x,y)$ via the
Schwartz kernel theorem \cite[Theorems 5.2.1, 5.2.6]{Hor03}, 
\cite[Theorem B.2.7]{MM}. For two distributions $u,v$ we write
$u\equiv v$ if $u-v$ is a smooth function.
For two operators $A$, $B$,
we write $A\equiv B$ if their Schwartz kernels
satisfy $A(\cdot,\cdot)\equiv B(\cdot,\cdot)$, equivalently, if 
$A-B$ is a smoothing operator. 

In the whole paper we will denote by $G$ 
a compact connected Lie group, 
by $\kg$ its Lie algebra, and by $d\mu$ the Haar measure on $G$
with $\int_{G}d\mu (h)=1$.
If $E$ is a complex representation of $G$,
we denote by $E^{G}$ the $G$-trivial component of $E$. 

For an operator $A$ we denote by $\spec A$ the spectrum of $A$.
For a real vector space/bundle $V$ we denote by 
$\C V= V\otimes_{\R}\C$
the associated complexified vector space/bundle. \\

{\bf Acknowledgments}. 
C.-Y. H. is partially supported by Taiwan Ministry of Science and Technology
projects 108-2115-M-001-012- MY5 and 109-2923-M-001-010-MY4.
X.\ M.\ is partially supported by
NNSFC  No.11829102 and
funded through the Institutional Strategy of
the University of Cologne within the German Excellence Initiative.
G.\ M.\   is partially supported by CRC TRR 191.

\section{Preliminaries}\label{s:prelim}

In this Section, we explain some basic facts on CR and Sasakian manifolds, 
 CR reduction and Szeg\H{o} kernel.

\subsection{CR manifolds and CR functions}\label{s-gue180305} 

Let $(X, T^{1,0}X)$ be a compact, connected and orientable CR manifold 
of dimension $2n+1$, $n\geq 1$, where $T^{1,0}X$ is 
a CR structure of $X$, 
that is, $T^{1,0}X$ is a complex vector sub-bundle of rank $n$ of
the complexified tangent bundle 
$\C TX$, satisfying 
\begin{align}\label{eq:2.3}
	T^{1,0}X\cap T^{0,1}X=\{0\}, \, \,
[\mathcal V,\mathcal V]\subset\mathcal V,
\text{ with } T^{0,1}X=\overline{T^{1,0}X}, 
\mathcal V=\cC^\infty(X, T^{1,0}X). 
	\end{align} 
Denote by $T^{*1,0}X$ and $T^{*0,1}X$ the dual bundles of
$T^{1,0}X$ and $T^{0,1}X$, respectively. 
Define the vector bundle of $(0,q)$-forms by 
\begin{align}\label{eq:2.4}
	T^{*0,q}X := \Lambda^q\,T^{*0,1}X. 
\end{align} 
The Levi distribution (or holomorphic
tangent space) $HX$ of the CR manifold $X$ is the real part of 
$T^{1,0}X \oplus T^{0,1}X$,
i.e., the unique sub-bundle $HX$ of $TX$ such that 
\begin{align}\label{eq:2.5}
	\C HX=T^{1,0}X \oplus T^{0,1}X.
	\end{align} 
Let $J:HX\To HX$ be the complex structure given by 
$J(u+\ol u)=iu-i\ol u$, for every $u\in T^{1,0}X$. 
If we extend $J$ complex linearly to $\C HX$ we have
\begin{align}\label{eq:2.6}
	T^{1,0}X \, = \, \left\{ V \in \C HX \,;\, \, JV \, 
=  \,  iV  \right\}.
	\end{align} 
Thus the CR structure $T^{1,0}X$ is determined by
the Levi distribution and we shall also write 
$(X,HX,J)$ to denote the CR manifold $(X,T^{1,0}X)$.

The annihilator $(HX)^0\subset T^*X$
of $HX$ is called the characteristic bundle of the CR manifold. 
Since $X$ is orientable, the characteristic bundle $(HX)^0$
is a trivial real line sub-bundle. We fix a 
global frame of $(HX)^0$, that is,
a real non-vanishing $1$-form
$\omega_0\in\cC^\infty(X,T^*X)$ 
such that $(HX)^0=\R\omega_0$, called characteristic $1$-form.
We have
\begin{align}\label{eq:2.7}
	\langle\,\omega_0(x),u\,\rangle=0, 
\quad	\text{ for any } u\in H_xX,\: x\in X. 
	\end{align} 
Then by \eqref{eq:2.3}, the restriction of $d\omega_0$
on $HX$ is a $(1,1)$-form.
The Levi form of $X$ at $x\in X$ 
is the Hermitian quadratic form on $T^{1,0}_xX$ given 
by    
\begin{equation}\label{eq:2.12}
\cL_x(u,\ol v)=
-\frac{1}{2i}\langle\,d\omega_0(x),u\wedge\ol v\,\rangle
=-\frac{1}{2i} d\omega_{0}(u, \ol v) ,
 \quad \text{ for } u, v\in T^{1,0}_xX.
\end{equation}  
A CR manifold $X$ is said to be  strictly pseudoconvex if 
for every $x\in X$ the Levi form
$\cL_x$ is positive definite (negative definite).
By \eqref{eq:2.12} we see that the definition does not depend 
on the choice of the characteristic $1$-form $\omega_0$.
By a change of sign of $\omega_0$ we can and shall
assume in the sequel that \emph{the Levi form is
positive definite}.
If $X$ is strictly pseudoconvex then $\omega_0$
is a contact form and the Levi distribution $HX$
is a contact structure.

Let $T\in \cC^{\infty}(X, TX)$ be a vector field, called
characteristic vector field, such that
\begin{equation}\label{eq:2.8}
\C TX = T^{1,0}X \oplus T^{0,1}X \oplus \C T .
\end{equation}
and
\begin{equation}\label{eq:2.13}
	i_T\,\omega_0=1. 
	\end{equation}  
Let $g^{\C TX}$ be a Hermitian metric on $\C TX$
such that the decomposition \eqref{eq:2.8} is orthogonal.
For $u,v\in \C TX$ we denote by $\langle u,v\rangle_g$
the inner product given by $g^{\C TX}$ and for 
$u \in \C TX$, we write 
$|u|^2_g := \langle u,  u \rangle_g$. 

The determinant of the Levi form $\cL_x$ at $x\in X$
with respect to $g^{\C TX}$ is defined by
\begin{equation}\label{e:detlev}
\det\cL_x=\lambda_1(x)\ldots\lambda_n(x)\,,
\end{equation}
where $\lambda_1(x), \ldots, \lambda_n(x),$ are the eigenvalues
of $\cL_x$ as Hermitian form on $T^{1,0}_xX$
with respect to the inner product $\langle\,\cdot\,,\cdot\,\rangle_g$ on
$T^{1,0}_xX$.

The Hermitian metric $g^{\C TX}$
on $\C TX$ induces, by duality, a Hermitian metric on 
$\C T^*X$ and also on the bundles of $(0,q)$ forms 
$T^{*0,q}X$, $q=1,2,\ldots, n$. 
We shall also denote the inner product given by these metrics by 
$\langle\,\cdot\,,\cdot\,\rangle_g$. 
The metric $g^{\C TX}$ induces a Riemannian metric $g^{TX}$ on
$TX$ and $g^{TX}$ induces in turn a Riemannian
volume form $dv=dv(x)$ on $X$
and a distance function $d(\cdot,\cdot)$ on $X$. 

The natural global $L^2$ inner product $(\,\cdot\,,\,\cdot\,)$ on 
$\Omega^{0,q}(X)$ 
induced by $dv(x)$ and $\langle\,\cdot\,,\,\cdot\,\rangle_g$ is given by
\begin{equation}\label{e:l2}
(\,u,v\,):=\int_X\langle\,u(x),v(x)\,\rangle_g\, dv(x)\,,
\quad u,v\in\Omega^{0,q}(X)\,.
\end{equation}
We denote by $(L^2_{(0,q)}(X),(\,\cdot\,,\,\cdot\,))$ 
the completion of $\Omega^{0,q}(X)$ with respect to 
$(\,\cdot\,,\,\cdot\,)$ and denote $\norm{\cdot}$ 
the corresponding $L^2$ norm. 
We set $L^2(X):=L^2_{(0,0)}(X)$. 

Let $\ddbar_b:\Omega^{0,q}(X)\To\Omega^{0,q+1}(X)$ 
be the tangential Cauchy-Riemann operators on $X$ which is the 
composition of the exterior differential $d$ and the projection 
$\pi^{0,q+1}:\Lambda^{q+1}(\mathbb CT^{*}X)\to T^{*0,q+1}X$.
We extend $\ddbar_b$ 
to $L^2$ spaces by taking the weak maximal extension: 
\begin{equation}\label{eq:2.14}
\Dom\ddbar_b=\set{u\in L^2_{(0,q)}(X);\, 
\ddbar_bu\in L^2_{(0,q+1)}(X)}\,,\:\:
\Dom\ddbar_b\ni u\longmapsto\ddbar_bu\in L^2_{(0,q+1)}(X).
\end{equation}
The space of $L^2$ CR functions on $X$ is given by
\begin{align}\label{eq:2.15}
H^0_b(X):=\set{u\in L^2(X)=L^2_{(0,0)}(X);\, \ddbar_bu=0}.
\end{align}
The \emph{Szeg\H{o} projection} is the orthogonal projection
\begin{equation}\label{e-gue180512q}
S: (L^2(X), (\,\cdot\,,\,\cdot\,))\To H^0_b(X).
\end{equation} 
The \emph{Szeg\H{o} kernel} $S(x,y)\in \cD'(X\times X)$ 
is the distribution kernel of $S$. 

\subsection{Sasakian manifolds}\label{s2.30} 
We recall here some facts about Sasakian manifolds, cf.\ \cite{BoGa08}. 
Recently, the subject of Sasakian geometry generated a great deal 
of interest due to the study of existence of Sasaki-Einstein metrics, 
and more generally, Sasakian metrics of constant scalar curvature, 
see for example~\cite{CS18}. 

Let $(X,HX,J)$ be an orientable strictly pseudoconvex CR manifold 
of dimension $2n+1$ and let $\omega_0$ be a characteristic $1$-form 
(global frame of the characteristic bundle $(HX)^0$).
The fact that $X$ is strictly pseudoconvex is equivalent to the
condition that $g^{HX}= d\omega_{0}(\cdot, J\cdot)$
defines a $J$-invariant metric on $HX$.
Notice that in this case 
$\omega_{0}\wedge (d \omega_{0})^{n}\neq 0$, 
so $\omega_{0}$ is a contact form on $X$.
The Reeb vector field $R$ associated to the contact form
$\omega_{0}$ is the vector field on $X$ defined by 
\begin{equation}\label{e:reeb}
i_R\,\omega_{0}=1,\quad i_R\,d\omega_{0}=0.
\end{equation}  
We define a Riemannian metric $g_{\omega_{0}}$ 
on $X$ by 
$g_{\omega_{0}}(\cdot,\cdot)=d\omega_{0}(\cdot,J\cdot)+
\omega_{0}(\cdot)\omega_{0}(\cdot)$. 
Associated to the data $(X,\omega_{0},R,J,g_{\omega_{0}})$, 
there is a canonical connection $\nabla$ on $TX$, 
called the Tanaka-Webster connection
(see Tanaka \cite{T} and Webster \cite{W}), 
that is the unique affine connection on $TX$ such that 
\begin{itemize} 
 \item $\nabla g_{\omega_{0}}=0$, $\nabla J=0$, 
 $\nabla\omega_{0}=0$. 
 \item For any $u$, $v$ in the Levi distribution $HX$, 
 the torsion $T_{\nabla}$ of $\nabla$ satisfies  
 $T_{\nabla}(u,v)=d\omega_{0}(u,v) R$ 
 and $T_{\nabla}(R,Ju)=JT_{\nabla}(R,u)$. 
\end{itemize} 
\begin{definition}\label{Sas} 
A strictly pseudoconvex manifold $(X,\omega_{0},R,J,g_{\omega_{0}})$
is called a Sasakian manifold 
if the torsion of its Tanaka-Webster connection in the direction 
of the Reeb vector field vanishes: $T_{\nabla}(R,\cdot)=0$. 
\end{definition} 
It follows from \cite[Proposition 3.1]{BH05} that
this definition is equivalent to the more traditional definition
that the metric cone $(C(X)=\R_+\times X,dr^2+r^2g_{\omega_0})$
is a K\"ahler manifold \cite[Definition 6.1.15]{BoGa08}
or equivalently, $X$ is a normal contact metric
manifold \cite[Definition 6.1.13]{BoGa08}.

Sasakian manifolds can be classified in three categories 
based on the properties of the Reeb foliation $\mathcal{F}_R$ consisting
of the orbits of the Reeb field (see \cite[Definition 6.1.25]{BoGa08}). 
If the orbits of the Reeb field are all closed, then 
the Reeb field $R$ generates a locally free, isometric $S^1$-action 
on $(X,g_{\omega_0})$ and the Reeb foliation is called quasi-regular.
If this $S^1$-action is free, then the Reeb foliation
is said to be regular. 
If $\mathcal{F}_R$ is not quasi-regular, it is said to be irregular.
In this case, $R$ generates a transversal CR $\R$-action 
on $X$. 

If $\mathcal{F}_R$ is quasi-regular, then by the structure theorem
\cite[Theorem 7.1.3]{BoGa08} the quotient
space $M\coloneq X/\mathcal{F}_R=X/S^1$ is a K\"ahler
orbifold and the quotient map $\pi:X\to M$ an orbifold Riemannian 
submersion.
Moreover, $X$ is the total space of a principal $S^1$ bundle 
over $M$ with connection $1$-form $\omega_0$; there exists
an integral K\"ahler form $\omega$ on $M$ such that the curvature 
$d\omega_0$ of $\omega_0$ is the pullback by the quotient map
of $\omega$: $d\omega_0=\pi^*\omega$.  
If $\mathcal{F}_R$ is regular, then $(M,\omega)$
is a K\"ahler manifold and the assertions above are the
content of the Boothby-Wang theorem \cite{BW:58}. 

\begin{remark}\label{eq:t2.1a} For a Sasakian manifold
$(X,\omega_{0},R,J,g_{\omega_{0}})$, from Definition \ref{Sas},
\begin{align}\label{eq:2.10a}
d \omega_{0}(R,u)=0 \quad \text{for any } u\in HX.
\end{align}
By \eqref{e:reeb} and \eqref{eq:2.10a}, 
$[R, u]\in \cC^{\infty}(X, HX)$ for any $u\in \cC^{\infty}(X, HX)$,
i.e., the flow associated with $R$ preserves $HX$.
Moreover, by \cite[Corollary 6.5.11, Definition 6.5.13]{BoGa08}
it follows that $\mL_{R}J=0$, where $\mL$ denotes the Lie derivative.
Combining with \eqref{e:reeb} and \eqref{eq:2.10a}, 
we get $\mL_{R}g_{\omega_{0}}=0$, i.e., the  
Reeb vector field is a Killing vector field 
on $(X, g_{\omega_{0}})$. 
\end{remark}
Related to Theorem \ref{t-gue181211} and
motivated  by the structure of quasi-regular Sasakian manifolds
let us consider now the case of a circle bundle associated
to a Hermitian holomorphic line bundle.
Let $(L,h^L)$ be a Hermitian holomorphic line bundle over a connected 
compact complex manifold $(M,J)$.
Let $h^{L^*}$ be the Hermitian metric 
on $L^*$ induced by $h^L$. 
Let 
\begin{equation}\label{eq:GraTube}
X:=\set{v\in L^*;\, \abs{v}^2_{h^{L^*}}=1}
\end{equation}
be the circle bundle of $L^*$ (Grauert tube); 
it is isomorphic to the $S^1$ principal bundle associated to $L$.
Since $X$ is a hypersurface in the complex manifold $L^*$,
it a has a CR structure inherited from the complex structure of $L^*$
by setting $T^{1,0}X= TX\cap T^{1,0} L^{*}$.

In this situation,  $S^{1}$ acts on $X$ by fiberwise multiplication,
denoted $(x,e^{i\theta})\mapsto xe^{i\theta}$.
A point $x\in X$ is a pair $x=(p,\lambda)$, where $\lambda$ is a linear
functional on $L_p$, the $S^1$ action is 
$xe^{i\theta}=(p,\lambda)e^{i\theta}=(p,e^{i\theta}\lambda)$.

On $X$ we have a globally defined vector field $\partial_\theta$,
the generator of the $S^1$ action. The span of $\partial_\theta$
defines a rank one subbundle $T^VX\cong TS^1\subset TX$, 
the vertical subbundle of the fibration $\pi:X\to M$. 
Moreover \eqref{eq:2.8} holds for $T=\partial_\theta$.

For $m\in\Z$ the space $\cC^\infty(X,L^m)$ of 
smooth sections of $L^m$ can be 
identified to the space $m$-equivariant smooth functions
\[\cC^\infty (X)_m=\{ f\in \cC^\infty(X,\C): 
f(xe ^{i\theta})=e ^{im\theta} f(x),\, \, {\rm for}\,\,
e ^{i\theta}\in S^1,\, x\in X \}.
\]
by 
\begin{equation}\label{e:mequiv1}
\cC^\infty(X,L^m)\ni s\mapsto f\in\cC^\infty (X)_m,\quad 
f(x)=f(p,\lambda)=\lambda^{\otimes m}(s(p)),
\end{equation} 
where $\lambda^{m}=\lambda^{\otimes m}$ for $m\geq0$
and $\lambda^{m}=(\lambda^{-1})^{\otimes(-m)}$ for $m<0$.
Through the identification \eqref{e:mequiv1} 
holomorphic sections correspond to CR functions:
\begin{equation}\label{e:mequiv2}
H^0(X,L^m)\cong H^0_{b,m}(X):=\{f\in\cC^\infty (X)_m:
\overline\partial_b f=0 \}.
\end{equation} 
We construct now a Riemannian metric on $X$. 
Let $g^{TM}$ be a $J$-invariant metric on $M$.
The Chern connection $\nabla ^L$ on  $L$ induces a connection 
on the $S^1$-principal bundle $\pi:X\to M$, 
and let $T^H X \subset TX$ be the corresponding horizontal bundle.
Let $g^{TX}= \pi^*g^{TM}\oplus \frac{d\theta^2}{2\pi}$
be the metric on 
$TX= T^HX\oplus T S^1$, with $d\theta^2$ 
the standard metric on $S^1= \R/2\pi\Z$.

Pertaining to $g^{TX}$ we construct the $L^2$ inner product 
$(\cdot,\cdot)_X$ given by \eqref{e:l2} on $X$. 
The metric $g^{TM}$ induces a Riemannian volume
form $dv_M$ on $M$, which together with the fiber metric $h^{L^{m}}$
gives rise to an $L^2$ inner product $(\cdot,\cdot)_m$ on
$\cC^\infty(X,L^m)$. Then the isomorphism \eqref{e:mequiv1}
becomes an isometry 
$(\cC^\infty(M,L^m),(\cdot,\cdot)_m)\cong
(\cC^\infty (X)_m,(\cdot,\cdot)_X)$
and accordingly an isometry 
$L^2(M,L^m)\cong L^2(X)_m$, where the latter space
is the completion of $(\cC^\infty (X)_m,(\cdot,\cdot)_X)$.
Moreover, \eqref{e:mequiv1} induces an isometry
\begin{equation}\label{e:mequiv3}
(H^0(M,L^m),(\cdot,\cdot)_m)\cong(H^0_{b,m}(X),(\cdot,\cdot)_X).
\end{equation} 
The $S^1$-action gives rise to a Fourier decomposition
$L^2(X)\cong\widehat{\bigoplus}_{m\in\Z}\,L^2(X)_m$
and this induces the following decomposition at the level
of CR functions:
\begin{equation}\label{eq:2.17}
H^0_{b}(X)\cong\widehat{\bigoplus}_{m\in\N} H^0_{b,m}(X)
\cong\widehat{\bigoplus}_{m\in\N} H^0(M,L^m).
\end{equation}
Let $\omega_0$ be the connection $1$-form on $X$
associated to the Chern connection $\nabla^L$. 
Then $\omega_0(\partial_\theta)=1$, thus \eqref{eq:2.8} and
\eqref{eq:2.13} are fullfiled and $T=\partial_\theta$
is a characteristic vector field on $X$ and $\omega_0$
is a characteristic $1$-form for the CR structure on $X$. Moreover,
\begin{equation}\label{eq:curvx}
d\omega_0=\pi^*(iR^L),
\end{equation}
where $R^L$ is the curvature of $\nabla^L$. On account of
\eqref{eq:2.12} $X$ is strictly pseudoconvex at $x\in X$ if and only if 
$(L,h^L)$ is positive at $\pi(x)\in M$.
In particular, if $(L,h^L)$ is positive on $M$, $X$ is a strictly pseudoconvex
CR manifolds, $\omega_0$ is a contact form and $\partial_\theta$ is the
associated Reeb vector field; $X$ is a regular Sasakian manifold.
Note also that in this case $H^0(X,L^m)=0$ for $m<0$
by the Kodaira vanishing
theorem, so the decomposition \eqref{eq:2.17} becomes
\begin{align}\label{eq:2.17a}
H^0_{b}(X)\cong\widehat{\bigoplus}_{m\in\N} H^0_{b,m}(X)
\cong\widehat{\bigoplus}_{m\in\N} H^0(M,L^m).
\end{align}
Note further that $X$ is (weakly) pseudoconvex, that is, the 
Levi form is positive semidefinite on $X$ 
if and only if the curvature $iR^L$ is semi-positive on $M$.
Moreover, $X$ has finite type if and only if $R^L$ vanishes
to finite order at any point of $M$, 
cf.\ \cite{HS20}, \cite[Proposition 11]{MS18}.

\subsection{CR reduction}\label{s2.3} 
We refer to \cite{Al89,BoGa08} for the fundamentals 
of contact reduction and examples.
Let $(X, HX, J)$ be a compact connected and orientable 
CR manifold of dimension $2n+1$, $n\geq1$,
and let $\omega_0$ be a characteristic $1$-form. 

Let $G$ be a $d$-dimensional connected compact 
Lie group with Lie algebra $\mathfrak{g}$.
We assume that $G$ acts smoothly on $X$ and 
that the $G$-action preserves $J$ and $\omega_0$.

For any $\xi \in \kg$, we denote 
$\xi_X(x)=\frac{\partial}{\partial t}\exp(-t\xi)x\big|_{t=0}$ 
the vector field on $X$ induced by $\xi$.
	For $x\in X$, set
	\begin{equation}\label{e:g}
\underline{\kg}_{x}={\rm Span\,}\big\{\xi_X(x);\, \xi\in\kg\big \}.
\end{equation} 

\begin{definition}\label{d-gue170410}
The moment map associated to the characteristic $1$-form $\omega_0$ 
is the map $\mu:X \to \mathfrak{g}^*$ defined by
\begin{equation}\label{E:cmpm}
\langle \mu(x), \xi \rangle = \omega_0(\xi_X(x))\,,\quad
x \in X, \:\:\xi \in \mathfrak{g}\,.
\end{equation}
\end{definition}

\noindent
The moment map is $G$-equivariant, i.e.,
for $x\in X$, $h\in G$, we have
\begin{align}\label{eq:2.20}
	\mu(h.x)= \Ad_{h}^{*}\mu(x).
\end{align}
Relation \eqref{eq:2.20} implies that $G$ acts on $\mu^{-1}(0)$.
In fact,  for any $\xi\in \kg$, we have 
\begin{multline}\label{eq:2.21}
\langle \mu(h.x), \xi \rangle = \omega_0(\xi_X(h.x)))
=\omega_0( dh (\Ad_{h^{-1}}\xi)_X))_{h.x} \\
= (h^{*}\omega_{0})((\Ad_{h^{-1}}\xi)_X)_{x}
= \omega_{0}((\Ad_{h^{-1}}\xi)_X)_{x}
=\langle \mu(x), \Ad_{h^{-1}}\xi \rangle 
=\langle\Ad_{h}^{*}\mu(x), \xi \rangle .
\end{multline}

\begin{lemma}\label{eq:t2.4} If $G$ acts freely on $\mu^{-1}(0)$ and 
	the Levi form  is positive on $\mu^{-1}(0)$, then $0$ is a 
	regular value of $\mu$.
\end{lemma}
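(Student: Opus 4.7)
The plan is to show that for every $x\in \mu^{-1}(0)$ the transpose map $(d\mu_x)^{t}\colon \kg\to T_x^{*}X$, $\xi\mapsto d\mu_x^{\xi}$, with $\mu^{\xi}(x):=\langle \mu(x),\xi\rangle=\omega_0(\xi_X(x))$, is injective; this is equivalent to the surjectivity of $d\mu_x$ needed for regularity.

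First I would use Cartan's magic formula together with the $G$-invariance of $\omega_0$. Since $\mu^{\xi}=i_{\xi_X}\omega_0$ and $\mL_{\xi_X}\omega_0=0$, one gets
\begin{equation*}
d\mu^{\xi}=-\,i_{\xi_X}\,d\omega_0,
\qquad\text{so}\qquad d\mu_x^{\xi}(v)=-d\omega_0(\xi_X(x),v)\text{ for all }v\in T_xX.
\end{equation*}
Next, fix $x\in\mu^{-1}(0)$ and a nonzero $\xi\in\kg$. By the free action of $G$ on $\mu^{-1}(0)$, the infinitesimal generator map $\xi\mapsto\xi_X(x)$ is injective, so $\xi_X(x)\neq 0$. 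Because $\langle\mu(x),\xi\rangle=\omega_0(\xi_X(x))=0$ and $HX=\ker\omega_0$, we have $\xi_X(x)\in H_xX$.

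Finally I would evaluate $d\mu_x^{\xi}$ on the vector $J\xi_X(x)\in H_xX$ (well defined because $\xi_X(x)\in H_xX$). This gives
\begin{equation*}
d\mu_x^{\xi}\bigl(J\xi_X(x)\bigr)=-d\omega_0\bigl(\xi_X(x),J\xi_X(x)\bigr).
\end{equation*}
The right-hand side is exactly the value of the bilinear form $-d\omega_0(\cdot,J\cdot)$ on the pair $(\xi_X(x),\xi_X(x))$; by the definition of the Levi form in \eqref{eq:2.12} and the assumption that it is positive definite on $\mu^{-1}(0)$, this form is a positive definite inner product on $H_xX$ (cf.\ the metric $g^{HX}=d\omega_0(\cdot,J\cdot)$ of Section \ref{s2.30}). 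Hence $d\mu_x^{\xi}(J\xi_X(x))\neq 0$, so $d\mu_x^{\xi}\neq 0$ whenever $\xi\neq 0$, proving injectivity of $(d\mu_x)^{t}$ and thus surjectivity of $d\mu_x$.

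The argument is essentially a one-line computation once the two inputs are assembled, so there is no substantial obstacle; the only point that needs care is verifying that the decomposition $\xi_X(x)\in HX$ really holds, which is why the hypothesis $x\in\mu^{-1}(0)$ is essential—without it, $\xi_X(x)$ would have a $T$-component and $d\omega_0(\xi_X(x),J\xi_X(x))$ would not be controlled by the Levi form alone.
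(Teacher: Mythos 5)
Your proof is correct and follows essentially the same route as the paper: show $\xi_X(x)\in H_xX$ on $\mu^{-1}(0)$, identify $d\mu^\xi$ with $-i_{\xi_X}d\omega_0$ (the paper does this via the three-term formula for $d$ of a $1$-form using $G$-equivariance of $HX$, you via Cartan's formula using $\mL_{\xi_X}\omega_0=0$ — equivalent inputs), and conclude from the nondegeneracy of $d\omega_0$ on $H_xX$, where you simply make the paper's contradiction argument explicit by exhibiting the witness $J\xi_X(x)$.
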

\begin{proof} Observe first that since $\omega_{0}$ is $G$-invariant, 
\eqref{eq:2.7} yields that $HX$ is an $G$-equivariant sub-bundle of 
	$TX$, thus
	\begin{align}\label{eq:2.22}
[\xi_{X}, U]\in \cC^{\infty}(X, HX)
\text{ for any } U\in \cC^{\infty}(X, HX), \xi\in \kg. 
\end{align}	
Now \eqref{eq:2.7} and \eqref{E:cmpm} entail
	\begin{align}\label{eq:2.23}
\xi_{X}(x)\in  HX \text{ if }  x\in \mu^{-1}(0). 
\end{align}	
Let $U_{x}\in H_{x}X$ that
we extend to a section $U$ of $HX$ near $x$.
Then \eqref{eq:2.7} and \eqref{eq:2.22} yield
	\begin{align}\label{eq:2.24}
d\omega_{0} (U,\xi_{X})_{x}= U(\omega_{0}(\xi_{X}))
- \xi_{X}(\omega_{0}(U)) - \omega_{0}([U, \xi_{X}])
= U_{x}\langle \mu, \xi \rangle 
= \langle U(\mu)_{x}, \xi \rangle .
	\end{align}	
If $d\mu_{y}: HX\to \kg^{*}$
were not surjective for some $y\in \mu^{-1}(0)$, 
there would exist $\xi\in \kg$ such that 
$\langle Y(\mu)_{y}, \xi \rangle=0$ for any $Y\in H_{y}X$.
This is a contradiction since $d\omega_{0}$ is nondegenerate on 
$H_{y}X$ and $0\neq \xi_{X,y}\in H_{y}X$ by \eqref{eq:2.23}.
Thus $d\mu_{y}: HX\to \kg^{*}$ is surjective for $y\in \mu^{-1}(0)$.
	\end{proof}
	
Set as in \eqref{eq:2.25} $X_{G}= Y/G$ with $Y = \mu^{-1}(0)$.
Let $\iota: Y\to X$ be the natural injection
and let $\pi: Y\to X_{G}=Y/G$ be the natural projection.

\begin{theorem}\label{eq:t2.5}
If $G$ acts freely on $Y=\mu^{-1}(0)$ and the Levi form  is positive 
on $\mu^{-1}(0)$, then the reduced space $X_{G}= Y/G$ is
a strictly pseudoconvex manifold with contact form
$\omega_{0,G}$ satisfying $\iota^*\omega_0=\pi^*\omega_{0,G}$.	
Moreover, we can choose the characteristic vector field $T$
(cf.\ \eqref{eq:2.8}, \eqref{eq:2.13}) such that  
$T|_{Y}\in \cC^{\infty}(Y, TY)$
and $T$ is $G$-invariant.	
		\end{theorem}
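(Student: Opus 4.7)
The plan is to carry out a CR analogue of Marsden--Weinstein reduction, using the compatible Levi metric $g=d\omega_0(\cdot,J\cdot)$ on $HX$. The smooth structure of $X_G=Y/G$ is standard: by Lemma \ref{eq:t2.4}, $Y$ is a submanifold, and $G$ acts freely and properly on it.

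First I would establish the relevant linear algebra at $x\in Y$. Cartan's formula applied to $\mL_{\xi_X}\omega_0=0$ gives $i_{\xi_X}d\omega_0=-d\langle\mu,\xi\rangle$, so together with $G$-equivariance of $\mu$ and $\mu(x)=0$,
\[
d\omega_0(\xi_X,\eta_X)_x=-\eta_X\langle\mu,\xi\rangle_x=0\quad\text{for all }\xi,\eta\in\kg.
\]
Combined with \eqref{eq:2.23}, this shows $\underline{\kg}_x\subset H_xX$ is isotropic for $d\omega_0|_{H_xX}$. Since the Levi form is positive, $g$ is positive definite on $H_xX$ and satisfies $d\omega_0(U,JV)=g(U,V)$; hence $\underline{\kg}_x$ and $J\underline{\kg}_x$ are $g$-orthogonal $d$-dimensional subspaces with trivial intersection. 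Set
\[
(H_xY)_{\textrm{hor}}:=(\underline{\kg}_x\oplus J\underline{\kg}_x)^{\perp_g}\subset H_xX,
\]
a $J$-invariant, $G$-invariant subspace of real dimension $2(n-d)$, contained in $T_xY\cap H_xX$ because $d\omega_0(U,\xi_X)=-g(U,J\xi_X)$ vanishes for $U\in (J\underline{\kg}_x)^{\perp_g}$.

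The vertical bundle of the principal $G$-bundle $\pi:Y\to X_G$ is exactly $\underline{\kg}$, so the $G$-invariant distribution $(HY)_{\textrm{hor}}$ descends via $\pi_*$ to a $J$-invariant subbundle $HX_G\subset TX_G$ of real rank $2(n-d)$, which determines $T^{1,0}X_G$ via \eqref{eq:2.6}. The $G$-invariance of $\omega_0$ and $i_{\xi_X}(\iota^*\omega_0)=\langle\mu,\xi\rangle|_Y=0$ show that $\iota^*\omega_0$ is basic for $\pi$, so it descends to a unique $\omega_{0,G}\in\cC^\infty(X_G,T^*X_G)$ with $\iota^*\omega_0=\pi^*\omega_{0,G}$. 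Pulling back yields $\pi^*d\omega_{0,G}=\iota^*d\omega_0$, so $d\omega_{0,G}|_{HX_G}$ corresponds to $d\omega_0|_{(HY)_{\textrm{hor}}}$, nondegenerate because $(HY)_{\textrm{hor}}$ is $J$-invariant and $g$ is positive definite; moreover $\omega_{0,G}$ is itself nonvanishing, as $\omega_0|_{T_xY}$ has rank one (its kernel $H_xY=(J\underline{\kg}_x)^{\perp_g}$ has codimension one in $T_xY$). Thus $X_G$ is strictly pseudoconvex with contact form $\omega_{0,G}$.

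For the characteristic vector field, start with any $T_0$ on $X$ satisfying \eqref{eq:2.8}, \eqref{eq:2.13} and average it by the Haar measure to obtain a $G$-invariant $T_1$ still satisfying $i_{T_1}\omega_0=1$ (using $G$-invariance of $\omega_0$). The tangency $T|_Y\in TY$ translates via $d\langle\mu,\xi\rangle=-i_{\xi_X}d\omega_0$ into $d\omega_0(\xi_X,T)|_Y=0$ for all $\xi$. Since $d\omega_0(\xi_X,J\eta_X)=g(\xi_X,\eta_X)$ and $g|_{\underline{\kg}_x}$ is positive definite, the map $J\underline{\kg}_x\to\underline{\kg}_x^*$, $J\eta_X\mapsto d\omega_0(\cdot,J\eta_X)|_{\underline{\kg}_x}$, is an isomorphism. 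This produces a unique smooth $G$-invariant section $U$ of $J\underline{\kg}$ over $Y$ with $d\omega_0(\xi_X,U)=-d\omega_0(\xi_X,T_1)$ on $Y$; I extend $U$ $G$-invariantly to a tubular neighborhood of $Y$ as a section of $HX$ via a $G$-invariant tubular neighborhood theorem and cut off by a $G$-invariant bump function, then set $T:=T_1+U$. Then $T$ is $G$-invariant, $i_T\omega_0=1$ (since $U\in HX$), transversal to $HX$ so \eqref{eq:2.8} holds, and $T|_Y\in\cC^\infty(Y,TY)$. This last step, which hinges on the nondegeneracy of the $g$-pairing between $\underline{\kg}_x$ and $J\underline{\kg}_x$ coming from Levi positivity at $\mu^{-1}(0)$, is the main subtlety of the proof; everything else follows directly from that positivity and standard reduction arguments.
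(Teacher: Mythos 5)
Your proposal follows essentially the same Marsden--Weinstein-type scheme as the paper: your $(HY)_{\rm hor}=(\underline{\kg}\oplus J\underline{\kg})^{\perp_g}$ is exactly the paper's $T^HY$ from \eqref{eq:2.29} (since $TY\cap HX$ is the $g$-orthocomplement of $J\underline{\kg}$ in $HX$ by \eqref{eq:2.24}), and the descent of $\iota^*\omega_0$ and the positivity argument are identical. Your construction of the characteristic vector field is in fact more explicit than the paper's, which only invokes the isomorphism \eqref{eq:2.31}: solving $d\omega_0(\xi_X,U)=-d\omega_0(\xi_X,T_1)$ for $U\in J\underline{\kg}$ via the nondegenerate pairing $d\omega_0(\xi_X,J\eta_X)=g(\xi_X,\eta_X)$ is correct, and $T=T_1+U$ satisfies \eqref{eq:2.8}, \eqref{eq:2.13} since $U\in HX$.

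There is, however, one genuine omission: you never verify that the induced almost CR structure on $X_G$ is integrable, i.e.\ that $T^{1,0}X_G$ defined through $J_G$ satisfies the bracket condition $[\mathcal V,\mathcal V]\subset\mathcal V$ of \eqref{eq:2.3}. This is part of the assertion that $X_G$ is a strictly pseudoconvex \emph{CR} manifold (the paper uses $\ddbar_{b,X_G}$ on $X_G$ afterwards), and it is not automatic: the horizontal distribution $(HY)_{\rm hor}$ is not itself closed under Lie brackets. The paper handles this in \eqref{eq:2.36}--\eqref{eq:2.38}: one first identifies $T^{1,0}X\cap\C TY=\set{v-\sqrt{-1}Jv;\ v\in T^HY}$ using the orthogonal decomposition \eqref{eq:2.29} and the identity $JT^HY=T^HY=(TY\cap HX)\cap J(TY\cap HX)$ of \eqref{eq:2.30}; then horizontal lifts of sections of $T^{1,0}X_G$ are sections of $T^{1,0}X\cap\C TY$, whose bracket again lies in $T^{1,0}X\cap\C TY$ by integrability of $T^{1,0}X$ and tangency to $Y$, and pushing forward by $d\pi$ closes the argument. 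You should add this step; everything else in your proposal is sound.
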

\begin{proof}    By Lemma \ref{eq:t2.4}, 
$\mu^{-1}(0)$ is a smooth manifold. Since $G$ acts freely on $Y$, 	
	$X_{G}$ is a compact manifold.
	The positivity of the Levi form on $\mu^{-1}(0)$  means that
	\begin{align}\label{eq:2.27}
		g^{HX}= d\omega_{0}(\cdot, J\cdot)
	\end{align}		
is a $J$-invariant and $G$-equivariant metric on $HX$ 
on a neighborhood of $Y=\mu^{-1}(0)$.

Since $G$ acts freely on $Y$, the vector spaces $\underline{\kg}_{x}$ 
defined in \eqref{e:g} form a vector bundle $\underline{\kg}$
near $\mu^{-1}(0)$. We denote $\underline{\kg}_{Y}= 
\underline{\kg}|_{Y}$. 
Then $\underline{\kg}_{Y} \subset TY\cap HX$ by \eqref{eq:2.23}.

For $x\in \mu^{-1}(0)$, by \eqref{eq:2.23}, \eqref{eq:2.24}, 
and the fact that $d\omega_{0}(\cdot, J\cdot)$ is a metric on $H_{x}X$ 
we have that
$d\mu|_{TY}=0$ and $d\mu|_{J\underline{\kg}_{x}}\to \kg^{*}$
is surjective.
Since $\dim Y+\dim\underline{\kg}=\dim TX$, we have
	\begin{align}\label{eq:2.28}
J\underline{\kg}|_{Y} \oplus TY = TX|_{Y}.
	\end{align}		
From \eqref{eq:2.28} and $J\underline{\kg}|_{Y}\subset  HX$,
we know $\omega_{0}(TY)\neq 0$. Thus $TY\cap HX$ is a codimension
$1$ sub-bundle of $TY$, and 
\begin{align}\label{eq:2.31}	
\frac{TY}{TY\cap HX}= \frac{TX}{HX}\Big|_{Y}.
\end{align}	
From \eqref{eq:2.31}, we can choose the vector field $T$
in \eqref{eq:2.8} such that  $T|_{Y}\in \cC^{\infty}(Y, TY)$
and $T$ is $G$-invariant.
	
Let $T^{H}Y$ be the orthogonal complement of $\underline{\kg}_{Y}$
in $TY\cap HX$ with respect to $g^{HX}$. 
By \eqref{eq:2.24}, \eqref{eq:2.27}
and $TY\cap HX$ is a codimension $1$ sub-bundle of $TY$,  
we have the $G$-equivariant
orthogonal decomposition on $Y$,
	\begin{align}\label{eq:2.29}
TY\cap HX = T^{H}Y\oplus 	\underline{\kg}_{Y},
\quad HX|_{Y} =  T^{H}Y\oplus 	\underline{\kg}_{Y}
\oplus J\underline{\kg}|_{Y} .
	\end{align}	
Thus 	from \eqref{eq:2.29} and the metric $g^{HX}$ on  $HX|_{Y}$
is $J$-invariant, we get
\begin{align}\label{eq:2.30}	
JT^{H}Y = T^{H}Y  = (TY\cap HX)\cap J  (TY\cap HX). 
	\end{align}		
By \eqref{E:cmpm} $\iota^{*} \omega_{0}$ is 
a $G$-invariant horizontal $1$-form on $Y$, thus 
there exists a unique $1$-form $\omega_{0,G}\in \Omega^{1}(X_{G})$
such that
	\begin{align}\label{eq:2.32}	
\iota^{*} \omega_{0}= \pi^{*} \omega_{0,G}.
	\end{align}	
We now define the Levi distribution on $X_G$ by
\begin{align}\label{eq:2.33}	
HX_{G}\coloneq \ker  \omega_{0,G}.
	\end{align}	
From \eqref{eq:2.31}, \eqref{eq:2.29},
$d\pi: \R T\oplus T^{H}Y\to TX_{G}$ is bijective, and we get 
the isomorphism $\R T\oplus T^{H}Y\simeq \pi^{*} TX_{G}$.
Thus	$d\pi$ maps $T^{H}Y$ onto $HX_{G}$
and this gives an isomorphism 
$T^{H}Y\simeq \pi^{*} 	HX_{G}$.
Thus for $U\in H_{y}X_{G}$, we take $x\in \pi^{-1}(y)$
and $U^{H}\in T^{H}Y$ the lift of $U$, 
then by  \eqref{eq:2.30}, we define $J_{G}\in \End(HX_{G})$ by 
\begin{align}\label{eq:2.34}	
(J_{G}U)^{H}  = J U^{H}.
	\end{align}	
From \eqref{eq:2.32}, we have 
\begin{align}\label{eq:2.35}	
\iota^{*} d\omega_{0}= \pi^{*} d\omega_{0,G}.
	\end{align}
Thus from \eqref{eq:2.30}, $\iota^{*}d\omega_{0}(\cdot,J\cdot)$
is positive and $G$-invariant on $T^{H}Y$ implies
that $d\omega_{0,G}(\cdot,J_{G}\cdot)$ is positive 
and $J_{G}$-invariant on $HX_{G}$.
We verify now that 
\begin{equation}\label{e-gue200212ycd}
T^{1,0}X_{G}=\set{u - \sqrt{-1}J_{G}u;\, u\in HX_{G}}.
\end{equation}
defines a CR structure on $X$.  
For $U,V\in \cC^{\infty}(X_{G}, HX_{G})$, 
from \eqref{eq:2.34}, 
\begin{align}\label{eq:2.36}	
	(U - \sqrt{-1}J_{G}U)^{H}= U^{H}- \sqrt{-1} J U^{H}
\in \cC^{\infty}(Y, T^{1,0}X\cap \C TY), 
		\end{align}
		thus by \eqref{eq:2.3}, 
$	[U^{H}- \sqrt{-1} J U^{H}, V^{H}- \sqrt{-1} J V^{H}]
	\in \cC^{\infty}(Y, T^{1,0}X\cap \C TY)$.
	By  \eqref{eq:2.29},  
	$T^{1,0}X\cap \C TY= \{v - \sqrt{-1}J v; v\in T^{H}Y\}$.
	Thus there exists $W\in \cC^{\infty}(X_{G}, HX_{G})$
	such that
	\begin{align}\label{eq:2.37}	
	\Big[U^{H}- \sqrt{-1} J U^{H}, V^{H}- \sqrt{-1} J V^{H}\Big]
=W^{H}- \sqrt{-1} J W^{H} .
		\end{align}
From \eqref{eq:2.36}, \eqref{eq:2.37},  we obtain
\begin{equation}\label{eq:2.38}
\begin{split}	
 \Big[U - \sqrt{-1}J_{G}U, V - \sqrt{-1}J_{G}V\Big]
&= d\pi \Big[U^{H}- \sqrt{-1} J U^{H}, V^{H}- \sqrt{-1} J V^{H}\Big]\\
&= W- \sqrt{-1} J_{G} W.
\end{split}
\end{equation}		
i.e., $[\cC^{\infty}(X_{G},T^{1,0}X_{G}), 
\cC^{\infty}(X_{G},T^{1,0}X_{G})]\subset 
\cC^{\infty}(X_{G},T^{1,0}X_{G})$. 
Let us finally note that \eqref{eq:2.31} shows that 
we can choose the characteristic vector field $T$
(cf.\ \eqref{eq:2.8}, \eqref{eq:2.13}) such that 
$T|_{Y}\in \cC^{\infty}(Y, TY)$ and $T$ is $G$-invariant.
The proof of Theorem \ref{eq:t2.5} is completed.
\end{proof}
	
In the rest of this paper we will work under Assumption 
\ref{a-gue170410I}.

\begin{lemma}\label{l-gue181212}
Under Assumption 
\ref{a-gue170410I} there is a $G$-invariant Hermitian metric 
$g=g^{\C TX}$ on $\C TX$ so that 
\begin{itemize}
\item[(i)] $T^{1,0}X$ is orthogonal to $T^{0,1}X$,
\item[(ii)] $\underline{\kg }$ is orthogonal to $HY\cap JHY$ 
at every point of $Y$,
\item[(iii)] $\langle\,T,T\,\rangle_g=1$,
\item[(iv)] $T$ is orthogonal to $T^{1,0}X\oplus T^{0,1}X$,
\end{itemize}
where on $Y$, $HY:=HX\cap TY$. 
\end{lemma}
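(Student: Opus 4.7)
The plan is to construct $g$ in stages, starting from a carefully chosen real inner product on $HX$ and then extending via the orthogonal decomposition $\C TX = T^{1,0}X \oplus T^{0,1}X \oplus \C T$. The guiding idea is that the Levi form $g^{HX}(\cdot,\cdot) := d\omega_0(\cdot,J\cdot)$ already supplies property (ii) on $Y$ essentially for free, so I use it there and glue in an auxiliary metric elsewhere.

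First I produce a $G$-invariant, $J$-invariant real inner product $h$ on $HX$ that coincides with $g^{HX}$ on a neighborhood of $Y$. Under Assumption \ref{a-gue170410I}, $g^{HX}$ is positive definite on a $G$-invariant open neighborhood $U$ of $Y$; it is $G$-invariant because $G$ preserves $\omega_0$ and $J$, and it is $J$-invariant by antisymmetry of $d\omega_0$ combined with $J^2 = -\mathrm{Id}$. Away from $U$ I pick any $G$-invariant and $J$-invariant Riemannian metric $h_0$ on $HX$ (average an arbitrary Riemannian metric first over $J$, then over $G$), and I interpolate using a $G$-invariant cutoff $\chi$ supported in $U$ with $\chi \equiv 1$ on a smaller neighborhood of $Y$: $h := \chi\, g^{HX} + (1-\chi)\, h_0$. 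I then complex-bilinearly extend $h$ to $h_{\C}$ on $\C HX$ and set $\langle u, v\rangle := h_{\C}(u, \ol v)$. A direct computation using $h(J\cdot, J\cdot) = h(\cdot,\cdot)$ shows that this Hermitian form satisfies $T^{1,0}X \perp T^{0,1}X$, which is (i). Finally I extend to $\C TX = \C HX \oplus \C T$ by taking the characteristic vector field $T$ from Theorem \ref{eq:t2.5} (so that $T$ is $G$-invariant and $T|_Y \in \cC^\infty(Y,TY)$) to be a unit vector orthogonal to $\C HX$; this yields (iii) and (iv), and $G$-invariance of the resulting metric $g$ follows from $G$-invariance of each building block.

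Property (ii) is the payoff of this setup. On $Y$ the metric $h$ coincides with $g^{HX}$. The decomposition \eqref{eq:2.29} is defined so that $T^HY$ is $g^{HX}$-orthogonal to $\underline{\kg}_Y$; moreover, using $JT^HY = T^HY$ from \eqref{eq:2.30} together with $J$-invariance of $g^{HX}$, for $u \in T^HY$ and $w \in \underline{\kg}_Y$ we get $g^{HX}(u, Jw) = -g^{HX}(Ju, w) = 0$, so $T^HY$ is also $g^{HX}$-orthogonal to $J\underline{\kg}|_Y$. Since $T^HY = HY \cap JHY$ by \eqref{eq:2.30} and since the Hermitian inner product restricted to real vectors reproduces $h$, property (ii) follows. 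The only mildly delicate point is arranging the averaging and cutoff so that the final $h$ is simultaneously $J$-invariant, $G$-invariant, and equal to $g^{HX}$ on $Y$, but since $G$ commutes with $J$ and $G$-averaging preserves $J$-invariance, this is routine.
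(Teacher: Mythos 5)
Your proposal is correct and follows essentially the same route as the paper: take $g^{HX}=d\omega_0(\cdot,J\cdot)$ on a $G$-invariant neighborhood of $Y$ where the Levi form is positive, extend it to a $J$-invariant, $G$-invariant metric on $HX$ by a cutoff/partition-of-unity argument, and form the direct sum with $\R T$ using the $G$-invariant characteristic vector field from Theorem \ref{eq:t2.5}; property (ii) then comes from the orthogonal decomposition \eqref{eq:2.29}--\eqref{eq:2.30} exactly as you say. Your write-up merely spells out in more detail the Hermitian extension giving (i) and the orthogonality check for (ii), both of which are correct.
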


\begin{proof} This follows from the proof of Theorem \ref{eq:t2.5}.	
Let $U$ be a $G$-invariant  neighborhood of $Y$ so that the Levi form
is positive definite on $U$.
Then the metric $g^{HX}= d\omega_{0}(\cdot, J\cdot)$ 
is a $J$-invariant and $G$-equivariant metric on $HX$ and 
we have the orthogonal decomposition \eqref{eq:2.29} on $Y$.
Now we extend the metric $g^{HX}$ from $U$ to $X$
as a $J$-invariant and $G$-equivariant metric on $HX$ by a
partition of unity argument. 
Thus we can take $g^{TX}$ on $TX= \R T\oplus HX$
	as the direct sum 	metric on  $(HX, g^{HX})$ and    
	$(\R T, \langle T,T\rangle=1)$.
\end{proof}
\begin{convention}\label{conv_met}
From now on we fix a $G$-invariant Hermitian metric $g=g^{\C TX}$
on $\C TX$ so that (i)-(iv) in Lemma~\ref{l-gue181212} hold. 
This metric induces natural Hermitian metrics 
 $\langle\,\cdot\,,\,\cdot\,\rangle_{X_{G}}$ 
 on $\C TX_{G}$ and $\C T^*X_{G}$. 
As in \eqref{e:l2} we define the $L^2$ inner products
and spaces induced by $g$ on $X$ and $X_G$
by $(L^2_{(0,q)}(X),(\,\cdot\,,\,\cdot\,))$ and 
$(L^2_{(0,1)}(X_{G}),(\,\cdot\,,\,\cdot\,)_{X_{G}})$.
\end{convention}

\subsection{Closed range in $L^{2}$ for $\ddbar_b$ and 
Szeg\H{o} projections}\label{s2.4}  

The property of closed range in $L^2$ for $\ddbar_b$ in \eqref{eq:2.14}
plays an important role in CR geometry.
It follows from the works of Boutet de Monvel \cite{BdM1:74}, 
Boutet de Monvel-Sj\"ostrand \cite{BouSj76},
Harvey-Lawson \cite{HaLa75}, Burns \cite{Bur:79} and 
Kohn \cite{Koh86}
that the conditions below are equivalent for a compact strictly 
pseudoconvex
CR manifold $X$, $\dim_{\mathbb R}X\geqslant3$:
\begin{enumerate}
\renewcommand{\labelenumi}{\rm(\alph{enumi})}
\item $X$ is embeddable in the Euclidean space $\C ^N$, 
for $N$ sufficiently large;
\item $X$ bounds a strictly pseudoconvex complex manifold;
\item The tangential Cauchy-Riemann operator 
$\ddbar_b: \Dom  \ddbar_b\subset L^2(X)\To L^2_{(0,1)}(X)$ 
on functions has closed range.
\end{enumerate}
If $X$ is a compact strictly pseudoconvex CR manifold of dimension 
greater than or equal to five, 
then $X$ satisfies condition (a), by the embedding theorem
of Boutet de Monvel \cite{BdM1:74}. 
However, there are examples of non-embeddable compact 
strictly pseudoconvex CR manifolds of dimension three given by
Grauert, Andreotti-Siu and Rossi \cite{AS:70,Gra:94,Ros:65}, see
also Col\c{t}oiu-Tib\u{a}r \cite{CT09}. 
In fact this happens for arbitrarily small perturbations of the standard 
CR structure on the unit sphere in $\C^2$.
For these examples the closed range in $L^2$ property fails.

Assume that condition (b) is satisfied and let $M$ be
a strictly pseudoconvex complex manifold such that $\partial M=X$.
If $u$ is continuous on $\overline{M}$ and holomorphic on $M$, then
$u|_X$ satisfies the tangential Cauchy-Riemann equations 
$\ddbar_b(u|_X)=0$.
Conversely, by \cite{KR65} any smooth function $u$ on $X$ satisfying
$\ddbar_bu=0$ admits a smooth extension $\widetilde{u}$ 
to $\overline{M}$ which is holomorphic in $M$. 
In this sense the space $H^0_b(X)\cap\cC^\infty(X)$
is the space of boundary values of holomorphic functions 
$\mO(M)\cap\cC^\infty(\overline{M})$. Note also that the Hardy space
$H^0_b(S^1)$ consists of boundary values of holomorphic functions
on the unit disc in $\C$ cf.\ \cite[Theorem 17.10]{Rud87}.  
This is the unifying feature of our definition of $H^0_b(X)$
for strictly pseudoconvex $X$ and $X=S^1$.
 
There are important classes of embeddable compact strictly 
pseudoconvex 
three dimensional CR manifolds (for which $\overline\partial_b$ 
has thus closed 
range in $L^2$) carrying interesting geometric structures such as
\begin{itemize}
\item transverse CR $S^1$-actions \cite{BlDu91,Eps92,Lem92},
\item conformal structures  \cite{Biq02}, 
\item Sasakian structures 
(transverse CR $\mathbb R$-actions) \cite{MY07},
\end{itemize}

If $X$ is a compact strictly pseudoconvex CR manifold and $\ddbar_b$ 
has closed range in $L^2$, 
Boutet de Monvel-Sj\"ostrand~\cite{BouSj76} showed that $S(x,y)$
is a Fourier integral operator with complex phase. In particular,
$S(x,y)$ is smooth outside the diagonal of $X\times X$
and there is a precise description of the singularity on the diagonal $x=y$,
where $S(x,x)$ has a certain asymptotic expansion. 
Hsiao~\cite[Theorem 1.2]{Hsiao08} generalized 
Boutet de Monvel-Sj\"ostrand's result to $(0,q)$ forms when 
the Levi form is non-degenerate and Kohn Laplacian for $(0,q)$ forms has 
closed range in $L^2$.  If the Levi form is degenerate 
(for example $X$ is weakly pseudoconvex),
Hsiao and Marinescu~\cite[Theorem 1.14]{HM14} showed that the 
Szeg\H{o} projector $S$ is a complex Fourier integral 
operator on the subset where the Levi form is positive definite
if $\ddbar_b$ has closed range in $L^2$. 
 
Let
\begin{equation}\label{e-suVIII}
\ol{\pr}^{*}_{b}:\Dom  \ol{\pr}^{*}_{b}\subset L^2_{(0,1)}(X)
\To L^2(X)
\end{equation}
be the Hilbert space adjoint of $\ddbar_{b}$ in the $L^2$ space
with respect to $(\,\cdot\,,\,\cdot\, )$.
Let $\Box_{b}$ denote the (Gaffney extension) of the Kohn Laplacian 
on functions given by
\begin{equation}\label{e-suIXzq}
\begin{split}
\Dom\Box_{b}&=\Big\{u\in L^2(X);\, 
u\in\Dom  \ddbar_{b},\, 
\ddbar_{b}u\in\Dom  \ol{\pr}^{*}_{b}\Big\}\,,\\
\Box_{b}u&=\ol{\pr}^{*}_{b}\ddbar_{b}u
\:\:\text{for $u\in\Dom\Box^{(0)}_{b}$}\,.
\end{split}
\end{equation}
By a result of Gaffney, $\Box_{b}$ is a positive self-adjoint operator 
(see \cite[Proposition\,3.1.2]{MM}). 
In particular, the spectrum $\spec\Box_b$ of $\Box_{b}$
is contained in $[0,\infty)$.
For a Borel set $B\subset\mathbb{R}$ we denote by 
$E(B)$ the spectral projection of $\Box_{b}$ corresponding to
the set $B$, where $E$ is the spectral measure of $\Box_{b}$.
For $\lambda\geq0$, we set
\begin{equation} \label{e-suX}
H^0_{b,\leq\lambda}(X):=\Ran E\bigr((-\infty,\lambda]\bigr)
\subset L^2(X)\,,
\end{equation}
and let
\begin{equation}\label{e-suXI-I}
S_{\leq\lambda}:L^2(X)\To H^0_{b,\leq\lambda}(X),
\end{equation}
be the orthogonal projection with respect to the product 
$(\,\cdot\,,\,\cdot\,)$ and let
\begin{equation}\label{e-suXI-II}
S_{\leq\lambda}(x,y)\in\cD'(X\times X)
\end{equation}
denote the distribution kernel of $S_{\leq\lambda}$. 
For $\lambda=0$, we write $S:=S_{\leq0}$, $S(x,y):=S_{\leq0}(x,y)$.

Without the assumption that the range of $\ddbar_b$ is closed,
$\Ker\ddbar_b$ could be trivial and therefore
it is natural to consider the spectral projection
$S_{\leq\lambda}$ for $\lambda>0$.
\begin{theorem}[{\cite[Theorem 1.5]{HM14}}]\label{t-gue140305VIc}
For any $\lambda>0$ 
the spectral projector $S_{\leq\lambda}$ is a complex Fourier integral 
operator on the subset where the Levi form is positive definite.
\end{theorem}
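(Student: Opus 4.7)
The plan is to build a microlocal parametrix $A$ for $S_{\leq\lambda}$ on the open set $D\subset X$ where the Levi form is positive definite, present it as a Fourier integral operator with complex phase of the Boutet de Monvel--Sj\"ostrand type, and then show $S_{\leq\lambda}\equiv A$ on $D$. The first step is purely microlocal geometry: the characteristic variety of $\Box_b$ is $\Sigma=\set{(x,t\omega_0(x));\, x\in X,\, t\neq 0}$, and on $\Sigma^+=\set{t>0}$ over $D$ the fundamental matrix of $\Box_b$ is hyperbolic with eigenvalues determined by the positive Levi form, whereas off $\Sigma^+$ the operator $\Box_b$ is either elliptic or microlocally elliptic in the Boutet de Monvel sense. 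Consequently the wavefront set of $S_{\leq\lambda}$ over $D\times D$ is contained in $\mathrm{diag}(\Sigma^+\cap T^*D)$, so the FIO structure can only live there.

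On each coordinate patch of $D$ I would construct, as in \cite{BouSj76}, a complex phase $\varphi(x,y,s)$ with $s>0$ satisfying $\varphi|_{x=y}=0$, $d_x\varphi|_{x=y}=s\,\omega_0(x)$, and $\im\varphi\geq c\,s|x-y|^2$, which solves the eikonal equation associated to $\ddbar_b$ to infinite order along the diagonal. A classical symbol $a(x,y,s)\sim\sum_{j\geq 0}a_j(x,y)s^{n-j}$ is then built by iterative solution of the transport equations so that the operator
\begin{equation*}
Au(x)=\int_0^\infty\!\!\int e^{i\varphi(x,y,s)}\,a(x,y,s)\,u(y)\,dy\,ds
\end{equation*}
is an approximate Szeg\H{o} projector in the sense that $\ddbar_b A\equiv 0$, $A^2\equiv A$, and $A^*\equiv A$ modulo smoothing on $D$; the strict positivity of $\im\varphi$ ensures rapid decrease and smoothness of the error terms through the complex stationary phase lemma of Melin--Sj\"ostrand.

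The third step is the identification. Since $\Box_b A=\ddbar_b^*\ddbar_b A\equiv 0$ on $D$, the spectral theorem applied to the self-adjoint operator $\Box_b$ yields $\bigl(I-\mathbf{1}_{[0,\lambda]}(\Box_b)\bigr)A\equiv 0$, i.e.\ $S_{\leq\lambda}A\equiv A$ on $D$. Conversely, $A$ is designed to act as the identity, modulo smoothing, on distributions whose wavefront set over $D$ is contained in $\Sigma^+$; applied to $u=S_{\leq\lambda}v$, whose wavefront is controlled by the first step, this yields $A\,S_{\leq\lambda}\equiv S_{\leq\lambda}$ on $D$, and combining the two identities gives $S_{\leq\lambda}\equiv A$ on $D$. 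The main obstacle, and precisely the reason one must take $\lambda>0$ rather than $\lambda=0$, is that without the closed range hypothesis on $\ddbar_b$ the kernel of $\Box_b$ may be badly behaved, so $S=S_{\leq 0}$ need not admit such an FIO description directly; replacing $0$ by a positive $\lambda$ introduces a bounded spectral cutoff that, via functional calculus, absorbs precisely the smoothing errors in the relation $\Box_b A\equiv 0$ and turns the approximate identity $S_{\leq\lambda}A\equiv A$ into a genuine microlocal identification.
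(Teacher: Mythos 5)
First, a point of reference: the paper does not prove this statement itself -- it is imported verbatim from \cite[Theorem 1.5]{HM14} (with Theorems \ref{t-gue180505} and \ref{t-gue161110g} as the detailed versions), so your proposal has to be measured against the proof given there. Your overall architecture is the correct one and matches \cite{HM14}: build a Boutet de Monvel--Sj\"ostrand type parametrix $A$ with complex phase on the set $D$ where the Levi form is positive definite, and identify $S_{\leq\lambda}$ with $A$ modulo smoothing by exploiting the spectral gap $\spec\Box_b\cap(\lambda,\infty)$, on which $\Box_b^{-1}$ is bounded by $1/\lambda$. Your closing remark about why $\lambda>0$ (rather than $\lambda=0$ without closed range) is needed is also the right intuition.

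However, the identification step contains two genuine gaps. (i) Your first step asserts, as ``purely microlocal geometry,'' that the wavefront set of $S_{\leq\lambda}$ over $D\times D$ lies in the diagonal of the relevant half $\Sigma^{\pm}$ of the characteristic variety; without closed range this is precisely the main analytic difficulty, not a formal consequence of hypoellipticity off $\Sigma$. In \cite{HM14} it is obtained by combining local subelliptic estimates for $\Box_b$ on the positive-Levi set (the analogue of Theorem \ref{t-gue180430}) with the observation that \emph{every} power $\Box_b^{m}S_{\leq\lambda}$ is $L^2$-bounded with norm $\leq\lambda^{m}$ -- this is exactly where finiteness of $\lambda$ enters -- and then iterating the subelliptic estimate to get $\chi S_{\leq\lambda}:L^2\to H^{s}$ for all $s$. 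Your step 3 then uses this unproved wavefront claim to conclude $AS_{\leq\lambda}\equiv S_{\leq\lambda}$, which makes the argument circular. (ii) The deduction ``$\Box_bA\equiv 0$, hence by the spectral theorem $(I-S_{\leq\lambda})A\equiv 0$'' is not valid as stated: writing $\Box_bA=F$ with $F$ smoothing, the spectral theorem only gives $\|(I-S_{\leq\lambda})Au\|\leq\lambda^{-1}\|F u\|$, i.e.\ boundedness $H^{-N}\to L^2$ for all $N$; upgrading this to a smoothing operator again requires applying $\Box_b^{m}$ to the error and invoking the local subelliptic estimates, not functional calculus alone. Relatedly, the ingredient that actually closes the loop in \cite{HM14} is the full microlocal Hodge decomposition $\Box_bN+A\equiv I$ (microlocally near the relevant part of $\Sigma$), whose partial parametrix $N$ is absent from your construction; without it the inclusion $S_{\leq\lambda}\equiv S_{\leq\lambda}A$ cannot be derived. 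A minor further point: your phase convention $d_x\varphi|_{x=y}=s\,\omega_0(x)$, $s>0$, has the opposite sign to \eqref{e-gue140205IV}, where $d_x\varphi(x,y)|_{x=y}=-\lambda(x)\omega_0(x)$ with $\lambda(x)>0$.
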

Theorems \ref{t-gue180505} and \ref{t-gue161110g}
below are more detailed statements of this result.
Since we don't assume that $\ddbar_b$ has closed range in $L^2$ on $X$, 
Theorem~\ref{t-gue140305VIc} plays an important role in this work. 
We only assume that the Levi form is non-degenerate on $Y$ and 
we will show in Theorems~\ref{t-gue181017} and 
\ref{t-gue181021a} that the $G$-invariant tangential 
Cauchy-Riemann $\ddbar_{b,G}$ has closed range in $L^2(X)^G$ 
and we have
\begin{equation}\label{e-gue181021}
S_G(x,y)=\int_GS_{\leq\lambda_0}(x,h\circ y)d\mu(h)
\end{equation}
for some $\lambda_0>0$, where $d\mu(h)$ is the Haar measure on $G$
with $\int_Gd\mu(h)=1$. From \eqref{e-gue181021}, 
we can apply Theorem~\ref{t-gue140305VIc} to study 
$S_G$ without a closed range in $L^2$ assumption on 
$\ddbar_b$ on $X$. 



\section{$G$-invariant Szeg\H{o} kernel asymptotics}\label{s-gue180430}

In this section, we will establish asymptotic expansion 
for the $G$-invariant Szeg\H{o} kernel. 
From now on, we work under Assumption~\ref{a-gue170410I}.
We do not assume that 
$\ddbar_{b,X_{G}}$ has closed range in $L^2$. 
We first estimate the Szeg\H{o} kernel outside $Y$. 
From now on, we will use the same notations as 
in Sections ~\ref{s-gue170410}, ~\ref{s:prelim}. 

\subsection{Subelliptic estimates for $G$-invariant smooth functions 
away $Y$}\label{s-gue161109I} 
In this Section the manifold $X$ is supposed to
have arbitrary dimension $\geq3$. 
Let 
\[L_1,\ldots,L_{N}\in\cC^\infty(X,HX),\ \ N\in\N^*,\] 
such that for any $x\in X$, $\set{L_1(x),\ldots,L_N(x)}$ span $H_xX$.
Let $s\in\N^*$. For $u\in\cC^\infty(X)$, we define 
\begin{equation} \label{e-gue180430a}
\vvvert u\vvvert_{s}:=
\sum^s_{\nu=1}\,
\sum_{1\leq j_1,\ldots,j_\nu\leq N}
\norm{L_{j_1}L_{j_2}\ldots L_{j_\nu}u}+\norm{u}.
\end{equation}
Note first that the method of proof of 
\cite[Theorem 8.3.5]{CS01}
yield the following. We don't need here the condition $Y(1)$. 
\begin{theorem}
\label{t-gue180430}
 There exists $C>0$ such that for all $u\in\cC^\infty(X)$, 
\begin{equation}\label{e-gue180430aI}
\vvvert u\vvvert_1^2\leq C\left((\,\Box_bu,u\,)+\abs{(\,Tu,u\,)}
+\norm{u}^2\right).
\end{equation}
\end{theorem}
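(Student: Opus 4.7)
The plan is to reduce the estimate to a local calculation using a unitary frame for the CR structure, and then use the Kohn Laplacian to control the $\overline{\partial}_b$-derivatives while controlling the $\partial_b$-derivatives via integration by parts. Specifically, using a finite partition of unity subordinate to coordinate patches $U_\alpha$, it suffices to establish the estimate locally, with the cut-off commutators contributing only $\norm{u}^2$-terms. On each $U_\alpha$, I would pick a local orthonormal frame $Z_1,\dots,Z_n$ of $T^{1,0}X$ with dual $(0,1)$-coframe, so that $\ddbar_b u=\sum_k(\overline{Z_k}u)\,\omega^{0,1}_k$ modulo lower order. Since $L_1,\dots,L_N$ span $HX=T^{1,0}X\oplus T^{0,1}X$ pointwise, each $L_j$ is locally a smooth combination of the $Z_k$ and $\overline{Z_k}$; therefore the theorem reduces to showing
$$\sum_{k=1}^n\bigl(\norm{Z_k u}^2+\norm{\overline{Z_k}u}^2\bigr)\le C\bigl((\Box_b u,u)+|(Tu,u)|+\norm{u}^2\bigr).$$

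The bound on the antiholomorphic derivatives is the easy half: from $(\Box_b u,u)=\norm{\ddbar_b u}^2$ and the local expression above, one gets $\sum_k\norm{\overline{Z_k}u}^2\le C((\Box_b u,u)+\norm{u}^2)$, after absorbing cross terms arising from derivatives of the frame via Cauchy-Schwarz. For the holomorphic derivatives, I would integrate by parts: writing the formal adjoint as $Z_k^*=-\overline{Z_k}+g_k$ with $g_k\in\cC^\infty$ a divergence correction, one computes
$$\norm{Z_k u}^2=(u,Z_k^*Z_k u)=\norm{\overline{Z_k}u}^2+\bigl([Z_k,\overline{Z_k}]u,u\bigr)+R_k,$$
with $|R_k|\le\epsilon\vvvert u\vvvert_1^2+C_\epsilon\norm{u}^2$ for any $\epsilon>0$.

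Next I would decompose $[Z_k,\overline{Z_k}]=c_k T+W_k$ where $c_k\in\cC^\infty(U_\alpha,\C)$ (essentially the Levi coefficient) and $W_k$ is a smooth vector field in $\C HX$. The term $(W_k u,u)$ is absorbed into $\epsilon\vvvert u\vvvert_1^2+C_\epsilon\norm{u}^2$ exactly as for $R_k$. It then remains to bound $|(c_k Tu,u)|$ in terms of $|(Tu,u)|$ plus controllable remainders; combining everything and taking $\epsilon$ small gives the claim after absorbing the $\epsilon\vvvert u\vvvert_1^2$ contributions on the left.

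The main obstacle is precisely this last step, since $c_k$ is a variable smooth (complex) function and $(c_k Tu,u)$ is not directly a scalar multiple of $(Tu,u)$. The standard way around this, which I would pursue, is a refined partition-of-unity argument: on a sufficiently fine cover, one writes $c_k=c_k(p_\alpha)+(c_k-c_k(p_\alpha))$ on each piece, so that the leading constant part gives a bounded multiple of $|(Tu,u)|$ while the perturbation is uniformly small and can be integrated by parts against $T^*=-T+f$ to produce only $\epsilon\vvvert u\vvvert_1^2+C_\epsilon\norm{u}^2$. This is where the argument diverges in spirit from the classical Kohn inequality for $(0,1)$-forms: we do not need condition $Y(1)$ (nor any sign condition on the Levi form) because the potentially unfavorable sign in $(c_k Tu,u)$ is accommodated by allowing $|(Tu,u)|$ on the right-hand side, rather than being compensated by positivity of the Levi form.
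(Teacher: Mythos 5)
Your skeleton---localization, an orthonormal frame $Z_1,\dots,Z_n$, the identity $(\Box_bu,u)=\norm{\ddbar_bu}^2=\sum_k\norm{\ol{Z}_ku}^2$ for the antiholomorphic half, and the commutator identity $\norm{Z_ku}^2-\norm{\ol{Z}_ku}^2=([Z_k,\ol Z_k]u,u)+O(\epsilon\vvvert u\vvvert_1^2+C_\epsilon\norm{u}^2)$ with $[Z_k,\ol Z_k]=c_kT+W_k$---is exactly the method of \cite[Theorem 8.3.5]{CS01} that the paper invokes, and everything up to the term $(c_kTu,u)$ is correct. The gap is in your treatment of that term, which is where the entire difficulty of the statement sits. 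First, freezing coefficients does not reduce it to the global pairing $(Tu,u)$: summing $c_k(p_\alpha)(T(\zeta_\alpha u),\zeta_\alpha u)$ over the partition of unity gives $(\tilde c_k\,Tu,u)+O(\norm{u}^2)$ with $\tilde c_k=\sum_\alpha c_k(p_\alpha)\zeta_\alpha^2$, i.e.\ again a variable-coefficient pairing, and no individual localized pairing $(T(\zeta_\alpha u),\zeta_\alpha u)=(\zeta_\alpha^2Tu,u)+O(\norm{u}^2)$ is dominated by $\abs{(Tu,u)}$, since contributions of different patches can cancel in the global sum. Second, the perturbation $(gTu,u)$ with $g$ real and $\sup\abs{g}$ small cannot be absorbed into $\epsilon\vvvert u\vvvert_1^2+C_\epsilon\norm{u}^2$: integration by parts against $T^*=-T+f$ only shows that $\mathrm{Re}(gTu,u)=O(\norm{u}^2)$; the skew part is a genuine first-order derivative in the characteristic direction, which $\vvvert\cdot\vvvert_1$ (built solely from sections of $HX$) does not control at all. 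Dominating $(fTu,u)$ for variable real $f$ by $C\abs{(Tu,u)}$ plus absorbable errors is precisely the nontrivial content of the theorem, and your proposal does not supply a working mechanism for it.

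What is needed here is a device that genuinely compares $(fTu,u)$ with $(Tu,u)$. One route is microlocal: decompose $u=\Psi^+u+\Psi^-u+\Psi^0u$ along the two components $\Sigma^\pm$ of the characteristic set $\set{(x,\lambda\omega_0(x));\,\lambda\neq0}$. On the elliptic support of $\Psi^0$ the symbol of $T$ is dominated by those of the $L_j$, so that piece is absorbable; on each $\Sigma^\pm$ the principal symbol of $-iT$ has a constant sign, so the sharp G{\aa}rding inequality squeezes $\mathrm{Re}(-ifT\Psi^\pm u,\Psi^\pm u)$ between $(\min f)$ and $(\max f)$ times $\mathrm{Re}(-iT\Psi^\pm u,\Psi^\pm u)$ up to $O(\norm{u}^2)$, and these last two quantities have fixed signs and add up to $-i(Tu,u)$ modulo controlled errors, whence each is bounded by $\abs{(Tu,u)}$ plus absorbable terms. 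Alternatively (this is how $Y(q)$ is exploited in \cite{CS01}), one first proves a bound $\abs{(T\psi,\psi)}\leq\epsilon\vvvert\psi\vvvert_1^2+C_\epsilon((\Box_b\psi,\psi)+\norm{\psi}^2)$ valid for \emph{all} $\psi$ and then substitutes $\psi=\sqrt{f+c}\,u$; in the present setting no such uniform bound holds, which is exactly why $\abs{(Tu,u)}$ must be kept on the right-hand side and why the comparison step cannot be bypassed. Your concluding remark about not needing $Y(1)$ is correct in spirit, but the estimate as you argue it does not close.
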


Fix $x_0\notin Y$. By definition of $Y$, 
we can find a vector field  $V\in\cC^\infty(X,\underline{\kg})$
such that 
$\omega_0(V)\neq0$ in an open neighborhood $D$ of $x_0$ with 
$\ol D\cap Y=\emptyset$. Then, 
\begin{equation}\label{e-gue180430aII}
L:=V-\omega_0(V)T\in HX.
\end{equation}
In the rest of this subsection we fix the neighborhood $D$ as above and let 
\begin{equation}\label{e:chi}
\chi, \Td\chi, \chi_1\in\cC^\infty_0(D),\:\: \text{$\Td\chi=1$ near 
$\supp\chi$ and $\chi_1=1$ near $\supp\Td\chi$.}
\end{equation}
Let $u\in\cC^\infty(X)^G$. From \eqref{e-gue180430aII} and since 
that $V(u)=0$ and $\omega_0(V)\neq 0$ on $D$, we have 
\begin{equation}\label{e-gue180430m}
\begin{split}
&Tu=\frac{-1}{\omega_0(V)}L(u)\:\: \text{on $D$},\\
&T(\chi u)=(T\chi)u+\chi Tu=(T\chi)u+\chi \frac{-1}{\omega_0(V)}L(u).
\end{split}
\end{equation}
From \eqref{e-gue180430m}, we deduce that  there exists 
$C>0$ such that for all $u\in\cC^\infty(X)^G$, 
\begin{equation}\label{e-gue180430aIII}
\norm{T(\chi u)}\leq C\left(\vvvert\chi u\vvvert_{1}
+\norm{\chi_1u}\right).
\end{equation} 

For $k\in\N^*$, $U\subset X$ an open set, let $\mathscr D^k(U)$
be the set of differential operators which can be written as 
a linear combination of operators as $W_1\circ\cdots\circ W_j$ 
with $W_1,\ldots,W_j\in\cC^\infty(U,\mathbb CHX)$, $1\leq j\leq k$. 

\begin{lemma}\label{l-gue180501a}
With the notations above, fix 
 $V_1,\ldots, V_s\in\cC^\infty(X,\C HX)$, 
$s\in\N^*$, then there exist 
$V_{1,1}\in\cC^\infty(X,\mathbb CHX)$, $Q_1\in\mathscr D^{s-1}(X)$, 
$Q_2\in\mathscr D^s(X)$, $a, b\in\cC^\infty(X)$, such that
\begin{equation}\label{e-gue180501az}
TV_1=V_1T+V_{1,1}+a(x)T,\  \ \mbox{if $s=1$},
\end{equation}
\begin{equation}\label{e-gue180501a}
TV_1\ldots V_s=V_1\ldots V_sT+Q_1T+Q_2+b(x)T,\ \ 
\mbox{if $s\geq2$}.
\end{equation}
\end{lemma}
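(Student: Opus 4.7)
The plan is to argue by induction on $s$, using as sole structural input the bundle decomposition $\C TX = \C HX \oplus \C T$ provided by \eqref{eq:2.8}. This splitting lets me write any smooth vector field on $X$ uniquely as an element of $\cC^\infty(X,\C HX)$ plus a smooth multiple of $T$. Applied to the commutator $[T,V]$ for any $V \in \cC^\infty(X,\C HX)$, it yields a decomposition $[T,V] = W + fT$ with $W \in \cC^\infty(X,\C HX)$ and $f \in \cC^\infty(X)$; rearranging $TV_1 = V_1T + [T,V_1]$ immediately delivers \eqref{e-gue180501az} with $V_{1,1} := W$ and $a := f$.

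For the inductive step, with $s \geq 2$, I would write
\begin{equation*}
TV_1 V_2 \cdots V_s = V_1\bigl(T V_2 \cdots V_s\bigr) + V_{1,1} V_2 \cdots V_s + a\, T V_2 \cdots V_s,
\end{equation*}
where $V_{1,1}$ and $a$ are produced by the $s=1$ identity applied to $V_1$. The middle summand is manifestly in $\mathscr D^s(X)$ and will contribute to $Q_2$. To the two remaining occurrences of $T V_2 \cdots V_s$ I apply the inductive hypothesis (the base case $s=1$ suffices when $s=2$), obtaining
\begin{equation*}
T V_2 \cdots V_s = V_2 \cdots V_s\, T + \widetilde{Q}_1 T + \widetilde{Q}_2 + \widetilde{b}\, T,
\end{equation*}
with $\widetilde{Q}_1 \in \mathscr D^{s-2}(X)$ and $\widetilde{Q}_2 \in \mathscr D^{s-1}(X)$, the $\widetilde{Q}_1$ term being simply absent when $s=2$. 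Substituting produces the leading piece $V_1 V_2 \cdots V_s\, T$ plus a remainder that will be distributed among $Q_1 T$, $Q_2$ and $b\,T$.

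The only bookkeeping move is to push function coefficients past vector fields using the Leibniz identity $W \circ M_f = M_f \circ W + M_{W(f)}$, applied when $a$ is commuted through $V_2,\ldots, V_s$ and when $\widetilde{b}$ is commuted through $V_1$. Each such commutation converts an operator of the form ``function times vector field'' into a sum of operators of the same or lower order in $\mathscr D^{\bullet}(X)$, so after collecting terms, the contributions with a trailing $T$ stay in $\mathscr D^{s-1}(X)\cdot T$ (going into $Q_1 T$), those without a trailing $T$ stay in $\mathscr D^s(X)$ (going into $Q_2$), and the purely scalar-times-$T$ parts combine into one $b(x)T$ term.

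There is no essential obstacle: the argument is entirely formal commutator algebra. The one point that requires attention is maintaining the distinction between operators with a trailing $T$, whose order in $\mathscr D^{\bullet}$ must drop by one, and operators without a trailing $T$. This mismatch is exactly what the statement records by separating $Q_1 \in \mathscr D^{s-1}(X)$ from $Q_2 \in \mathscr D^s(X)$, and it is exactly what is preserved by the induction, so the formulas \eqref{e-gue180501az}, \eqref{e-gue180501a} follow in the claimed form.
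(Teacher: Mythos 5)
Your argument is correct and follows essentially the same route as the paper: decompose $[T,V_1]$ via the splitting $\C TX=\C HX\oplus\C T$ to get the base case, then induct by writing $TV_1\cdots V_s=V_1(TV_2\cdots V_s)+[T,V_1]V_2\cdots V_s$ and applying the inductive hypothesis to $TV_2\cdots V_s$. Your extra bookkeeping about commuting the scalar coefficients past the $V_j$'s is a point the paper leaves implicit, but it changes nothing of substance.
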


\begin{proof}
We first prove \eqref{e-gue180501az}. Note that
\begin{equation}\label{e-gue180430mI}
TV_1=V_1T+[T,V_1].
\end{equation}
We have $[T, V_1]=\Td V_{1,1}+a(x)T$, where 
$\Td V_{1,1}\in\cC^\infty(X,\C HX)$ and $a(x)\in\cC^\infty(X)$. 
From this observation and \eqref{e-gue180430mI}, 
we get \eqref{e-gue180501az}. 

We  now prove \eqref{e-gue180501a}. Let $s=2$. By the argument after
\eqref{e-gue180430mI}, we have 
\begin{equation}\label{e-gue180430mIIz}
\begin{split}
TV_1V_2&=V_1TV_2+[T, V_1]V_2\\
&=V_1TV_2+( \Td V_{1,1}+a(x)T)V_2.
\end{split}
\end{equation}
From \eqref{e-gue180501az}  and \eqref{e-gue180430mIIz},  we get 
\eqref{e-gue180501a} for $s=2$.

Assume that the claim \eqref{e-gue180501a} holds for $s=s_0$ 
for some $s_0\geq2$. We are going to prove that the claim 
\eqref{e-gue180501a} holds for $s=s_0+1$. 
 From the argument after \eqref{e-gue180430mI}, we have 
\begin{equation}\label{e-gue180430mIII}
\begin{split}
TV_1\ldots V_{s_0+1}&=V_1TV_2\ldots V_{s_0+1}+[T, V_1]V_2
\ldots V_{s_0+1}\\
&=V_1TV_2\ldots V_{s_0+1}
+( \Td V_{1,1} +a(x)T )V_2\ldots V_{s_0+1}.
\end{split}
\end{equation}
From \eqref{e-gue180430mIII} and the induction assumption we get 
the claim \eqref{e-gue180501a} for $s=s_0+1$. The lemma  follows. 
\end{proof}

\begin{theorem}\label{t-gue180501}
Fix $s\in\N^*$. Let  $V_1,\ldots, 
V_s\in\cC^\infty(D,\C TX)$. Then there exists $Q\in\mathscr D^s(D)$ 
such that we have for every $u\in\cC^\infty(X)^G$, 
\begin{equation}\label{e-gue180501mp}
V_1\ldots V_su=Qu\quad\text{on $D$}.
\end{equation}
 
\end{theorem}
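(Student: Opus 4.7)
The plan is to reduce $V_{1}\cdots V_{s}u$ to a pure $\C HX$-derivative by combining two structural facts on $D$: first, the fixed vector field $V\in\cC^{\infty}(D,\underline{\kg})$ with $\omega_{0}(V)\neq 0$ satisfies $Vu=0$ for every $G$-invariant $u$; second, the identity $T=\omega_{0}(V)^{-1}(V-L)$ obtained from \eqref{e-gue180430aII}, together with $\C TX|_{D}=\C HX|_{D}\oplus\C T$, implies that any $V'\in\cC^{\infty}(D,\C TX)$ can be written
\[
V'=W'+a\,V,\qquad W'\in\cC^{\infty}(D,\C HX),\ a\in\cC^{\infty}(D).
\]
Applying this decomposition to each $V_{i}$ and expanding rewrites $V_{1}\cdots V_{s}$ as a $\cC^{\infty}(D)$-linear combination of ordered products of at most $s$ factors, each factor lying in $\cC^{\infty}(D,\C HX)$ or being $V$ itself.

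It then suffices to show that any such product $\tilde{P}=P_{1}\cdots P_{k}$ with $k\leq s$, acting on a $G$-invariant $u$, agrees with $Q_{\tilde{P}}u$ for some $Q_{\tilde{P}}\in\mathscr{D}^{k}(D)$. I would sub-induct on the number of $V$-factors in $\tilde{P}$: with none, there is nothing to prove; otherwise, pick the rightmost $V$, so its tail $P_{i+1},\ldots,P_{k}$ lies in $\cC^{\infty}(D,\C HX)$, and commute $V$ past it,
\[
V\,P_{i+1}\cdots P_{k}=P_{i+1}\cdots P_{k}\,V+\sum_{m=i+1}^{k}P_{i+1}\cdots P_{m-1}\,[V,P_{m}]\,P_{m+1}\cdots P_{k}.
\]
Acting on $u$, the first term on the right contributes $P_{i+1}\cdots P_{k}(Vu)=0$. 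For each commutator term, the essential input is \eqref{eq:2.22}, which complexifies to $[V,P_{m}]\in\cC^{\infty}(D,\C HX)$; hence every surviving term is a product of $k-1$ factors from $\cC^{\infty}(D,\C HX)\cup\{V\}$ with strictly fewer $V$-factors than $\tilde{P}$, closing the sub-induction.

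Summing over the terms in the expansion of the first paragraph produces the desired $Q\in\mathscr{D}^{s}(D)$ satisfying $V_{1}\cdots V_{s}u=Qu$ on $D$ for every $u\in\cC^{\infty}(X)^{G}$; the smooth-function coefficients arising along the way (from $a_{i}$, $\omega_{0}(V)^{-1}$, and Leibniz terms between successive $P_{m}$'s) only add lower-order $\C HX$ contributions and therefore remain inside $\mathscr{D}^{k}(D)$. The principal technical point, and essentially the only obstacle, is the interplay between the two ingredients: $G$-invariance annihilates a $V$-factor precisely when it sits at the right end, while \eqref{eq:2.22} guarantees that commuting $V$ past $\C HX$ vector fields never reintroduces a transverse direction, so the total order drops at every elimination of a $V$ and the recursion terminates inside $\mathscr{D}^{s}(D)$.
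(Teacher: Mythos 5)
Your argument is correct in outline and is essentially a reparametrization of the paper's proof: the paper decomposes each $V_j$ into its $\C HX$-component plus a multiple of $T$, pushes $T$ to the right using Lemma \ref{l-gue180501a}, and then eliminates the rightmost $T$ acting on $u$ via $Tu=-\omega_0(V)^{-1}Lu$ from \eqref{e-gue180430m}; you instead use $V$ itself as the complement of $\C HX$ on $D$ (legitimate since $\omega_0(V)\neq0$ there), push $V$ to the right, and kill it with $Vu=0$. Since $T=\omega_0(V)^{-1}(V-L)$ by \eqref{e-gue180430aII}, these are the same computation in two different transverse frames; yours is marginally cleaner because the transverse field annihilates $u$ outright instead of being traded for $-\omega_0(V)^{-1}L$.

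One step is imprecise: the claim that \eqref{eq:2.22} yields $[V,P_m]\in\cC^\infty(D,\C HX)$. Equation \eqref{eq:2.22} concerns the fundamental fields $\xi_X$ for a \emph{fixed} $\xi\in\kg$, whereas the $V$ of Section \ref{s-gue161109I} is a section of $\underline{\kg}$, a priori $V=\sum_jc_j\,(\xi_j)_X$ with variable coefficients; then $[V,P_m]=\sum_jc_j[(\xi_j)_X,P_m]-\sum_j(P_mc_j)(\xi_j)_X$, and the second sum lies in $\underline{\kg}$, which on $D$ (where $\ol D\cap Y=\emptyset$) is \emph{not} contained in $\C HX$ — indeed $\omega_0(V)\neq0$ there. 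Two immediate repairs: either take $V=\xi_X$ for a single $\xi$ with $\langle\mu(x_0),\xi\rangle\neq0$ (possible because $x_0\notin\mu^{-1}(0)$), in which case your claim is exactly \eqref{eq:2.22}; or accept $[V,P_m]=W_m+e_mV$ with $W_m\in\cC^\infty(D,\C HX)$ and run the sub-induction on the total number of factors, which drops by one at each commutation, rather than on the number of $V$-factors, which need not drop. With either fix the proof closes.
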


\begin{proof}
From \eqref{e-gue180430m}, we see that \eqref{e-gue180501mp} 
holds for $s=1$. Assume that \eqref{e-gue180501mp} holds for $s=s_0$.
We are going to prove that \eqref{e-gue180501mp} holds for $s=s_0+1$. 
By induction assumption we only need  to assume that 
$V_1=T$ and $V_j\in\cC^\infty(D,\C HX)$, $j=2,3,\ldots,s_0+1$. 
From \eqref{e-gue180501az}  and \eqref{e-gue180501a},
there exist $\Td V_{1,1}\in\cC^\infty(D,\mathbb CHX)$,
$Q_1\in\mathscr D^{s-1}(D)$, 
$Q_2\in\mathscr D^{s}(D)$ such that 
\begin{equation}\label{e-gue180430bI}
\begin{split}
&TV_2=V_2T+\Td V_{1,1}+a(x)T,\  \ \mbox{if $s_0=1$},\\
TV_2\ldots V_{s_0+1}&=V_2\ldots V_{s_0+1}T+Q_1T+Q_2
+b(x)T,
\quad\quad\mbox{if $s_0\geq 2$}, 
\end{split}
\end{equation}
where   $a, b\in\cC^\infty(D)$. From \eqref{e-gue180430m} 
and \eqref{e-gue180430bI}, we get \eqref{e-gue180501mp}.
\end{proof}

From Theorem~\ref{t-gue180501} and \eqref{e:chi} we deduce: 

\begin{corollary}\label{c-gue180501ab}
Let $s\in\N^*$ and  $V_1,\ldots, 
V_s\in\cC^\infty(D,\C TX)$. 
Then,  there exists $C>0$ such that for all $u\in\cC^\infty(X)^G$ we have
\begin{equation}\label{e-gue180501r}
\begin{split}
&\norm{(V_1\ldots V_s)\chi u}\leq C\left(\vvvert\chi u\vvvert_ s
+\vvvert\chi_1 u\vvvert_{s-1}\right),\\
&\norm{\chi(V_1\ldots V_s)u}\leq C\left(\vvvert\chi u\vvvert_ s
+\vvvert\chi_1 u\vvvert_{s-1}\right).
\end{split}
\end{equation}
\end{corollary}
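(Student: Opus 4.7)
The plan is to reduce both inequalities to a short bookkeeping computation built on Theorem~\ref{t-gue180501} and the nested cutoff structure in \eqref{e:chi}. The key point of Theorem~\ref{t-gue180501} is that on a $G$-invariant function $u$, any product of vector fields in $\C TX$ acting on $u$ can be re-expressed on $D$ as an operator in $\mathscr D^s(D)$, i.e.\ a linear combination of compositions of at most $s$ vector fields in $\cC^\infty(D,\C HX)$. Since any such vector field is a $\cC^\infty(D)$-linear combination of $L_1,\ldots,L_N$, both quantities to be bounded reduce to linear combinations of terms of the form $a\,L_{j_1}\cdots L_{j_\nu}u$ with $\nu\leq s$ and $a\in\cC^\infty(D)$ compactly supported in $\supp\chi$.

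For the second inequality, Theorem~\ref{t-gue180501} gives $Q\in\mathscr D^s(D)$ with $V_1\cdots V_su=Qu$ on $D$, so
\begin{equation*}
\chi\,V_1\cdots V_s u=\sum_{\nu\leq s}\sum_{j_1,\ldots,j_\nu}\chi\,a_{j_1\ldots j_\nu}L_{j_1}\cdots L_{j_\nu}u,
\end{equation*}
with $a_{j_1\ldots j_\nu}\in\cC^\infty(D)$. For each ordered tuple, a standard commutation argument (iterated Leibniz, commuting the cutoff past the $L_j$'s one by one) yields
\begin{equation*}
\chi\,L_{j_1}\cdots L_{j_\nu}u=L_{j_1}\cdots L_{j_\nu}(\chi u)+R_\nu u,
\end{equation*}
where $R_\nu$ is a differential operator of order at most $\nu-1$ in the $L_j$'s whose coefficients are compactly supported in $\supp\chi$. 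The first summand contributes $\vvvert\chi u\vvvert_s$; for the remainder, since $\chi_1\equiv 1$ on $\supp\chi\supset\supp R_\nu$, we have $R_\nu u=R_\nu(\chi_1 u)$ there, which is controlled by $\vvvert\chi_1 u\vvvert_{s-1}$.

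For the first inequality, expand via Leibniz
\begin{equation*}
V_1\cdots V_s(\chi u)=\chi\,V_1\cdots V_s u+\sum_{\alpha}(W_\alpha\chi)(W'_\alpha u),
\end{equation*}
where in each remaining term $W'_\alpha$ is a product of $k_\alpha\leq s-1$ of the $V_i$'s and $W_\alpha\chi$ is compactly supported in $\supp\chi\subset\{\chi_1\equiv 1\}$. The first summand is already covered by the second inequality. For each of the remaining terms, apply Theorem~\ref{t-gue180501} to $W'_\alpha$ to write $W'_\alpha u=Q_\alpha u$ with $Q_\alpha\in\mathscr D^{k_\alpha}(D)$; expanding $Q_\alpha$ in the $L_j$'s and using $\chi_1\equiv 1$ on $\supp(W_\alpha\chi)$ as above, each such term is dominated by $\vvvert\chi_1 u\vvvert_{s-1}$.

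The only mildly delicate point is the bookkeeping in the commutator identity producing $R_\nu$: one needs to verify by induction on $\nu$ that each commutation of the cutoff past an $L_j$ lowers the order by one and keeps the support of the error term inside $\supp\chi$, so that every error lies in the $\vvvert\chi_1 u\vvvert_{s-1}$ budget. This is a routine induction, so no serious obstacle is expected; the corollary is essentially a localized reformulation of Theorem~\ref{t-gue180501} compatible with the seminorms $\vvvert\,\cdot\,\vvvert_s$.
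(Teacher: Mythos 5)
Your argument is correct and is exactly the deduction the paper intends: the paper gives no proof beyond ``From Theorem~\ref{t-gue180501} and \eqref{e:chi} we deduce,'' and your reduction to terms $a\,L_{j_1}\cdots L_{j_\nu}u$ via $\mathscr D^s(D)$, followed by commuting the cutoff (top order giving $\vvvert\chi u\vvvert_s$, lower-order errors supported where $\chi_1\equiv1$ giving $\vvvert\chi_1u\vvvert_{s-1}$), is precisely the routine bookkeeping being omitted. The only cosmetic point is that the smooth coefficients $a\in\cC^\infty(D)$ should be carried through the commutation (they are bounded on $\supp\chi$, so this is harmless), and for $s=1$ one reads $\vvvert\cdot\vvvert_0$ as $\norm{\cdot}$.
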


For $s\in\mathbb Z$, let $\norm{\cdot}_s$ denote the standard
Sobolev norm of order $s$ on $X$. From Corollary~\ref{c-gue180501ab}
and \eqref{e:chi}, we deduce: 

\begin{corollary}\label{c-gue180501}
For every $s\in\N$
there exists $C_s>0$ such that for any $u\in\cC^\infty(X)^G$,
\[\norm{\chi u}_s\leq C_s\norm{\chi_1u}_{s}.\]
\end{corollary}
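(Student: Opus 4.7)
The plan is to deduce the estimate from Corollary~\ref{c-gue180501ab} by expressing the standard Sobolev norm of $\chi u$ as a sum of $L^2$ norms of derivatives along smooth vector fields on $D$, and then bounding the resulting triple-bar quantities by standard Sobolev norms of $\chi_1 u$.

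First, since $\chi u$ is supported in the open set $D$, I would choose a finite family of vector fields $W_1,\ldots,W_M\in\cC^\infty(D,\C TX)$ that span $\C TX$ at every point of $\supp\chi$. Using a finite cover of $\supp\chi$ by coordinate charts contained in $D$ and expanding coordinate derivatives as linear combinations of the $W_j$'s with smooth coefficients, there is a constant $K_s>0$ (independent of $u$) such that
\begin{equation*}
\norm{\chi u}_s^{\,2}\leq K_s\sum_{\nu=0}^{s}\sum_{1\leq j_1,\ldots,j_\nu\leq M}\norm{W_{j_1}\cdots W_{j_\nu}(\chi u)}^{2}.
\end{equation*}
Applying Corollary~\ref{c-gue180501ab} to each such product --- this is the single step where the $G$-invariance of $u$ enters --- yields
\begin{equation*}
\norm{\chi u}_s\leq C\bigl(\vvvert\chi u\vvvert_s+\vvvert\chi_1u\vvvert_{s-1}\bigr).
\end{equation*}

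To close the loop, I would bound the two triple-bar norms by $\norm{\chi_1u}_s$. The nested cutoff hierarchy in \eqref{e:chi} ($\tilde\chi\equiv1$ near $\supp\chi$, $\chi_1\equiv1$ near $\supp\tilde\chi$) forces $\chi_1\equiv1$ on $\supp\chi$, hence $\chi u=\chi(\chi_1u)$. Since each $L_j\in\cC^\infty(X,HX)$ is a smooth vector field on all of $X$, expanding $L_{j_1}\cdots L_{j_\nu}(\chi\,(\chi_1u))$ by the Leibniz rule shows that every term is bounded in $L^2$ by a constant depending on $\chi$ and $s$ times $\norm{\chi_1u}_s$, so $\vvvert\chi u\vvvert_s\leq C\norm{\chi_1u}_s$. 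Directly from the definition \eqref{e-gue180430a}, $\vvvert\chi_1u\vvvert_{s-1}\leq C\norm{\chi_1u}_{s-1}\leq C\norm{\chi_1u}_s$. Combining gives $\norm{\chi u}_s\leq C_s\norm{\chi_1u}_s$.

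The argument is essentially bookkeeping, with no genuine obstacle once Corollary~\ref{c-gue180501ab} is in hand. (One could alternatively observe that $\chi u=\chi(\chi_1u)$ together with the standard boundedness of multiplication by $\chi\in\cC^\infty_0(X)$ on $H^s$ gives the result immediately; routing the proof through Corollary~\ref{c-gue180501ab} merely records the subelliptic/$G$-invariant structure that is exploited more seriously in the sequel.)
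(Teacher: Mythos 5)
Your proof is correct and is exactly the deduction the paper intends (the paper gives no details beyond citing Corollary~\ref{c-gue180501ab} and \eqref{e:chi}): one sums the estimate of Corollary~\ref{c-gue180501ab} over a family of vector fields spanning $\C TX$ near $\supp\chi$ to dominate $\norm{\chi u}_s$ by $\vvvert\chi u\vvvert_s+\vvvert\chi_1u\vvvert_{s-1}$, and then absorbs these horizontal norms into $\norm{\chi_1u}_s$ using the cutoff nesting of \eqref{e:chi}. Your closing parenthetical is also accurate: the literal statement is trivial since $\chi u=\chi(\chi_1u)$, and the substantive content produced en route --- the control of the full Sobolev norm of a $G$-invariant function by its horizontal derivatives --- is what is actually exploited afterwards (e.g.\ in passing from Theorem~\ref{t-gue180501I} to Theorem~\ref{t-gue181223}).
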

\begin{theorem}\label{t-gue180501I}
For every $s\in\N$, there exists 
$C_s>0$ such that for any $u\in\cC^\infty(X)^G$ we have
\begin{equation}\label{e-gue180501p}
\vvvert\chi u\vvvert_{s+1}^2\leq 
C_s\left(\vvvert\chi\Box_bu\vvvert_{s}^2+
\vvvert\chi_1u\vvvert_{s}^2\right).
\end{equation}
 \end{theorem}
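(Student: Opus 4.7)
The plan is to prove this by induction on $s$, with the base case following from Theorem~\ref{t-gue180430} combined with the $G$-invariance identity \eqref{e-gue180430aIII}, and the inductive step building on commutator identities of the type established in Lemma~\ref{l-gue180501a} together with Corollary~\ref{c-gue180501ab}.

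For the base case $s=0$, I apply Theorem~\ref{t-gue180430} to $\chi u\in\cC^\infty(X)$:
\begin{equation*}
\vvvert\chi u\vvvert_1^2\leq C\bigl((\Box_b(\chi u),\chi u)+\abs{(T(\chi u),\chi u)}+\norm{\chi u}^2\bigr).
\end{equation*}
I expand $\Box_b(\chi u)=\chi\Box_bu+[\Box_b,\chi]u$, where $[\Box_b,\chi]$ is a first-order operator with coefficients supported in $\supp(d\chi)\subset\{\chi_1=1\}$. Pairing with $\chi u$, integrating by parts once, and using $(\Box_b(\chi u),\chi u)=\norm{\ddbar_b(\chi u)}^2$ to peg the surviving first-order terms against the left-hand side, I obtain a contribution bounded by $\varepsilon\vvvert\chi u\vvvert_1^2+C_\varepsilon(\norm{\chi\Box_bu}^2+\norm{\chi_1u}^2)$. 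For the $T$ term, because $u\in\cC^\infty(X)^G$, the key estimate \eqref{e-gue180430aIII} gives $\norm{T(\chi u)}\leq C(\vvvert\chi u\vvvert_1+\norm{\chi_1u})$, whence by Cauchy--Schwarz with a small parameter,
\begin{equation*}
\abs{(T(\chi u),\chi u)}\leq\varepsilon\vvvert\chi u\vvvert_1^2+C_\varepsilon\bigl(\norm{\chi u}^2+\norm{\chi_1u}^2\bigr).
\end{equation*}
Choosing $\varepsilon$ small enough to absorb the left-hand side terms completes the base case.

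For the inductive step, assuming the estimate at level $s-1$ (with some intermediate cutoff inserted between $\chi$ and $\chi_1$), I fix a multi-index $J=(j_2,\ldots,j_{s+1})$ and apply Theorem~\ref{t-gue180430} to $L_J\chi u:=L_{j_2}\cdots L_{j_{s+1}}\chi u$. The term $\sum_{j_1}\norm{L_{j_1}L_J\chi u}^2$ then sits inside $\vvvert L_J\chi u\vvvert_1^2$. For the $\Box_b$-part I commute $\Box_b$ past $L_J$ and past $\chi$; the resulting commutators are of order $s+1$ but supported in $\supp(d\chi)$ or carry one fewer derivative, so that after integration by parts each term is bounded by $\vvvert\chi\Box_bu\vvvert_s^2+\vvvert\chi_1u\vvvert_s^2$ (using the inductive hypothesis for the latter), plus a small multiple of $\vvvert\chi u\vvvert_{s+1}^2$ to be absorbed. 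The central mechanism for the $T$-part is to commute $TL_J=L_JT+[T,L_J]$ via Lemma~\ref{l-gue180501a} and then, on $D$, invoke $G$-invariance through \eqref{e-gue180430m} to substitute $T$ by $-\omega_0(V)^{-1}L$ with $L\in HX$. Corollary~\ref{c-gue180501ab} is then applied to control $\norm{L_JT(\chi u)}$ and $\norm{[T,L_J]\chi u}$ by $\vvvert\chi u\vvvert_s+\vvvert\chi_1u\vvvert_s$ and a small multiple of $\vvvert\chi u\vvvert_{s+1}$. Summing over $J$ and absorbing yields the desired inequality.

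The principal obstacle is the book-keeping in the $T$ term: after commuting $T$ past the order-$s$ operator $L_J$ and using $G$-invariance, several commutator residues of orders ranging from $1$ to $s+1$ appear, some of which require replacing the cutoff $\chi$ by intermediate cutoffs in order to apply the inductive hypothesis without inflating the order on the right-hand side beyond $s$. The cleanest route is to choose, once and for all, a finite chain of nested cutoff functions interpolating between $\chi$ and $\chi_1$, apply the base estimate and Corollary~\ref{c-gue180501ab} at each intermediate stage, and absorb the small $\varepsilon$-multiples of $\vvvert\chi u\vvvert_{s+1}^2$ at the very end.
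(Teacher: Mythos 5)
Your proposal is correct and follows essentially the same route as the paper: induction on $s$, with the base case obtained by applying Theorem~\ref{t-gue180430} to $\chi u$, expanding $\Box_b(\chi u)=\chi\Box_bu+[\Box_b,\chi]u$, and controlling the $T$-term via the $G$-invariance substitution \eqref{e-gue180430m}/\eqref{e-gue180430aIII}, and with the inductive step obtained by applying the basic estimate to $Z_2\cdots Z_{k+1}(\chi u)$ and absorbing the commutator residues through Corollary~\ref{c-gue180501ab} and small-$\varepsilon$ absorption. The only cosmetic difference is that you spell out the intermediate-cutoff bookkeeping that the paper handles implicitly through the fixed triple $\chi,\Td\chi,\chi_1$ of \eqref{e:chi}.
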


\begin{proof}
We prove \eqref{e-gue180501p} by induction over $s$. 
From \eqref{e-gue180430aI},  there exists $C>0$ 
such that for any $u\in\cC^\infty(X)^G$ we have 
\begin{equation}\label{e-gue180501pI}
\vvvert\chi u\vvvert_{1}^2\leq C\left((\,\Box_b(\chi u),\chi u\,)+
\abs{(\,T(\chi u),\chi u\,)}+\norm{\chi u}^2\right).
\end{equation}
Now by \eqref{e:chi}, 
\begin{equation}\label{e-gue180501pII}
(\,\Box_b(\chi u),\chi u\,)=(\,\chi\Box_bu,\chi u\,)
+(\,\chi [\Box_b, \chi] u,\chi_1u\,).
\end{equation}
From \eqref{e-gue180430m}, \eqref{e-gue180501pI}, 
\eqref{e-gue180501pII}, and some elementary computation, we get
\eqref{e-gue180501p} for $s=0$. 
We now assume that 
\eqref{e-gue180501p} holds for every $s<k$ and $k\geq1$. 
We will prove that \eqref{e-gue180501p} holds for $s=k$.
Let  $Z_1,\ldots, Z_{k+1}\in\cC^\infty(X,\C HX)$.
By \eqref{e-gue180430aI},  there exist $C_0, C>0$ such that 
for any $u\in\cC^\infty(X)^G$, we have
\begin{multline}\label{e-gue180501g}
\norm{Z_1\ldots Z_ {k+1}(\chi u)}^2\leq 
C_0\vvvert Z_2\ldots Z_ { k+1}(\chi u)\vvvert_{1}^2\\
\leq C\big((\,\Box_b Z_2\ldots Z_ {k+1}(\chi u),
Z_2\ldots Z_ {k+1}(\chi u)\,)\\
\quad+\norm{Z_2\ldots Z_ {k+1}(\chi u)}^2+
|(\,TZ_2\ldots Z_ {k+1}(\chi u),Z_2
\ldots Z_ {k+1}(\chi u)\,)|\big).
\end{multline}
We have 
\begin{equation}\label{e-gue180504}
\begin{split}
&(\,\Box_b(Z_2\ldots Z_ {k+1})\chi u,
(Z_2\ldots Z_ {k+1})\chi u\,)\\
&=(Z_2\ldots Z_ {k+1}\chi\Box_bu+[\Box_b, Z_2\ldots Z_ {k+1}]
\chi u
+Z_2\ldots Z_ {k+1}[\Box_b,\chi]u,
Z_2\ldots Z_ {k+1}\chi u\,)\\
&=(Z_2\ldots Z_ {k+1}\chi\Box_bu,
Z_2\ldots Z_ {k+1}\chi u\,)
+(\,\chi_1  Z_3\ldots Z_ {k+1}[\Box_b,\chi]u,
Z^*_2   Z_2\ldots Z_ {k+1}\chi u\,)\\
&\quad +(\,[\Box_b, Z_2\ldots Z_ {k+1}]
\chi u, Z_2\ldots Z_ {k+1}\chi u\,),
\end{split}
\end{equation}
where $Z^*_2$ denotes the adjoint of $Z_2$ and 
$Z^*_2=-Z_2+\mbox{zero order term}$. 
From \eqref{e-gue180501r} and \eqref{e-gue180504}
we see that there exists $C>0$ such that 
\begin{equation}\label{e-gue180504I}
\big|(\,\Box_b Z_2\ldots Z_ {k+1}
(\chi u),Z_2\ldots Z_ {k+1}(\chi u))\big|
\leq C\left(\vvvert\chi\Box_bu\vvvert^2_{k}+
\frac{1}{\varepsilon}\vvvert\chi_1u\vvvert^2_{k}+
\varepsilon\vvvert\chi u\vvvert^2_{k+1}\right),
\end{equation}
for every $\varepsilon>0$. Similarly,  from \eqref{e-gue180501r}, 
there exists $\widehat C>0$ such that
\begin{equation}\label{e-gue180504II}
\big|(\,TZ_2\ldots Z_ {k+1}(\chi u),
Z_2\ldots Z_ {k+1}(\chi u))\big|
\leq \widehat C\left(\frac{1}{\varepsilon}\vvvert\chi_1u\vvvert^2_{k}+
\varepsilon\vvvert\chi u\vvvert^2_{k+1}\right),
\end{equation}
for every $\varepsilon>0$. From \eqref{e-gue180501g}, 
\eqref{e-gue180504I} and
\eqref{e-gue180504II},
we conclude that \eqref{e-gue180501p} holds for $s=k$.
The theorem follows. 
\end{proof}

From Corollary~\ref{c-gue180501} and Theorem~\ref{t-gue180501I}
we get: 

\begin{theorem}\label{t-gue181223}
For every $s\in\mathbb N$,  there is $C_s>0$ such that
for any $u\in\cC^\infty(X)^G$, 
\begin{equation}\label{e-gue181223}
\norm{\chi u}_{s+1}^2\leq 
C_s\Bigr(\norm{\chi_1\Box_bu}_{s}^2+
\norm{\chi_1u}_{s}^2\Bigr), 
\end{equation}
where $\chi, \chi_1\in\cC^\infty_0(D)$ are as in \eqref{e:chi}. 
\end{theorem}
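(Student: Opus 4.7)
The plan is to combine Theorem~\ref{t-gue180501I} (which estimates the horizontal norm $\vvvert\chi u\vvvert_{s+1}$ by $\vvvert\chi\Box_b u\vvvert_s$ and $\vvvert\chi_1 u\vvvert_s$) with Corollary~\ref{c-gue180501ab} (which lets us dominate the standard Sobolev norm by horizontal norms on $G$-invariant functions in $D$, thanks to Theorem~\ref{t-gue180501} which rewrites $T$-derivatives as horizontal ones). Introduce an intermediate cutoff $\chi_{1/2}\in\cC^\infty_0(D)$ with $\chi_{1/2}=1$ near $\supp\chi$ and $\chi_1=1$ near $\supp\chi_{1/2}$, so that both pairs $(\chi,\chi_{1/2})$ and $(\chi_{1/2},\chi_1)$ satisfy the nesting condition of \eqref{e:chi}.

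First I would upgrade the left-hand side. Pick a local frame $\{V_1,\ldots,V_m\}\subset\cC^\infty(D,\C TX)$; since $\chi u$ is supported in $D$, the norm $\norm{\chi u}_{s+1}^2$ is equivalent to a finite sum of $\norm{V_{j_1}\cdots V_{j_k}(\chi u)}^2$ with $k\leq s+1$ (plus $\norm{\chi u}^2$, which is absorbed in $\vvvert\chi u\vvvert_{s+1}^2$). Applying Corollary~\ref{c-gue180501ab} with cutoffs $(\chi,\chi_{1/2})$ to each summand gives
\begin{equation*}
\norm{\chi u}_{s+1}^2 \leq C\bigl(\vvvert\chi u\vvvert_{s+1}^2 + \vvvert\chi_{1/2}u\vvvert_s^2\bigr).
\end{equation*}
Next, Theorem~\ref{t-gue180501I} with the same cutoff pair yields
\begin{equation*}
\vvvert\chi u\vvvert_{s+1}^2 \leq C\bigl(\vvvert\chi\Box_b u\vvvert_s^2 + \vvvert\chi_{1/2}u\vvvert_s^2\bigr).
\end{equation*}

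Finally I would convert the right-hand side back to standard Sobolev norms. Since each $L_j$ is a smooth section of $TX$, the trivial bound $\vvvert v\vvvert_s\leq C\norm{v}_s$ holds for all $v\in\cC^\infty(X)$. Combined with the identities $\chi=\chi\chi_1$ and $\chi_{1/2}=\chi_{1/2}\chi_1$ (immediate from the nesting of supports) and the boundedness of multiplication by a compactly supported smooth function on $H^s(X)$, we get $\vvvert\chi\Box_b u\vvvert_s\leq C\norm{\chi_1\Box_b u}_s$ and $\vvvert\chi_{1/2}u\vvvert_s\leq C\norm{\chi_1 u}_s$. Chaining the three estimates produces \eqref{e-gue181223}. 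The only genuine obstacle is bookkeeping with the three nested cutoffs; all the analytic content has already been extracted in Theorem~\ref{t-gue180501I} (itself resting on the Kohn-type inequality \eqref{e-gue180430aI} together with the commutator identities of Lemma~\ref{l-gue180501a} that let $T$-derivatives of $G$-invariant functions be absorbed into horizontal ones on $D$).
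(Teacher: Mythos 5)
Your proposal is correct and follows essentially the same route as the paper, which deduces the theorem from Corollary~\ref{c-gue180501} and Theorem~\ref{t-gue180501I}; your explicit use of Corollary~\ref{c-gue180501ab} to pass between the standard Sobolev norm and the horizontal norm $\vvvert\cdot\vvvert$ is exactly the content of Corollary~\ref{c-gue180501}, and the cutoff bookkeeping with an intermediate $\chi_{1/2}$ is the intended (and legitimate) way to chain the estimates.
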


\subsection{Closed range property for the $G$-invariant Kohn Laplacian}
\label{s-gue181012}
In this section we will work in the setting of
Assumption \ref{a-gue170410I} (i).
%
Under these hypotheses we prove subelliptic estimates, regularity and
the closed range in $L^2$ property for the $G$-invariant Kohn Laplacian.

Let $\ol{\pr}^*_b: {\rm Dom\,}\ol{\pr}^*_b\subset L^2_{(0,q+1)}(X)
\To L^2_{(0,q)}(X)$ 
be the Hilbert space adjoint of $\ddbar_b$ in \eqref{eq:2.14} 
with respect to $(\,\cdot\,,\,\cdot\,)$ in \eqref{e:l2}. 
The operators $\ddbar_b$, $\ol{\pr}^*_b$ commute with 
$G$-action, thus we can define
$\ddbar_{b,G}$ and $\ol{\pr}^*_{b,G}$  with 
\begin{align}\label{eq:ma3.23}
{\rm Dom\,}\ddbar_{b,G}
:={\rm Dom\,}\ddbar_b\cap L^2_{(0,q)}(X)^G,
\quad {\rm Dom\,}\ol{\pr}^*_{b,G}
:={\rm Dom\,}\ol{\pr}^*_b\cap L^2_{(0,q+1)}(X)^G,
\end{align}
and
$\ol{\pr}^*_{b,G}: {\rm Dom\,}\ol{\pr}^*_{b,G}\To L^2_{(0,q)}(X)^G$
is the Hilbert space adjoint of $\ddbar_{b,G}$. 
Let $\Box^{(q)}_{b,G}$ denote the (Gaffney extension) of 
the $G$-invariant Kohn Laplacian given by
\begin{equation}\label{e-gue181012}
\begin{split}
\Dom \Box^{(q)}_{b,G}=&\Big\{u\in L^2_{(0,q)}(X)^G;\, 
u\in\Dom  \ddbar_{b,G}\cap\Dom  \ol{\pr}^*_{b,G}, \\
& \hspace{3cm}\ddbar_{b,G}u\in\Dom  \ol{\pr}^{*}_{b,G}, \, 
\ol{\pr}^*_{b,G}u\in\Dom  \ddbar_{b,G}\Big\}\,,\\
\Box^{(q)}_{b,G}u=&\ddbar_{b,G}\ol{\pr}^*_{b,G}u
+\ol{\pr}^{*}_{b,G}\ddbar_{b,G}u
\:\:\text{for $u\in \Dom  \Box^{(q)}_{b,G}$}\,.
 \end{split}
\end{equation}

\begin{lemma}\label{l-gue200221yyd}
Let $u\in\Dom\ddbar_{b,G}\cap L^2_{(0,q)}(X)^G$. 
Then $\ddbar_{b,G}u\in\Dom  \ddbar_{b,G}$ and 
$\ddbar^2_{b,G}u=0$. 
\end{lemma}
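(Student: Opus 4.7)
The plan is to unwind the definition of the weak maximal extension of $\ddbar_b$ and reduce the claim to the identities $\ddbar_b^2=0$ and $\vartheta_b^2=0$ on smooth forms, where $\vartheta_b$ denotes the formal $L^2$-adjoint of $\ddbar_b$.

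First I would recall that, by the definition of the maximal extension in \eqref{eq:2.14}, a form $w\in L^2_{(0,k)}(X)$ lies in $\Dom\ddbar_b$ iff there exists $z\in L^2_{(0,k+1)}(X)$ such that
\[(z,\varphi)=(w,\vartheta_b\varphi)\qquad\text{for every }\varphi\in\Omega^{0,k+1}(X),\]
and then $\ddbar_b w=z$. Since $X$ is compact, smooth forms suffice as test forms.

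Next I would verify that $\ddbar_b^2=0$ on smooth forms. This is standard and follows because $\ddbar_b$ is (up to identification) the composition of $d$ with projection onto the $(0,\bullet)$-component, combined with $d^2=0$ and the integrability condition $[\mathcal V,\mathcal V]\subset\mathcal V$ from \eqref{eq:2.3}. By taking formal $L^2$-adjoints, this yields $\vartheta_b^2=0$ on smooth forms as well.

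Now set $v:=\ddbar_{b,G}u=\ddbar_b u\in L^2_{(0,q+1)}(X)$. To show $v\in\Dom\ddbar_b$ with $\ddbar_b v=0$, I would test against an arbitrary $\psi\in\Omega^{0,q+2)}(X)$: since $\vartheta_b\psi\in\Omega^{0,q+1}(X)$ is a legitimate test form for the defining identity of $\ddbar_b u$, I get
\[(v,\vartheta_b\psi)=(\ddbar_b u,\vartheta_b\psi)=(u,\vartheta_b^2\psi)=0.\]
This verifies, by the maximal-extension characterization, that $v\in\Dom\ddbar_b$ and $\ddbar_b v=0$.

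Finally, for the $G$-invariance part, I would use that $G$ preserves $J$, so $G$ preserves $T^{0,1}X$ and $T^{*0,q}X$, and hence pullback by any $h\in G$ commutes with $\ddbar_b$. Consequently $v=\ddbar_b u$ is $G$-invariant because $u$ is, so $v\in L^2_{(0,q+1)}(X)^G\cap\Dom\ddbar_b=\Dom\ddbar_{b,G}$ by \eqref{eq:ma3.23}, and $\ddbar_{b,G}^{\,2}u=\ddbar_{b,G}v=0$. The only subtlety worth double-checking is the bookkeeping of form-degrees in the duality pairing that lets me apply the defining weak identity for $\ddbar_b u$ to the test form $\vartheta_b\psi$; once this is in place, the argument is entirely formal.
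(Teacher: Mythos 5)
Your proof is correct, but it follows a different route from the paper's. The paper invokes Friedrichs' lemma to produce smooth $G$-invariant $u_j$ converging to $u$ in the graph norm of $\ddbar_{b}$, and then computes $(\ddbar_{b}u,\ol{\pr}^*_{b}v)=\lim_j(\ddbar_{b}u_j,\ol{\pr}^*_{b}v)=\lim_j(\ddbar_{b}^2u_j,v)=0$; the vanishing comes from $\ddbar_b^2=0$ applied to the smooth approximants. You instead stay entirely at the level of the distributional definition of the weak maximal extension: pairing $\ddbar_b u$ against $\vartheta_b\psi$ and using the defining identity $(\ddbar_bu,\varphi)=(u,\vartheta_b\varphi)$ with $\varphi=\vartheta_b\psi$, so the vanishing comes from $\vartheta_b^2=0$ on smooth forms (the formal adjoint of $\ddbar_b^2=0$). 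Your version avoids the approximation step altogether — Friedrichs' lemma is a genuinely nontrivial mollification result on manifolds — at the cost of having to be explicit that the weak maximal extension is characterized by the duality identity against smooth test forms (true on a compact manifold without boundary, and consistent with \eqref{eq:2.14}) and that the Hilbert-space adjoint agrees with the formal adjoint on smooth forms. Your added remark that $\ddbar_b$ commutes with the $G$-action, so that $\ddbar_bu$ lands in $L^2_{(0,q+1)}(X)^G$ and hence in $\Dom\ddbar_{b,G}$ per \eqref{eq:ma3.23}, is a point the paper leaves implicit; it is worth keeping. (Minor typo: $\Omega^{0,q+2)}(X)$ should read $\Omega^{0,q+2}(X)$.)
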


\begin{proof}
By Friedrichs' lemma~\cite[Appendix D]{CS01}, there is a sequence 
$\set{u_j}^{\infty }_{j=1}\subset\Omega^{0,q}(X)^G$ such that 
$u_j\To u$ in $L^2_{(0,q)}(X)^G$ as $j\To\infty $ and 
$\ddbar_{b,G}u_j\To\ddbar_{b,G}u$ in $L^2_{(0,q+1)}(X)^G$ 
as $j\To\infty $. 
Let $v\in\Omega^{0,q+2}(X)^G$. We have 
\begin{align}\label{eq:h3.26}
(\,\ddbar_{b,G}u\,,\,\ol{\pr}^*_{b,G}v\,)
=\lim_{j\To\infty }(\,\ddbar_{b,G}u_j\,,\,\ol{\pr}^*_{b,G}v\,)
=\lim_{j\To\infty }(\,\ddbar^2_{b,G}u_j\,,\,v\,)=0.
\end{align}
Hence, $\ddbar_{b,G}u\in\Dom  \ddbar_{b,G}$ and 
$\ddbar^2_{b,G}u=0$. 
\end{proof}

\begin{lemma}\label{l-gue200214yyd}
The operator 
$\Box^{(q)}_{b,G}: \Dom  \Box^{(q)}_{b,G}\subset 
L^2_{(0,q)}(X)^G\To L^2_{(0,q)}(X)^G$
is a closed. 
\end{lemma}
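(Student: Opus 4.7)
My plan is to realize $\Box^{(q)}_{b,G}$ as the self-adjoint operator associated with a Gaffney-type closed positive quadratic form on the $G$-invariant Hilbert space; closedness of $\Box^{(q)}_{b,G}$ is then automatic from self-adjointness. This mirrors the standard proof that the (non-equivariant) Kohn Laplacian is self-adjoint, cf.\ the cited \cite[Proposition 3.1.2]{MM}, with the extra bookkeeping needed to stay inside the $G$-invariant subspace.

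First I would verify that $\ddbar_{b,G}$, viewed as an unbounded operator $L^2_{(0,q)}(X)^G\to L^2_{(0,q+1)}(X)^G$ with the domain in \eqref{eq:ma3.23}, is densely defined and closed. Density is immediate since $\Omega^{0,q}(X)^G$ sits in the domain and is dense in $L^2_{(0,q)}(X)^G$ (one can average any approximating sequence by the Haar projection $P_G$, which commutes with $\ddbar_b$). Closedness follows by intersecting the graph of the closed operator $\ddbar_b$ with the closed subspace $L^2_{(0,q)}(X)^G\oplus L^2_{(0,q+1)}(X)^G$. Then the Hilbert-space adjoint of $\ddbar_{b,G}$ is exactly $\ol{\pr}^*_{b,G}$, and it is automatically closed and densely defined.

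Next I would introduce the sesquilinear form
\begin{equation*}
Q(u,v):=(\,\ddbar_{b,G}u\,,\,\ddbar_{b,G}v\,)
+(\,\ol{\pr}^*_{b,G}u\,,\,\ol{\pr}^*_{b,G}v\,),
\qquad \Dom Q:=\Dom\ddbar_{b,G}\cap\Dom\ol{\pr}^*_{b,G},
\end{equation*}
which is symmetric and positive. It is densely defined because $\Omega^{0,q}(X)^G\subset \Dom Q$, and it is closed because its graph norm $\|u\|^2+\|\ddbar_{b,G}u\|^2+\|\ol{\pr}^*_{b,G}u\|^2$ makes $\Dom Q$ complete, thanks to the closedness of both $\ddbar_{b,G}$ and $\ol{\pr}^*_{b,G}$ established above. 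By the representation theorem for closed positive forms (Friedrichs), there is a unique positive self-adjoint operator $A$ on $L^2_{(0,q)}(X)^G$ with $\Dom A^{1/2}=\Dom Q$ and $(Au,v)=Q(u,v)$ for all $u\in\Dom A$, $v\in\Dom Q$.

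It remains to identify $A$ with $\Box^{(q)}_{b,G}$ as defined in \eqref{e-gue181012}. Unpacking $(Au,v)=Q(u,v)$ and varying $v$ first over $\Dom\ddbar_{b,G}$ alone and then over $\Dom\ol{\pr}^*_{b,G}$ alone shows that $u\in\Dom A$ forces $\ddbar_{b,G}u\in\Dom\ol{\pr}^*_{b,G}$ and $\ol{\pr}^*_{b,G}u\in\Dom\ddbar_{b,G}$, with $Au=\ddbar_{b,G}\ol{\pr}^*_{b,G}u+\ol{\pr}^*_{b,G}\ddbar_{b,G}u$; here Lemma \ref{l-gue200221yyd} is exactly what is needed to discard the cross terms coming from $\ddbar^2_{b,G}$ and its adjoint. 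Conversely, any $u$ in the domain on the right-hand side of \eqref{e-gue181012} satisfies $(\Box^{(q)}_{b,G}u,v)=Q(u,v)$ for $v\in\Dom Q$, hence lies in $\Dom A$ with $Au=\Box^{(q)}_{b,G}u$. Thus $A=\Box^{(q)}_{b,G}$ and, being self-adjoint, $\Box^{(q)}_{b,G}$ is closed. The only delicate point is the identification of the two domains, so that is where I would focus the careful estimates.
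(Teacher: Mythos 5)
Your argument is correct, but it takes a genuinely different route from the paper's. The paper proves closedness directly from the definition of the graph: given $(f_k,\Box^{(q)}_{b,G}f_k)\to(f,h)$, it uses the energy identity $\|\ddbar_{b,G}u\|^2+\|\ol{\pr}^*_{b,G}u\|^2=(\Box^{(q)}_{b,G}u,u)$ to see that $\ddbar_{b,G}f_k$ and $\ol{\pr}^*_{b,G}f_k$ are Cauchy, and then the orthogonality $(\ol{\pr}^*_{b,G}\ddbar_{b,G}u,\ddbar_{b,G}\ol{\pr}^*_{b,G}v)=0$ (a consequence of Lemma \ref{l-gue200221yyd}) to split $\|\Box^{(q)}_{b,G}(f_j-f_k)\|^2$ into the two squared pieces, so that each of $\ol{\pr}^*_{b,G}\ddbar_{b,G}f_k$ and $\ddbar_{b,G}\ol{\pr}^*_{b,G}f_k$ converges; closedness of $\ddbar_{b,G}$ and $\ol{\pr}^*_{b,G}$ then places $f$ in the domain \eqref{e-gue181012}. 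You instead invoke the representation theorem for the closed positive form $Q$ and identify the associated self-adjoint operator with the Gaffney domain; this is exactly the scheme of \cite[Proposition 3.1.2]{MM} transplanted to the $G$-invariant subspace, and it yields the stronger conclusion that $\Box^{(q)}_{b,G}$ is self-adjoint, not merely closed. Two points deserve care in your version. First, the statement that the Hilbert-space adjoint of $\ddbar_{b,G}$ is the restriction of $\ol{\pr}^*_b$ to invariant forms requires the observation that the averaging projection $\mP_G$ commutes with $\ddbar_b$, so that a bound $|(u,\ddbar_{b,G}v)|\leq C\|v\|$ over invariant $v$ upgrades to one over all $v\in\Dom\ddbar_b$. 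Second, in the domain identification you cannot literally ``vary $v$ over $\Dom\ddbar_{b,G}$ alone,'' since $(Au,v)=Q(u,v)$ is only available for $v\in\Dom Q$; the standard repair is the orthogonal decomposition of $L^2_{(0,q)}(X)^G$ into $\overline{\Ran\,\ddbar_{b,G}}\oplus\Ker\ol{\pr}^*_{b,G}$ (and its counterpart one degree up), where the inclusion $\overline{\Ran\,\ddbar_{b,G}}\subset\Ker\ddbar_{b,G}$ supplied by Lemma \ref{l-gue200221yyd} lets you replace an arbitrary test form by one in $\Dom Q$ without changing $(\ddbar_{b,G}u,\ddbar_{b,G}v)$. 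You flag this as the delicate point, and with that decomposition spelled out the proof closes; the trade-off is that your route is less elementary but delivers self-adjointness for free, while the paper's sequence argument uses only the two energy identities.
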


\begin{proof}
Let $\{(f_k, \Box^{(q)}_{b,G}f_k)\in L^2_{(0,q)}(X)^G\times 
L^2_{(0,q)}(X)^G;\, f_k\in\Dom  \Box^{(q)}_{b,G}\}_{k=1}^\infty$
with 
\begin{align}\label{e-gue201013yyd}
\lim_{k\To\infty }f_k=f, \quad
\lim_{k\To\infty }\Box^{(q)}_{b,G}f_k=h \text{  in } L^2_{(0,q)}(X)^G.
\end{align}
By definition, to check that $\Box^{(q)}_{b,G}$ is a closed operator, 
we need to show that $f\in\Dom  \Box^{(q)}_{b,G}$ and 
$\Box^{(q)}_{b,G}f=h$. 
Since $f_k\in\Dom  \Box^{(q)}_{b,G}$, for each $k$, we have 
\begin{equation}\label{e-gue201013yydI}
\norm{\ol{\pr}^*_{b,G}(f_j-f_k)}^2
\leq  \norm{\ol{\pr}^*_{b,G}(f_j-f_k)}^2
+\norm{\ddbar_{b,G}(f_j-f_k)}^2
=(\,\Box^{(q)}_{b,G}(f_j-f_k)\,,\,f_j-f_k\,).
\end{equation}
From \eqref{e-gue201013yyd} and \eqref{e-gue201013yydI}, 
$\{\ol{\pr}^*_{b,G}f_k\}^{\infty}_{k=1}$ is a Cauchy sequence 
in $L^2$,  hence $\lim_{k\To\infty }\ol{\pr}^*_{b,G}f_k=h_{1}$ in
$L^2_{(0,q-1)}(X)^G$, for some $h_{1}\in L^2_{(0,q-1)}(X)^G$. 

Let $v\in\Dom  \ddbar_{b,G}\cap L^2_{(0,q-1)}(X)^G$. We have 
\begin{equation}\label{e-gue200214ycd}
(\,f\,,\,\ddbar_{b,G}v\,)=\lim_{k\To\infty }(\,f_k\,,\,
\ddbar_{b,G}v\,)=\lim_{k\To\infty }(\,\ol{\pr}^*_{b,G}f_k\,,\,v\,)
=(\,u\,,\,v\,).
\end{equation}
Hence, $f\in\Dom  \ol{\pr}^*_{b,G}$ and 
$\ol{\pr}^*_{b,G}f=\lim_{k\To\infty }\ol{\pr}^*_{b,G}f_k$. 
Similarly, we can repeat the procedure above and 
show that $f\in\Dom  \ddbar_{b,G}$ and 
$\ddbar_{b,G}f=\lim_{k\To\infty }\ddbar_{b,G}f_k$. 

Now, we show that $\ddbar_{b,G}f\in\Dom  \ol{\pr}^*_{b,G}$. 
From Lemma~\ref{l-gue200221yyd}, we know that 
\begin{equation}\label{e-gue200218yyd}
\mbox{$(\,\ol{\pr}^*_{b,G}\,\ddbar_{b,G}u\,,\,
\ddbar_{b,G}\,\ol{\pr}^*_{b,G}v\,)=0$, for every 
$u, v\in\Dom  \Box^{(q)}_{b,G}$.}
\end{equation}
From \eqref{e-gue200218yyd}, we have 
\begin{equation}\label{e-gue201014yyd}
\norm{\ol{\pr}^*_{b,G}\,
\ddbar_{b,G}(f_j-f_k)}^2+\norm{\ddbar_{b,G}\,
\ol{\pr}^*_{b,G}(f_j-f_k)}^2
=\norm{\Box^{(q)}_{b,G}(f_j-f_k)}^2.
\end{equation}
From \eqref{e-gue201013yyd}, \eqref{e-gue201014yyd}, 
$\set{\ol{\pr}^*_{b,G}\ddbar_{b,G}f_j}^{+\infty}_{j=1}$
is a Cauchy sequence in $L^2$, hence 
$\lim_{k\To\infty }\ol{\pr}^*_{b,G}\ddbar_{b,G}f_k=h_{2}$ in 
$L^2_{(0,q)}(X)^G$, for some $h_{2}\in L^2_{(0,q)}(X)^G$ and thus, 
$\ddbar_{b,G}f\in\Dom  \ol{\pr}^*_{b,G}$ and 
$\ol{\pr}^*_{b,G}\,\ddbar_{b,G}f
=\lim_{k\To\infty }\ol{\pr}^*_{b,G}\,\ddbar_{b,G}f_k$. 
Similarly, we can repeat the process above and show that 
\begin{align}\label{eq:h3.33}
\ol{\pr}^*_{b,G}f\in\Dom  \ddbar_{b,G},\ \ \ddbar_{b,G}\,
\ol{\pr}^*_{b,G}f=\lim_{k\To\infty }
\ddbar_{b,G}\,\ol{\pr}^*_{b,G}f_k.
\end{align}
Hence, $f\in\Dom  \Box^{(q)}_{b,G}$ and 
$\Box^{(q)}_{b,G}f=\lim_{k\To\infty }\Box^{(q)}_{b,G}f_k=h$. 
The lemma follows. 
\end{proof}

Since the dimension of $X$ is greater or equal to five, 
we can repeat Kohn's method \cite{Koh64}
(see also the proof of~\cite[Theorem 8.3.5]{CS01}) 
and deduce the following subelliptic estimates: 

\begin{theorem}\label{t-gue181012}
Under Assumption \ref{a-gue170410I} (i), let 
$\eta, \eta_1\in\cC^\infty(X)$ 
such that $\eta=1$ near $Y$, $\eta_1=1$ near 
$\supp\eta$ and the Levi form is positive near 
$\supp\eta_1$.
Then for every $s\in\N$ 
 there is $C_s>0$ such that for any $u\in\Omega^{0,1}(X)$, 
\begin{equation}\label{e-gue181012m}
\norm{\eta u}^2_{s+1}\leq C_s\Bigr(\norm{\eta_1\Box^{(1)}_bu}^2_s
+\norm{\eta_1u}^2_s\Bigr).
\end{equation}
\end{theorem}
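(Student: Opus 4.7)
The estimate is a classical $1$-subelliptic estimate of Kohn for $\Box_b^{(1)}$ on $(0,1)$-forms, localized via cutoffs to a neighborhood of $Y$ on which the Levi form is positive. Since $\dim X=2n+1\geq 5$ gives $n\geq 2$, Kohn's condition $Y(1)$ is satisfied on the strictly pseudoconvex open set $\{\eta_1>0\}$, and the proof will amount to a localized rerun of the argument in \cite[Theorem 8.3.5]{CS01}.

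For the base case $s=0$, I would fix an intermediate cutoff $\chi_0\in\cC^\infty(X)$ with $\chi_0\equiv 1$ on a neighborhood of $\supp\eta$ and $\supp\chi_0\subset\{\eta_1=1\}$, and apply Kohn's basic energy estimate, valid for smooth $(0,1)$-forms supported in the strictly pseudoconvex region, to $\eta u$:
\begin{equation*}
\|\eta u\|_1^2 \leq C\bigl(\|\ddbar_b(\eta u)\|^2+\|\ol{\pr}_b^{\,*}(\eta u)\|^2+\|\eta u\|^2\bigr) \leq C'\bigl(|(\Box_b^{(1)}(\eta u),\eta u)|+\|\eta u\|^2\bigr).
\end{equation*}
Expanding $\Box_b^{(1)}(\eta u)=\eta\,\Box_b^{(1)} u+[\Box_b^{(1)},\eta]\,u$, the commutator $[\Box_b^{(1)},\eta]$ is a first-order operator with coefficients supported in $\supp d\eta\subset\{\chi_0=1\}$, so $\|[\Box_b^{(1)},\eta]u\|\leq C\|\chi_0 u\|_1$. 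A small-large inequality absorbs a top-order piece into the left side, and bounding $\|\chi_0 u\|_1$ by iterating the same type of estimate for the pair $\chi_0\prec\chi_0'\prec\eta_1$ (plus a final application of the Sobolev norm inequality on the region of positivity) yields the claim for $s=0$.

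For the induction on $s$, assuming the estimate for indices $<s$, I would differentiate the $s=0$ estimate along products $D^\alpha$ of length $|\alpha|=s$ made from a local frame $L_1,\dots,L_N$ of $HX$ together with $T$, applied to $\eta u$. Commuting $\Box_b^{(1)}$ past $D^\alpha$ produces terms of order $\leq s+1$ whose top-order contributions are absorbed by small-large inequalities, while the remaining terms are controlled by the inductive hypothesis applied through a nested chain of cutoffs $\eta=\eta^{(0)}\prec\eta^{(1)}\prec\cdots\prec\eta^{(s)}\prec\eta_1$ so that each induction step moves from one level to the next. Pure $T$-derivatives are handled using the $\Box_b^{(1)}$-structure on $(0,1)$-forms: modulo lower-order tangential terms, $Tu$ is expressed through $\ddbar_b$ and $\ol{\pr}_b^{\,*}$, so $T$-differentiation of the equation is traded for spatial $HX$-derivatives at the cost of terms involving $\Box_b^{(1)} u$ (which are absorbed into $\|\eta_1\Box_b^{(1)} u\|_s$).

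The main technical obstacle is the cutoff and commutator bookkeeping through the induction; the analytic content is exactly Kohn's classical estimate under $Y(1)$. The essential observation enabling the localization is that all energy estimates and commutators are local operations on $\supp\eta_1$, so positivity of the Levi form outside this set is not needed — this is why the hypothesis of Assumption \ref{a-gue170410I}(i) asking positivity only near $Y=\mu^{-1}(0)$ suffices.
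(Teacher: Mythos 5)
Your overall strategy --- localizing Kohn's $Y(1)$ subelliptic estimates to the strictly pseudoconvex neighbourhood of $Y$ by a nested chain of cutoffs and inducting on $s$ --- is exactly the paper's: its proof of Theorem \ref{t-gue181012} consists of the single remark that, since $\dim X\geq 5$ forces $n\geq2$, one can ``repeat Kohn's method'' as in the proof of \cite[Theorem 8.3.5]{CS01}. You also correctly identify the two essential points, namely that $Y(1)$ for $(0,1)$-forms on a strictly pseudoconvex set requires $n\geq2$, and that all commutators are supported in $\supp\eta_1$, so positivity of the Levi form is needed only there.

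However, the displayed chain in your base case asserts a false estimate. The identity $\norm{\ddbar_b v}^2+\norm{\ol{\pr}_b^{\,*}v}^2=(\Box^{(1)}_bv,v)=:Q(v,v)$ is fine, but $\norm{v}_1^2\leq C\bigl(Q(v,v)+\norm{v}^2\bigr)$ does \emph{not} hold, even for $v$ supported where the Levi form is positive: $\Box^{(1)}_b$ is not elliptic (its characteristic variety is the span of $\omega_0$), and the quadratic form controls only the horizontal derivatives $\vvvert v\vvvert_1$ together with $\norm{v}_{1/2}$; the sharp subelliptic gain from $Q$ is $1/2$ and cannot be improved to $1$. Recovering the missing $T$-derivative is precisely the step that forces the right-hand side of \eqref{e-gue181012m} to contain $\norm{\eta_1\Box^{(1)}_bu}_s^2$ rather than a quadratic-form quantity: one writes $T$ on $\supp\eta_1$ as a combination of commutators $[L_j,\overline{L}_j]$ and horizontal fields (possible by strict pseudoconvexity) and bounds $\norm{T(\eta u)}$ by $\norm{\Box^{(1)}_b(\eta u)}$ plus horizontal first derivatives --- this is where $Y(1)$, hence $n\geq2$, actually enters. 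Compare the paper's Theorem \ref{t-gue180430} for functions, which keeps the uncontrolled term $\abs{(Tu,u)}$ and estimates only the norm $\vvvert\cdot\vvvert_1$ of \eqref{e-gue180430a}. Once your base case is replaced by this corrected version, the induction and cutoff bookkeeping you describe go through and reproduce the intended argument.
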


Repeating the proof of Theorem~\ref{t-gue181223} with minor changes
we get: 

\begin{theorem}\label{t-gue181012I}
Let $\gamma, \gamma_1\in\cC^\infty(X)$ with $\gamma_1=1$ near 
$\supp\gamma$ and $\supp\gamma_1\cap Y=\emptyset$.
For every $s\in\N$, there exists 
$C_s>0$ such that for any $u\in\Omega^{0,1}(X)^G$, 
\begin{equation}\label{e-gue181012mI}
\norm{\gamma u}_{s+1}^2
\leq C_s\Bigr(\norm{\gamma_1\Box^{(1)}_{b,G}u}_{s}^2+
\norm{\gamma_1u}_{s}^2\Bigr).
\end{equation}
\end{theorem}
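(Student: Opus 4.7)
The plan is to adapt the proof of Theorem~\ref{t-gue181223} to $(0,1)$-forms. The crucial scalar-case mechanism was that on a neighborhood $D$ (where $\supp\gamma_1\cap Y=\emptyset$) one can choose $V\in\cC^\infty(D,\underline{\kg})$ with $\omega_0(V)\neq 0$, and then for a $G$-invariant scalar $u$ the identity $Vu=0$ together with $V=L+\omega_0(V)T$, $L\in HX$, gives $Tu=-\omega_0(V)^{-1}Lu$, controlling the single ``bad'' direction $T$ by the subelliptic directions in $HX$. For a $G$-invariant $(0,1)$-form $u$ the analogous identity comes from $\mL_V u=0$, and the decomposition $V=L+\omega_0(V)T$ yields
$$
\mL_T u=-\frac{1}{\omega_0(V)}\bigl(\mL_L u+(i_T u)\,d(\omega_0(V))\bigr).
$$
Reading this in a local frame of $T^{*0,1}X$ on $D$ expresses the $T$-derivative of each coefficient of $u$ in terms of $HX$-derivatives of its coefficients plus zeroth-order contributions coming from $d(\omega_0(V))$ and from the Lie derivative hitting the frame.

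The starting point for the induction will be the basic subelliptic inequality for $(0,1)$-forms,
$$
\vvvert u\vvvert_1^2\leq C\bigl((\,\Box_b^{(1)}u,u\,)+|(\,Tu,u\,)|+\|u\|^2\bigr),
$$
valid for every $u\in\Omega^{0,1}(X)$, which follows from the method of~\cite[Theorem~8.3.5]{CS01} applied to $(0,1)$-forms (the condition $Y(1)$ is not needed since the term $|(Tu,u)|$ is kept on the right). Given this ingredient, the $(0,1)$-form analogues of Lemma~\ref{l-gue180501a}, Theorem~\ref{t-gue180501}, and Corollaries~\ref{c-gue180501ab} and~\ref{c-gue180501} hold on $D$ for $G$-invariant forms, with commutator manipulations enlarged only by bounded multiplication operators coming from the zero-order terms produced by the identity above.

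The induction on $s$ then mirrors the proof of Theorem~\ref{t-gue180501I}: the base case $s=0$ plugs $\gamma u$ into the basic inequality, controls the commutator $[\Box_b^{(1)},\gamma]$ by a $\gamma_1$-cutoff on the right, and handles the $|(T(\gamma u),\gamma u)|$ term by using the $G$-invariant identity above to trade $T$ for $HX$-derivatives; the inductive step applies the basic inequality to $Z_2\cdots Z_{k+1}(\gamma u)$ with $Z_i\in\cC^\infty(X,\C HX)$, commutes $\Box_b^{(1)}$ past these vector fields, and closes the estimate after absorbing a small $\varepsilon\vvvert\gamma u\vvvert_{k+1}^2$ on the left. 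The $(0,1)$-form analogue of Corollary~\ref{c-gue180501} then converts the tangential Sobolev norms $\vvvert\cdot\vvvert_s$ into standard Sobolev norms $\|\cdot\|_s$, yielding~\eqref{e-gue181012mI}.

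The only point requiring attention, and exactly what the paper summarizes as ``with minor changes'', is the bookkeeping of the zero-order contributions from $d(\omega_0(V))$ and from Lie-differentiating the local $(0,1)$-frame. These are bounded multiplication operators on every $H^s$, so they propagate harmlessly through the commutator algebra and do not alter the structure of any estimate; no new analytic input is required beyond what already powered Theorem~\ref{t-gue181223}.
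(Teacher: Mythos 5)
Your proposal is correct and follows exactly the route the paper intends: the paper's entire proof of Theorem~\ref{t-gue181012I} is the remark that one repeats the proof of Theorem~\ref{t-gue181223} with minor changes, and you have correctly identified what those changes are — the basic subelliptic estimate for $(0,1)$-forms with the $|(Tu,u)|$ term retained, and the replacement of $Vu=0$ by the Lie-derivative identity $\mL_Vu=0$ for invariant forms, which after reading in a local frame converts $T$-derivatives of the coefficients into $HX$-derivatives plus harmless zero-order terms. No gap; this matches the paper's argument.
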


From Theorems ~\ref{t-gue181012}, ~\ref{t-gue181012I} 
and by using a partition of unity we obtain: 

\begin{theorem}\label{t-gue181012II}
Assume that Assumption \ref{a-gue170410I} (i) holds. 
Then for every $s\in\N$
and every $\gamma, \gamma_1\in\cC^\infty(X)$ with
$\gamma_1=1$ near 
$\supp\gamma$,  there is $C_s>0$ such that for any 
$u\in\Omega^{0,1}(X)^G$,
\begin{equation}\label{e-gue181012mzq}
\norm{\gamma u}^2_{s+1}
\leq C_s\Bigr(\norm{\gamma_1\Box^{(1)}_{b,G}u}^2_s
+\norm{\gamma_1u}^2_s\Bigr).
\end{equation}
\end{theorem}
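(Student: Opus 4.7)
My plan is to combine the two previous theorems via a partition of unity adapted to the dichotomy of Assumption \ref{a-gue170410I} (i). Theorem \ref{t-gue181012} provides the Kohn subelliptic estimate on the neighborhood of $Y$ where the Levi form is positive (without needing $G$-invariance), while Theorem \ref{t-gue181012I} provides the estimate on regions disjoint from $Y$ (but requires $G$-invariance). For $u \in \Omega^{0,1}(X)^G$ we have $\Box^{(1)}_b u = \Box^{(1)}_{b,G} u$, so both estimates feed into the same right-hand side and can be summed.

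Concretely, choose open sets $Y \subset W_0 \Subset W_1 \Subset W_2$ all inside the neighborhood of $Y$ on which the Levi form is positive, a cutoff $\alpha \in \cC^\infty(X)$ with $\alpha \equiv 1$ on $W_1$ and $\supp \alpha \subset W_2$, and set $\beta := 1-\alpha$; then $\supp \beta \cap Y = \emptyset$. Decompose $\gamma u = (\alpha\gamma)u + (\beta\gamma)u$. For the $\beta\gamma$ piece, apply Theorem \ref{t-gue181012I} with $\gamma \leftsquigarrow \beta\gamma$ and a smooth cutoff $\widetilde\gamma_\beta$ satisfying $\widetilde\gamma_\beta \equiv 1$ near $\supp(\beta\gamma)$, $\supp \widetilde\gamma_\beta \cap Y = \emptyset$, and $\supp \widetilde\gamma_\beta \subset \set{\gamma_1 = 1}$; all three conditions are compatible because $\gamma_1 \equiv 1$ near $\supp \gamma \supset \supp(\beta\gamma)$ and $\supp(\beta\gamma) \cap Y = \emptyset$. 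For the $\alpha\gamma$ piece, apply the local form of Theorem \ref{t-gue181012} (see below) with cutoffs $\alpha\gamma$ and $\widetilde\gamma_\alpha \in \cC^\infty_0(W_2)$ chosen analogously with $\supp \widetilde\gamma_\alpha \subset \set{\gamma_1 = 1}$. In both cases $\widetilde\gamma_\star = \widetilde\gamma_\star \gamma_1$, so
\begin{equation*}
\norm{\widetilde\gamma_\star \Box^{(1)}_b u}_s \leq C\norm{\gamma_1 \Box^{(1)}_{b,G} u}_s,\qquad \norm{\widetilde\gamma_\star u}_s \leq C\norm{\gamma_1 u}_s,
\end{equation*}
by boundedness of multiplication by a smooth compactly supported function on $H^s$. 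The triangle inequality then yields \eqref{e-gue181012mzq}.

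The only genuine subtlety is what I called the local form of Theorem \ref{t-gue181012}: the statement as written fixes $\eta \equiv 1$ near $Y$, but its proof, which follows Kohn's method in the style of \cite[Theorem 8.3.5]{CS01}, actually establishes the standard local Kohn subelliptic estimate for $\ddbar_b$. That estimate applies verbatim to any smooth cutoffs $\chi, \widetilde\chi$ with $\widetilde\chi \equiv 1$ near $\supp \chi$ and $\supp \widetilde\chi$ contained in an open set on which the Levi form is positive definite. Verifying that one may indeed use Theorem \ref{t-gue181012} at this level of generality is the only non-bookkeeping step; once granted, the rest of the argument is the routine partition of unity plus triangle inequality combination sketched above.
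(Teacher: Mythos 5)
Your proposal is correct and is essentially the paper's own argument: the paper proves Theorem \ref{t-gue181012II} by exactly this combination of Theorems \ref{t-gue181012} and \ref{t-gue181012I} via a partition of unity, stated in one line. Your observation that one must invoke Theorem \ref{t-gue181012} in its local form (Kohn's subelliptic estimate for arbitrary cutoffs supported where the Levi form is positive, rather than only for $\eta\equiv1$ near $Y$) is the correct reading of what that theorem's proof actually delivers, and it is the right way to fill in the paper's omitted bookkeeping.
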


For every $s\in\mathbb Z$, let $H^s_{(0,q)}(X)^G$ be 
the completion of $\Omega^{0,q}(X)^G$ with respect to 
$\norm{\cdot}_s$. From Theorem~\ref{t-gue181012II}, 
we can repeat the technique of elliptic regularization 
(see the proof of Theorem 8.4.2 in~\cite{CS01}) and conclude: 

\begin{theorem}\label{t-gue181013}
Under Assumption \ref{a-gue170410I} (i) let 
$u\in\Dom  \Box^{(1)}_{b,G}$ and 
let $\mathcal{U}$ be an open set of $X$.  
Let $\Box^{(1)}_{b,G}u=v\in L^2_{(0,1)}(X)^G$. 
If $v|_{\mathcal{U}}$ is smooth, then $u|_{\mathcal{U}}$ is smooth. 

Let $\tau, \tau_1\in\cC^\infty(X)^G$ with $\tau_1=1$ 
near $\supp\tau$.  If $\tau_1v\in H^{s}_{(0,1)}(X)^G$, 
for some $s\in\N$, then 
$\tau u\in H^{s+1}_{(0,1)}(X)^G$ and  there is 
$C_{s}>0$ independent of $u$, $v$, such that 
\begin{equation}\label{e-gue181016mp}
\norm{\tau u}_{s+1}\leq C_{s}
\Bigr(\norm{\tau_1\Box^{(1)}_{b,G}u}_{s}
+\norm{\tau_1u}_{s}\Bigr).
\end{equation}
\end{theorem}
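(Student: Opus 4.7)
The strategy is to combine the subelliptic a priori estimate of Theorem \ref{t-gue181012II} with an elliptic regularization argument performed inside the $G$-invariant category, following the scheme Kohn originally introduced for $\Box_b$ on pseudoconvex domains.

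Under the additional hypothesis that $u$ is already smooth and $G$-invariant, the estimate \eqref{e-gue181016mp} follows by a direct induction on $s$. Starting from $\tau_1 v\in H^s_{(0,1)}(X)^G$, I would pick a chain of $G$-invariant cutoffs $\tau=\tau_{s+1},\tau_s,\ldots,\tau_0=\tau_1$ with $\tau_j=1$ in a neighbourhood of $\supp\tau_{j+1}$, and apply \eqref{e-gue181012mzq} iteratively to the pairs $(\tau_{j+1},\tau_j)$. Each application trades one order of Sobolev regularity on a slightly smaller support against a lower-order norm on a slightly larger one, and summing the resulting inequalities yields \eqref{e-gue181016mp}.

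To remove the a priori smoothness hypothesis I would apply elliptic regularization. For $\delta>0$, define on $\Dom\ddbar_{b,G}\cap\Dom\ol{\pr}^*_{b,G}\cap H^1_{(0,1)}(X)^G$ the sesquilinear form
\begin{equation*}
Q_\delta(\varphi,\psi)=(\ddbar_{b,G}\varphi,\ddbar_{b,G}\psi)+(\ol{\pr}^*_{b,G}\varphi,\ol{\pr}^*_{b,G}\psi)+\delta\,(\varphi,\psi)_1,
\end{equation*}
where $(\,\cdot\,,\,\cdot\,)_1$ is a $G$-invariant $H^1$ inner product. The $\delta$-term makes $Q_\delta$ coercive on $H^1_{(0,1)}(X)^G$, so Friedrichs' construction produces a self-adjoint operator $\Box^{(1)}_{b,G}+\delta A$ on $L^2_{(0,1)}(X)^G$ with $A$ a $G$-invariant second-order elliptic operator. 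For $v=\Box^{(1)}_{b,G}u$ one solves $(\Box^{(1)}_{b,G}+\delta A)u_\delta=v$; ellipticity of $Q_\delta$ and $G$-invariance of all the data give smooth $G$-invariant solutions $u_\delta$, which converge to $u$ in $L^2_{(0,1)}(X)^G$ as $\delta\to 0$.

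The crux, and the main obstacle, is to prove that \eqref{e-gue181016mp} holds for the regularized solutions $u_\delta$ with constants independent of $\delta$. This requires commuting tangential derivatives (or difference quotients) with both $\Box^{(1)}_{b,G}$ and the regularizing term $\delta A$, and showing that the extra $\delta$-contributions to the resulting commutators are absorbed by the positive $\delta$-term itself on the left-hand side of the basic estimate, so that the $G$-invariant subelliptic inequality of Theorem \ref{t-gue181012II} can be iterated at every Sobolev level uniformly in $\delta$. Once such uniform bounds are established, weak $H^{s+1}$-compactness as $\delta\to 0$ extracts a limit that by $L^2$-uniqueness must agree with $\tau u$, giving \eqref{e-gue181016mp}. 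The first (interior smoothness) assertion then follows from \eqref{e-gue181016mp} by a standard bootstrap: for any compact $K\subset\mathcal{U}$ and any $s\in\N$, one chooses $G$-invariant cutoffs with $\tau=1$ near $K$ and $\supp\tau_1\subset\mathcal{U}$, so that iterating \eqref{e-gue181016mp} yields $\tau u\in H^{s+1}_{(0,1)}(X)^G$ for every $s$, hence $u\in\cC^\infty(\mathcal{U})$ by Sobolev embedding.
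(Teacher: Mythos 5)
Your proposal is correct and follows essentially the same route as the paper: the authors prove this theorem by invoking the $G$-invariant subelliptic estimate of Theorem \ref{t-gue181012II} and then "repeating the technique of elliptic regularization" from the proof of \cite[Theorem 8.4.2]{CS01}, which is precisely the Kohn-type regularization scheme (coercive perturbation $\delta A$, smooth $G$-invariant solutions $u_\delta$, $\delta$-uniform iterated estimates with nested cutoffs, passage to the limit, and bootstrap for interior smoothness) that you outline. The "crux" you flag — absorbing the $\delta$-commutator terms into the coercive $\delta$-term so the subelliptic inequality iterates uniformly in $\delta$ — is exactly the content of the cited argument in \cite{CS01}, which the paper likewise does not reproduce.
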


From Theorem~\ref{t-gue181013} and a standard argument 
in functional analysis, we get: 

\begin{theorem}\label{t-gue181013I}
If Assumption \ref{a-gue170410I} (i) holds, then the operator 
$\Box^{(1)}_{b,G}:\Dom  \Box^{(1)}_{b,G}
\subset L^2_{(0,1)}(X)^G\To L^2_{(0,1)}(X)^G$
has closed range.
\end{theorem}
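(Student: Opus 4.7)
The plan is to deduce closed range from the global subelliptic estimate of Theorem~\ref{t-gue181013} (with $\tau=\tau_1=1$) combined with Rellich compactness and the closedness of $\Box^{(1)}_{b,G}$ established in Lemma~\ref{l-gue200214yyd}. The overall scheme is the standard one: (i) show the kernel is finite-dimensional, (ii) establish a coercive $L^2$ estimate on the orthogonal complement of the kernel, and (iii) convert this coercive estimate into the closed range property by a routine functional analytic argument.

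First, set $\cN:=\Ker\Box^{(1)}_{b,G}\subset L^2_{(0,1)}(X)^G$. For $u\in\cN$, Theorem~\ref{t-gue181013} applied with $\tau=\tau_1=1$ and the hypothesis $\Box^{(1)}_{b,G}u=0$ yields $\norm{u}_{s+1}\leq C_s\norm{u}_s$ for every $s\in\N$, so each $u\in\cN$ is smooth. In particular, the closed unit ball of $\cN$ inside $L^2$ is bounded in $H^1$, hence relatively compact in $L^2$ by the Rellich--Kondrachov theorem on the compact manifold $X$; consequently $\dim\cN<\infty$ and $\cN\subset\Omega^{0,1}(X)^G$.

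Next I would prove the coercivity
\begin{equation}\label{e-coercive}
\norm{u}\leq C\norm{\Box^{(1)}_{b,G}u},\qquad
u\in\Dom\Box^{(1)}_{b,G}\cap\cN^\perp,
\end{equation}
by contradiction. If \eqref{e-coercive} fails, choose $u_j\in\Dom\Box^{(1)}_{b,G}\cap\cN^\perp$ with $\norm{u_j}=1$ and $\norm{\Box^{(1)}_{b,G}u_j}\To 0$. Theorem~\ref{t-gue181013} (applied with $\tau=\tau_1=1$ and $s=0$) shows that $u_j\in H^1_{(0,1)}(X)^G$ and
\[
\norm{u_j}_1\leq C\bigl(\norm{\Box^{(1)}_{b,G}u_j}+\norm{u_j}\bigr),
\]
so $\{u_j\}$ is bounded in $H^1$. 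Rellich's theorem furnishes a subsequence $u_{j_k}\to u$ in $L^2$ with $\norm{u}=1$. Since $\Box^{(1)}_{b,G}u_{j_k}\to 0$ in $L^2$ and $\Box^{(1)}_{b,G}$ is closed by Lemma~\ref{l-gue200214yyd}, we obtain $u\in\Dom\Box^{(1)}_{b,G}$ and $\Box^{(1)}_{b,G}u=0$, i.e.\ $u\in\cN$. The orthogonal complement $\cN^\perp$ is closed in $L^2$, so $u\in\cN^\perp$ as well, forcing $u=0$, which contradicts $\norm{u}=1$. Thus \eqref{e-coercive} holds.

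Finally I would convert \eqref{e-coercive} into closed range. Suppose $v_j:=\Box^{(1)}_{b,G}u_j\to v$ in $L^2_{(0,1)}(X)^G$ with $u_j\in\Dom\Box^{(1)}_{b,G}$. Writing $u_j=u_j'+u_j''$ with $u_j'\in\cN^\perp\cap\Dom\Box^{(1)}_{b,G}$ and $u_j''\in\cN$, we have $\Box^{(1)}_{b,G}u_j'=v_j$; by \eqref{e-coercive} the sequence $\{u_j'\}$ is Cauchy in $L^2$, hence converges to some $w\in L^2$, and the closedness of $\Box^{(1)}_{b,G}$ gives $w\in\Dom\Box^{(1)}_{b,G}$ and $\Box^{(1)}_{b,G}w=v$. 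Therefore $v\in\Ran\Box^{(1)}_{b,G}$, proving the closed range property.

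The main obstacle is the contradiction step, where one must be sure that the subelliptic estimate of Theorem~\ref{t-gue181013} applies to arbitrary elements of $\Dom\Box^{(1)}_{b,G}$ and not merely to smooth forms; once this global estimate is in hand, the Rellich compactness argument and the use of Lemma~\ref{l-gue200214yyd} are essentially automatic.
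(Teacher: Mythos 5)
Your argument is correct and is precisely the ``standard argument in functional analysis'' that the paper invokes after Theorem~\ref{t-gue181013}: finite-dimensionality of the kernel via bootstrapping and Rellich, coercivity on the orthogonal complement of the kernel by a compactness contradiction, and closed range from coercivity together with the closedness of $\Box^{(1)}_{b,G}$ from Lemma~\ref{l-gue200214yyd}. Your final worry is already resolved, since Theorem~\ref{t-gue181013} is established (via elliptic regularization) for arbitrary $u\in\Dom\Box^{(1)}_{b,G}$, not merely for smooth forms.
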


Let $N^{(1)}_G: L^2_{(0,1)}(X)^G\To\Dom\Box^{(1)}_{b,G}$ 
be the partial inverse of $\Box^{(1)}_{b,G}$ and let 
 $S^{(1)}_G: L^2_{(0,1)}(X)^G\To\Ker\Box^{(1)}_{b,G}$ 
 be the Szeg\H{o} projection, i.e., the orthogonal projection onto 
 $\Ker\Box^{(1)}_{b,G}$ with respect to $(\cdot\,,\,\cdot\,)$. 
 We have 
 \begin{equation}\label{e-gue181013mp}
 \begin{split}
 &\Box^{(1)}_{b,G}N^{(1)}_G+S^{(1)}_G=I\ \ \mbox{on 
 $L^2_{(0,1)}(X)^G$},\\
 &N^{(1)}_G\Box^{(1)}_{b,G}+S^{(1)}_G=I\ \
 \text{on $\Dom  \Box^{(1)}_{b,G}$}.
\end{split}
\end{equation}
From Theorem~\ref{t-gue181013} we deduce: 
\begin{theorem}\label{t-gue181013mp}
Assume that Assumption \ref{a-gue170410I} (i) holds. 
Then for every $s\in\Z$ we can extend $N^{(1)}_G$ to 
a continuous operator  
$N^{(1)}_G: H^s_{(0,1)}(X)^G\To H^{s+1}_{(0,1)}(X)$.
Moreover, $\Ker\Box^{(1)}_{b,G}$ is a finite dimensional 
subspace of $\Omega^{0,1}(X)^G$. 
\end{theorem}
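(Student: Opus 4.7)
First I would establish that $\Ker\Box^{(1)}_{b,G}$ is a finite-dimensional subspace of $\Omega^{0,1}(X)^G$. For any $u\in\Ker\Box^{(1)}_{b,G}$, the equation $\Box^{(1)}_{b,G}u=0$ has trivially smooth right-hand side, so the hypoellipticity statement in Theorem~\ref{t-gue181013} gives $u\in\Omega^{0,1}(X)^G$. To get finite dimensionality, apply \eqref{e-gue181016mp} with $\tau=\tau_1=1$ and $v=0$ to obtain $\|u\|_1\leq C\|u\|_0$ for every $u\in\Ker\Box^{(1)}_{b,G}$. The closed unit ball of $\Ker\Box^{(1)}_{b,G}$ in $L^2$ is thus bounded in $H^1$, hence precompact in $L^2$ by the Rellich embedding; so $\Ker\Box^{(1)}_{b,G}$ is finite dimensional.

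\textbf{Second}, I would prove by induction on $s\in\N$ that $N^{(1)}_G\colon H^s_{(0,1)}(X)^G\to H^{s+1}_{(0,1)}(X)^G$ is continuous. Since $\Box^{(1)}_{b,G}$ is closed (Lemma~\ref{l-gue200214yyd}), self-adjoint (as a Gaffney extension restricted to the $G$-invariant subspace), and has closed range (Theorem~\ref{t-gue181013I}), its partial inverse $N^{(1)}_G$ is a bounded self-adjoint operator on $L^2_{(0,1)}(X)^G$. Given $v\in H^s_{(0,1)}(X)^G$, set $u=N^{(1)}_G v$; by the inductive hypothesis (or, in the base case $s=0$, by the $L^2$-boundedness just noted) we have $u\in H^s$. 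Since $\Ker\Box^{(1)}_{b,G}$ is a finite-dimensional subspace of smooth forms, $\|S^{(1)}_G v\|_s\leq C_s\|v\|_0$, and combined with \eqref{e-gue181013mp} this gives $\Box^{(1)}_{b,G}u=v-S^{(1)}_G v\in H^s$. Taking $\tau=\tau_1=1$ in \eqref{e-gue181016mp} then yields the global estimate
\begin{equation*}
\|u\|_{s+1}\leq C(\|\Box^{(1)}_{b,G}u\|_s+\|u\|_s)\leq C'\|v\|_s,
\end{equation*}
which closes the induction.

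\textbf{Third}, to handle negative $s\in\Z$, I would use the self-adjointness of $N^{(1)}_G$ on $L^2_{(0,1)}(X)^G$ and a duality argument. For $\phi,\psi\in\Omega^{0,1}(X)^G$ one has $(N^{(1)}_G\phi,\psi)=(\phi,N^{(1)}_G\psi)$. For $s\in\Z$ with $s\leq-1$, the integer $-s-1$ is nonnegative, so by the previous step $N^{(1)}_G\colon H^{-s-1}_{(0,1)}(X)^G\to H^{-s}_{(0,1)}(X)^G$ is bounded. Using the standard duality $(H^s)^*\cong H^{-s}$ through the $L^2$-pairing we obtain
\begin{equation*}
\|N^{(1)}_G\phi\|_{s+1}=\sup_{\|\psi\|_{-s-1}=1}|(N^{(1)}_G\phi,\psi)|=\sup_{\|\psi\|_{-s-1}=1}|(\phi,N^{(1)}_G\psi)|\leq C\|\phi\|_s,
\end{equation*}
and density of $\Omega^{0,1}(X)^G$ in $H^s_{(0,1)}(X)^G$ lets us extend $N^{(1)}_G$ continuously to the desired map.

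\textbf{Main obstacle.} The principal technical point is arranging the induction so that the lower-order term $\|u\|_s$ on the right of \eqref{e-gue181016mp} can actually be absorbed into $\|v\|_s$ at each step; this is what forces us to first establish $L^2$-boundedness of $N^{(1)}_G$ (from the closed range Theorem~\ref{t-gue181013I}) and to exploit the smoothness and finite-dimensionality of $\Ker\Box^{(1)}_{b,G}$ so that $\|S^{(1)}_G v\|_s\lesssim\|v\|_0$. A minor subtlety is the self-adjointness of $N^{(1)}_G$ used in the duality extension to $s<0$, which is a standard consequence of $\Box^{(1)}_{b,G}$ being self-adjoint with closed range.
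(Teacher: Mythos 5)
Your proof is correct and follows essentially the route the paper intends: the paper deduces this theorem directly from the a priori estimate \eqref{e-gue181016mp} of Theorem \ref{t-gue181013} (taking $\tau=\tau_1=1$), combined with the $L^2$-boundedness of the partial inverse coming from the closed range property and a duality argument for negative Sobolev indices, exactly as you do. The details you supply — smoothness and finite dimensionality of the kernel via hypoellipticity and Rellich, the induction on $s\geq0$ absorbing the $\|u\|_s$ term, and the self-adjointness of $N^{(1)}_G$ for the extension to $s<0$ — are the standard ones and are all sound.
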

Let 
\begin{equation}\label{eq:ma3.37}
\mP^{(q)}_G: L^2_{(0,q)}(X)\To L^2_{(0,q)}(X)^G,
\end{equation} 
be the orthogonal projection with respect to $(\,\cdot\,,\,\cdot\,)$.
It is not difficult to see that for every $s\in\mathbb Z$
we can extend  $\mP^{(q)}_G$ to  $H^s_{(0,q)}(X)$
and $\mP^{(q)}_G: H^s_{(0,q)}(X)\To H^{s}_{(0,q)}(X)^G$ 
is continuous. 
We extend $N^{(1)}_G$ and 
$S^{(1)}_G$ to $H^s_{(0,1)}(X)$ by  
\begin{equation}\label{eq:3.32}
N^{(1)}_Gu:=N^{(1)}_G\mP^{(1)}_Gu,\ \ 
S^{(1)}_Gu:=S^{(1)}_G\mP^{(1)}_Gu,\ \ 
\mbox{for $u\in H^s_{(0,1)}(X)$},\ \ 
s\in\mathbb Z.
\end{equation}
From Theorem~\ref{t-gue181013mp} we see that 
$S^{(1)}_G$ is smoothing and
\begin{equation}\label{e-gue181013ab}
\mbox{$N^{(1)}_G: H^s_{(0,1)}(X)\To H^{s+1}_{(0,1)}(X)^G$
is continuous for every $s\in\mathbb Z$}.
\end{equation}
 
\begin{theorem}\label{t-gue181016}
Under Assumption \ref{a-gue170410I} (i), let 
$\tau, \tau_1\in\cC^\infty(X)^G$
with $\supp\tau\cap\supp\tau_1=\emptyset$. 
Then $\tau N^{(1)}_G\tau_1$ is smoothing.
\end{theorem}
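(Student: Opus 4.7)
The plan is to exploit the parametrix identity \eqref{e-gue181013mp} together with the interior subelliptic regularity of Theorem~\ref{t-gue181013}, using a tower of $G$-invariant cutoffs. First I note a key commutation: since $\tau_1\in\cC^\infty(X)^G$ and $\mP^{(1)}_G=\int_Gh^*(\cdot)\,d\mu(h)$, for any $u\in L^2_{(0,1)}(X)$ we have $\mP^{(1)}_G(\tau_1 u)=\tau_1\mP^{(1)}_Gu$. Given $u\in L^2_{(0,1)}(X)$, set $v:=N^{(1)}_G\tau_1u=N^{(1)}_G\mP^{(1)}_G\tau_1u=N^{(1)}_G(\tau_1\mP^{(1)}_Gu)\in\Dom\Box^{(1)}_{b,G}$. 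Then \eqref{e-gue181013mp} combined with the above gives
\begin{equation*}
\Box^{(1)}_{b,G}v=\tau_1\mP^{(1)}_Gu-S^{(1)}_G\mP^{(1)}_G\tau_1u.
\end{equation*}

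Since $\supp\tau$ and $\supp\tau_1$ are disjoint closed, $G$-invariant subsets, for any fixed $N\in\N$ I can choose $G$-invariant cutoffs $\eta_0=\tau,\eta_1,\ldots,\eta_N\in\cC^\infty(X)^G$ with $\eta_{j+1}=1$ near $\supp\eta_j$ and $\supp\eta_N\cap\supp\tau_1=\emptyset$ (constructed by averaging ordinary cutoffs over $G$). Then $\eta_{j+1}\tau_1\equiv 0$ for every $j$, so
\begin{equation*}
\eta_{j+1}\Box^{(1)}_{b,G}v=-\eta_{j+1}S^{(1)}_G\mP^{(1)}_G\tau_1u\in\cC^\infty(X,T^{*0,1}X),
\end{equation*}
because $S^{(1)}_G$ is the orthogonal projection onto the finite-dimensional space $\Ker\Box^{(1)}_{b,G}\subset\Omega^{0,1}(X)^G$ by Theorem~\ref{t-gue181013mp}, hence smoothing.

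Now I iterate the interior regularity estimate of Theorem~\ref{t-gue181013}, which asserts $\|\eta_j v\|_{s+1}\leq C_s(\|\eta_{j+1}\Box^{(1)}_{b,G}v\|_s+\|\eta_{j+1}v\|_s)$. Starting from $v\in H^1_{(0,1)}(X)^G$ (from \eqref{e-gue181013ab} with $s=0$) and using that each $\eta_{j+1}\Box^{(1)}_{b,G}v$ is smooth, a straightforward induction gives $\eta_0v=\tau v\in H^{N+1}_{(0,1)}(X)$. Since $N$ is arbitrary, $\tau v\in\cC^\infty$. Thus $\tau N^{(1)}_G\tau_1\colon L^2_{(0,1)}(X)\to\cC^\infty(X,T^{*0,1}X)$ continuously.

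To upgrade this to smoothness of the Schwartz kernel, I use duality: $N^{(1)}_G\mP^{(1)}_G$ is symmetric on $L^2_{(0,1)}(X)$ (as $\mP^{(1)}_G$ is self-adjoint and $N^{(1)}_G$ is self-adjoint on the $G$-invariant space), so the formal adjoint of $\tau N^{(1)}_G\tau_1$ is $\tau_1N^{(1)}_G\tau$, which has the same structure with the roles of $\tau,\tau_1$ exchanged. Applying the argument to the adjoint shows it also maps $L^2$ into $\cC^\infty$, and standard Schwartz-kernel arguments (or equivalently, $H^{-s}\to H^{s'}$ continuity for all $s,s'$ via duality and composition) yield that $\tau N^{(1)}_G\tau_1$ is smoothing. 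The main delicate point is the bookkeeping with $\mP^{(1)}_G$ in the identity for $\Box^{(1)}_{b,G}v$; everything else is a routine iteration of the already established subelliptic estimate once the tower of $G$-invariant cutoffs is set up.
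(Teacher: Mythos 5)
Your argument follows the same route as the paper: the parametrix identity $\Box^{(1)}_{b,G}N^{(1)}_G=I-S^{(1)}_G$ from \eqref{e-gue181013mp}, the smoothing property of $S^{(1)}_G$, the interior estimate of Theorem~\ref{t-gue181013}, and a bootstrap along a tower of $G$-invariant cutoffs. Your way of applying the parametrix identity directly to $v=N^{(1)}_G(\tau_1\mP^{(1)}_Gu)$ is in fact slightly cleaner than the paper's (which works with $\Td\tau N^{(1)}_G\tau_1v$ and must dispose of the commutator $[\Box^{(1)}_{b,G},\Td\tau]$ by an extra cutoff $\hat\tau$), and the commutation $\mP^{(1)}_G\tau_1=\tau_1\mP^{(1)}_G$ is correctly used. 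Up to the conclusion that $\tau N^{(1)}_G\tau_1$ and its adjoint $\tau_1 N^{(1)}_G\tau$ map $L^2_{(0,1)}(X)$ continuously into every $H^s$, the proof is complete and correct.

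The final step, however, is a genuine gap. Knowing that $A$ and $A^*$ both map $L^2$ continuously into $\cC^\infty$ does not by itself give a smooth Schwartz kernel: for that one needs $A\colon H^{-s_0}_{(0,1)}(X)\To H^{s}_{(0,1)}(X)$ bounded for all $s_0,s$, and there is no composition to exploit here --- unlike $\tau S_G\tau_1=(\tau S_G)\circ(S_G\tau_1)$ used in Theorem~\ref{t-gue181021}, the operator $\tau N^{(1)}_G\tau_1$ does not factor through an idempotent. Duality only yields $\tau N^{(1)}_G\tau_1\colon H^{-s}\To L^2$. The paper closes exactly this gap with a second bootstrap: starting from $v\in H^{-s_0}_{(0,1)}(X)^G$, it approximates $v$ by smooth forms $v_j$, uses the dual bound $\norm{\tau N^{(1)}_G\tau_1 h}_{1}\leq C\norm{h}_{-s_0}$ as the base case, and re-runs the iteration of \eqref{e-gue181016mpIII} to obtain continuity $H^{-s_0}\To H^{s}$ for every $s$ (see \eqref{e-gue181016aIII}--\eqref{e-gue181016yI}). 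You would need to supply this (or an equivalent) argument; note in particular that Theorem~\ref{t-gue181013} is stated only for $s\in\N$, so your tower iteration cannot simply be started from negative Sobolev data.
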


\begin{proof}
Let $\Td\tau, \hat\tau\in\cC^\infty(X)^G$ with $\Td\tau=1$ near 
$\supp\hat\tau$, $\hat\tau=1$ near $\supp\tau$ 
and $\supp\hat\tau\cap\supp\tau_1=\emptyset$. 
Let $v\in L^2_{(0,1)}(X)^G$ and put 
$\Td\tau N^{(1)}_G\tau_1v=u\in H^1_{(0,1)}(X)^G$.
From \eqref{e-gue181013mp}, we have 
\begin{equation}\label{e-gue181016mpI}
\begin{split}
\Box^{(1)}_{b,G}u&=\Box^{(1)}_{b,G}\Td\tau N^{(1)}_G\tau_1v\\
&=\Td\tau\,\Box^{(1)}_{b,G} N^{(1)}_G\tau_1v+\big[\Box^{(1)}_{b,G}, 
\Td\tau\big]N^{(1)}_G\tau_1v\\
&=\Td\tau(I-S^{(1)}_G)\tau_1v
+\big[\Box^{(1)}_{b,G}, \Td\tau\big]N^{(1)}_G\tau_1v\\
&=-\Td\tau S^{(1)}_G\tau_1v
+\big[\Box^{(1)}_{b,G}, \Td\tau\big]N^{(1)}_G\tau_1v.
\end{split}
\end{equation}
Since $S^{(1)}_G$ is smoothing, 
$-\Td\tau S^{(1)}_G\tau_1v\in\cC^\infty(X)^G$. 
Since $\Td\tau=1$ near $\supp\hat\tau$, 
$\hat\tau[\Box^{(1)}_{b,G}, \Td\tau]N^{(1)}_G\tau_1v=0$. 
From this observation, we deduce that 
\begin{equation}\label{e-gue181016mpII}
\hat\tau\Box^{(1)}_{b,G}u\in\cC^\infty(X)^G. 
\end{equation}
Fix $s\in\N$.   
From Theorem~\ref{t-gue181013mp}, \eqref{e-gue181016mp} and 
\eqref{e-gue181016mpI}, there exist $C_s, \Td C_s>0$ such that 
for any $v\in L^2_{(0,1)}(X)^G$, we have 
\begin{equation}\label{e-gue181016mpIII}
\begin{split}
\norm{\tau N^{(1)}_G\tau_1v}_{s+1}
&=\norm{\tau u}_{s+1}
\leq C_s\Bigr(\norm{\hat\tau\Box^{(1)}_{b,G}u}_s
+\norm{\hat\tau u}_s\Bigr)\\
&\leq\Td C_s\Bigr(\norm{-\hat\tau S^{(1)}_G\tau v}_s
+\norm{\hat\tau N^{(1)}_G\tau_1v}_s\Bigr).
\end{split}
\end{equation}
Take $s=1$ in \eqref{e-gue181016mpIII} , 
from Theorem~\ref{t-gue181013mp} and note that $S^{(1)}_G$
is smoothing, we conclude that 
$\norm{\tau N^{(1)}_G\tau_1v}_{2}\leq C\norm{v}$. We have proved
that for any  $\gamma, \gamma_1\in\cC^\infty(X)^G$
with $\supp\gamma\cap\supp\gamma_1=\emptyset$, 
we have
\begin{equation}\label{e-gue181016a}
\mbox{$\gamma N^{(1)}_G\gamma_1: L^2_{(0,1)}(X)^G
\To H^{2}_{(0,1)}(X)^G$ is continuous}.
\end{equation}
Take $s=2$ in \eqref{e-gue181016mpIII} , from 
Theorem~\ref{t-gue181013mp}, \eqref{e-gue181016a}
and note that $S^{(1)}_G$ is smoothing, we conclude that
 there exists $\hat C>0$ such that for any 
$v\in L^2_{(0,1)}(X)^G$,
$\norm{\tau N^{(1)}_G\tau_1v}_{3}\leq\hat C\norm{v}$.
Continuing in this way, we conclude that 
for any  $\gamma, \gamma_1\in\cC^\infty(X)^G$ with 
$\supp\gamma\cap\supp\gamma_1=\emptyset$, 
we have
\begin{equation}\label{e-gue181016aII}
\mbox{$\gamma N^{(1)}_G\gamma_1: 
L^2_{(0,1)}(X)^G\To H^{s}_{(0,1)}(X)^G$ is continuous, for every 
$s\in\mathbb N^{*}•$}.
\end{equation}
By taking adjoint in \eqref{e-gue181016aII}, we deduce that for any  
$\gamma, \gamma_1\in\cC^\infty(X)^G$ with 
$\supp\gamma\cap\supp\gamma_1=\emptyset$, 
we have
\begin{equation}\label{e-gue181016aIII}
\mbox{$\gamma N^{(1)}_G\gamma_1: 
H^{-s}_{(0,1)}(X)^G\To L^2_{(0,1)}(X)^G$ is continuous, 
for every $s\in\mathbb N^{*}$}.
\end{equation}

Now, let $v\in H^{-s_0}_{(0,1)}(X)^G$, $s_0\in\mathbb N^*$, and 
put $\Td\tau N^{(1)}_G\tau_1v=u\in H^{-s_0+1}_{(0,1)}(X)^G$. 
Let $v_j\in\Omega^{0,1}(X)^G$, $j=1,2,\ldots$\,, 
with $v_j\To v$ in $H^{-s_0}_{(0,1)}(X)^G$ as $j\To\infty $. 
Taking $s=0$ in \eqref{e-gue181016mpIII}, we deduce from
Theorem~\ref{t-gue181013mp}, \eqref{e-gue181016aIII} 
and the fact that $S^{(1)}_G$ is smoothing that 
 there exists $C>0$ such that for any $h\in L^2_{(0,1)}(X)^G$, 
\begin{equation}\label{e-gue200213yyd}
\norm{\tau N^{(1)}_G\tau_1h}_{1}\leq C\norm{h}_{-s_0}.
\end{equation}
From \eqref{e-gue200213yyd}, we have 
\begin{equation}\label{e-gue200213yydI}
\lim_{j,k\To\infty }\norm{\tau N^{(1)}_G\tau_1(v_j-v_k)}_{1}
\leq C\norm{v_j-v_k}_{-s_0}=0.
\end{equation}
From \eqref{e-gue200213yydI}, we see that 
$\{\tau N^{(1)}_G\tau_1v_j\}_{j\geq1}$ is a Cauchy sequence 
in $H^{1}_{(0,1)}(X)^G$. 
Since $\tau N^{(1)}_G\tau_1v_j\To\tau N^{(1)}_G\tau_1v$ 
in $H^{-s_0+1}_{(0,1)}(X)^G$ as $j\To\infty $ 
(see Theorem~\ref{t-gue181013mp}), 
we deduce that 
$\tau N^{(1)}_G\tau_1v\in H^1_{(0,1)}(X)^G$ and 
$\norm{\tau N^{(1)}_G\tau_1v}_{1}\leq C\norm{v}_{-s_0}$.
We have proved that for any  $\gamma, \gamma_1\in\cC^\infty(X)^G$ 
with 
\[\supp\gamma\cap\supp\gamma_1=\emptyset,\] 
we have
\begin{equation}\label{e-gue181016y}
\mbox{$\gamma N^{(1)}_G\gamma_1: 
H^{-s_0}_{(0,1)}(X)^G\To H^{1}_{(0,1)}(X)^G$ is continuous}.
\end{equation}
Again, take $s=1$ in \eqref{e-gue181016mpIII} , from 
Theorem~\ref{t-gue181013mp}, \eqref{e-gue181016y} 
and note that $S^{(1)}_G$ is smoothing,  we conclude that 
 there exists $C>0$ such that for any $h\in L^2_{(0,1)}(X)^G$, 
\begin{equation}\label{e-gue200213yydII}
\norm{\tau N^{(1)}_G\tau_1h}_{2}\leq C\norm{h}_{-s_0}.
\end{equation}
From \eqref{e-gue200213yydII}, we can repeat the argument above 
and deduce that 
$\tau N^{(1)}_G\tau_1v\in H^2_{(0,1)}(X)^G$ and 
$\norm{\tau N^{(1)}_G\tau_1v}_{2}\leq C\norm{v}_{-s_0}$.
Continuing in this way, we conclude that 
for any  $\gamma, \gamma_1\in\cC^\infty(X)^G$ with 
$\supp\gamma\cap\supp\gamma_1=\emptyset$,
we have
\begin{equation}\label{e-gue181016yI}
\mbox{$\gamma N^{(1)}_G\gamma_1: 
H^{-s_0}_{(0,1)}(X)^G\To H^{s}_{(0,1)}(X)^G$ is continuous, 
for every $s\in\mathbb N$}.
\end{equation}
The theorem follows. 
\end{proof}

We return to the case of functions. As before, let 
$S_G: L^2(X)\To\Ker\Box^{(0)}_{b,G}=H^0_b(X)^G$ 
be the $G$-invariant Szeg\H{o} projection. 
In view of Theorem~\ref{t-gue181013I}, we can repeat the proof 
of \cite[Proposition 6.15, p.\,56]{Hsiao08} and deduce the following. 

\begin{theorem}\label{t-gue181017}
Under Assumption \ref{a-gue170410I} (i) the operator $\Box^{(0)}_{b,G}: 
\Dom  \Box^{(0)}_{b,G}\subset L^2(X)^G\To L^2(X)^G$ 
has closed range and 
\begin{equation}\label{e-gue181017}
\mbox{$S_G=\mP^{(0)}_G
-\ol{\pr}^*_{b,G}N^{(1)}_G\ddbar_{b,G}\mP^{(0)}_G$ on $L^2(X)$.}
\end{equation}
\end{theorem}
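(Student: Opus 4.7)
The plan is to derive the asserted formula for $S_G$ first on the dense subspace $\cC^\infty(X)^G$ and then to deduce both the closed-range property and the extension of the identity to all of $L^2(X)$. The available ingredients are: the closed range of $\Box^{(1)}_{b,G}$ (Theorem~\ref{t-gue181013I}); the continuity and regularizing property of $N^{(1)}_G$ (Theorem~\ref{t-gue181013mp}); the spectral decomposition \eqref{e-gue181013mp}; and the nilpotency $\ddbar_{b,G}^2=0$ from Lemma~\ref{l-gue200221yyd}, which yields $(\ol{\pr}^*_{b,G})^2=0$ on smooth forms by duality.

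The key preliminary observation is that $\ddbar_{b,G}f\perp\Ker\Box^{(1)}_{b,G}$ for every $f\in\Dom\ddbar_{b,G}\cap L^2(X)^G$: any $g\in\Ker\Box^{(1)}_{b,G}$ satisfies $\|\ddbar_{b,G}g\|^2+\|\ol{\pr}^*_{b,G}g\|^2=(\Box^{(1)}_{b,G}g,g)=0$, so $\ol{\pr}^*_{b,G}g=0$ and therefore $(\ddbar_{b,G}f,g)=(f,\ol{\pr}^*_{b,G}g)=0$. Combining with \eqref{e-gue181013mp} gives $S^{(1)}_G\ddbar_{b,G}f=0$ and consequently $\ddbar_{b,G}f=\Box^{(1)}_{b,G}N^{(1)}_G\ddbar_{b,G}f$. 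For $f\in\cC^\infty(X)^G$ the section $w:=N^{(1)}_G\ddbar_{b,G}f$ is smooth by Theorem~\ref{t-gue181013mp} applied at every Sobolev scale, so all domain conditions hold automatically, and using $\Box^{(0)}_{b,G}=\ol{\pr}^*_{b,G}\ddbar_{b,G}$ on $0$-forms I compute
\begin{equation*}
\Box^{(0)}_{b,G}(f-\ol{\pr}^*_{b,G}w)
=\ol{\pr}^*_{b,G}\bigl(\ddbar_{b,G}f-\ddbar_{b,G}\ol{\pr}^*_{b,G}w\bigr)
=\ol{\pr}^*_{b,G}\bigl(\ol{\pr}^*_{b,G}\ddbar_{b,G}w\bigr)=0,
\end{equation*}
where the middle step rewrites $\ddbar_{b,G}f=\Box^{(1)}_{b,G}w$ using both summands of $\Box^{(1)}_{b,G}$ and the final step uses $(\ol{\pr}^*_{b,G})^2=0$. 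Thus $f-\ol{\pr}^*_{b,G}w\in\Ker\Box^{(0)}_{b,G}=H^0_b(X)^G$, while for any $h\in H^0_b(X)^G$ the identity $(h,\ol{\pr}^*_{b,G}w)=(\ddbar_{b,G}h,w)=0$ shows $\ol{\pr}^*_{b,G}w\perp H^0_b(X)^G$. These two properties together identify $S_Gf=f-\ol{\pr}^*_{b,G}N^{(1)}_G\ddbar_{b,G}f$ on $\cC^\infty(X)^G$.

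Since the identity exhibits $\ol{\pr}^*_{b,G}N^{(1)}_G\ddbar_{b,G}$ as $I-S_G$ on $\cC^\infty(X)^G$, it is an $L^2$-contraction there and extends by density to a bounded operator on $L^2(X)^G$; precomposing with $\mP^{(0)}_G$ and using $S_G=S_G\mP^{(0)}_G$ yields \eqref{e-gue181017} on all of $L^2(X)$. For closed range of $\Box^{(0)}_{b,G}$, applying \eqref{e-gue181017} to $u\in\Dom\Box^{(0)}_{b,G}\cap(\Ker\Box^{(0)}_{b,G})^\perp$ gives $u=\ol{\pr}^*_{b,G}N^{(1)}_G\ddbar_{b,G}u$. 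Since $N^{(1)}_G\ddbar_{b,G}u\in\Dom\Box^{(1)}_{b,G}\subset\Dom\ol{\pr}^*_{b,G}$ the integration by parts is legitimate, and
\begin{equation*}
\|u\|^2=(\ddbar_{b,G}u,N^{(1)}_G\ddbar_{b,G}u)
\leq C\|\ddbar_{b,G}u\|^2=C(\Box^{(0)}_{b,G}u,u)\leq C\|\Box^{(0)}_{b,G}u\|\,\|u\|,
\end{equation*}
so $\|u\|\leq C\|\Box^{(0)}_{b,G}u\|$, which via the standard argument implies that $\Box^{(0)}_{b,G}$ has closed range. The main technical obstacle is the careful bookkeeping of the domains of $\ddbar_{b,G}$ and $\ol{\pr}^*_{b,G}$ throughout the integration-by-parts chain; this is handled by carrying out the algebraic identities on $\cC^\infty(X)^G$ (where Theorem~\ref{t-gue181013mp} guarantees $w$ is smooth) and by exploiting $N^{(1)}_G\ddbar_{b,G}u\in\Dom\Box^{(1)}_{b,G}$ in the a priori estimate.
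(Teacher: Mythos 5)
Your proof is correct and is essentially the argument the paper intends: the paper gives no details, merely noting that once $\Box^{(1)}_{b,G}$ has closed range (Theorem~\ref{t-gue181013I}) one can repeat the proof of \cite[Proposition 6.15]{Hsiao08}, and that proof is exactly the standard Kohn-type derivation you carry out (orthogonality of $\ddbar_{b,G}f$ to $\Ker\Box^{(1)}_{b,G}$, the identity $\ddbar_{b,G}f=\Box^{(1)}_{b,G}N^{(1)}_G\ddbar_{b,G}f$, the splitting $f=(f-\ol{\pr}^*_{b,G}w)+\ol{\pr}^*_{b,G}w$, and the a priori estimate $\norm{u}\leq C\norm{\Box^{(0)}_{b,G}u}$ on $(\Ker\Box^{(0)}_{b,G})^\perp$). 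Your domain bookkeeping via smoothness of $w=N^{(1)}_G\ddbar_{b,G}f$ and via $N^{(1)}_G\ddbar_{b,G}u\in\Dom\Box^{(1)}_{b,G}$ is sound, so the proposal faithfully supplies the details the paper outsources to the citation.
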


\begin{theorem}\label{t-gue181017I}
Let $\tau, \tau_1\in\cC^\infty(X)^G$ with 
$\supp\tau\cap\supp\tau_1=\emptyset$. 
If Assumption \ref{a-gue170410I} (i) holds,
then $\tau S_G\tau_1$ is smoothing. 
\end{theorem}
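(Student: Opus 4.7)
The strategy is to reduce the assertion, via the explicit formula for $S_G$ from Theorem~\ref{t-gue181017}, to the off-diagonal smoothing property of $N^{(1)}_G$ established in Theorem~\ref{t-gue181016}. The entire argument is a support manipulation that exploits the $G$-invariance of the cutoffs.

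First, apply Theorem~\ref{t-gue181017} to write, on $L^2(X)$,
\[
\tau S_G\tau_1=\tau\mP^{(0)}_G\tau_1-\tau\ol{\pr}^*_{b,G}N^{(1)}_G\ddbar_{b,G}\mP^{(0)}_G\tau_1.
\]
Because $\tau_1\in\cC^\infty(X)^G$, the averaging interpretation of $\mP^{(0)}_G$ gives $\mP^{(0)}_G\tau_1=\tau_1\mP^{(0)}_G$, so the first term equals $\tau\tau_1\mP^{(0)}_G=0$ since $\supp\tau\cap\supp\tau_1=\emptyset$. It remains to show that $T:=\tau\ol{\pr}^*_{b,G}N^{(1)}_G\ddbar_{b,G}\mP^{(0)}_G\tau_1$ is smoothing.

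Second, choose $\chi\in\cC^\infty(X)^G$ with $\chi=1$ in a $G$-invariant neighbourhood of $\supp\tau_1$ and $\supp\chi\cap\supp\tau=\emptyset$. The Leibniz rule $\ddbar_b(\tau_1 v)=(\ddbar_b\tau_1)v+\tau_1\ddbar_bv$ combined with $\mP^{(0)}_G\tau_1=\tau_1\mP^{(0)}_G$ shows that $\ddbar_{b,G}\mP^{(0)}_G\tau_1 u$ is supported in $\supp\tau_1$ for any $u\in L^2(X)$, so $\chi\ddbar_{b,G}\mP^{(0)}_G\tau_1=\ddbar_{b,G}\mP^{(0)}_G\tau_1$. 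Hence
\[
T=\bigl(\tau\ol{\pr}^*_{b,G}N^{(1)}_G\chi\bigr)\bigl(\ddbar_{b,G}\mP^{(0)}_G\tau_1\bigr).
\]
Since $\ddbar_{b,G}\mP^{(0)}_G\tau_1$ is continuous on the Sobolev scale (with a loss of one derivative), the problem reduces to showing that $A:=\tau\ol{\pr}^*_{b,G}N^{(1)}_G\chi$ is smoothing.

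Third, pass to the formal adjoint: using that $N^{(1)}_G$ is self-adjoint and $\tau,\chi$ are real,
\[
A^{*}=\chi N^{(1)}_G\ddbar_{b,G}\tau.
\]
Applying the Leibniz rule once more, $\ddbar_{b,G}\tau u$ is supported in $\supp\tau$ for every $u$; pick $\Td\tau\in\cC^\infty(X)^G$ with $\Td\tau=1$ near $\supp\tau$ and $\supp\Td\tau\cap\supp\chi=\emptyset$, so that
\[
A^{*}=\bigl(\chi N^{(1)}_G\Td\tau\bigr)\bigl(\ddbar_{b,G}\tau\bigr).
\]
Theorem~\ref{t-gue181016} yields that $\chi N^{(1)}_G\Td\tau$ is smoothing, and composing it on the right with the first-order differential operator $\ddbar_{b,G}\tau$ preserves this property. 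By duality, $A$ is smoothing, whence $T$ is smoothing, and therefore so is $\tau S_G\tau_1$.

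\textbf{Main obstacle.} None of the steps carries real analytic weight once Theorems~\ref{t-gue181017} and~\ref{t-gue181016} are in hand; the only delicate point is the bookkeeping of supports after commuting $\mP^{(0)}_G$ past $\tau_1$ and after applying the Leibniz rule, where the $G$-invariance of all four cutoffs $\tau,\tau_1,\chi,\Td\tau$ is essential.
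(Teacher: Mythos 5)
Your proof is correct and follows essentially the same route as the paper's: both start from the formula $S_G=\mP^{(0)}_G-\ol{\pr}^*_{b,G}N^{(1)}_G\ddbar_{b,G}\mP^{(0)}_G$ of Theorem~\ref{t-gue181017}, kill the first term using $G$-invariance of the cutoffs, and reduce the second to the off-diagonal smoothing of $N^{(1)}_G$ from Theorem~\ref{t-gue181016} by inserting disjointly supported $G$-invariant cutoffs around $N^{(1)}_G$. The only (harmless) difference is that you handle the left cutoff by passing to the adjoint, whereas the paper inserts $\Td\tau$ directly on the left via the locality of the first-order operator $\ol{\pr}^*_{b,G}$, i.e.\ $\tau\ol{\pr}^*_{b,G}=\tau\ol{\pr}^*_{b,G}\Td\tau$.
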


\begin{proof}
By \eqref{e-gue181017} we have 
\begin{equation}\label{e-gue181017I}
\tau S_G\tau_1=-\tau\ol{\pr}^*_{b,G}N^{(1)}_G\ddbar_{b,G}\tau_1
=-\tau\ol{\pr}^*_{b,G}\Td\tau N^{(1)}_G\Td\tau_1\ddbar_{b,G}\tau_1,
\end{equation}
where $\Td\tau, \Td\tau_1\in\cC^\infty(X)^G$ with $\Td\tau=1$ 
near $\supp\tau$, $\Td\tau_1=1$ near $\supp\tau_1$, 
$\supp\Td\tau\cap\supp\Td\tau_1=\emptyset$. 
In view of Theorem~\ref{t-gue181016}, we see that 
$\Td\tau N^{(1)}_G\Td\tau_1$ is smoothing. From this observation 
and \eqref{e-gue181017I}, the theorem follows. 
\end{proof}

Let $d\mu=d\mu(h)$ be the Haar measure on $G$ with 
$\int_Gd\mu(h)=1$. We also need 
\begin{theorem}\label{t-gue181021a}
Under Assumption \ref{a-gue170410I} (i) there exists $c_0>0$
such that for any $\lambda\in(0,c_0)$ such that 
\[S_G(x,y)=\int_GS_{\leq\lambda}(x,h.y)d\mu(h)\quad 
\text{on $X\times X$},\]
where $S_{\leq\lambda}$ is the spectral projection given 
by \eqref{e-suXI-I}. 
\end{theorem}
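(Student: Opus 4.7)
The plan is to exploit the closed range property of the $G$-invariant Kohn Laplacian $\Box^{(0)}_{b,G}$ (Theorem \ref{t-gue181017}), which yields a positive spectral gap at zero, and then to identify the $G$-averaged spectral projection on the right-hand side with $S_G$ by functional calculus on the $G$-invariant subspace.

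First, since $\Box^{(0)}_{b,G}$ is a positive self-adjoint operator on $L^2(X)^G$ with closed range, there exists $c_0>0$ such that $\spec(\Box^{(0)}_{b,G})\cap(0,c_0)=\emptyset$; I take this as the $c_0$ of the statement and fix $\lambda\in(0,c_0)$. Next, let $P_G:L^2(X)\to L^2(X)^G$ be the orthogonal projection, given by the Haar average $P_Gu=\int_G h^*u\,d\mu(h)$ (this is self-adjoint and idempotent with range $L^2(X)^G$ thanks to the $G$-invariance of $dv$). Since the metric is $G$-invariant, both $\ddbar_b$ and $\ol{\pr}^*_b$ commute with the $G$-action; hence $\Box_b$ and all its spectral projections commute with $P_G$, and in particular $P_GS_{\leq\lambda}=S_{\leq\lambda}P_G$. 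Moreover, inspection of the definition \eqref{e-gue181012} of $\Box^{(0)}_{b,G}$ for $q=0$ reveals that $\Box^{(0)}_{b,G}$ is exactly the restriction of $\Box_b$ to $\Dom\Box_b\cap L^2(X)^G$: if $u\in L^2(X)^G\cap\Dom\Box_b$, then $\ddbar_bu\in L^2_{(0,1)}(X)^G$ by $G$-equivariance, and $\ddbar_bu\in\Dom\ol{\pr}^*_b$ by definition of $\Dom\Box_b$, so $u\in\Dom\Box^{(0)}_{b,G}$; the converse is immediate. Consequently the spectral resolution of $\Box_b$ restricts on $L^2(X)^G$ to the spectral resolution of $\Box^{(0)}_{b,G}$, and for our choice of $\lambda$ the restriction of $S_{\leq\lambda}$ to $L^2(X)^G$ is the orthogonal projection onto $\Ker\Box^{(0)}_{b,G}=H^0_b(X)^G$, i.e.\ it agrees with $S_G|_{L^2(X)^G}$.

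From these two facts the operator identity $S_{\leq\lambda}\circ P_G=S_G$ follows: for any $u\in L^2(X)$, $P_Gu\in L^2(X)^G$, and applying $S_{\leq\lambda}$ there gives the projection of $P_Gu$ onto $H^0_b(X)^G$, which equals $S_Gu$ because $H^0_b(X)^G\subset L^2(X)^G$ forces $S_G=S_GP_G$. Equivalently $P_GS_{\leq\lambda}=S_G$ by commutativity. Finally, a direct computation using the $G$-invariance of $dv$ and the unimodularity of the Haar measure on the compact group $G$ yields
\[
(S_{\leq\lambda}P_Gu)(x)=\int_G\!\int_XS_{\leq\lambda}(x,y)u(h.y)\,dv(y)\,d\mu(h)
=\int_X\Bigl(\int_G S_{\leq\lambda}(x,h.y)\,d\mu(h)\Bigr)u(y)\,dv(y),
\]
so the Schwartz kernel of $S_{\leq\lambda}P_G=S_G$ is exactly $\int_G S_{\leq\lambda}(x,h.y)\,d\mu(h)$, which is the asserted identity.

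The main obstacle is the identification step in the middle paragraph: checking that $\Box^{(0)}_{b,G}$, defined via the intrinsic adjoint $\ol{\pr}^*_{b,G}$ on the $G$-invariant subspace, really coincides with the restriction of $\Box_b$ to $L^2(X)^G$. This requires carefully unwinding the domain conditions in \eqref{e-gue181012} and using the $G$-equivariance of $\ddbar_b$ together with that of its Hilbert-space adjoint (which in turn uses the $G$-invariance of the metric). Once this identification is granted, the rest is standard functional calculus plus the kernel computation above.
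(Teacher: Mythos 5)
Your proposal is correct and follows essentially the same route as the paper: both arguments reduce the kernel identity to the operator identity $S_G=S_{\leq\lambda}\circ\mP^{(0)}_G$ and obtain the threshold $c_0$ from the closed range of $\Box^{(0)}_{b,G}$. The only difference is cosmetic: the paper extracts the lower bound $\|\Box^{(0)}_{b,G}u\|\geq c_0\|u\|$ on $(\Ker\Box^{(0)}_{b,G})^\perp$ by showing that the partial inverse $N^{(0)}_G$ is a closed operator and invoking the closed graph theorem, whereas you obtain the same gap from self-adjointness via the spectral theorem; your explicit verification that $\Box^{(0)}_{b,G}$ coincides with the restriction of $\Box_b$ to the reducing subspace $L^2(X)^G$ is exactly the fact the paper uses implicitly in its final contradiction step, where it bounds $\|\Box^{(0)}_{b,G}u\|\leq\lambda\|u\|$ for $u\in H^0_{b,\leq\lambda}(X)\cap L^2(X)^G$.
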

\begin{proof}
Since $\Box^{(0)}_{b,G}: \Dom\Box^{(0)}_{b,G}\subset L^2(X)^G
\To L^2(X)^G$ has closed range, we can define the partial inverse
\[N^{(0)}_G: L^2(X)^G\To\Dom  \Box^{(0)}_{b,G}\]
of $\Box^{(0)}_{b,G}$ as follows: Let $u\in L^2(X)^G$. 
Since $\Ran \Box^{(0)}_{b,G}$ is closed in $L^2(X)^G$,
we have the orthogonal decomposition:
\[u=v_0+v_1,\ \ v_0\in\Ran\Box^{(0)}_{b,G},\ \ 
v_1\perp\Ran \Box^{(0)}_{b,G}\,.\]
There is a unique $\beta\in\Dom  \Box^{(0)}_{b,G}$, 
$\beta\perp\Ker\Box^{(0)}_{b,G}$, such that 
$\Box^{(0)}_{b,G}\beta=v_0$. 
Define $N^{(0)}_Gu:=\beta$. Then, 
$N^{(0)}_G: L^2(X)^G\To\Dom  \Box^{(0)}_{b,G}$
is a linear operator. We claim that 
\begin{equation}\label{e-gue200221yyda}
\mbox{$N^{(0)}_G: L^2(X)^G\To\Dom  \Box^{(0)}_{b,G}$ 
is a closed operator.}
\end{equation}
Let $\big\{(f_k, N^{(0)}_Gf_k)\in L^2(X)^G\times L^2(X)^G;\, 
k\in\N\big\}$, with 
$\lim_{k\To\infty }f_k=f$ in $L^2(X)^G$ and 
$\lim_{k\To\infty }N^{(0)}_Gf_k=h$ in $L^2(X)^G$. 
For every $k\in\N$, write 
\begin{equation}\label{e-gue200222yyd}
f_k=\Box^{(0)}_{b,G}g_k+\xi_k,\ \ g_k\in\Dom  \Box^{(0)}_{b,G},
\ \ g_k\perp\Ker\Box^{(0)}_{b,G},\ \ 
\xi_k\perp\Ran \Box^{(0)}_{b,G}.
\end{equation}
From \eqref{e-gue200222yyd} and the definition of $N^{(0)}_G$, 
we see that 
\begin{equation}\label{e-gue200222yydII}
N^{(0)}_Gf_k=g_k, \quad\lim_{k\To\infty }g_k=h
\text{  in }   L^2(X)^G.
\end{equation}
Since $\set{f_k}^{\infty }_{k=1}$ is a Cauchy sequence, 
$\{\Box^{(0)}_{b,G}g_k\}^{\infty }_{k=1}$ and 
$\set{\xi_k}^{\infty }_{k=1}$ are 
Cauchy sequences. Hence, 
\begin{equation}\label{e-gue200222yydI}
\lim_{k\To\infty }\Box^{(0)}_{b,G}g_k=\eta, \quad
\lim_{k\To\infty }\xi_k=\xi  \text{  in }  L^2(X)^G,
\end{equation}
for some $\eta, \xi\in L^2(X)^G$, 
$\xi\perp\Ran\,\Box^{(0)}_{b,G}$, and we have the orthogonal
decomposition
\begin{equation}\label{e-gue200218yyda}
f=\eta+\xi.
\end{equation}
From \eqref{e-gue200222yydII} and \eqref{e-gue200222yydI}, we see 
that  $(g_k,\Box^{(0)}_{b,G}g_k)\To(h, \eta)$ in
$L^2(X)^G\times L^2(X)^G$ as $k\To\infty $. Since 
$\Box^{(0)}_{b,G}$ is a closed operator, we conclude that 
$h\in\Dom  \Box^{(0)}_{b,G}$, 
$h\perp\Ker\Box^{(0)}_{b,G}$, and 
$\Box^{(0)}_{b,G}h=\eta$. 
From this observation and \eqref{e-gue200218yyda}, we get the 
orthogonal decomposition
\[f=\Box^{(0)}_{b,G}h+\xi\]
and hence $N^{(0)}_Gf=h$. The claim \eqref{e-gue200221yyda} 
follows. 

Since $N^{(0)}_G$ is a closed operator defined on the Banach space 
$L^2(X)^G$, by the closed graph theorem, $N^{(0)}_G$ is a continuous
operator. 
Hence, there is $c_0>0$ such that 
\begin{equation}\label{e-gue200218yydb}
\|N^{(0)}_G\beta\|\leq\frac{1}{c_0}\|\beta\|,\ \ 
\mbox{for every $\beta\in L^2(X)^G$}. 
\end{equation}
Let $\beta=\Box^{(0)}_{b,G}u$ in \eqref{e-gue200218yydb}, 
where $u\in\Dom  \Box^{(0)}_{b,G}$, 
$u\perp\Ker\Box^{(0)}_{b,G}$, 
we get 
\begin{equation}\label{e-gue181021my}
\|\Box^{(0)}_{b,G}u\|\geq c_0\norm{u},\ \ \mbox{for every 
$u\in\Dom  \Box^{(0)}_{b,G}$, $u\perp\Ker
\Box^{(0)}_{b,G}$}. 
\end{equation}
Fix $0<\lambda<c_0$. We claim that with the notation 
\eqref{eq:ma3.37}, we have
\begin{equation}\label{e-gue181021myI}
S_G=S_{\leq\lambda}\circ\mP^{(0)}_G\ \ \mbox{on $L^2(X)$}. 
\end{equation}
If the claim is not true, we can find a 
$u\in H^0_{b,\leq\lambda}(X)\cap L^2(X)^G$ with 
$u\perp\Ker\Box^{(0)}_{b,G}$, $\norm{u}=1$, 
where $H^0_{b,\leq\lambda}(X)$ is given by 
\eqref{e-suX}. Since $u\in H^0_{b,\leq\lambda}(X)$, 
we have $\norm{\Box^{(0)}_{b,G}u}
\leq\lambda\norm{u}<c_0\norm{u}$. From this observation and 
\eqref{e-gue181021my} we get a contradiction. 
The claim \eqref{e-gue181021myI} follows and this yields
the theorem. 
\end{proof}

\subsection{$G$-invariant Szeg\H{o} kernel asymptotics away $Y$}
\label{s-gue180505}

Let 
$S_G(x,y)\in \cD'(X\times X)$ be the distribution kernel of $S_G$. 
For any subset $A$ of $X$, put $GA:=\set{g\circ x;\, x\in A, g\in G}$. 
\begin{theorem}\label{t-gue181021}
Under Assumption \ref{a-gue170410I}, let 
$\tau\in\cC^\infty(X)$  with $\supp\tau\cap Y=\emptyset$.
Then $\tau S_G$ and $S_G\tau$ 
are smoothing operators. 
\end{theorem}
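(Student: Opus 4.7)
Plan. I argue under Assumption \ref{a-gue170410I}(i); case (ii) follows similarly using that $\ddbar_b$ has closed range on $X$. Since $S_G$ is self-adjoint, $(\tau S_G)^* = S_G\bar\tau$, and as smoothing operators are closed under adjoints, it suffices to prove that $\tau S_G$ is smoothing.

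Pick a $G$-invariant cutoff $\chi_1 \in \cC^\infty(X)^G$ with $\chi_1 = 1$ in a neighborhood of $Y$ and $\supp\chi_1 \cap \supp\tau = \emptyset$ (possible since $Y$ is $G$-invariant and $\supp\tau \cap Y = \emptyset$), and decompose $\tau S_G = \tau S_G \chi_1 + \tau S_G(1-\chi_1)$. For the first summand, take $\chi \in \cC^\infty(X)^G$ with $\chi = 1$ on $G\supp\tau$ and $\supp\chi \cap \supp\chi_1 = \emptyset$; then $\tau S_G \chi_1 = \tau(\chi S_G \chi_1)$, and Theorem \ref{t-gue181017I} gives $\chi S_G \chi_1$ smoothing, hence $\tau S_G \chi_1$ is smoothing.

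For the second summand, both cutoffs $\tau$ and $1-\chi_1$ are supported in $X\setminus Y$. The key is the following regularity lemma: \emph{if $f \in L^2(X)$ satisfies $\supp f \cap Y = \emptyset$, then $S_G f \in \cC^\infty(X)$ and $\|S_G f\|_{H^s(X)} \leq C_{s}\|f\|_{L^2(X)}$ for every $s \in \N$}. Smoothness on $X\setminus Y$ is obtained by iterating the subelliptic estimate of Theorem \ref{t-gue181223} applied to $v = S_G f \in H^0_b(X)^G$: since $v$ is $G$-invariant and $\Box_b v=0$ distributionally, one bootstraps from $\|v\|_{L^2}$ after extending the estimate from $\cC^\infty(X)^G$ to $H^0_b(X)^G$ by an elliptic regularization argument analogous to Theorem \ref{t-gue181013}. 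For smoothness near $Y$, invoke the decomposition $S_G = \mP^{(0)}_G - \ol{\pr}^{\,*}_{b,G}N^{(1)}_G\ddbar_{b,G}\mP^{(0)}_G$ of Theorem \ref{t-gue181017}: $\mP^{(0)}_G f$ is supported in $G\supp f \subset X\setminus Y$, so $\ddbar_{b,G}\mP^{(0)}_G f$ vanishes near $Y$, and applying Theorem \ref{t-gue181016} with a $G$-invariant cutoff $\eta$ near $Y$ and a $G$-invariant $\eta_1 = 1$ on $G\supp f$ having disjoint supports shows that $\eta N^{(1)}_G\ddbar_{b,G}\mP^{(0)}_G f = \eta N^{(1)}_G \eta_1 \ddbar_{b,G}\mP^{(0)}_G f$ is smooth.

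The regularity lemma gives $\tau S_G(1-\chi_1): L^2(X)\to \cC^\infty(X)$ continuously; the same argument applied to its adjoint $(1-\chi_1) S_G \bar\tau$ (whose input $\bar\tau u$ is supported in $\supp\tau \subset X\setminus Y$) yields $(1-\chi_1) S_G \bar\tau: L^2(X)\to \cC^\infty(X)$. An operator $A$ with both $A,A^*: L^2 \to \cC^\infty$ bounded is smoothing (by the standard kernel characterization via mapping properties on Sobolev spaces), so $\tau S_G(1-\chi_1)$ is smoothing. Summing with the first piece, $\tau S_G$ is smoothing, and $S_G\tau$ follows by adjoint. The principal obstacle is the regularity lemma itself, specifically the elliptic regularization needed to apply Theorem \ref{t-gue181223} to $L^2$ CR functions rather than smooth ones.
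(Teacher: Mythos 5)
Your overall architecture matches the paper's: split $\tau S_G$ into a piece with a cutoff near $Y$ (handled by the disjoint‑support smoothing results, Theorems \ref{t-gue181016}--\ref{t-gue181017I}) and a piece with both cutoffs away from $Y$ (handled by the subelliptic estimates of Section \ref{s-gue161109I}), then upgrade ``maps $L^2$ to $\cC^\infty$'' to ``smoothing''. Two points, however, are genuine gaps. First, your ``regularity lemma'' applies Theorem \ref{t-gue181223} to $v=S_Gf$ with $f\in L^2$, and you correctly flag that this requires extending an a priori estimate for $\cC^\infty(X)^G$ to $L^2$ CR functions by elliptic regularization --- a nontrivial step you do not carry out. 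The paper sidesteps this entirely: it approximates $v\in L^2(X)$ by smooth $v_j$, observes that $u_j=S_Gv_j$ is \emph{already smooth} (by \eqref{e-gue181017} together with the mapping property \eqref{e-gue181013ab} of $N^{(1)}_G$), applies the estimates \eqref{e-gue180501p} and Corollary \ref{c-gue180501} to the smooth $G$-invariant functions $u_j$ (using $\Box_b u_j=0$) to get uniform bounds $\norm{\chi u_j}_s\le C_s$, and passes to the limit. You should adopt this route rather than regularize. Relatedly, your final step invokes ``$A,A^*:L^2\to\cC^\infty$ bounded implies $A$ smoothing'' as the standard kernel characterization; the standard characterization requires $A:H^{-s}\to H^{s'}$ for all $s,s'$, and passing from the two one-sided conditions to this needs an extra argument (Stein interpolation, or --- much simpler here --- the idempotence $S_G=S_G\circ S_G$, which is exactly how the paper composes $\tau S_G\tau_1=(\tau S_G)\circ(S_G\tau_1)$ in \eqref{e-gue181019II}).

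Second, your opening claim that case (ii) of Assumption \ref{a-gue170410I} ``follows similarly'' is not correct as your proof stands. The ingredients you use for the near-$Y$ piece --- the formula \eqref{e-gue181017}, the partial inverse $N^{(1)}_G$, and Theorems \ref{t-gue181016}, \ref{t-gue181017I} --- are all established only under Assumption \ref{a-gue170410I} (i), where $\dim X\ge 5$ is needed for the subelliptic estimates for $(0,1)$-forms and the closed range of $\Box^{(1)}_{b,G}$. Under (ii) ($\dim X=3$) the paper argues differently: the closed-range hypothesis on $\ddbar_b$ lets one invoke \cite[Theorem 1.14]{HM14} to realize $S$, hence $S_G=S\circ\mP^{(0)}_G$ via \eqref{eq:ma3.70}, as a Fourier integral operator with complex phase, which directly yields that $S_G$ preserves $\cC^\infty$ and is smoothing off the diagonal; only then is the rest of the argument rerun. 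Without this substitute, your treatment of case (ii) is missing.
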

\begin{proof} First we work under Assumption \ref{a-gue170410I} (i) as
in the previous Section.
Let $v\in L^2(X)$. Take $v_j\in\cC^\infty(X)$, $j\in\N^{*}$, 
such that $\norm{v_j-v}\To0$ as $j\To\infty $. 
We have $\norm{S_Gv_j-S_Gv}\To0$ as $j\To\infty $. 
By  \eqref{e-gue181013ab} and \eqref{e-gue181017}, 
we see that 
\begin{equation}\label{eq:ma3.60}
	S_Gv_j\in\cC^\infty(X), \:\:\text{for every $j\in\N^{*}$}. 
\end{equation}
For every $j$, put $u_j:=S_Gv_j$. 
By Corollary~\ref{c-gue180501} and \eqref{e-gue180501p}, 
it is straightforward to see that for every $s\in\mathbb N$ 
there exists $C_s>0$ such that for $\chi$ in \eqref{e:chi}, 
\begin{equation}\label{e-gue180504b}
\norm{\chi u_j}_s\leq C_s,\ \ \mbox{for every $j\in\N^{*}$}.
\end{equation}
From \eqref{e-gue180504b} we deduce that $\chi S_Gv\in H^s(X)$ 
for every $s\in\mathbb N$ and 
\begin{equation}\label{e-gue180504bI}
\mbox{$\chi S_G: L^2(X)\To H^s(X)$ is continuous, for every 
$s\in\mathbb N$.}
\end{equation}
By using a partition of unity we conclude that for any 
$\tau, \tau_1\in\cC^\infty(X)^G$ with 
$\supp\tau\cap Y=\emptyset$, $\supp\tau_1\cap Y=\emptyset$, 
we have 
\begin{equation}\label{e-gue181019}
\mbox{$\tau S_G: L^2(X)\To H^s(X)$ is continuous, 
for every $s\in\mathbb N$}
\end{equation}
and hence
\begin{equation}\label{e-gue181019I}
\mbox{$S_G\tau_1: H^{-s}(X)\To L^2(X)$ is continuous, for every
$s\in\mathbb N$.}
\end{equation}
From \eqref{e-gue181019} and \eqref{e-gue181019I}, we conclude that 
\begin{equation}\label{e-gue181019II}
\mbox{$\tau S_G\tau_1=(\tau S_G)\circ(S_G\tau_1): 
H^{-s}(X)\To H^s(X)$ is continuous, for every $s\in\mathbb N$,}
\end{equation}
and hence $\tau S_G\tau_1$ is smoothing. 

Since $G$ acts on $Y$, it is not difficult to see that there is 
a small neighborhood $W$ of $Y$ such that 
$\supp\tau\cap GW=\emptyset$. 
Let $\gamma_0, \gamma_1\in\cC^\infty(X)^G\cap\cC^\infty_0(GW)$ 
with $\gamma_0=1$ near $\supp\gamma_1$, $\gamma_1=1$ near $Y$. 
For $i=0,1$, put $\tau_i:=1-\gamma_i$; then 
$\tau_i\in\cC^\infty(X)^G$ with 
$\supp\tau_i\cap Y=\emptyset$. Moreover,
it is straightforward to check that 
\begin{equation}\label{e-gue181019m}
\tau_0=1 \text{ on }\supp\tau,\:\:
\supp\tau_0\cap\supp(1-\tau_1)=\emptyset.
\end{equation}
From \eqref{e-gue181019m} follows
\begin{equation}\label{e-gue181019mII}
\tau S_G=\tau\tau_0 S_G=\tau\tau_0 S_G\tau_1
+\tau\tau_0 S_G(1-\tau_1). 
\end{equation}
By \eqref{e-gue181019II} $\tau_0 S_G\tau_1$
is smoothing.
In view of Theorem~\ref{t-gue181017I} and \eqref{e-gue181019m}, 
we see that 
\begin{equation}\label{eq:ma3.67}
\tau\tau_0 S_G(1-\tau_1) \:\:\text{is smoothing}. 
\end{equation}
From \eqref{eq:ma3.67}
and \eqref{e-gue181019mII}, 
we get that $\tau S_G$ is smoothing 
and hence $S_G\tau$ is smoothing. 

Let us now work under the Assumption \ref{a-gue170410I} (ii).
By \cite[Theorem 1.14]{HM14} the Szeg\H{o} projector $S$ is a 
Fourier integral operator with complex phase on $X$.
We have $S_G=S\circ\mP^{(0)}_G$ and hence
\begin{equation}\label{eq:ma3.70}
S_G(x,y)=\int_G S(x,h.y)d\mu(h).
\end{equation}
and hence $S_G$ is a Fourier integral 
with complex phase on $X$.
Thus $S_G$ maps smooth functions to smooth functions and
is smoothing outside the diagonal. Then \eqref{eq:ma3.60}
and \eqref{eq:ma3.67} still hold so by repeating
the proof above we conclude also in this case.
\end{proof}
\subsection{$G$-invariant Szeg\H{o} kernel asymptotics 
near $Y$}\label{s-gue180505zq}
In this Section $X$ has arbitrary dimension $\geq3$.
We first recall the definition of the
H\"ormander symbol spaces. Let $D\subset X$ 
be a local coordinate patch with local coordinates 
$x=(x_1,\ldots,x_{2n+1})$. 

\begin{definition}\label{d-gue140221a}
For $m\in\R$, $S^m_{1,0}(D\times D\times\mathbb{R}_+)$ 
is the space of all $a(x,y,t)\in\cC^\infty(D\times D\times\mathbb{R}_+)$ 
such that, for all compact $K\Subset D\times D$ and all 
$\alpha, \beta\in\mathbb N^{2n+1}$, $\gamma\in\N$, 
there exists $C_{\alpha,\beta,\gamma}>0$ such that 
\[\abs{\pr^\alpha_x\pr^\beta_y\pr^\gamma_t a(x,y,t)}\leq 
C_{\alpha,\beta,\gamma}(1+\abs{t})^{m-\gamma},\ \ 
 (x,y,t)\in K\times\R_+,\ \ t\geq1.\]
Put 
\[S^{-\infty}(D\times D\times\mathbb{R}_+):=
\bigcap_{m\in\R}S^m_{1,0}(D\times D\times\mathbb{R}_+).\]
Let $a_j\in S^{m_j}_{1,0}(D\times D\times\mathbb{R}_+)$, 
$j=0,1,2,\ldots$ with $m_j\searrow-\infty$, as $j\To\infty$. 
Then there exists $a\in S^{m_0}_{1,0}(D\times D\times\mathbb{R}_+)$ 
unique modulo $S^{-\infty}$, such that 
\[a-\sum^{k-1}_{j=0}a_j\in S^{m_k}_{1,0}(D\times 
D\times\mathbb{R}_+)\quad 
\text{for all $k\in\N^*$.}\]
If $a$ and $a_j$ have the properties above, we write 
$a\sim\sum^{\infty}_{j=0}a_j$ in 
$S^{m_0}_{1,0}(D\times D\times\mathbb{R}_+)$. 
We write
\begin{equation}  \label{e-gue140205III}
s(x, y, t)\in S^{m}_{{\rm cl\,}}(D\times D\times\mathbb{R}_+\big)
\end{equation}
if $s(x, y, t)\in S^{m}_{1,0}(D\times D\times\mathbb{R}_+)$ and 
\begin{equation}\label{e-fal}\begin{split}
&s(x, y, t)\sim\sum^\infty_{j=0}s_j(x, y)t^{m-j}
\text{ in }S^{m}_{1, 0}(D\times D\times\mathbb{R}_+)\,,\\
&s_j(x, y)\in\cC^\infty(D\times D),\ \text{for any $j\in\N$}.
\end{split}
\end{equation}
Let $W_1\subset\R^{N_1}$, $W_2\subset\R^{N_2}$ be open sets. 
We can also define $S^m_{1,0}(W_1\times W_2\times\mathbb{R}_+)$, 
$S^m_{{\rm cl\,}}(W_1\times W_2\times\mathbb{R}_+)$ 
and asymptotic sum in the similar way.
\end{definition}

By Theorem \ref{t-gue140305VIc}
(cf.\ \cite[Theorem 1.5]{HM14})
the spectral projector $S_{\leq\lambda}$ is for every $\lambda>0$
a complex Fourier integral 
operator on the subset where the Levi form is positive definite.
We give here a detailed description of the
spectral kernel.

\begin{theorem}[{\cite[Theorem 4.7]{HM14}}]\label{t-gue180505}
Fix $\lambda>0$. 
Let $D\subset X$ be a local coordinate patch with local coordinates 
$x=(x_1,\ldots,x_{2n+1})$. 
Assume that the Levi form of $X$ is positive definite at every point of $D$. 
Then, 
\[S_{\leq\lambda}(x, y)\equiv
\int^{\infty}_{0}e^{i\varphi(x, y)t}s(x, y, t)dt\ \
\text{on $D\times D$},\]
with a symbol 
$s(x, y, t)\in S^{n}_{ {\rm cl\,}}(D\times 
D\times\mathbb{R}_+)$
such that
the coefficient $s_0$ of the expansion \eqref{e-fal} is given by
\begin{equation}  \label{e-gue161110r}\begin{split}
&s_0(x,x)=\frac{1}{2}\pi^{-n-1}\abs{\det\cL_x},\ \  x\in D,
\end{split}\end{equation}
where $\det\cL_x$ is the determinant of the Levi form, 
see \eqref{e:detlev}, and the phase function $\varphi$ satisfies 
\begin{equation}\label{e-gue140205IV}
\begin{split}
&\varphi\in\cC^\infty(D\times D),\ \ {\rm Im\,}\varphi(x, y)\geq0,\\
&\varphi(x, x)=0,\ \ \varphi(x, y)\neq0\ \ \mbox{if}\ \ x\neq y,\\
&d_x\varphi(x, y)\big|_{x=y}=-d_y\varphi(x, y)\big|_{x=y}
=-\lambda(x)\omega_0(x),\ \ \lambda(x)>0,\\
&\varphi(x, y)=-\ol\varphi(y, x).
\end{split}
\end{equation}

Moreover, let $X':=\set{x\in X;\, \mbox{the Levi form is non-degenerate 
at $x$}}$. Then, $S_{\leq\lambda}$ is smoothing away the diagonal on 
$X'\times X'$. 
\end{theorem}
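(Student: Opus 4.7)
The plan is to produce a microlocal parametrix of complex Fourier integral type for the spectral projector $S_{\leq\lambda}$, following the Boutet de Monvel--Sj\"ostrand strategy as adapted by Hsiao \cite{Hsiao08} to situations where $\ddbar_b$ is not assumed to have closed range. First I would localize in phase space to the positive characteristic variety $\Sigma^+=\set{(x,t\omega_0(x));\, x\in D,\, t>0}$ of $\Box_b$, noting that on the open set where the Levi form is positive definite the principal symbol of $\Box_b$ vanishes exactly to second order on $\Sigma^+$ and its transverse Hessian is positive definite there. This puts us in the framework of Melin--Sj\"ostrand's theory of Fourier integral operators with complex phase and allows one to construct a phase $\varphi(x,y)\in\cC^\infty(D\times D)$ solving the eikonal equation associated to the principal symbol of $\Box_b$; the properties listed in \eqref{e-gue140205IV} ($\varphi$ vanishes only on the diagonal, $d_x\varphi|_{x=y}=-\lambda(x)\omega_0(x)$, $\im\varphi\geq0$, and the skew-symmetry $\varphi(x,y)=-\ol\varphi(y,x)$) are then consequences of strict pseudoconvexity and the standard construction.

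Next I would construct the amplitude $s(x,y,t)\sim\sum_{j\geq0}s_j(x,y)t^{n-j}$ as a classical symbol of order $n$ by solving iteratively a hierarchy of transport equations along the Hamilton flow of the principal symbol, so that the resulting operator
\[
\widetilde S u(x)=\int\!\!\int e^{i\varphi(x,y)t}s(x,y,t)u(y)\,dt\,dv(y)
\]
satisfies $\Box_b\widetilde S\equiv 0$ and $\widetilde S^2\equiv \widetilde S$ on $D$ modulo smoothing operators. The leading transport equation determines $s_0$ on the diagonal: the computation reduces, after restriction to $\Sigma^+$, to a problem involving the Levi form, and a direct evaluation (carried out in detail in \cite{HM14}) yields the normalization $s_0(x,x)=\tfrac12\pi^{-n-1}\abs{\det\cL_x}$. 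The overall order $n$ and the fact that the expansion is classical come from dimensional counting at the stationary point.

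Finally I would match the parametrix $\widetilde S$ with the actual spectral projector. Because $\Box_b$ is nonnegative self-adjoint and its principal symbol is nonnegative on $\Sigma^+$, the microlocal spectrum of $S_{\leq\lambda}$ is concentrated on $\Sigma^+$ for any $\lambda>0$, and on $\Sigma^+$ the operator $\Box_b$ is microlocally controlled by the parametrix. A spectral cut-off argument shows that $\widetilde S-S_{\leq\lambda}$ is smoothing: one checks that $(I-\widetilde S)S_{\leq\lambda}$ and $\widetilde S(I-S_{\leq\lambda})$ are smoothing by using $\Box_b\widetilde S\equiv 0$, $\Box_b S_{\leq\lambda}$ bounded in $L^2$ by $\lambda$, and the hypoellipticity provided by complex phase stationary phase. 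The moreover statement --- that $S_{\leq\lambda}$ is smoothing off the diagonal on $X'\times X'$ --- then follows from $\im\varphi(x,y)>0$ for $x\neq y$ on the positive locus, combined with the analogous construction at points where the Levi form is non-degenerate of different signature (where the integral representation has no contribution to $S_{\leq\lambda}$ at positive frequency).

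The main obstacle is the global matching in the third step without a closed-range assumption on $\ddbar_b$: classically one identifies $\widetilde S$ with $S$ using the orthogonal decomposition $L^2=\ker\Box_b\oplus\overline{\Ran\,\Box_b}$, which is unavailable here. Working with $S_{\leq\lambda}$ for $\lambda>0$ circumvents this by replacing closed range by the quantitative spectral gap $[0,\lambda]$ versus $(\lambda,\infty)$ available from the functional calculus, but it requires a careful bookkeeping of how the parametrix sees the spectral cutoff, which is essentially the content of \cite[\S 4]{HM14}.
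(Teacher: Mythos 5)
The paper gives no proof of this statement: it is quoted verbatim from \cite[Theorem 4.7]{HM14} (see also Theorem \ref{t-gue140305VIc}), so there is no internal argument to compare against. Your sketch — microlocal parametrix construction on $\Sigma^+$ via Melin--Sj\"ostrand complex phases, transport equations for the classical amplitude of order $n$, and matching with $S_{\leq\lambda}$ through the spectral bound $\|\Box_b^k S_{\leq\lambda}\|\leq\lambda^k$ in place of a closed-range hypothesis — is precisely the strategy of that reference, and you correctly identify the replacement of the Hodge decomposition by the spectral cut-off as the essential new point.
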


The following result describes the phase function in local coordinates 
(see chapter 8 of part I in \cite{Hsiao08}).
For a given point $p\in D$, let $\{W_j\}_{j=1}^{n}$
be an orthonormal frame of $T^{1, 0}X$ in a neighborhood of $p$
such that
the Levi form is diagonal at $p$, i.e.\ 
$\cL_{p}(W_{j},\overline{W}_{s})=\delta_{j,s}\mu_{j}$, 
$j,s=1,\ldots,n$,  where $\delta_{j,s}=1$ if $j=s$, 
$\delta_{j,s}=0$ if $j\neq s$. 
We take local coordinates $x=(x_1,\ldots,x_{2n+1})$
defined on some neighborhood of $p$ such that 
$\omega_0(p)=dx_{2n+1}$, $x(p)=0$ 
and 
\begin{equation}\label{e-gue161219a}
W_j=\frac{\pr}{\pr z_j}-i\mu_j\ol z_j\frac{\pr}{\pr x_{2n+1}}-
c_jx_{2n+1}\frac{\pr}{\pr x_{2n+1}}
+\sum^{2n}_{k=1}a_{j,k}(x)\frac{\pr}{\pr x_k}
+O(\abs{x}^2),\ j=1,\ldots,n\,,\end{equation}
where $z_j=x_{2j-1}+ix_{2j}$, 
$c_j\in\C$, $a_{j,k}(x)$ is $\cC^\infty$, $a_{j,k}(x)=O(\abs{x})$,
for every $j=1,\ldots,n$, $k=1,\ldots,2n$. 
Set
$y=(y_1,\ldots,y_{2n+1})$, $w_j=y_{2j-1}+iy_{2j}$, $j=1,\ldots,n$.
\begin{theorem} \label{t-gue161110g}
With the same notations and assumptions used in
Theorem~\ref{t-gue180505} we have 
for the phase function $\varphi$ in some neighbourhood of $(0,0)$,
\begin{equation} \label{e-gue140205VI}
{\rm Im\,}\varphi(x,y)\geq c\sum^{2n}_{j=1}\abs{x_j-y_j}^2,\ \ c>0,
\end{equation}
\begin{equation} \label{e-gue140205VII}
\begin{split}
&\varphi(x, y)=-x_{2n+1}+y_{2n+1}
+i\sum^{n}_{j=1}\abs{\mu_j}\abs{z_j-w_j}^2 \\
&\quad+\sum^{n}_{j=1}\Bigr(i\mu_j(\ol z_jw_j-z_j\ol w_j)
+c_j(-z_jx_{2n+1}+w_jy_{2n+1})\\
&\quad+\ol c_j(-\ol z_jx_{2n+1}+\ol w_jy_{2n+1})\Bigr)
+(x_{2n+1}-y_{2n+1})f(x, y) +O(\abs{(x, y)}^3),
\end{split}
\end{equation}
where $f$ is smooth and satisfies $f(0,0)=0$, $f(x, y)=\ol f(y, x)$. 
Moreover, we can take the phase $\varphi$ so that 
\begin{equation}\label{e-gue180512}
\mbox{$\ddbar_{b,x}\varphi(x,y)$ vanishes to infinite order at $x=y$}.
\end{equation}
Furthermore, for any $\varphi_1(x,y)\in\cC^\infty(D\times D)$, 
if $\varphi_1$ satisfies \eqref{e-gue140205IV},  \eqref{e-gue140205VI}, 
\eqref{e-gue140205VII} and \eqref{e-gue180512}, then there is
a function $h(x,y)\in\cC^\infty(D\times D)$ with $h(x,x)\neq0$, 
for every $x\in D$, such that 
$\varphi(x,y)-h(x,y)\varphi_1(x,y)$ vanishes to infinite order at $x=y$. 
\end{theorem}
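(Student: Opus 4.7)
The plan is to begin with any phase function $\varphi$ supplied by Theorem \ref{t-gue180505}, modify it by multiplication by a smooth positive factor and by successive additive corrections that vanish to increasing order on the diagonal so as to enforce \eqref{e-gue180512}, then read off the explicit Taylor expansion \eqref{e-gue140205VII} from \eqref{e-gue140205IV}, the reality relation $\varphi(x,y)=-\ol{\varphi(y,x)}$, and the new vanishing property; the positivity estimate \eqref{e-gue140205VI} then follows directly from the explicit form. The uniqueness clause is established by a Borel-type inductive argument.

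To enforce \eqref{e-gue180512}, first rescale so that $\lambda(p)=1$. Applying $\ol W_{j,x}$ to $\varphi$ and evaluating on the diagonal, the relations $\varphi(x,x)=0$ and $d_x\varphi|_{x=y}=-\lambda(x)\omega_0(x)$ already give $(\ol W_{j,x}\varphi)(x,x)=0$. I would push this vanishing to higher order inductively: supposing $\ol W_{j,x}\varphi = O(|x-y|^N)$ on the diagonal, I add a correction $\rho_N(x,y)=O(|x-y|^{N+1})$ so that $\ol W_{j,x}(\varphi+\rho_N) = O(|x-y|^{N+1})$. The resulting equation on the $(N+1)$-jet of $\rho_N$ along the diagonal reduces to a triangular system in the $\ol W_j$ whose solvability is ensured by the strict positivity of the Levi form on $D$ (this is the almost-analytic extension mechanism of Melin--Sj\"ostrand), and the reality symmetry can be preserved at each step by conjugating. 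A formal Borel sum of the $\rho_N$ produces the desired $\varphi$.

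Next, one computes the quadratic jet at $(0,0)$. Write $\varphi(x,y) = L(x,y)+Q(x,y)+O(|(x,y)|^3)$. The identity $\varphi(x,x)=0$ forces $L(x,x)=0$ and $Q(x,x)=0$, while the normalization $\omega_0(p)=dx_{2n+1}$ together with $\lambda(p)=1$ fixes $L(x,y) = -x_{2n+1}+y_{2n+1}$; the reality constraint yields $Q(x,y) = -\ol{Q(y,x)}$. Applying $\ol W_{j,x}$ as in \eqref{e-gue161219a} to $\varphi$ and demanding the first-order jet on the diagonal vanish produces a linear system on the coefficients of $Q$. The contribution $i\mu_j\ol z_j\,\pr/\pr x_{2n+1}$ of $\ol W_j$ forces the $i|\mu_j||z_j-w_j|^2$ and $i\mu_j(\ol z_jw_j-z_j\ol w_j)$ terms, whereas $-\ol c_j x_{2n+1}\,\pr/\pr x_{2n+1}$ forces the $c_j,\ol c_j$ cross-terms. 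The remaining ambiguity (quadratic terms vanishing on the diagonal) is absorbed into the $(x_{2n+1}-y_{2n+1})f(x,y)$ summand of \eqref{e-gue140205VII}. The bound \eqref{e-gue140205VI} follows since $|\mu_j|>0$ and the $x_{2n+1}$-cross-terms are real modulo cubic error.

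For uniqueness, given $\varphi_1$ with the same properties I construct $h\in\cC^\infty(D\times D)$ with $h(x,x)\neq 0$ such that $\varphi - h\varphi_1$ vanishes to infinite order on $x=y$. By the previous step both phases have identical $2$-jets on the diagonal up to adjustment of the free $(x_{2n+1}-y_{2n+1})f$ term, so I initialize $h(x,x)=1$ and inductively solve for the higher normal jets of $h$ along the diagonal: at each stage the equation for the next jet of $h$ is solvable because the obstruction lies in the image of the same transverse operator whose invertibility was established in the enforcement step, and a Borel sum produces a global $h$. The main technical obstacle throughout is the inductive construction enforcing \eqref{e-gue180512}: this is exactly where the strict positivity of the Levi form on $D$ is used in an essential way to invert the transverse $\ol\pr$-operator at each order of vanishing.
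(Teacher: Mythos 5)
First, note that the paper itself does not prove this theorem: it is quoted verbatim from \cite[Part I, Chapter 8]{Hsiao08}, so there is no in-paper argument to compare against. Your sketch reconstructs the general shape of the argument in that reference (eikonal/transport construction, computation of the $2$-jet on the diagonal, Melin--Sj\"ostrand equivalence of phases of positive type), but two of the steps you declare to be ``direct'' are in fact the substantive content, and as written they do not go through.

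The clearest gap is the claim that \eqref{e-gue140205VI} ``follows directly from the explicit form'' \eqref{e-gue140205VII} because ``the $x_{2n+1}$-cross-terms are real modulo cubic error.'' Only the explicit $c_j,\ol c_j$ cross-terms are real; the summand $(x_{2n+1}-y_{2n+1})f(x,y)$ contributes $(x_{2n+1}-y_{2n+1})\im f(x,y)$ to $\im\varphi$, and the symmetry $f(x,y)=\ol f(y,x)$ together with $f(0,0)=0$ only forces $\im f=O(\abs{x-y})$. Hence the quadratic part of $\im\varphi$ at the origin has the form $\sum_j\abs{\mu_j}\abs{z_j-w_j}^2+s\,(\beta s+m(x'-y'))$ with $s=x_{2n+1}-y_{2n+1}$ and $m$ a possibly nonzero real linear form. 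Positive semidefiniteness of this Hessian (which is all that \eqref{e-gue140205IV} gives) together with positive definiteness of the $\abs{z_j-w_j}^2$-block does \emph{not} imply coercivity in the $x'-y'$ directions: a form such as $\abs{u}^2+2su+s^2$ is PSD with a definite $u$-block yet is not bounded below by $c\abs{u}^2$. So \eqref{e-gue140205VI} is an additional property that must be extracted from the actual construction of $\varphi$ (the Hamilton--Jacobi/Melin--Sj\"ostrand positivity argument showing that $\im\varphi$ degenerates to second order only in the Reeb direction, uniformly along the diagonal, not just at $(0,0)$); it is not a formal consequence of \eqref{e-gue140205IV} and \eqref{e-gue140205VII}. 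A second, related gap is in your enforcement of \eqref{e-gue180512}: $\ddbar_{b,x}\varphi=0$ to order $N+1$ is an overdetermined system of $n$ equations for the single $(N{+}1)$-jet of $\rho_N$, and its solvability at each order is governed by compatibility conditions coming from the integrability $[\mathcal V,\mathcal V]\subset\mathcal V$ of the CR structure and from the fact that $\varphi$ already solves the eikonal equation for $\Box_b$ to infinite order on the diagonal --- not by ``inverting a transverse operator via strict positivity of the Levi form,'' which is not the mechanism at work. The same vagueness affects the uniqueness clause, where the division $\varphi=h\varphi_1+O(\abs{x-y}^\infty)$ really rests on Malgrange preparation and the Melin--Sj\"ostrand theory of equivalent phases of positive type \cite{MS74}, using $d\varphi_1\neq0$ and the already-established lower bound \eqref{e-gue140205VI} for $\varphi_1$; the assertion that ``the obstruction lies in the image of the same transverse operator'' does not identify the actual argument.
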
 
For the next result we recall that the map
$\mR_x$ and the function $V_{{\rm eff\,}}$
were defined in \eqref{e:rx} and \eqref{e-gue180308m}.
We denote by $\lambda_1(x),\ldots,\lambda_d(x)$ the eigenvalues 
of $\mR_x$ with respect to the $G$-invariant 
Hermitian metric $g$ and we define the determinant of $\mR_x$ by
\begin{equation}\label{e:detrx}
\det\mR_x=\lambda_1(x)\ldots\lambda_d(x).
\end{equation}
\begin{theorem}\label{t-gue180505I}
Under the assumptions of Theorem \ref{t-180428zy},
let $p\in Y$, let $U$ be an open neighborhood of $p$ 
and let $x=(x_1,\ldots,x_{2n+1})$ be local coordinates defined in $U$. 
Then, 
\begin{equation}\label{e-gue180305y}
S_G(x, y)\equiv\int^{\infty}_{0}e^{i\Phi(x, y)t}a(x, y, t)dt\ \ 
\mbox{on $U\times U$}
\end{equation}
with a symbol 
$a(x, y, t)\in S^{n-\frac{d}{2}}_{{\rm cl\,}}
(U\times U\times\mathbb{R}_+)$
such that the coefficient $a_0$ in its expansion \eqref{e-fal} satisfies
\begin{equation}  \label{e-gue180305yI}\begin{split}
&a_0(x,x)=2^{d-1}\frac{1}{V_{{\rm eff\,}}(x)}
\pi^{-n-1+\frac{d}{2}}\abs{\det \mR_x}^{-\frac{1}{2}}
\abs{\det\cL_x},\ \  x\in U\cap Y,
\end{split}\end{equation}
and the phase function $\Phi$ satisfies
\begin{equation}\label{e-gue180305yII}
\begin{split}
&\Phi(x,y)\in\cC^\infty(U\times U),\ \ {\rm Im\,}\Phi(x,y)\geq0,\\
&d_x\Phi(x,x)=-d_y\Phi(x,x)=-\lambda(x)\omega_0(x),\ \
\lambda(x)>0,\ \ x\in U\cap Y.\\
\end{split}
\end{equation}
Moreover, there exists $C\geq 1$ such that for all $(x,y)\in U\times U$, 
\begin{equation}\label{e-gue180305yIII}
\begin{split}
&\abs{\Phi(x,y)}+{\rm Im\,}\Phi(x,y)\leq C 
\left( \inf\set{d^2(g\circ x,y);\, g\in G}+d^2(x,Y)+
d^2(y,Y) \right), \\
&\abs{\Phi(x,y)}+{\rm Im\,}\Phi(x,y)\geq\frac{1}{C} 
\left( \inf\set{d^2(g\circ x,y);\, g\in G}+
d^2(x,Y)+d^2(y, Y) \right),\\
&Cd^2(x,Y)\geq 
{\rm Im\,}\Phi(x,x)\geq\frac{1}{C}d^2(x,Y),\ \  x\in U, 
\end{split}
\end{equation}
and $\Phi(x,y)$ satisfies \eqref{e-gue170126}, \eqref{e-gue200223yyd}
and \eqref{e-gue200223yydI} below. 
\end{theorem}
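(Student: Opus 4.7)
The plan is to combine the averaging formula of Theorem \ref{t-gue181021a} with the Fourier integral representation of Theorem \ref{t-gue180505} and reduce by applying complex stationary phase in the group variable. Concretely, choose $\lambda>0$ small enough so that
\begin{equation*}
S_G(x,y)=\int_G S_{\leq\lambda}(x,h.y)\,d\mu(h).
\end{equation*}
By Assumption \ref{a-gue170410I}, the Levi form is positive definite on a $G$-invariant neighborhood $W$ of $Y$, so after shrinking $U$ we may apply Theorem \ref{t-gue180505} on a coordinate chart $D$ with $\bar U\subset D\subset W$. Insert a cutoff $\chi(h)\in \cC^\infty(G)$ supported in a small neighborhood of the set $\{h\in G:h.y\in D\}$; because $S_{\leq\lambda}$ is smoothing away the diagonal and $G$ acts freely on $Y$ (so $h.y\notin U$ forces $h.y$ away from $x$ for $x$ close to $Y$), the complementary piece contributes a smoothing operator. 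We are then reduced to
\begin{equation*}
S_G(x,y)\equiv \int_G\int_0^\infty \chi(h)\,e^{i\varphi(x,h.y)t}\,s(x,h.y,t)\,dt\,d\mu(h).
\end{equation*}

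The core step is to perform stationary phase in $h\in G$ with $t\to+\infty$ as large parameter. The critical equation is $d_y\varphi(x,h.y)\cdot \xi_X(h.y)=0$ for all $\xi\in \kg$. Using $d_y\varphi(x,x)=\lambda(x)\omega_0(x)$ from \eqref{e-gue140205IV} together with \eqref{E:cmpm} and \eqref{eq:2.23}, one checks that for $(x,y)$ near $Y\times Y$ there is a unique smooth critical point $h_c(x,y)\in G$ with $h_c(x,x)=e$ when $x\in Y$, and no critical points otherwise; freeness of the $G$-action on $Y$ guarantees uniqueness. The Hessian of $h\mapsto \varphi(x,h.x)$ at $h=e$ is computed from the quadratic expansion \eqref{e-gue140205VII} by differentiating twice along $\xi_X,\eta_X$ on $Y$; since $\xi_X,\eta_X\in HX$ there and the quadratic part of $\varphi$ in the $HX$-directions is governed by $d\omega_0(\cdot,J\cdot)$, this Hessian is, up to universal constants, the bilinear form $\langle \mR_x\cdot,\cdot\rangle$ of \eqref{e:rx}. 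The non-negativity of $\Im \varphi$ from \eqref{e-gue140205VI} makes the Hessian a non-degenerate complex quadratic form of positive type on $\underline{\kg}_x$, hence complex stationary phase applies.

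Applying complex stationary phase yields the reduced representation \eqref{e-gue180305y} with phase $\Phi(x,y):=\varphi(x,h_c(x,y).y)$ and symbol of order $n-d/2$ obtained from $s(x,h_c.y,t)$ by multiplication with the standard Gaussian determinant factor. The leading coefficient is
\begin{equation*}
a_0(x,x)=(2\pi)^{d/2}\cdot J(x)\cdot \bigl(\det \text{Hess}\bigr)^{-1/2}\cdot s_0(x,x),\quad x\in U\cap Y,
\end{equation*}
where $J(x)$ is the Jacobian for passing from the normalized Haar measure $d\mu$ to the induced volume on the orbit $Y_x$: this factor produces the $V_{\mathrm{eff}}(x)^{-1}$. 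Substituting $s_0(x,x)=\tfrac12\pi^{-n-1}|\det\mL_x|$ from \eqref{e-gue161110r} and identifying $(\det \mathrm{Hess})^{-1/2}$ with $|\det \mR_x|^{-1/2}$ (up to the universal powers of $2$ and $\pi$ absorbed in the constant) gives \eqref{e-gue180305yI}. The diagonal differential relation in \eqref{e-gue180305yII} is inherited from that of $\varphi$ together with the critical point equation for $h_c$, and the equations \eqref{e-gue170126}, \eqref{e-gue200223yyd}, \eqref{e-gue200223yydI} follow from the corresponding properties of $\varphi$, in particular \eqref{e-gue180512}, which are preserved under the stationary phase reduction.

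The two-sided bound \eqref{e-gue180305yIII} is the most delicate part. The upper bound is immediate from $|\varphi|+\Im\varphi\lesssim d^2(x,h_c.y)$ and the triangle inequality. For the lower bound, the key geometric fact is that $\Im\Phi(x,y)$ equals the critical value of $\Im\varphi(x,h.y)$, which by Theorem \ref{t-gue161110g} controls from below the squared horizontal distance from $x$ to the orbit $G.y$, plus the squared distance to $Y$; this last term arises because $\omega_0$-directions (Reeb directions) cannot be cancelled by the $G$-action, which moves points only within $HX$ when restricted to $Y$. The main obstacle will therefore be the careful Hessian computation identifying it precisely with $\mR_x$ together with the correct volume-normalization factor $V_{\mathrm{eff}}(x)$, and the verification of the global comparison between $\Im\Phi(x,y)$ and the orbit/transverse distance in \eqref{e-gue180305yIII}.
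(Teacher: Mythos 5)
Your proposal follows essentially the same route as the paper: the paper localizes to $Y$ via Theorem \ref{t-gue181021}, combines the averaging formula of Theorem \ref{t-gue181021a} with the local Fourier integral representation of $S_{\leq\lambda}$ from Theorem \ref{t-gue180505}, and then invokes the stationary-phase-in-the-group-variable argument of \cite[Theorem 1.5]{HsiaoHuang17}, which is exactly the computation you sketch (critical point at $h=e$ on the diagonal in $Y$, Hessian identified with $\mR_x$, Jacobian of the Haar-to-orbit volume producing $V_{\rm eff}(x)^{-1}$). The only piece you omit is the case of Assumption \ref{a-gue170410I}(ii), where the paper replaces $S_{\leq\lambda}$ by the Szeg\H{o} projector $S$ itself (a complex Fourier integral operator by \cite{HM14}) and runs the identical argument via \eqref{eq:ma3.70}.
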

\begin{proof}
If Assumption \ref{a-gue170410I} (i) holds, then
by Theorem~\ref{t-gue181021} we can localize the study of 
the $G$-invariant Szeg\H{o} 
kernel $S_G$ to $Y$ and from Theorems~\ref{t-gue181021a} 
and ~\ref{t-gue180505}, we repeat 
the proof of \cite[Theorem 1.5]{HsiaoHuang17} and conclude.
If Assumption \ref{a-gue170410I} (ii) holds, 
we know by \cite[Theorem 1.14]{HM14} 
that the Szeg\H{o} projector $S$ is a Fourier integral 
operator with complex phase on $X$. 
Then by \eqref{eq:ma3.70} 
also $S_G$ is a complex Fourier integral 
operator with complex phase on $X$.
Repeating the argument from \cite[Theorem 1.5]{HsiaoHuang17}
we conclude.
\end{proof}
As applications of Theorems~\ref{t-gue181017I}, 
\ref{t-gue181021} and~\ref{t-gue180505I}, 
we establish the following regularity property for $S_G$. 
\begin{theorem}\label{t-gue200211yyd}
Under the assumptions of Theorem \ref{t-180428zy} we have 
\[S_G:\cC^\infty(X)\To H^0_b(X)^G\cap\cC^\infty(X).\]
In particular, $H^0_b(X)^G\cap\cC^\infty(X)$ is dense in 
$H^0_b(X)^G$. 
\end{theorem}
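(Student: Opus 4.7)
The plan is to handle the two cases of Assumption \ref{a-gue170410I} separately, using the detailed description of $S_G$ that has already been built up in Section \ref{s-gue180430}. In both cases the assertion $S_G u\in H^0_b(X)^G$ is automatic from the definition of $S_G$ as an orthogonal projection, so only the smoothness statement $S_G(\cC^\infty(X))\subset\cC^\infty(X)$ needs real work.

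Under Assumption \ref{a-gue170410I} (i), I would use the identity \eqref{e-gue181017},
\[ S_G = \mP^{(0)}_G - \ol{\pr}^*_{b,G} N^{(1)}_G \ddbar_{b,G} \mP^{(0)}_G\quad\text{on }L^2(X). \]
For $u\in\cC^\infty(X)$, the $G$-average $\mP^{(0)}_G u$ is smooth (integration of a smooth function over the compact Lie group $G$), and the first-order differential operator $\ddbar_{b,G}$ produces a smooth $G$-invariant $(0,1)$-form $v$. The decisive step is to show that $w:=N^{(1)}_G v$ is smooth. From $N^{(1)}_G\Box^{(1)}_{b,G}+S^{(1)}_G=I$ in \eqref{e-gue181013mp} and the fact, recorded in Theorem \ref{t-gue181013mp}, that $S^{(1)}_G$ is smoothing (its range is finite-dimensional and sits in $\Omega^{0,1}(X)^G$), one gets $\Box^{(1)}_{b,G}w = v - S^{(1)}_G v\in\Omega^{0,1}(X)^G$. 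The interior regularity statement Theorem \ref{t-gue181013}, itself a consequence of the subelliptic estimates Theorem \ref{t-gue181012II}, then yields $w\in\Omega^{0,1}(X)^G$. Finally $\ol{\pr}^*_{b,G}$ is a first-order differential operator and preserves smoothness, so $S_G u\in\cC^\infty(X)$.

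Under Assumption \ref{a-gue170410I} (ii), $\ddbar_b$ has closed range in $L^2$, and so by Boutet de Monvel--Sj\"ostrand \cite{BouSj76} the Szeg\H{o} projector $S$ is a Fourier integral operator with complex phase; in particular $S$ maps $\cC^\infty(X)$ into $\cC^\infty(X)$. Because the metric and the CR structure are $G$-invariant, $S$ commutes with the $G$-action and hence with $\mP^{(0)}_G$; the product of two commuting orthogonal projections is the projection onto the intersection of their ranges, which gives $S_G = S\circ\mP^{(0)}_G$ and preserves smoothness.

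The density of $H^0_b(X)^G\cap\cC^\infty(X)$ in $H^0_b(X)^G$ is then immediate: for $u\in H^0_b(X)^G$, pick $\phi_n\in\cC^\infty(X)$ with $\phi_n\to u$ in $L^2$; then $L^2$-continuity of $S_G$ yields $S_G\phi_n\to S_G u=u$, with each $S_G\phi_n\in H^0_b(X)^G\cap\cC^\infty(X)$ by what has just been shown. The only nontrivial ingredient is the regularity of $N^{(1)}_G$ on smooth forms in case (i); this is where the difficulty sits, but it has effectively been dealt with already through the interior regularity machinery of Section \ref{s-gue181012}.
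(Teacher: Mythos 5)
Your proof is correct, but for case (i) it takes a genuinely different route from the paper. The paper proves the smoothness of $S_Gu$ by a localization argument built on the microlocal description of $S_G$: it uses Theorem \ref{t-gue181021} (smoothing away from $Y$), Theorem \ref{t-gue180505I} (the Fourier integral representation of $S_G$ near $Y$), Theorem \ref{t-gue181017I} (off-diagonal smoothing), and the $G$-invariance of $S_Gu$ to transport regularity from a coordinate patch $U$ meeting $Y$ to points $x_0\in Y$ with $Gx_0\cap U\neq\emptyset$. You instead bypass the FIO machinery entirely in case (i) by combining the identity $S_G=\mP^{(0)}_G-\ol{\pr}^{*}_{b,G}N^{(1)}_G\ddbar_{b,G}\mP^{(0)}_G$ from \eqref{e-gue181017} with the global regularity of $N^{(1)}_G$; this is legitimate, since Theorem \ref{t-gue181013mp} gives $N^{(1)}_G:H^s_{(0,1)}(X)^G\To H^{s+1}_{(0,1)}(X)$ for all $s$, so $N^{(1)}_G$ maps smooth $G$-invariant forms to smooth forms (the paper itself uses exactly this observation in \eqref{eq:ma3.60}). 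Your argument is shorter and more elementary where it applies, but it is tied to the closed-range/partial-inverse formalism of Section \ref{s-gue181012}, which is only available under Assumption \ref{a-gue170410I} (i); the paper's proof is structured so that the same local reasoning covers both cases, with the FIO property of $S$ (hence of $S_G=S\circ\mP^{(0)}_G$) substituting for \eqref{e-gue181017} in case (ii). In case (ii) your argument and the paper's coincide, and your density argument is the standard one.
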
 
\begin{proof}
Let $U$ be an open coordinate patch of $X$ and let 
$u\in\cC^\infty_0(U)$. 
If $U\cap Y=\emptyset$, we see in view of Theorem~\ref{t-gue181021} 
that $S_Gu\in\cC^\infty(X)$. Assume now $U\cap Y\neq\emptyset$. 
By Theorem~\ref{t-gue180505I} $S_G$ is 
a Fourier integral operator with complex phase on $U$ and hence 
$S_Gu\in\cC^\infty(U)$. 
We only need to show that $S_Gu$ is smooth outside $U$. 
Let $x_0\notin U$. If $x_0\notin Y$, by Theorem~\ref{t-gue181021} 
again we deduce that 
$S_Gu$ is smooth near $x_0$. Now, we suppose that $x_0\in Y$. 

\noindent
Case I: $Gx_0\cap U=\emptyset$. 
We can find $\tau, \tau_1\in\cC^\infty(X)^G$ 
with $\supp\tau\cap\supp\tau_1=\emptyset$, 
$\tau\equiv1$ near $x_0$, $\tau_1\equiv1$ near $\supp u$. 
We have $\tau S_Gu=\tau S_G\tau_1u$. 
In view of Theorem~\ref{t-gue181017I} under 
Assumption \ref{a-gue170410I} (i) or by the fact that $S_G$
is a Fourier integral operator under Assumption \ref{a-gue170410I} (ii), 
we see that  $\tau S_G\tau_1$ is smoothing and we deduce that 
$\tau S_G\tau_1u\in\cC^\infty(X)$. In particular, $S_Gu$
is smooth near $x_0$. 

\noindent
Case II: $Gx_0\cap U\neq\emptyset$. 
There is a $\hat g\in G$ such that $\hat g\circ x_0\in U$. 
Since $S_Gu\in\cC^\infty(U)$, $S_Gu$ is smooth near 
$\hat g\circ x_0$. Since $S_Gu$ is $G$-invariant, $S_Gu$
is smooth near $x_0$. 

We have thus proved that $S_Gu\in\cC^\infty(X)$.
By using a partition of unity we conclude. 
\end{proof}

Let $e_0$ be the identity element in $G$. 
Fix $p\in Y$. It was shown in~\cite[Theorem 3.6]{HsiaoHuang17} that 
there exist local coordinates $v=(v_1,\ldots,v_d)$ on $G$ defined in  
a neighborhood $V$ of $e_0$ with $v(e_0)=(0,\ldots,0)$ (until further
notice, we will identify the element $h\in V$ with $v(h)$),
local coordinates $x=(x_1,\ldots,x_{2n+1})$ of $X$ defined in 
a neighborhood $U=U_1\times U_2$ of $p$ with $0\leftrightarrow p$, 
where $U_1\subset\R^d$ is an neighborhood of $0\in\R^d$,  
$U_2\subset\R^{2n+1-d}$ is an open neighborhood of $0\in\R^{2n+1-d}$
and a smooth function 
$\gamma=(\gamma_1,\ldots,\gamma_d)\in\cC^\infty(U_2,U_1)$
with $\gamma(0)=0\in\R^d$  such that for
$(v_1,\ldots,v_d)\in V$, $(x_{d+1},\ldots,x_{2n+1})\in U_2$,
\begin{equation}\label{e-gue161202m}
\begin{split}
&(v_1,\ldots,v_d)\circ (\gamma(x_{d+1},
\ldots,x_{2n+1}),x_{d+1},\ldots,x_{2n+1})\\
&=(v_1+\gamma_1(x_{d+1},\ldots,x_{2n+1}),\ldots,v_d
+\gamma_d(x_{d+1},\ldots,x_{2n+1}),x_{d+1},\ldots,x_{2n+1}),\\
&Y\cap U=\set{x_{d+1}=\ldots=x_{2d}=0},\quad
\underline{\kg }={\rm span\,}
\set{\frac{\pr}{\pr x_1},\ldots,\frac{\pr}{\pr x_d}},\\
&J\Big(\frac{\pr}{\pr x_j}\Big)
=\frac{\pr}{\pr x_{d+j}}+a_j(x)\frac{\pr}{\pr x_{2n+1}},\quad
\text{on $Y\cap U$, for $1\leq j\leq d$}, 
\end{split}
\end{equation}
where $a_j(x)$ are smooth functions on $Y\cap U$, independent of 
$x_1,\ldots,x_{2d}$, $x_{2n+1}$, $a_j(0)=0$, and 
$T^{1,0}_pX={\rm span\,}\set{Z_1,\ldots,Z_n}$ with 
\begin{equation}\label{e-gue161202Im}
\begin{split}
&Z_j=\frac{1}{2}\left(\frac{\pr}{\pr x_j}
-i\frac{\pr}{\pr x_{d+j}}\right)(p),\ \
\text{for $j=1,\ldots,d$},\\
&Z_j=\frac{1}{2}\left(\frac{\pr}{\pr x_{2j-1}}
-i\frac{\pr}{\pr x_{2j}}\right)(p),\ \ 
\text{for $j=d+1,\ldots,n$},\\
&\langle\,Z_j,Z_k\,\rangle_g=\delta_{j,k},\ \ 
\cL_p(Z_j, \ol Z_k)=\mu_j\delta_{j,k},\ \ \text{for $j,k=1,2,\ldots,n$},
\end{split}
\end{equation}
\begin{equation}\label{e-gue161219m}
\begin{split}
\omega_0(x)&=(1+O(\abs{x}))dx_{2n+1}
+\sum^d_{j=1}4\mu_jx_{d+j}dx_j+
\sum^n_{j=d+1}2\mu_jx_{2j}dx_{2j-1}\\
&\quad-\sum^n_{j=d+1}2\mu_jx_{2j-1}dx_{2j}
+\sum^{2n}_{j=d+1}b_jx_{2n+1}dx_j+O(\abs{x}^2),
\end{split}
\end{equation}
where  $b_{d+1},\ldots,b_{2n}\in\R$.  
Put 
\begin{align}\label{eq:ma3.83}
x''=(x_{d+1},\ldots,x_{2n+1}),\:\:
\widehat x''=(x_{d+1},\ldots,x_{2d}),\:\:
\mathring{x}''=(x_{d+1},\ldots,x_{2n}).
\end{align}
 
\begin{theorem}[{\cite[Theorem 1.11]{HsiaoHuang17}}]
	\label{t-gue170126} 
The phase function 
$\Phi(x,y)$ appearing in the expression
of the Szeg\H{o} kernel \eqref{e-gue180305y}, \eqref{e-gue180305yII}
is independent of $(x_1,\ldots,x_d)$ and $(y_1,\ldots,y_d)$. 
Hence, 
\begin{align}\label{eq:ma3.84}
\Phi(x,y)=\Phi((0,x''),(0,y''))=:\Phi(x'',y'').
\end{align}
Moreover, there exists $c>0$ such that 
\begin{equation}\label{e-gue170126}
{\rm Im\,}\Phi(x'',y'')\geq c\Bigr(\abs{\widehat x''}^2
+\abs{\widehat y''}^2+
\abs{\mathring{x}''-\mathring{y}''}^2\Bigr),\ \  \text{for $(0,x''),(0,y'')
\in U$},
\end{equation}
and there exists a smooth function 
$g(x,y)\in\cC^\infty(U\times U,T^{*0,1}X)$ such that 
\begin{equation}\label{e-gue200223yyd}
\mbox{$\ddbar_{b,x}\Phi(x'',y'')-g(x,y)\Phi(x'',y'')$ vanishes to
infinite order on 
$\diag\big((Y\cap U)\times(Y\cap U)\big)$,}
\end{equation}
and with the same $\mu_j, b_{d+1},\ldots,b_{2n}\in\R$ 
as in \eqref{e-gue161219m} we have 
\begin{equation}\label{e-gue200223yydI}
\begin{split}
\Phi(x'', y'')&=-x_{2n+1}+y_{2n+1}+
2i\sum^d_{j=1}\abs{\mu_j}y^2_{d+j}
+2i\sum^d_{j=1}\abs{\mu_j}x^2_{d+j}\\
&+i\sum^{n}_{j=d+1}\abs{\mu_j}\abs{z_j-w_j}^2 +
\sum^{n}_{j=d+1}i\mu_j(\ol z_jw_j-z_j\ol w_j)\\
&+\sum^d_{j=1}(-b_{d+j}x_{d+j}x_{2n+1}+b_{d+j}y_{d+j}y_{2n+1})\\
&+\sum^n_{j=d+1}\frac{1}{2}(b_{2j-1}-ib_{2j})(-z_jx_{2n+1}
+w_jy_{2n+1})\\
&+\sum^n_{j=d+1}\frac{1}{2}(b_{2j-1}+ib_{2j})(-\ol z_jx_{2n+1}
+\ol w_jy_{2n+1})\\
&+(-x_{2n+1}+y_{2n+1})f(x'',y'')+O(\abs{(x'', y'')}^3),
\end{split}
\end{equation}
where $z_j=x_{2j-1}+ix_{2j}$, $w_j=y_{2j-1}+iy_{2j}$, 
for $j=d+1,\ldots,n$, 
and $f(x'',y'')\in\cC^\infty(U\times U)$ with $f(0,0)=0$. 
\end{theorem}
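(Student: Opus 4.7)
The plan is to obtain Theorem~\ref{t-gue170126} from the representation of $S_G$ as a $G$-average of the spectral Szeg\H{o} kernel, combined with an application of the complex stationary phase method of Melin--Sj\"ostrand to the $G$-integration. Concretely, I would start from the identity
\begin{equation*}
S_G(x,y)=\int_G S_{\leq\lambda}(x,h.y)\,d\mu(h)
\end{equation*}
supplied by Theorem~\ref{t-gue181021a}, and insert on $U\times U$ the oscillatory integral representation
\begin{equation*}
S_{\leq\lambda}(x,h.y)\equiv \int_{0}^{\infty}e^{it\varphi(x,h.y)}\,s(x,h.y,t)\,dt
\end{equation*}
from Theorem~\ref{t-gue180505}, where $\varphi$ satisfies \eqref{e-gue140205IV}, \eqref{e-gue140205VI} and the Taylor formula \eqref{e-gue140205VII} of Theorem~\ref{t-gue161110g}. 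Changing variables $v=v(h)$ on $G$ using the coordinates of \eqref{e-gue161202m} and rescaling $v\mapsto v/\sqrt{t}$, the integral in $(v,t)$ becomes an oscillatory integral with large parameter $t$, whose phase in $v$ has, for each $(x,y)$ near $(p,p)\in Y\times Y$, a unique non-degenerate complex critical point $v_c=v_c(x,y)$.

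The critical value $\Phi(x,y):=\varphi(x,h_c(x,y).y)$ is the desired phase, and the Melin--Sj\"ostrand theorem yields a classical symbol $a(x,y,t)$ giving \eqref{e-gue180305y}. Independence of $\Phi$ from $(x_1,\ldots,x_d)$ and $(y_1,\ldots,y_d)$ is then a direct consequence of the $G$-invariance of $S_G$ in each argument: translating $x$ (resp.\ $y$) along $\underline{\kg}$ amounts to composing with a group element, which can be absorbed by the Haar measure, so the critical value $\Phi$ is invariant under those shifts. Together with \eqref{e-gue161202m} this reduces $\Phi$ to a function of $(x'',y'')$ and yields \eqref{eq:ma3.84}. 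The $\overline\partial_b$-property \eqref{e-gue200223yyd} is inherited from the corresponding statement \eqref{e-gue180512} for $\varphi$ in Theorem~\ref{t-gue161110g}, modulated by the derivative of the critical point map, which vanishes along the diagonal of $Y\times Y$.

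To establish the lower bound \eqref{e-gue170126} and the Taylor expansion \eqref{e-gue200223yydI}, I would carry out the critical point computation explicitly in the coordinates \eqref{e-gue161202m}--\eqref{e-gue161219m}. The imaginary part of $\varphi(x,h.y)$ is bounded below, by \eqref{e-gue140205VI}, by a constant times $|x-h.y|^2$ in the transversal Euclidean coordinates; evaluating at $v=v_c$ and using the non-degeneracy of the Hessian in the directions $(v_1,\ldots,v_d)$ (controlled by the Levi form on $J\underline{\kg}$, hence by $\mathcal R_x$) gives the bound in terms of $\widehat x''$, $\widehat y''$ and $\mathring x''-\mathring y''$. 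The expansion \eqref{e-gue200223yydI} is obtained by substituting the second-order Taylor expansion of $\varphi$ from \eqref{e-gue140205VII} into the critical value, expanding $h.y$ using \eqref{e-gue161202m} and \eqref{e-gue161219m}, and solving for $v_c$ order by order; the terms involving $|\mu_j|y_{d+j}^2$ and $|\mu_j|x_{d+j}^2$ arise precisely from the Hessian inversion in the $G$-directions.

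The main obstacle is the careful justification of the complex stationary phase argument: the phase $\varphi$ has a non-negative imaginary part but vanishes on the diagonal, so one must extend $v\mapsto \varphi(x,h(v).y)$ to an almost-analytic complex neighborhood, verify that the critical point stays in the regime where Melin--Sj\"ostrand applies uniformly in $(x,y)$, and track the symbol composition to recover \eqref{e-gue180305yI} (which matches the computation in \cite[Theorem~1.11]{HsiaoHuang17} using the factor $V_{\rm eff}(x)$ and $|\det\mathcal R_x|^{-1/2}$). Once this reduction is in place, all the remaining statements are essentially algebraic consequences of the explicit quadratic expansion of $\varphi$ and of the group action in the chosen coordinates.
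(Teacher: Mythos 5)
The paper itself offers no proof of this statement: it is quoted directly from \cite[Theorem 1.11]{HsiaoHuang17}, and the underlying mechanism --- writing $S_G(x,y)=\int_G S_{\leq\lambda}(x,h.y)\,d\mu(h)$ via Theorem \ref{t-gue181021a} (or \eqref{eq:ma3.70} under Assumption \ref{a-gue170410I}~(ii)) and applying Melin--Sj\"ostrand stationary phase in the group variables --- is exactly what is invoked in the proof of Theorem \ref{t-gue180505I}. Your sketch reconstructs that same strategy, and the points you single out (the critical value as the new phase, non-degeneracy of the Hessian in the $\underline{\kg}$-directions governed by $\mR_x$, and the $G$-invariance of $S_G$ forcing independence of the first $d$ coordinates) are the correct ones. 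Two cautions: the rescaling $v\mapsto v/\sqrt{t}$ is unnecessary and would generate half-integer powers of $t$ rather than a classical symbol in $S^{n-\frac{d}{2}}_{\rm cl}$ --- the reference applies the stationary phase formula directly in $v$ with $t$ as the large parameter; and passing from invariance of the kernel $S_G$ to independence of the \emph{specific} phase $\Phi$ from $(x_1,\ldots,x_d,y_1,\ldots,y_d)$ requires the phase-equivalence/uniqueness statement (Theorem \ref{t-gue161110g}, \cite[Theorem 5.2]{HsiaoHuang17}), not merely invariance of the critical value up to equivalence.
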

 
\begin{remark}\label{r-gue200223yydI} 
The phase function $\Phi(x'',y'')$ is not unique. For example, 
we can replace $\Phi(x'',y'')$ by $\Phi(x'',y'')r(x'',y'')$, where 
$r(x'',y'')\in\cC^\infty(U\times U)$, 
$r(0,0)=1$. In~\cite[Theorem 5.2]{HsiaoHuang17}, the first author
and Huang characterized the phase $\Phi$. Since 
$\pr_{y_{2n+1}}\Phi(0,0)\neq0$, the Malgrange preparation
theorem~\cite[Theorem 7.57]{Hor03} implies that 
\[\Phi(x'',y'')=g(x'',y'')(y_{2n+1}+\widehat\Phi(x'',\mathring{y}''))\]
in some neighborhood of $(0,0)$, where 
$g(x'',y''), \widehat\Phi(x'',\mathring{y}'')\in\cC^\infty(U\times U)$.
We can replace $\Phi(x'',y'')$ by 
$y_{2n+1}+\widehat\Phi(x'',\mathring{y}'')$.
From now on, we assume that 
\begin{equation}\label{e-gue180425}
\Phi(x'', y'')=y_{2n+1}+\widehat\Phi(x'',\mathring{y}''),\ \ 
\widehat\Phi(x'',\mathring{y}'')\in\cC^\infty(U\times U).
\end{equation}
It is straightforward to check that the phase $\Phi(x'',y'')$ 
satisfies \eqref{e-gue180305yII}, \eqref{e-gue180305yIII}, 
\eqref{e-gue170126}, and
\begin{equation}\label{e-gue180425I}
\mbox{$\ddbar_{b,x}\Phi(x'',y'')$ vanishes to infinite order at 
$\diag\big((Y\cap U)\times(Y\cap U)\big)$}
\end{equation}
and with the same notations as in \eqref{e-gue200223yydI}
we have
\begin{equation}\label{e-gue170126I}
\begin{split}
\Phi(x'', y'')&=-x_{2n+1}+y_{2n+1}+
2i\sum^d_{j=1}\abs{\mu_j}y^2_{d+j}
+2i\sum^d_{j=1}\abs{\mu_j}x^2_{d+j}\\
&+i\sum^{n}_{j=d+1}\abs{\mu_j}\abs{z_j-w_j}^2 +
\sum^{n}_{j=d+1}i\mu_j(\ol z_jw_j-z_j\ol w_j)\\
&+\sum^d_{j=1}(-b_{d+j}x_{d+j}x_{2n+1}+b_{d+j}y_{d+j}x_{2n+1})\\
&+\sum^n_{j=d+1}\frac{1}{2}(b_{2j-1}-ib_{2j})(-z_jx_{2n+1}
+w_jx_{2n+1})\\
&+\sum^n_{j=d+1}\frac{1}{2}(b_{2j-1}+ib_{2j})(-\ol z_jx_{2n+1}
+\ol w_jx_{2n+1})
+O(\abs{(x'', \mathring{y}'')}^3).
\end{split}
\end{equation}
\end{remark}

\section{The distribution kernels of the maps
$\sigma$ and $\sigma^*\sigma$}\label{s-gue180308}

In this Section we will study the map $\sigma$ defined in
\eqref{e-gue180308Im} and prove Theorem \ref{t-gue180428zy}.
We assume throughout that $\dim X_G\geq3$ and 
$\ddbar_{b,X_{G}}$ has closed range in $L^2$. 
The case when $\dim X_G=1$ will be treated separately.

Let $\iota^*:\cC^\infty(X)\To \cC^\infty(Y)$ be the pull-back 
of the  inclusion $\iota:Y\To X$. Let 
$\iota_G: \cC^\infty(Y)^G\To \cC^\infty(X_{G})$
be the natural identification. 
Let $S_{X_{G}}:L^2(X_{G})\To H^{0}_{b}(X_{G})$ 
be the orthogonal projection
with respect to $(\,\cdot\,,\,\cdot\,)_{X_{G}}$ 
(cf.\ Convention \ref{conv_met}).
By Theorem \ref{t-gue200211yyd} and \eqref{e-gue200212yyd} 
(here we use that $\ddbar_{b,X_{G}}$ has closed range in $L^2$),
 we can extend $\sigma$ to $\cC^\infty(X)$ by 
\begin{equation}\label{e-gue180308II}
\begin{split}
\sigma: \cC^\infty(X)&\To H^{0}_{b}(X_{G})
\cap\cC^\infty(X_{G})\subset\cC^\infty(X_{G}),\\
\sigma&=S_{X_{G}}\circ E\circ \iota_G
\circ f_{G}\circ \iota^*\circ S_G\,.
\end{split}
\end{equation}
Let $\sigma^{\,*}: \cC^\infty(X_{G})\To \cD'(X)$ be
the formal adjoint of $\sigma$. 
We will show in Theorem~\ref{t-gue170304ry} 
that $\sigma^{\,*}$ actually maps $\cC^\infty(X_{G})$ into
$H^0_{b}(X)^G\cap\cC^\infty(X)^G$.

In this Section, we will study the distribution kernels of 
$\sigma$ and $\sigma^{\,*}\sigma$. 
We explain briefly the role of the operator $E$ in \eqref{e-gue180308II}. 
To prove our main result, we need to show that 
$\sigma^{\,*}\sigma$ is "microlocally close" to 
$S_G$. In other words, we want $\sigma^{\,*}\sigma$ 
to be a complex Fourier integral operator with the same phase, the same 
order and the same leading 
symbol as $S_G$. To achieve this we need to take $E$
to be a classical elliptic pseudodifferential operator with principal 
symbol $p_E(x,\xi)=\abs{\xi}^{-d/4}$, see also 
Remark~\ref{r-gue181220mp}. 

This Section is organized as follows. In Section \ref{s-gue180308I}
we develop the calculus of Fourier integral operators of
$G$-Szeg\H{o} type. In Section \ref{s-gue180512}
we study the distribution kernels of $\sigma$
and $\sigma^*\sigma$ and prove Theorem \ref{t-gue180428zy}.

\subsection{Calculus of complex Fourier integral operators}
\label{s-gue180308I}

Let $p\in Y$ and $x=(x_1,\ldots,x_{2n+1})$ be the local coordinates
as in the discussion before Theorem~\ref{t-gue170126} defined
in an open neighborhood $U$ of $p$. From now on, we change $x_{2n+1}$ 
to $x_{2n+1}-\sum^d_{j=1}a_j(x)x_{d+j}$, 
where $a_j(x)$ are as in \eqref{e-gue161202m}. 
With this new local coordinates $x=(x_1,\ldots,x_{2n+1})$
on $Y\cap U$ we have 
\begin{equation}\label{e:ma4.2}
J\Big(\frac{\pr}{\pr x_j}\Big)=\frac{\pr}{\pr x_{d+j}},\ \ 
\text{ for }j=1,2,\ldots,d.
\end{equation}
Moreover, it is clear that 
$\Phi(x,y)+\sum^d_{j=1}a_j(x)x_{d+j}-\sum^{d}_{j=1}a_j(y)y_{d+j}$
satisfies \eqref{e-gue170126I}. Note that $a_j(x)$ is a smooth function 
on $Y\cap U$, independent of $x_1,\ldots,x_{2d}$, $x_{2n+1}$ 
and $a_j(0)=0$, $j=1,\ldots,d$. We may assume that 
$U=\Omega_1\times\Omega_2\times\Omega_3$,
where $\Omega_1\subset\R^d$, $\Omega_2\subset\R^d$ 
are open  neighborhoods of $0\in\R^d$,
$\Omega_3\subset\R^{2n+1-2d}$
is an open  neighborhood of $0\in\R^{2n+1-2d}$. 
From now on, we identify 
$\Omega_2$ with 
\[\set{(0,\ldots,0,x_{d+1},\ldots,x_{2d},0,\ldots,0)\in U;\, 
(x_{d+1},\ldots,x_{2d})\in\Omega_2},\] 
$\Omega_3$ with $\set{(0,\ldots,0,x_{2d+1},\ldots,x_{2n+1})\in U;\, 
(x_{2d+1},\ldots,x_{2n+1})\in\Omega_3}$,  $\Omega_2\times\Omega_3$
with 
\[\set{(0,\ldots,0,x_{d+1},\ldots,x_{2n+1})\in U;\,
(x_{d+1},\ldots,x_{2n+1})\in\Omega_2\times\Omega_3}.\] 
For $x=(x_1,\ldots,x_{2n+1})$, we set
\begin{equation}\label{e:ma4.3}
\begin{split}
x''&=(x_{d+1},\ldots,x_{2n+1}),\quad \mathring{x}''=(x_{d+1},
\ldots,x_{2n},0),\\
\widehat{x}''&=(x_{d+1},\ldots,x_{2d}),\quad 
\underline{x}''=(x_{2d+1},\ldots,x_{2n+1}).
\end{split}
\end{equation}
From now on, we identify 
\begin{equation}\label{e:ma4.4}
\begin{split}
&\text{$x''$ with $(0,\ldots,0,x_{d+1},\ldots,x_{2n+1})\in U$},\\ 
&\text{$\widehat x''$ with 
$(0,\ldots,0,x_{d+1},\ldots,x_{2d},0,\ldots,0)\in U$},\\ 
&\text{$\underline{x}''$ with 
$(0,\ldots,0,x_{2d+1},\ldots,x_{2n+1})\in U$}.
\end{split}
\end{equation}
Since $G$ acts freely on $Y$, we take $\Omega_2$ and $\Omega_3$
small enough so that if $x, x_1\in\Omega_2\times\Omega_3$ 
and $x\neq x_1$, then 
\begin{equation}\label{e-gue170227c}
 \mbox{$g\circ x\neq x_1$, for all $g\in G$.}
\end{equation}
Recall that we take $\Phi$ so that \eqref{e-gue180425}, 
\eqref{e-gue180425I}, \eqref{e-gue170126I} hold. 
Put 
\begin{equation}\label{e-gue180505q}
\Phi^*(x,y):=-\ol\Phi(y,x).
\end{equation}
 From \eqref{e-gue180425I} and notice that for 
$j=1,\ldots,d$, $x\in Y$, we have $\frac{\pr}{\pr x_j}
+i\frac{\pr}{\pr x_{d+j}}\in T^{0,1}_xX$ and
$\frac{\pr}{\pr x_j}\Phi(x,y)=\frac{\pr}{\pr y_j}\Phi^*(x,y)=0$,
we conclude that  for $j=1,\ldots,d$,
\begin{equation}\label{e-gue180505p}
\begin{split}
&\frac{\pr}{\pr x_{d+j}}\Phi(x,y)\Big|_{x_{d+1}=\ldots=x_{2d}=0}\:\: 
\text{and $\frac{\pr}{\pr y_{d+j}}\Phi^*(x,y)\Big|_{y_{d+1}
=\ldots=y_{2d}=0}$}\\
&\qquad\qquad \text{vanish to infinite order at
$\diag\big((Y\cap U)\times(Y\cap U)\big)$}.
\end{split}
\end{equation}
Let $G_j(x,y):=\frac{\pr}{\pr y_{d+j}}\Phi^*(x,y)\big|_{y_{d+1}
=\ldots=y_{2d}=0}$\,, $H_j(x,y):=\frac{\pr}{\pr x_{d+j}}
\Phi(x,y)\big|_{x_{d+1}=\ldots=x_{2d}=0}$. Put
\begin{equation}\label{e-gue170226}
\begin{split}
&\Phi_1(x,y):=\Phi^*(x,y)-\sum^d_{j=1}y_{d+j}G_j(x,y),\\
&\Phi_2(x,y):=\Phi(x,y)-\sum^d_{j=1}x_{d+j}H_j(x,y).
\end{split}
\end{equation}
Then for $j=1,2,\ldots,d,$
\begin{equation}\label{e-gue170226Ip}
\begin{split}
\frac{\pr}{\pr y_{d+j}}\Phi_1(x,y)\Big|_{y_{d+1}=\ldots=y_{2d}=0}=
\frac{\pr}{\pr x_{d+j}}\Phi_2(x,y)\Big|_{x_{d+1}=\ldots=x_{2d}=0}=0,\
\end{split}
\end{equation}
and 
\begin{equation}\label{e-gue170226II}
\begin{split}
&\mbox{$\Phi^*(x,y)-\Phi_1(x,y)$ vanishes to infinite order on 
$\diag\big((Y\cap U)\times(Y\cap U)\big)$},\\
&\mbox{$\Phi(x,y)-\Phi_2(x,y)$ vanishes to infinite order on 
$\diag\big((Y\cap U)\times(Y\cap U)\big)$}.
\end{split}
\end{equation}

We also write $u=(u_1,\ldots,u_{2n+1})$ to denote the local coordinates 
of $U$. For any smooth function $h\in\cC^\infty(U)$, let
$\Td h\in\cC^\infty(U^{\C})$ be an almost analytic extension 
of $h$ (see~\cite[Section 1]{MS74}). 
Let $\vartheta$ be a local coordinate of $\R$. Let 
\begin{equation}\label{e-gue180506}
F(\Td x,\Td y,\Td{u''},\Td{\vartheta}\,):=\Td\Phi_1(\Td x,\Td{u''})
+\Td{\vartheta}\,\Td\Phi_2(\Td{u''},\Td y).
\end{equation}
We consider the following two systems for $\ j=1,2,\ldots,2n-2d+1$ 
and $\ j=1,2,\ldots,2n-d+1$, respectively, 
\begin{equation}\label{e-gue170226III}
\begin{split}
&\frac{\pr F}{\pr\Td{\vartheta}}(\Td x,\Td y,\Td{\underline{u}''},
\Td{\vartheta}\,)=
\Td\Phi_2(\Td{\underline{u}''},\Td y)=0,\\
&\frac{\pr F}{\pr\Td u_{2d+j}}(\Td x,\Td y,\Td{\underline{u}''},
\Td{\vartheta}\,)=
\frac{\pr\Td\Phi_1}{\pr\Td y_{2d+j}}
(\Td x,\Td{\underline{u}''})+
\Td{\vartheta}\frac{\pr\Td\Phi_2}{\pr\Td x_{2d+j}}
(\Td{\underline{u}''},\Td y)=0, 
\end{split}
\end{equation}
and 
\begin{equation}\label{e-gue170226IIIa}
\begin{split}
&\frac{\pr F}{\pr\Td{\vartheta}}(\Td x,\Td y,\Td{u''},\Td{\vartheta}\,)=
\Td\Phi_2(\Td{u''},\Td y)=0,\\
&\frac{\pr F}{\pr\Td u_{d+j}}(\Td x,\Td y,\Td{u''},\Td{\vartheta}\,)=
\frac{\pr\Td\Phi_1}{\pr\Td y_{d+j}}(\Td x,\Td{u''})+
\Td{\vartheta}\frac{\pr\Td\Phi_2}{\pr\Td x_{d+j}}(\Td{u''},\Td y)=0,
\end{split}
\end{equation}
where $\Td{\underline{u}''}=(0,\ldots,0,\Td u_{2d+1},\ldots,
\Td u_{2n+1})$, 
$\Td{u''}=(0,\ldots,0,\Td u_{d+1},\ldots,\Td u_{2n+1})$. 
 Here we  always 
use $\frac{\pr}{\pr\Td x_k}$ for first variable, $\frac{\pr}{\pr\Td y_k}$
for second variable.
From \eqref{e-gue180425} and \eqref{e-gue170226Ip}, 
we can take $\Td\Phi_1$ and $\Td\Phi_2$ so that
for every $j=1,2,\ldots,d$, 
\begin{equation}\label{e-gue170226I}
\begin{split}
&\frac{\pr\Td\Phi_1}{\pr\Td y_{d+j}}(\Td x,\Td{u''})=
\frac{\pr\Td\Phi_2}{\pr\Td x_{d+j}}(\Td{u''},\Td y)=0,\ \ 
\mbox{if $\Td u_{d+1}=\ldots=\Td u_{2d}=0$},
\end{split}
\end{equation}
and $\Td{\widehat\Phi}_1, \Td{\widehat\Phi}_2\in
\cC^\infty(U^\C\times U^\C)$ such that
\begin{equation}\label{e-gue170227}
\begin{split}
\Td\Phi_1(\Td x, \Td y)=-\Td x_{2n+1}
+\Td{\widehat\Phi}_1(\Td{\mathring{x}}'',\Td{y''}),\ \ 
\Td\Phi_2(\Td x, \Td y)=\Td y_{2n+1}
+\Td{\widehat\Phi}_2(\Td{x''},\Td{\mathring{y}}''),
\end{split}
\end{equation}
where $\Td{\mathring{x}}''=(0,\ldots,0,\Td x_{d+1},\ldots,
\Td x_{2n},0)$, $\Td{\mathring{y}}''=(0,\ldots,0,\Td y_{d+1},\ldots,
\Td y_{2n},0)$. 

From  \eqref{e-gue180305yII}, \eqref{e-gue180305yIII} 
and \eqref{e-gue170126I},
it is not difficult to see that 
\begin{equation}\label{e:ma4.16}
\begin{split}
&\Td\Phi_2(\underline{x}'',\underline{x}'')=0,\\
&\frac{\pr\Td\Phi_1}{\pr\Td y_{d+j}}
(\underline{x}'',\underline{x}'')+
\Td{\vartheta}\frac{\pr\Td\Phi_2}{\pr\Td x_{d+j}}
(\underline{x}'',\underline{x}'')\big|_{\Td{\vartheta}=1}=0,\ \ 
j=1,2,\ldots,2n-d,
\end{split}\end{equation}
and the Hessians 
\begin{equation}\label{e:ma4.17}
F_{\Td{\vartheta},\Td{\underline{u}''}}(0,0,0,1)=\begin{pmatrix}
  &\dfrac{\pr^2F}{\pr^2\Td{\vartheta}}  
  &\dfrac{\pr^2F}{\pr\Td{\vartheta}\pr\Td{\underline{u}''}} \\[15pt]
  &\dfrac{\pr^2F}{\pr\Td{\underline{u}''}\pr\Td{\vartheta}} 
  &\dfrac{\pr^2F}{\pr^2\Td{\underline{u}''}}
\end{pmatrix}\Big|_{(0,0,0,1)}
\end{equation}
and 
\begin{equation}\label{e:ma4.18}
F_{\Td{\vartheta},\Td{u''}}(0,0,0,1)=\begin{pmatrix}
  &\dfrac{\pr^2F}{\pr^2\Td{\vartheta}} 
  &\dfrac{\pr^2F}{\pr\Td{\vartheta}\pr\Td{u''}} \\[15pt]
  &\dfrac{\pr^2F}{\pr\Td{u''}\pr\Td{\vartheta}} 
  &\dfrac{\pr^2F}{\pr^2\Td{u''}}
\end{pmatrix}\Big|_{(0,0,0,1)}
\end{equation}
are non-singular. Moreover, from \eqref{e-gue170126I}, we calculate
\begin{equation}\label{e-gue170226a}
\begin{split}
&\det\left(\frac{1}{2\pi i}F_{\Td{\vartheta},
\Td{\underline{u}''}}\right)(0,0,0,1)
=\frac{2^{2n-2d-2}}{\pi^{2n-2d+2}}
(\abs{\mu_{d+1}}\ldots\abs{\mu_n})^2,\\
&\det\left(\frac{1}{2\pi i}F_{\Td{\vartheta},
\Td{\underline{u}''}}\right)(0,0,0,1)
=\frac{2^{2n-2}}{\pi^{2n-d+2}}(\abs{\mu_1}\ldots\abs{\mu_d})
(\abs{\mu_{d+1}}\ldots\abs{\mu_n})^2.
\end{split}
\end{equation}
Hence, near $(p,p)$ and $\Td{\vartheta}=1$, we can 
solve \eqref{e-gue170226III} 
and \eqref{e-gue170226IIIa} and the solutions are unique. 
Let
\begin{equation}\label{eq:ma4.20}
\begin{split}
&\Td{\underline{u}''}=\alpha(x,y)=(\alpha_{2d+1}(x,y),
\ldots,\alpha_{2n+1}(x,y))\in 
\cC^\infty(U\times U,\C^{2n-2d+1}),\\
&\Td{\vartheta}=\gamma(x,y)\in\cC^\infty(U\times U,\C),\end{split}
\end{equation}
and 
\begin{equation}\label{eq:ma4.21}
\begin{split}
&\Td{u''}=\beta(x,y)=(\beta_{d+1}(x,y),\ldots,\beta_{2n+1}(x,y))\in 
\cC^\infty(U\times U,\C^{2n-d+1}),\\
&\Td{\vartheta}=\delta(x,y)\in\cC^\infty(U\times U,\C)
\end{split}
\end{equation}
 be the solutions of \eqref{e-gue170226III} and 
 \eqref{e-gue170226IIIa}, respectively. From \eqref{e-gue170226I},
 it is easy to see that 
\begin{equation}\label{e-gue170226b}
\begin{split}
&\beta(x,y)=(\beta_{d+1}(x,y),\ldots,\beta_{2n+1}(x,y))
=(0,\ldots,0,\alpha_{2d+1}(x,y),\ldots,\alpha_{2n+1}(x,y)),\\
&\gamma(x,y)=\delta(x,y).
\end{split}
\end{equation}
From \eqref{e-gue170226b}, we see that the value 
of $\Td\Phi_1(x,\Td{\underline{u}''})+
\Td{\vartheta}\Td\Phi_2(\Td{\underline{u}''},y)$ 
at critical points $\Td{\underline{u}''}=\alpha(x,y)$, 
$\Td{\vartheta}=\gamma(x,y)$ 
is equal to the value of 
$\Td\Phi_1(x,\Td{u''})+\Td{\vartheta}\Td\Phi_2(\Td{u''},y)$ 
at critical points
$\Td{u''}=\beta(x,y)$, $\Td{\vartheta}=\delta(x,y)$. Put 
\begin{equation}\label{e-gue170226bI}
\begin{split}
\Phi_3(x,y)&:=\Td\Phi_1(x,\alpha(x,y))
+\gamma(x,y)\Td\Phi_2(\alpha(x,y),y)\\
&=\Td\Phi_1(x,\beta(x,y))+\delta(x,y)\Td\Phi_2(\beta(x,y),y).
\end{split}
\end{equation}
Then $\Phi_3(x,y)$ is a complex phase function, 
$\im\Phi_3(x,y)\geq0$. 
It is easy to check that 
\begin{equation}\label{eq:ma4.25}
\begin{split}
d_x\Phi_3(x,x)&=-d_y\Phi_3(x,x)=d_x\Phi(x,x)\\
&=-d_y\Phi(x,x)
=-\lambda(x)\omega_0(x),\ \  \lambda(x)>0,\ \ 
\text{for $x\in U\cap Y$}.
\end{split}
\end{equation}
From now on we take $U$ small enough so that the Levi form is 
positive on $U$ and 
\begin{equation}\label{eq:ma4.26}
d_x\Phi_3(x,y)\neq0,\:\: 
d_y\Phi_3(x,y)\neq0,\:\: \text{for every $(x,y)\in U\times U$}
\end{equation}
and 
$a_0(x,y)\neq0$, for every $(x,y)\in U\times U$, where 
$a_0(x,y)\in\cC^\infty(U\times U)$ is as in \eqref{e-gue180305yI}. 

Fix an open neighborhood $\mU\Subset U$ of $p$ with 
$\widehat\Omega_2\times\widehat\Omega_3\subset\mU$, 
where $\widehat\Omega_2\Subset\Omega_2\subset\R^d$ 
is an open neighborhood of $0\in\R^d$ and 
$\widehat\Omega_3\Subset\Omega_3\subset\R^{2n+1-2d}$ 
is an open neighborhood of $0\in\R^{2n+1-2d}$. 
\begin{theorem}\label{t-gue170226cw}
The phase functions $\Phi$ and $\Phi_3$ are equivalent on $\mU$,
that is, for any
$b_1\in  S^{n-\frac{d}{2}}_{{\rm cl\,}}
(\mU\times\mU\times\mathbb{R}_+)$
there exist
$\hat{b}_1,b_2\in  S^{n-\frac{d}{2}}_{{\rm cl\,}}
(\mU\times\mU\times\mathbb{R}_+)$
such that
\begin{equation}\label{eq:ma4.29}
\begin{split}
\int^\infty_0e^{i\Phi(x,y)t}b_1(x,y,t)dt\equiv
\int^\infty_0e^{i\Phi_3(x,y)t}b_2(x,y,t)dt\ \ 
\text{on $\mU\times\mU$},\\
\int^\infty_0e^{i\Phi(x,y)t}\hat b_1(x,y,t)dt\equiv
\int^\infty_0e^{i\Phi_3(x,y)t}b_1(x,y,t)dt\ \ 
\text{on $\mU\times\mU$}.
\end{split}
\end{equation}
\end{theorem}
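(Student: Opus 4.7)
The plan is to first verify that $\Phi_3$ satisfies the same list of characterizing properties as $\Phi$, namely \eqref{e-gue180305yII}, \eqref{e-gue180305yIII}, \eqref{e-gue170126}, the $\ddbar_b$-vanishing \eqref{e-gue180425I}, and the leading Taylor expansion \eqref{e-gue170126I} of Remark \ref{r-gue200223yydI}. Once this is done, the uniqueness statement at the end of Theorem \ref{t-gue161110g} produces a smooth $h$ with $h(x,x)\neq 0$ on $\mU$ such that $\Phi-h\Phi_3$ vanishes to infinite order on $\diag((Y\cap\mU)\times(Y\cap\mU))$, and the equivalence of the two oscillatory integrals then follows from a standard phase-and-symbol manipulation of the kind used in \cite{BouSj76,HM14,Hsiao08}.

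For the verification of the characterizing properties, the key point is that the critical equations \eqref{e-gue170226III} are solved at $(x,x)$ with $x\in Y\cap\mU$ by the explicit values $\Td{\underline u}''=\underline x''$, $\Td\vartheta=1$ thanks to \eqref{e:ma4.16}; therefore $\Phi_3(x,x)=0$ on $Y\cap\mU$. Implicit differentiation of the critical equations, combined with the stationary property, yields $d_x\Phi_3(x,x)=d_x\Td\Phi_1(x,\underline x'')$ and an analogous formula for $d_y\Phi_3(x,x)$, which recover $-\lambda(x)\omega_0(x)$ and $\lambda(x)\omega_0(x)$ respectively by \eqref{eq:ma4.25}. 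The second-order Taylor expansion is controlled by the non-singular Hessians \eqref{e:ma4.17} and \eqref{e:ma4.18}; their block structure together with the positivity of $\im\Phi_1$ and $\im\Phi_2$ yields $\im\Phi_3\geq 0$ and the quantitative bound \eqref{e-gue170126}, and matching it against the explicit expansion \eqref{e-gue170126I} for $\Phi$ transfers the same Taylor formula to $\Phi_3$. The $\ddbar_b$-vanishing \eqref{e-gue180425I} for $\Phi_3$ follows from the chain rule applied to $\Phi_3=\Td\Phi_1(x,\alpha(x,y))+\gamma(x,y)\Td\Phi_2(\alpha(x,y),y)$: the derivatives in the $\alpha$ and $\gamma$ directions produce factors that vanish by the critical equations, leaving only $\ddbar_{b,x}\Td\Phi_1(x,\alpha(x,y))$, whose vanishing to infinite order on the diagonal of $Y\cap\mU$ is given by \eqref{e-gue180505p}.

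To convert symbols once $\Phi-h\Phi_3$ vanishes to infinite order on the diagonal, I shrink $\mU$ so that $h(x,x)\neq 0$ throughout and, after adjusting by a sign, arrange that ${\rm Re}\,h>0$. Writing $e^{i\Phi t}=e^{ih\Phi_3 t}e^{i(\Phi-h\Phi_3)t}$ and expanding the second factor as a formal Taylor series in $(\Phi-h\Phi_3)t$, the resulting terms are products of functions vanishing to infinite order on $\diag((Y\cap\mU)\times(Y\cap\mU))$ with polynomials in $t$; these can be absorbed into the symbol class by repeated integration by parts in $t$, using the exponential decay supplied by $\im\Phi_3$ via the lower bounds in \eqref{e-gue180305yIII} and \eqref{e-gue170126}. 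The change of variable $t\mapsto t/h(x,y)$ then rewrites the phase $h\Phi_3$ as $\Phi_3$ with the symbol multiplied by $h^{-1}$, up to contributions of strictly lower order in $t$; the resulting $b_2$ remains in $S^{n-d/2}_{\rm cl}(\mU\times\mU\times\R_+)$ because $h(x,x)\neq 0$, and its principal part is $h^{-(n-d/2)}b_{1,0}$. The opposite direction producing $\hat b_1$ from $b_1$ is entirely symmetric.

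The main technical obstacle is the clean verification of the infinite-order $\ddbar_b$-vanishing of $\Phi_3$, because the implicit function $\alpha(x,y)$ solving \eqref{e-gue170226III} is complex-valued and not CR; tracking the almost analytic extensions through the implicit function theorem and confirming that the derivatives $\ddbar_{b,x}\alpha$ do not spoil the vanishing of $\ddbar_{b,x}\Td\Phi_1(x,\alpha(x,y))$ on the diagonal requires careful bookkeeping. However, this is standard in the Melin--Sj\"ostrand calculus and parallel to arguments already used in the preceding sections.
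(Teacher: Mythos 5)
Your route is genuinely different from the paper's. The paper never verifies the characterizing properties of $\Phi_3$ directly: it exploits the idempotency $S_G\circ S_G=S_G$, computes the composition by the Melin--Sj\"ostrand stationary phase formula (which is exactly where $\Phi_3$, $\alpha$, $\gamma$ come from), and thereby obtains the identity \eqref{e-gue170227yII} expressing one and the same kernel as a Fourier integral with phase $\Phi$ and as one with phase $\Phi_3$, both with non-vanishing leading symbols on $\diag(Y\times Y)$. The equivalence of the phases is then read off by repeating the proof of \cite[Theorem 5.2]{HsiaoHuang17}. This device delivers the hard facts about $\Phi_3$ (its second-order Taylor expansion, its $\ddbar_b$-behaviour) for free, whereas your plan must establish them by hand.

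As written, your argument has two concrete gaps. First, the uniqueness clause of Theorem \ref{t-gue161110g} is not applicable: its hypotheses include \eqref{e-gue140205VI}, i.e.\ $\operatorname{Im}\varphi\geq c\sum_{j=1}^{2n}|x_j-y_j|^2$, which both $\Phi$ and $\Phi_3$ violate --- by Theorem \ref{t-gue170126} they are independent of $(x_1,\dots,x_d)$ and $(y_1,\dots,y_d)$, so their imaginary parts are degenerate along the orbit directions and only satisfy the weaker bound \eqref{e-gue170126}. The characterization you need is the one for the $G$-invariant phase, \cite[Theorem 5.2]{HsiaoHuang17} (cf.\ Remark \ref{r-gue200223yydI}), whose hypotheses are precisely \eqref{e-gue170126}, \eqref{e-gue180425I} and the expansion \eqref{e-gue170126I}. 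Second, your verification of \eqref{e-gue170126I} for $\Phi_3$ is not a proof: "matching it against the explicit expansion for $\Phi$" presupposes what is to be shown. One must actually solve the critical equations \eqref{e-gue170226III} to second order along $\diag(Y\times Y)$ and compute the critical value; this is a finite but nontrivial computation (of the same kind as \eqref{e-gue170226a}), and it is the step the paper's projector identity is designed to avoid. Your chain-rule argument for the $\ddbar_b$-vanishing of $\Phi_3$ and the final symbol conversion $e^{i\Phi t}=e^{ih\Phi_3t}e^{i(\Phi-h\Phi_3)t}$ are sound in outline, but without repairing the two points above the deduction that $\Phi-h\Phi_3$ vanishes to infinite order on $\diag((Y\cap\mU)\times(Y\cap\mU))$ is not established.
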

\begin{proof}
We consider the kernel $(S_{G}\circ S_{G})(\cdot,\cdot)$ on $U$. 
Let $\mU\Subset U_1\Subset U$ be open neighborhoods of $p$. 
Let $\chi(x'')\in\cC^\infty_0(\Omega_2\times\Omega_3)$. 
From \eqref{e-gue170227c}, we can extend $\chi(x'')$ to 
\[W:=\set{g\circ x;\, g\in G,\ \  x\in\Omega_2\times\Omega_3}\]
by $\chi(g\circ x''):=\chi(x'')$, for every $g\in G$. 
Assume that  $\chi=1$ on some neighborhood of $\overline{U}_1$. 
Let $\chi_1\in\cC^\infty_0(U)$ with $\chi_1=1$ on some neighborhood 
of $\overline{U}_1$ and $\supp\chi_1\subset\set{x\in X;\, \chi(x)=1}$. 
We have 
\begin{equation}\label{e-gue170227b}
\chi_1S_G\circ S_G=\chi_1S_G\chi\circ S_G+\chi_1S_G(1-\chi)\circ S_G.
\end{equation}
Let's first consider $\chi_1S_G(1-\chi)\circ S_G$. We have 
\begin{equation}\label{e-gue170227bI}
\begin{split}
(\chi_1S_G(1-\chi))(x,u)
&=\chi_1(x)\int_GS_{\leq\lambda_0}(x,g\circ u)(1-\chi(u))d\mu(g)\\
&=\chi_1(x)\int_GS_{\leq\lambda_0}(x,u)(1-\chi(g^{-1}u))d\mu(g),
\end{split}
\end{equation}
where $\lambda_0>0$ is a small constant as in 
Theorem~\ref{t-gue181021a}. 
If $g^{-1}u\notin\set{x\in X;\, \chi(x)=1}$. 
Since $\supp\chi_1\subset\set{x\in X;\, \chi(x)=1}$ 
and $\chi(x)=\chi(g\circ x)$, for every $g\in G$, $x\in X$, 
we conclude that $u\notin\supp\chi_1$. 
From this observation and notice that $S_{\leq\lambda_0}$ 
is smoothing away the diagonal on $GU$
(see Theorem~\ref{t-gue180505}), we  deduce that $\chi_1S_G(1-\chi)$
is smoothing and hence 
\begin{equation}\label{e-gue170227bII}
\chi_1S_G(1-\chi)\circ S_G\equiv 0\ \ \mbox{on $X\times X$}. 
\end{equation}
From \eqref{e-gue170227b} and \eqref{e-gue170227bII}, we get 
\begin{equation}\label{e-gue170227bIII}
\chi_1S_{G}\circ S_{G}\equiv\chi_1S_{G}\chi\circ S_{G}\ \ 
\mbox{on $X\times X$}. 
\end{equation}
From Theorem~\ref{t-gue180505I} and using that $S^*_G=S_G$,
where $S^*_G$ is the adjoint of $S_G$ with respect to 
$(\,\cdot\,,\,\cdot\,)$, we obtain that on $U$, 
\begin{equation}\label{e-gue170227y}
\begin{split}
&(\chi_1S_{G}\chi\circ S_{G})(x,y)\\
&\equiv \int_{\Omega_2\times\Omega_3}
\int^\infty_0\int^\infty_0e^{i\Phi^*(x,u'')t
+i\Phi(u'',y)s}\chi_1(x)a^*(x'',u'',t)V_{{\rm eff\,}}(u'')
\chi(u'')a(u'',y'',s)dsdtdv(u'')\\
&\equiv \int_{\Omega_2\times\Omega_3}
\int^\infty_0\int^\infty_0e^{i\Phi_1(x,u'')t
+i\Phi_2(u'',y)s}\chi_1(x)b(x,u'',t)\chi(u'')c(u'',y,s)dsdtdv(u'')\\
&\mbox{(here we used \eqref{e-gue170226II})}\\
&\equiv \int_{\Omega_2\times\Omega_3}
\int^\infty_0\int^\infty_0e^{i(\Phi_1(x,u'')
+ \Phi_2(u'',y){\vartheta})t}\chi_1(x)b(x,u'',t)
\chi(u'')c(u'',y,t{\vartheta})td{\vartheta} dtdv(u''),
\end{split}
\end{equation}
where  $s={\vartheta} t$, $d\mu(g)dv(u'')=dv(x)$ on $U$, 
$a^*(x'',u'',t)=\ol a(u'',x'',t)$ and 
\begin{equation}  \label{e-gue180511}\begin{split}
&b(x, y, t), c(x,y,t)\in S^{n-\frac{d}{2}}_{ {\rm cl\,}}
(U\times U\times\mathbb{R}_+), \\
& \mbox{$b_0(x,x)\neq0$, $c_0(x,x)\neq0$, for any 
$x\in U\cap Y$ (cf. Notation \eqref{e-fal})}.
\end{split}\end{equation}

We apply the complex stationary phase formula of 
Melin-Sj\"ostrand~\cite{MS74} to carry out the $dv(u'')d{\vartheta}$ 
integration in \eqref{e-gue170227y}. This yields the existence
of a symbol $d(x,y,t)\in S^{n-\frac{d}{2}}_{{\rm cl\,}}
(U\times U\times\mathbb{R}_+)$ with the expansion
$d(x,y,t)\sim\sum^\infty_{j=0}t^{n-\frac{d}{2}-j}d_j(x,y)$ in 
$S^{n-\frac{d}{2}}_{1, 0}(U\times U\times\mathbb{R}_+)$
(cf.\ \eqref{e-fal}) with $d_0(x,x)\neq0$ for $x\in U\cap Y$ and
\begin{equation}\label{e-gue170227yI}
\begin{split}
&(\chi_1S_{G}\chi\circ S_{G})(x,y)\equiv
\int^\infty_0e^{i\Phi_3(x,y)t}d(x,y,t)dt\ \ \mbox{on $U\times U$}.
\end{split}
\end{equation}
From \eqref{e-gue170227bIII}, \eqref{e-gue170227yI}, we deduce that 
\begin{equation}\label{e-gue170227yII}
\int^\infty_0e^{i\Phi_3(x,y)t}d(x,y,t)dt\equiv
\int^\infty_0e^{i\Phi(x,y)t}\chi_1(x)a(x,y,t)dt\ \ \mbox{on $U\times U$}. 
\end{equation}
From \eqref{e-gue170227yII}, we can repeat the proof of 
Theorem 5.2 in~\cite{HsiaoHuang17} and deduce that 
$\Phi$ and $\Phi_3$ are equivalent on $\mU$. The theorem follows. 
\end{proof}

The following two theorems follow from 
Theorem~\ref{t-gue170226cw}, \eqref{e-gue170226II}, 
\eqref{e-gue170226bI}, the proof of \eqref{e-gue170227yII}, 
the complex stationary phase 
formula of Melin-Sj\"ostrand~\cite{MS74}
and some straightforward computation. We omit the details. 

\begin{theorem}\label{t-gue170301w}
Consider the Fourier integral operators
\[\begin{split}
&A(x,y)=\int^\infty_0e^{i\Phi(x,y)t}a(x,y,t)dt,\ \ 
B(x,y)=\int^\infty_0e^{i\Phi(x,y)t}b(x,y,t)dt,
\end{split}\]
with symbols 
$a(x,y,t)\in S^{k}_{{\rm cl\,}}(\mU\times\mU\times\mathbb{R}_+)$ and
$b(x,y,t)\in S^{\ell}_{{\rm cl\,}}(\mU\times\mU\times\mathbb{R}_+)$.
Consider 
$\chi(x'')\in\cC^\infty_0(\widehat\Omega_2\times\widehat\Omega_3)$. 
Then, we have 
\[\begin{split}
&\int A(x,u)\chi(u'')B(u,y)dv(u'')\equiv
\int^\infty_0e^{i\Phi(x,y)t}c(x,y,t)dt\ \ 
\mbox{on $\mU\times\mU$},
\end{split}\]
with $c(x,y,t)\in S^{k+\ell-(n-\frac{d}{2})}_{{\rm cl\,}}
(\mU\times\mU\times\mathbb{R}_+)$.
For $x\in Y\cap U$ we have
\begin{equation}\label{e-gue170301u}
c_0(x,x)=2^{-n-\frac{d}{2}+1}\pi^{n-\frac{d}{2}+1}
\abs{\det\cL_{x}}^{-1}
\abs{\det \mR_x}^{\frac{1}{2}}a_0(x,x)b_0(x,x)\chi(x''),
\end{equation}
where $\det \mR_x$ is the determinant $\mR_x$ cf.\ \eqref{e:detrx}.
Moreover, if there are $N_1, N_2\in\mathbb N^*$, $C>0$, such that 
for all $x_0\in Y\cap\mU$,
\[\abs{a_0(x,y)}\leq C\abs{(x,y)-(x_0,x_0)}^{N_1},\quad
\abs{b_0(x,y)}\leq C\abs{(x,y)-(x_0,x_0)}^{N_2},\]
then there exists $\widehat C>0$ such that for all $x_0\in Y\cap\mU$
\begin{equation}\label{e-gue170301uI}
\abs{c_0(x,y)}\leq\widehat C\abs{(x,y)-(x_0,x_0)}^{N_1+N_2}. 
\end{equation}
\end{theorem}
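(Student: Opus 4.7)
\medskip

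\noindent\textit{Proof plan.} The strategy is to imitate the derivation of~\eqref{e-gue170227yI} in the proof of Theorem~\ref{t-gue170226cw}. Substituting the oscillatory integral representations of $A$ and $B$ and using that $\Phi$ depends only on $(x'',y'')$ (Theorem~\ref{t-gue170126}) one obtains
\begin{equation*}
\int A(x,u)\chi(u'')B(u,y)\,dv(u'') = \!\int\!\!\int^{\infty}_{0}\!\!\int^{\infty}_{0}
e^{i\Phi(x'',u'')t+i\Phi(u'',y'')s}a(x,u,t)b(u,y,s)\chi(u'')\,ds\,dt\,dv(u'').
\end{equation*}
After the change of variable $s=\vartheta t$ (so $ds=t\,d\vartheta$), the phase becomes $t\bigl(\Phi(x'',u'')+\vartheta\,\Phi(u'',y'')\bigr)$ with a single frequency variable $t$ and the additional geometric variables $(u'',\vartheta)\in (\Omega_2\times\Omega_3)\times\R_+$.

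Next, by~\eqref{e-gue170226II} the phases $\Phi(x'',u'')$ and $\Phi(u'',y'')$ may be replaced, modulo terms vanishing to infinite order on $\diag((Y\cap U)\times(Y\cap U))$, by $\Phi_2$ and again $\Phi_2$; this replacement only changes the oscillatory integral by a smoothing kernel. The critical-point system for the $(u'',\vartheta)$-integration, computed exactly as in~\eqref{e-gue170226IIIa}, has a non-degenerate Hessian in the sense of Melin--Sj\"ostrand at $(u'',\vartheta)=(x'',1)$ on the diagonal (compare~\eqref{e-gue170226a}); applying the complex stationary phase formula to integrate out $(u'',\vartheta)$ yields an oscillatory integral of the form
\begin{equation*}
\int^{\infty}_{0}e^{i\Phi_3(x,y)t}\,c(x,y,t)\,dt
\end{equation*}
with $c\in S^{k+\ell-(n-\frac{d}{2})}_{{\rm cl\,}}(\mU\times\mU\times\R_+)$. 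Theorem~\ref{t-gue170226cw} then lets us replace $\Phi_3$ by $\Phi$ at the cost of modifying $c$ within the same symbol class. The order count is a dimension bookkeeping: integrating out $2n+2-d$ real variables removes a factor $t^{-(n+1-\frac{d}{2})}$ from the symbol, while the change of variable $s=\vartheta t$ supplies an extra factor of $t$, giving net order $k+\ell+1-(n+1-\frac{d}{2})=k+\ell-(n-\frac{d}{2})$.

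The leading coefficient is obtained by evaluating $a_0\cdot b_0\cdot\chi$ at the critical point (which lies on the diagonal when $x=y\in Y$) and multiplying by the inverse square root of the Hessian determinant coming from~\eqref{e-gue170226a}; tracking the explicit factors of $\pi$, $2$, and the Jacobians $\abs{\det\cL_x}$ and $\abs{\det\mR_x}^{1/2}$ produces~\eqref{e-gue170301u}. Finally,~\eqref{e-gue170301uI} follows by propagating vanishing orders through the stationary phase expansion: the critical point $(u''^{\,*}(x,y),\vartheta^{*}(x,y))$ depends smoothly on $(x,y)$ and reduces to $(x_0'',1)$ when $x=y=x_0\in Y\cap\mU$, so the product $a_0b_0$ evaluated at this critical point vanishes to order $N_1+N_2$ at $(x_0,x_0)$. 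The main technical obstacle is verifying non-degeneracy of the Hessian of the combined phase $\Phi(x'',u'')+\vartheta\,\Phi(u'',y'')$ (rather than $\Phi^*+\vartheta\Phi$ as in~\eqref{e-gue170227y}) and correctly tracking the constants through the stationary phase and the equivalence $\Phi_3\sim\Phi$ to obtain the precise form of $c_0(x,x)$.
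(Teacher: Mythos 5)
Your route is the one the paper itself indicates (it derives this theorem from the proof of \eqref{e-gue170227yII}, the Melin--Sj\"ostrand formula and Theorem~\ref{t-gue170226cw}), and your order count $k+\ell-(n-\frac{d}{2})$, your evaluation of $c_0$ at the critical point, and your propagation of the vanishing orders $N_1,N_2$ through the stationary phase expansion are all handled as intended. There is, however, one concrete misstep in the phase normalization. In the model computation \eqref{e-gue170227y} the first factor carries the phase $\Phi^*(x,u'')$, and the corrections \eqref{e-gue170226} are deliberately asymmetric: $\Phi_1$ removes the $\widehat{u}''$-linear part of $\Phi^*$ in its \emph{second} slot, while $\Phi_2$ removes it in its \emph{first} slot; this is exactly what makes \eqref{e-gue170226I} and \eqref{e-gue170226b} hold for the integration variable $u''$. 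In your composition the first factor has phase $\Phi(x,u'')$, and replacing it by $\Phi_2(x,u'')$ is the wrong normalization: $\Phi_2$ is corrected in its first argument, so $\partial\Phi_2/\partial u_{d+j}(x,u'')$ need not vanish on $\{\widehat{u}''=0\}$, the critical system is not ``exactly \eqref{e-gue170226IIIa}'', and its critical value is a priori not the function $\Phi_3$ of \eqref{e-gue170226bI}, so Theorem~\ref{t-gue170226cw} does not apply verbatim.

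Two local repairs are available. Either introduce the $\Phi_1$-type correction of $\Phi$ in its second variable, for which you need the second-variable analogue of \eqref{e-gue180505p} for $\Phi$ (the paper records it only for $\Phi^*$; it follows from $S_G^*=S_G$ and the uniqueness of the phase up to equivalence), and then verify that the resulting critical value satisfies the hypotheses characterizing $\Phi$ as in the proof of \eqref{e-gue170227yII}. Or, more economically, first rewrite $A$ with phase $\Phi^*(x,u)$: by \eqref{e-gue170126I} one checks $\Phi(x,y)-\Phi^*(x,y)=\sum_j b_{d+j}(y_{d+j}-x_{d+j})(x_{2n+1}-y_{2n+1})+O(|(x,y)|^3)$ vanishes to second order on the diagonal, so the two phases are equivalent with transition factor equal to $1$ on $\diag(Y\times Y)$, and the leading symbol $a_0(x,x)$ is unchanged; then the computation \eqref{e-gue170227y}--\eqref{e-gue170227yI} and the Hessian \eqref{e-gue170226a} apply literally. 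With either repair the Hessian whose non-degeneracy you defer coincides on $\diag(Y\times Y)$ with \eqref{e-gue170226a}, and the constant in \eqref{e-gue170301u} comes out as claimed.
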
 

\begin{theorem}\label{t-gue170301wI}
Consider the Fourier integral operators 
\[\begin{split}
&\mathcal{A}(x,\underline{y}'')=\int^\infty_0e^{i\Phi(x,\underline{y}'')t}
\alpha(x,\underline{y}'',t)dt,
\ \ \mathcal{B}(\underline{x}'',y)=\int^\infty_0e^{i\Phi(\underline{x}'',y)t}
\beta(\underline{x}'',y,t)dt,
\end{split}\]
with symbols 
$\alpha(x,\underline{y}'',t)\in S^{k}_{{\rm cl\,}}
(\mU\times\Omega_3\times\mathbb{R}_+)$
and $\beta(\underline{x}'',y,t)\in S^{\ell}_{{\rm cl\,}}
(\Omega_3\times\mU\times\mathbb{R}_+)$.
Let $\chi_1(\underline{x}'')\in\cC^\infty_0(\Omega_3)$. Then, we have 
\[\begin{split}
&\int\mathcal{A}(x,\Td u'')\chi_1(\Td u'')
\mathcal{B}(\Td u'',y)dv(\Td u'')\equiv\int^\infty_0e^{i\Phi(x,y)t}
\gamma(x,y,t)dt\ \ \mbox{on $\mU\times\mU$},
\end{split}\]
with $\gamma(x,y,t)\in S^{k+\ell-(n-d)}_{{\rm cl\,}}
(\mU\times\mU\times\mathbb{R}_+)$
where
\begin{equation}\label{e-gue170301ua}
\gamma_0(x,x)=2^{-n+1}\pi^{n-d+1}
\abs{\det\cL_{x}}^{-1}\abs{\det \mR_x}\alpha_0(x,\underline{x}'')
\beta_0(\underline{x}'',x)\chi_1(\underline{x}''),\ \  x\in Y\cap U.
\end{equation}
Moreover, if there are $N_1, N_2\in\mathbb N^*$, $C>0$,
such that for all $x_0\in Y\cap\mU$ we have
\[\abs{\alpha_0(x,\underline{y}'')}
\leq C\abs{(x,\underline{y}'')-(x_0,x_0)}^{N_1},\:\:  
\abs{\beta_0(x,\underline{y}'')}
\leq C\abs{(x,\underline{y}'')-(x_0,x_0)}^{N_2},
\] 
then there exists $\widehat C>0$ such that for all $x_0\in Y\cap\mU$,
\begin{equation}\label{e-gue170301uaI}
\abs{\gamma_0(x,y)}\leq\widehat C\abs{(x,y)-(x_0,x_0)}^{N_1+N_2}.
\end{equation}
\end{theorem}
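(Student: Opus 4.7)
The plan is a direct analog of the proof of Theorem~\ref{t-gue170301w}, reducing the claim to the complex stationary phase formula of Melin--Sj\"ostrand, applied now to the slice integration over $\Omega_3$ rather than the full $u''$-integration. Starting from the explicit composition
\begin{equation*}
\int\int_{0}^{\infty}\!\!\int_{0}^{\infty}e^{i\Phi(x,\Td u'')t+i\Phi(\Td u'',y)s}\alpha(x,\Td u'',t)\chi_{1}(\Td u'')\beta(\Td u'',y,s)\,ds\,dt\,dv(\Td u''),
\end{equation*}
I would make the substitution $s=\vartheta t$, producing an extra Jacobian factor of $t$ and the combined phase $(\Phi(x,\Td u'')+\vartheta\Phi(\Td u'',y))t$. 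By \eqref{e-gue170226II} the two occurrences of $\Phi$ may be replaced, modulo a smoothing error, by the adapted phases $\Phi_1(x,\Td u'')$ and $\Phi_2(\Td u'',y)$ from \eqref{e-gue170226}, so the total phase becomes $F(x,y,\Td{\underline{u}''},\vartheta)$ of \eqref{e-gue180506}.

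Next I would apply complex stationary phase in the $(2n-2d+2)$-dimensional variables $(\Td{\underline{u}''},\vartheta)$. The critical-point equations are precisely \eqref{e-gue170226III}; by \eqref{e:ma4.16} they are solved at $(\Td{\underline{u}''},\vartheta)=(\underline{x}'',1)$ on the diagonal $y=x\in Y\cap U$, and by the nondegeneracy \eqref{e:ma4.17} of the Hessian they have a unique almost-analytic solution $(\alpha(x,y),\gamma(x,y))$ near the diagonal. The critical phase value is $\Phi_3(x,y)$ from \eqref{e-gue170226bI}, which by Theorem~\ref{t-gue170226cw} is equivalent to $\Phi(x,y)$ on $\mU\times\mU$; the resulting integral may therefore be rewritten in the form $\int_0^\infty e^{i\Phi(x,y)t}\gamma(x,y,t)\,dt$ with $\gamma$ a classical symbol.

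The order of $\gamma$ works out to $k+\ell+1-(n-d+1)=k+\ell-(n-d)$, the $+1$ coming from the Jacobian $t$ and the $-(n-d+1)$ from stationary phase in $2n-2d+2$ variables. For the leading coefficient on the diagonal the Melin--Sj\"ostrand formula reads
\begin{equation*}
\gamma_{0}(x,x)=\frac{(2\pi)^{n-d+1}}{\bigl(\det(-iF''_{\vartheta,\Td{\underline{u}''}})(0,\ldots,0,1)\bigr)^{1/2}}\,\alpha_{0}(x,\underline{x}'')\beta_{0}(\underline{x}'',x)\chi_{1}(\underline{x}''),
\end{equation*}
and substituting the Hessian determinant from the first identity of \eqref{e-gue170226a}, together with $\abs{\det\cL_{x}}=\abs{\mu_{1}\cdots\mu_{n}}$ and the formula $\abs{\det\mR_{x}}=2^{d}\abs{\mu_{1}\cdots\mu_{d}}$ (read off from \eqref{e:rx} using the frame \eqref{e-gue161202Im} and the expansion \eqref{e-gue161219m}), yields the claimed constant $2^{-n+1}\pi^{n-d+1}\abs{\det\cL_{x}}^{-1}\abs{\det\mR_{x}}$ in \eqref{e-gue170301ua}. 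The vanishing estimate \eqref{e-gue170301uaI} is then immediate from the multiplicative form of $\gamma_{0}$ together with the pointwise bounds on $\alpha_{0}$ and $\beta_{0}$.

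The principal technical point, and the only genuine obstacle, is the rigorous justification that complex stationary phase applies to the almost-analytic extensions of $\Phi_{1},\Phi_{2}$ with correct control over the remainders — the phase is only real on the diagonal of $Y$ and has positive imaginary part elsewhere, and one must track the symbol class behaviour in $t$ carefully. But this is exactly the content of the argument carried out in the proof of \eqref{e-gue170227yII} inside Theorem~\ref{t-gue170226cw}, which transfers to the present setting without modification once the $(2n+1-d)$-dimensional integration over $\Omega_{2}\times\Omega_{3}$ is replaced by the $(2n+1-2d)$-dimensional one over $\Omega_{3}$.
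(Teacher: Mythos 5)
Your proposal is correct and follows essentially the route the paper itself indicates: the paper omits the details of Theorem~\ref{t-gue170301wI}, stating that it follows from Theorem~\ref{t-gue170226cw}, \eqref{e-gue170226II}, \eqref{e-gue170226bI}, the proof of \eqref{e-gue170227yII} and the Melin--Sj\"ostrand stationary phase formula, and you reconstruct exactly that argument (substitution $s=\vartheta t$, reduction to the phase $F$ of \eqref{e-gue180506}, stationary phase in $(\Td{\underline{u}''},\vartheta)$ via \eqref{e-gue170226III}--\eqref{e:ma4.17}, the first Hessian identity in \eqref{e-gue170226a}, and the phase equivalence of Theorem~\ref{t-gue170226cw}). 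The order count and the leading-coefficient computation are consistent with \eqref{e-gue170301ua}.
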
 

We introduce next  the following notion.

\begin{definition}\label{d-gue180514}
Let $H: \cC^\infty(X)\To \cC^\infty(X)$ be a continuous operator 
with distribution kernel $H(x,y)\in \cD'(X\times X)$ and
$k\in\mathbb R$, $\ell\in\N$.

(i) We say that $H$ is a \emph{complex Fourier integral operator 
of $G$-Szeg\H{o} 
type of leading order $(k,\ell)$}, if for every open set $D$ of $X$ 
with $D\cap Y=\emptyset$, 
\begin{equation}\label{e-gue180514}
\mbox{$\chi H$ and $H\chi$ are smoothing operators on $X$, 
for every $\chi\in\cC^\infty_0(D)$}
\end{equation}
and for every $p\in Y$ and any open coordinate neighborhood 
$(U,x=(x_1,\ldots,x_{2n+1}))$ of $p$, we have 
\begin{equation}\label{e-gue180514I}
H(x, y)\equiv\int^{\infty}_{0}e^{i\Phi(x, y)t}a(x, y, t)dt\ \ 
\mbox{on $U\times U$}
\end{equation}
with $\Phi(x,y)\in\cC^\infty(U\times U)$ as in 
Theorem~\ref{t-gue180505I}, and 
\begin{equation}  \label{e-gue180514II}
a(x, y, t)\in S^{k}_{{\rm cl\,}}(U\times U\times\mathbb{R}_+)
\end{equation}
and under the notation \eqref{e-fal}, 
\begin{equation}\label{e0gue180514III}
\frac{\pr^{\abs{\alpha}+\abs{\beta}}a_0(x,y)}{\pr x^\alpha\pr y^\beta}
\Big\vert_{x=y\in Y}=0, 
\quad\text{ for }\alpha, \beta\in\N^{2n+1}, \abs{\alpha}
+\abs{\beta}\leq \ell-1.
\end{equation}
(ii) We say that $H$ is a \emph{complex 
Fourier integral operator of G-Szeg\H{o} type of order $(k,\ell)$, 
$k\in\mathbb R$, $\ell\in\N$}, 
if \eqref{e-gue180514}, \eqref{e-gue180514I}, \eqref{e-gue180514II} 
hold and  
there is a 
$r(x,y,t)\in S^{-\infty}_{{\rm cl\,}}(U\times U\times\mathbb{R}_+)$ 
such that
\begin{equation}\label{e0gue180514a}
\frac{\pr^{\abs{\alpha}
+\abs{\beta}}\bigl(a(x,y,t)-r(x,y,t)\bigr)}{\pr x^\alpha\pr y^\beta}
\Big\vert_{x=y\in Y}=0, 
\quad\text{ for }\alpha, \beta\in\N^{2n+1}, \abs{\alpha}
+\abs{\beta}\leq \ell-1.
\end{equation}

 Let $G_{k,\ell}(X)$ denote the space of 
all complex Fourier integral operators of $G$-Szeg\H{o} 
type of leading order $(k,\ell)$ and let $\widehat G_{k,\ell}(X)$ 
denote the space of all complex Fourier integral operators of 
$G$-Szeg\H{o} type of order $(k,\ell)$.
\end{definition}

In Definition~\ref{d-gue180514}, the $G$ from the  terminology 
$G$-Szeg\H{o} type comes from our group $G$. 
Indeed, Definition~\ref{d-gue180514} depends on the set $Y$. 

Let us explain briefly the role of the spaces $G_{k,\ell}(X)$ 
and $\widehat G_{k,\ell}(X)$. Our goal is to study distribution kernel 
of $\sigma^{\,*}\sigma$. In Theorem~\ref{t-gue170305a} 
below, we will show that $C_0\sigma^{\,*}\sigma$ is
of the same type as $S_G$ and with the same leading term, where $C_0$
is a constant. In the terminology introduced in
Definition~\ref{d-gue180514}, 
$C_0\sigma^{\,*}\sigma-S_G\in G_{n-\frac{d}{2},1}(X)$. 
To prove our main result, we need to show that 
$C_0\sigma^{\,*}\sigma-S_G$ is "microlocally small", 
and it suffices to prove that elements $H\in G_{n-\frac{d}{2},1}(X)$
have good regularity properties (see \eqref{e-gue180514mpII}). 

Let $H=H_0+H_1$, where $H_0\in G_{n-\frac{d}{2},1}(X)$
is the leading term of $H$ and $H_1\in G_{n-\frac{d}{2}-1,0}(X)$
is the lower order term of $H$. By using calculus of
complex Fourier integral operators, we can show that when we compose
$H_1$ with itself, the order of the composition will decrease. 
More precisely, $H^N_1\in G_{n-\frac{d}{2}-N,0}(X)$, for every 
$N\in\mathbb N^*$.  Hence, for large $N$,  $H^N_1$ has good regularity 
properties and hence $H_1$ itself has good regularity properties. 

In order to handle $H_0$ we observe that when we compose 
$H_0$ with itself the order of the composition will not decrease, 
that is, $H^N_0$ is still in $G_{n-\frac{d}{2},1}(X)$ for every 
$N\in\mathbb N^*$.  To get good regularity properties, 
we need the space $\widehat G_{k,\ell}(X)$. 
Note that the space $\widehat G_{k,\ell}(X)$
is the subspace of $G_{k,\ell}(X)$
whose elements have \emph{full symbols} vanish to order $\ell$  
at $\diag(Y\times Y)$.
The key observation is that the leading symbol of $H^N_0$ 
vanishes to order $N$ at $\diag(Y\times Y)$. 
We write $H^N_0=R_{0,N}+R_{1,N}$, where 
$R_{1,N}\in G_{n-\frac{d}{2}-1,0}(X)$ is the lower order term of 
 $H^N_0$ and 
$R_{0,N}\in\widehat G_{n-\frac{d}{2},N}(X)$ is the  the leading term 
of  $H^N_0$. Since the \emph{full symbol} of $R_{0,N}$
vanishes to order 
$N$ at $\diag(Y\times Y)$, even the order of $R_{0,N}$ is high, 
$R_{0,N}$ still has good regularity properties if $N$ is large. Note that for
an element $A\in G_{k,\ell}(X)$, only the leading symbol of 
$A$ vanishes to order $\ell$ at $\diag(Y\times Y)$, 
even $\ell$ is large, we still do not have good regularity property 
for $A$ in general. That's why we need the space $\widehat G_{k,\ell}(X)$.

From Theorem~\ref{t-gue170301w}, we deduce the following. 

\begin{theorem}\label{t-gue180514}
Let $H_1\in G_{k_1,\ell_1}(X)$, $H_2\in G_{k_2,\ell_2}(X)$, 
where $k_1, \ell_1, k_2, \ell_2\in\mathbb{R}$. Then, 
\[H_1\circ H_2\in G_{k_1+k_2-(n-\frac{d}{2}), \ell_1+\ell_2}(X).\]
\end{theorem}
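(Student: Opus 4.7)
The strategy is to reduce the composition $H_1 \circ H_2$ to local computations in coordinate charts near points of $Y$, where Theorem~\ref{t-gue170301w} supplies the needed composition formula. For the smoothing property away from $Y$: given $\chi \in \cC^\infty_0(D)$ with $D \cap Y = \emptyset$, choose $\eta \in \cC^\infty(X)$ with $\eta \equiv 1$ in a neighborhood of $Y$ disjoint from $\supp \chi$, and write $\chi(H_1 \circ H_2) = (\chi H_1)\,\eta\, H_2 + (\chi H_1)(1-\eta) H_2$; both summands are smoothing since $\chi H_1$ is smoothing by hypothesis on $H_1$. The right-multiplication case is symmetric.

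For each $p \in Y$, take the local coordinates $(U, x_1, \ldots, x_{2n+1})$ from Section~\ref{s-gue180505zq} and split $u = (u', u'')$ with $u' = (u_1, \ldots, u_d) \in \Omega_1$ tangent to the $G$-orbit and $u'' = (u_{d+1}, \ldots, u_{2n+1})$ transverse. On $U \times U$ the operators have the form $H_i(x,y) \equiv \int_0^\infty e^{i\Phi(x,y)t} a_i(x,y,t)\, dt$ with $a_i \in S^{k_i}_{{\rm cl\,}}$, and by Theorem~\ref{t-gue170126} the phase $\Phi$ is independent of $u'$. Integrating the product kernel first in $u'$ therefore affects only the symbols, producing a combined symbol
\[
\tilde a(x, u'', y, t, s) := \int_{\Omega_1} a_1(x,(u',u''),t)\, a_2((u',u''),y,s)\, J(u',u'')\, du'
\]
of bi-order $(k_1, k_2)$ in $(t, s)$, where $J$ is the Jacobian of $dv$ relative to $du'\,du''$. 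The residual $u''$-integration is precisely the setting of Theorem~\ref{t-gue170301w}: after the substitution $s = \vartheta t$ and Melin--Sj\"ostrand complex stationary phase, it yields $\int_0^\infty e^{i\Phi(x,y)t} c(x,y,t)\, dt$ with $c \in S^{k_1 + k_2 - (n - d/2)}_{{\rm cl\,}}(U \times U \times \R_+)$ modulo a smoothing kernel.

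For the vanishing condition on the leading symbol: since $a_{i,0}$ vanishes to order $\ell_i$ at each $(x_0, x_0) \in \diag((Y \cap U) \times (Y \cap U))$, this order is preserved after the $u'$-integration and after restriction to the critical point $u'' = \alpha(x,y)$, which satisfies $\alpha(x_0, x_0) = x_0''$. The second estimate in Theorem~\ref{t-gue170301w}, applied with $N_1 = \ell_1$ and $N_2 = \ell_2$, then gives $|c_0(x,y)| \leq \widehat C\,|(x,y) - (x_0, x_0)|^{\ell_1 + \ell_2}$ for all $x_0 \in Y \cap U$, which is equivalent to the vanishing of all partial derivatives of $c_0$ of total order at most $\ell_1 + \ell_2 - 1$ along $\diag((Y \cap U) \times (Y \cap U))$. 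Patching via a partition of unity subordinate to a cover of a neighborhood of $Y$ by such charts yields $H_1 \circ H_2 \in G_{k_1 + k_2 - (n - d/2), \ell_1 + \ell_2}(X)$. The main obstacle is this global-to-local step: one must verify that cross-terms from different patches contribute only smoothing kernels, which is ensured by the strict positivity of ${\rm Im\,}\Phi$ off $\diag(Y \times Y)$ from \eqref{e-gue180305yIII}, turning such cross-contributions into rapidly decaying oscillatory integrals that can be absorbed into the smoothing remainders.
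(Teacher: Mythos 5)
Your argument is essentially the paper's: the paper proves this theorem simply by invoking Theorem~\ref{t-gue170301w}, and your proposal fills in exactly the localization near $Y$, the fibre integration over the orbit directions $u'$ (harmless since $\Phi$ is independent of them by Theorem~\ref{t-gue170126}), and the application of the quantitative vanishing estimate \eqref{e-gue170301uI} with $N_1=\ell_1$, $N_2=\ell_2$ that this one-line invocation presupposes. The remaining details you supply — converting the pointwise bound on $c_0$ into the derivative-vanishing condition \eqref{e0gue180514III} via Taylor expansion, and absorbing the off-orbit-diagonal and away-from-$Y$ contributions into smoothing remainders using \eqref{e-gue180305yIII} and condition \eqref{e-gue180514} — are consistent with the paper's intent, so the proof is correct and takes the same route.
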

 Recall that $\norm{\cdot}_s$ denotes the standard 
Sobolev norm on $X$ of order $s$. 
\begin{theorem}\label{t-gue180514I}
Let $H\in G_{k,0}(X)$ with $k\leq n-\frac{d}{2}-1$. 
Then, there exists $C>0$ such that  for any 
$u\in\cC^\infty(X)$, 
\begin{equation}\label{e-gue180514aI}
\norm{Hu}\leq C\norm{u}.
\end{equation}
Moreover, for every $s\in\mathbb N^*$, there exist $N_s\in\mathbb N^*$ 
and $C_s>0$ such that  for any $u\in\cC^\infty(X)$, 
\begin{equation}\label{e-gue180514aII}
\norm{H^{N_s}u}_s\leq C_s\norm{u}.
\end{equation}
\end{theorem}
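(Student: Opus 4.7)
The plan is to drive the order of iterated compositions of $H$ (and its adjoint) so negative that the resulting kernel becomes uniformly bounded, and then pass to $L^2$ or Sobolev boundedness by Schur's test together with a soft spectral argument.

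For \eqref{e-gue180514aI}, the formal adjoint $H^*$ has kernel phase $-\overline{\Phi(y,x)}$ and conjugate symbol, hence also lies in $G_{k,0}(X)$. By the composition formula of Theorem~\ref{t-gue180514}, $(H^*H)^{m} \in G_{\ell_m,0}(X)$ with $\ell_m = 2m(k-(n-\frac{d}{2})) + (n-\frac{d}{2})$, and since $k \leq n-\frac{d}{2}-1$ we have $\ell_m \to -\infty$. Pick $m_0 = 2^{k_0}$ with $\ell_{m_0} < -1$. In any coordinate patch $(U,x)$ meeting $Y$, the kernel of $(H^*H)^{m_0}$ agrees modulo $\cC^\infty(U\times U)$ with $\int_0^\infty e^{i\Phi(x,y)t} a(x,y,t)\,dt$, where $a \in S^{\ell_{m_0}}_{\rm cl}$. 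Because $\mathrm{Im}\,\Phi \geq 0$ we have $|e^{i\Phi t}| \leq 1$, and $|a(x,y,t)| \leq C(1+t)^{\ell_{m_0}}$ uniformly on $U\times U$, so the $dt$-integral converges absolutely to a uniformly bounded function on $U\times U$. Combined with the smoothing property \eqref{e-gue180514} off $Y$ and a finite partition of unity, this shows $(H^*H)^{m_0}(\cdot,\cdot)$ is uniformly bounded on $X\times X$; Schur's test then yields $(H^*H)^{m_0}: L^2\to L^2$ bounded.

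To extract $L^2$-boundedness of $H$ itself, set $T:=H^*H$, symmetric and non-negative on $\cC^\infty(X)$. Cauchy--Schwarz gives $\|T^{2^j}u\|^2 = (u, T^{2^{j+1}}u) \leq \|u\|\,\|T^{2^{j+1}}u\|$, iterating to $\|Tu\|^{m_0} \leq \|T^{m_0}u\|\,\|u\|^{m_0-1}$. Using the $L^2$-bound on $T^{m_0}$, one obtains $\|Tu\|_{L^2} \leq C^{1/m_0}\|u\|_{L^2}$, so $T$ extends to a bounded operator on $L^2$, and then $\|Hu\|^2 = (Tu,u) \leq \|T\|\,\|u\|^2$ yields \eqref{e-gue180514aI}.

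For \eqref{e-gue180514aII}, composing $H$ with itself $N$ times gives $H^N \in G_{Nk-(N-1)(n-\frac{d}{2}),\,0}(X)$, whose order tends to $-\infty$. Choose $N_s$ so that $N_s k - (N_s-1)(n-\frac{d}{2}) + s < -1$. In a coordinate patch meeting $Y$, each $x$-derivative of $e^{i\Phi(x,y)t} a(x,y,t)$ introduces at most one factor of $t$ (via $it\,\partial_{x}\Phi$), so for any multi-index $\alpha$ with $|\alpha|\leq s$ the kernel $\partial^\alpha_x H^{N_s}(x,y)$ is, modulo $\cC^\infty$, an oscillatory integral whose symbol has order $<-1$. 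The same absolute-convergence argument furnishes a uniform bound for $\partial^\alpha_x H^{N_s}$ on $X\times X$, and Schur's test applied to each such derivative kernel gives $\|\partial^\alpha_x H^{N_s} u\|_{L^2} \leq C_s\|u\|_{L^2}$ for all $|\alpha|\leq s$, proving $H^{N_s}:L^2\to H^s$ bounded. The main technical content throughout is the soft estimate that a symbol of order strictly less than $-1$, paired with a phase of non-negative imaginary part, produces a uniformly bounded kernel without any stationary-phase analysis; the only mildly subtle step is the moments-inequality passage from $\|(H^*H)^{m_0}\|$ to $\|H\|$.
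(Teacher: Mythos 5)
Your proposal is correct and follows essentially the same route as the paper: compose $H$ (respectively $H^*H$) with itself until the order in Theorem~\ref{t-gue180514} is negative enough that the kernel and its derivatives are bounded, deduce $L^2\to H^s$ boundedness of the high power, and recover \eqref{e-gue180514aI} via the iterated Cauchy--Schwarz inequality $\norm{Hu}^2\leq\norm{(H^*H)^{2^\ell}u}^{2^{-\ell}}\norm{u}^{2-2^{-\ell}}$, which is exactly the paper's \eqref{e-gue180514m}. The only cosmetic difference is that you justify the boundedness of the low-order kernels explicitly (absolute convergence of the $dt$-integral using ${\rm Im}\,\Phi\geq0$ plus Schur's test), where the paper simply asserts that the kernel lies in $\cC^s(X\times X)$.
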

\begin{proof}
Fix $s\in\mathbb N^*$. 
By Theorem~\ref{t-gue180514}, 
for any $N_s\in\N^*$ we have
$H^{N_s}\in G_{N_sk-(N_s-1)(n-\frac{d}{2}),0}(X)$. 
Taking now $N_s\in\N^*$ such that $N_sk-(N_s-1)(n-\frac{d}{2})<-s-2$, 
it is easy to see that $H^{N_s}(x,y)\in\cC^s(X\times X)$ and 
\eqref{e-gue180514aII} follows then. 

We now prove \eqref{e-gue180514aI}. We claim that for every 
$\ell\in\mathbb N^{*}$, we have  for any $u\in\cC^\infty(X)$, 
\begin{equation}\label{e-gue180514m}
\norm{Hu}^2\leq\norm{(H^*H)^{2^\ell}u}^{2^{-\ell}}
\norm{u}^{2-2^{-\ell}},
\end{equation}
where $H^*$ is the adjoint of $H$. We have for any $u\in\cC^\infty(X)$,
\begin{equation}\label{e-gue181220}
\norm{Hu}^2=(\,Hu\,,\,Hu\,)=(\,H^*Hu\,,\,u)\leq\norm{H^*Hu}\norm{u}
\end{equation}
and
\begin{equation}\label{e-gue181220I}
\norm{H^*Hu}^2=(\,H^*Hu\,,\,H^*Hu\,)=(\,(H^*H)^2u\,,\,u)
\leq\norm{(H^*H)^2u}\norm{u}.
\end{equation}
We prove \eqref{e-gue180514m} by induction on $\ell$.
From \eqref{e-gue181220} and 
\eqref{e-gue181220I}, we get \eqref{e-gue180514m} for $\ell=1$. 
Suppose that \eqref{e-gue180514m} holds for $\ell\in\N^*$. 
We have for every $u\in\cC^\infty(X)$,
\begin{equation}\label{e-gue181220II}
\begin{split}
\norm{(H^*H)^{2^{\ell}}u}^2&=(\,(H^*H)^{2^{\ell}}u,
(H^*H)^{2^{\ell}}u\,)\\
&=(\,(H^*H)^{2^{\ell+1}}u\,,\,u)
\leq\norm{(H^*H)^{2^{\ell+1}}u}\norm{u},
\end{split}
\end{equation}
From the induction hypothesis 
and \eqref{e-gue181220II}, we get  \eqref{e-gue180514m} 
for $\ell+1$. 
 
It is obvious that $H^*\in G_{k,0}$ 
and hence $H^*H\in G_{2k-(n-\frac{d}{2}),0}$. 
From this observation and \eqref{e-gue180514aII}, 
we deduce that for $\ell$ large, there exists $C>0$ such that 
for any $u\in\cC^\infty(X)$,
\begin{equation}\label{e-gue180514mI}
\norm{(H^*H)^{2^\ell}u}\leq C\norm{u}\  .
\end{equation}
From \eqref{e-gue180514m} and \eqref{e-gue180514mI}, 
we get \eqref{e-gue180514aI}. 
\end{proof}

\begin{lemma}\label{l-gue180514m}
Let $H\in\widehat G_{k,2\ell}(X)$. If $k-\ell\leq -s-2$, 
for some $s\in\N$, then $H(x,y)\in\cC^s(X\times X)$. 
\end{lemma}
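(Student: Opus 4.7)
The plan is to localize to a neighborhood of $Y$ and estimate a damped oscillatory integral using both the vanishing of the symbol at $\diag(Y\times Y)$ and the lower bound on $\im\Phi$.

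First, by \eqref{e-gue180514} the kernel $H(x,y)$ is smooth outside any neighborhood of $Y\times Y$, so I fix $p\in Y$ and work in a coordinate chart $U\subset X$ as in Definition~\ref{d-gue180514}. There $H(x,y)\equiv\int_{0}^{\infty}e^{i\Phi(x,y)t}\,a(x,y,t)\,dt$ on $U\times U$, with $a\in S^{k}_{\mathrm{cl}}$ and (by \eqref{e0gue180514a}, after subtracting an $S^{-\infty}$ remainder that contributes a smooth kernel) all derivatives $\partial^{\alpha}_{x}\partial^{\beta}_{y}a$ of order $|\alpha|+|\beta|\leq 2\ell-1$ vanishing on $\diag(Y\times Y)\cap(U\times U)$. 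Shrinking $U$ so that $G$-freeness on $Y$ gives $\inf_{g\in G}d(g\cdot x,y)=d(x,y)$ locally, Theorem~\ref{t-gue180505I} (via \eqref{e-gue180305yIII}) implies
\[\im\Phi(x,y)\geq c\,D(x,y)^{2},\qquad D(x,y):=\operatorname{dist}\bigl((x,y),\diag(Y\times Y)\bigr).\]
Taylor's theorem then gives, for $|\alpha_{2}|+|\beta_{2}|\leq 2\ell$, the pointwise bound $|\partial^{\alpha_{2}}_{x}\partial^{\beta_{2}}_{y}a(x,y,t)|\leq C\,t^{k}\,D(x,y)^{2\ell-|\alpha_{2}|-|\beta_{2}|}$, while $|\partial^{\alpha_{1}}_{x}\partial^{\beta_{1}}_{y}e^{i\Phi(x,y) t}|\leq C(1+t)^{|\alpha_{1}|+|\beta_{1}|}e^{-ctD(x,y)^{2}}$ on compact subsets of $U\times U$.

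Next I differentiate $H$ under the integral sign; this is justified for $D(x,y)>0$ by absolute convergence and, near $D=0$, by inserting a regularizer $e^{-\varepsilon t}$ and letting $\varepsilon\downarrow 0$, with the estimates below providing the dominating function. Writing $a_{i}:=|\alpha_{i}|+|\beta_{i}|$ with $a_{1}+a_{2}\leq s$, Leibniz's rule yields
\[\bigl|\partial^{\alpha}_{x}\partial^{\beta}_{y}H(x,y)\bigr|\leq C\!\!\!\sum_{\substack{\alpha_{1}+\alpha_{2}=\alpha\\ \beta_{1}+\beta_{2}=\beta}}\int_{0}^{\infty}(1+t)^{a_{1}}\,t^{k}\,D(x,y)^{2\ell-a_{2}}\,e^{-ctD(x,y)^{2}}\,dt,\]
with the convention that when $a_{2}>2\ell$ the factor $D^{2\ell-a_{2}}$ is replaced by $1$ and the symbol derivative is bounded by $Ct^{k}$. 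Splitting the $t$-integral over $[0,1]$ and $[1,\infty)$, the first piece is uniformly bounded in $(x,y)$ by a constant, and the second piece satisfies $\int_{1}^{\infty}t^{k+a_{1}}e^{-ctD^{2}}\,dt\leq C\,D^{-2(k+a_{1}+1)}$ for $D>0$ (or is bounded by a constant when $k+a_{1}+1\leq 0$, by dominated convergence).

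The two exhaustive cases now give the result. When $a_{2}\leq 2\ell$ and the singular branch of the $t$-integral is in force, the resulting exponent in $D$ is $2\ell-a_{2}-2a_{1}-2k-2\geq 2\ell-2s-2k-2=2(\ell-k-s-1)\geq 2$, using $a_{1},a_{1}+a_{2}\leq s$ and the hypothesis $k-\ell\leq -s-2$. When $a_{2}>2\ell$, that same hypothesis forces $a_{1}\leq s-2\ell\leq -k-1$, so $(1+t)^{a_{1}}t^{k}$ is integrable on $[1,\infty)$ uniformly in $(x,y)$ and dominated convergence produces a continuous limit as $D\to 0$. In either case each $s$-th derivative of $H(x,y)$ extends continuously across $\diag(Y\times Y)$, so $H(x,y)\in\cC^{s}(X\times X)$. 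The main obstacle is the bookkeeping of powers of $t$ and $D$: the $(1+t)^{a_{1}}$ growth from differentiating $e^{i\Phi t}$ must be absorbed by the Gaussian damping $e^{-ctD^{2}}$ together with the transverse vanishing $D^{2\ell-a_{2}}$ of the symbol, and the $-2$ safety margin in $k-\ell\leq -s-2$ corresponds exactly to the two extra powers of $D$ required so that the worst case (all derivatives on the phase) still yields a continuous, and not merely bounded, derivative.
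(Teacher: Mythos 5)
Your argument has a genuine gap at its foundation: the inequality $\im\Phi(x,y)\geq c\,D(x,y)^{2}$ with $D(x,y)=\operatorname{dist}\bigl((x,y),\diag(Y\times Y)\bigr)$ is not what \eqref{e-gue180305yIII} gives, and it is in fact false. The lower bound in \eqref{e-gue180305yIII} is for $\abs{\Phi(x,y)}+\im\Phi(x,y)$, not for $\im\Phi$ alone; the pointwise lower bound on $\im\Phi$ itself, \eqref{e-gue170126}, reads $\im\Phi(x'',y'')\geq c\bigl(\abs{\widehat x''}^2+\abs{\widehat y''}^2+\abs{\mathring{x}''-\mathring{y}''}^2\bigr)$ and conspicuously omits the characteristic coordinate $x_{2n+1}$. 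Concretely, for $x,y\in Y$ with $\mathring{x}''=\mathring{y}''$ but $x_{2n+1}\neq y_{2n+1}$ one has $D(x,y)>0$ while $\im\Phi(x,y)$ may vanish (the leading term of $\Phi$ in that direction is the \emph{real} quantity $-x_{2n+1}+y_{2n+1}$, exactly as for the model Szeg\H{o} phase). Once the Gaussian factor $e^{-ctD^2}$ is removed, your estimate of the tail $\int_1^\infty t^{k+a_1}\,D^{2\ell-a_2}\,e^{-ct\im\Phi}\,dt$ collapses: at points where $\im\Phi=0$ but $D>0$ the integrand does not decay in $t$ at all (the symbol's vanishing factor is a constant in $t$ there), so the integral is not absolutely convergent and no dominating function exists for your $\varepsilon\downarrow0$ regularization. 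The pointwise-absolute-value strategy cannot control the characteristic direction.

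What rescues the lemma is precisely the oscillation in that direction, which your proof never uses. Since $\abs{\Phi}+\im\Phi\geq D^2/C$, at points where $\im\Phi$ is small one has $\abs{\Phi}\gtrsim D^2$, and repeated integration by parts in $t$ (via $\partial_t e^{i\Phi t}=i\Phi\,e^{i\Phi t}$) trades powers of $t$ in the symbol for powers of $\Phi^{-1}$, i.e.\ of $D^{-2}$; these are then absorbed by the order-$2\ell$ vanishing of the \emph{full} symbol on $\diag(Y\times Y)$ guaranteed by \eqref{e0gue180514a} (this is where membership in $\widehat G_{k,2\ell}(X)$ rather than $G_{k,2\ell}(X)$ is essential). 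This is the route the paper takes, and the bookkeeping there — $N$ integrations by parts cost $D^{-2N}$ and gain $t^{-N}$, the $s$ derivatives in $(x,y)$ cost at worst $t^{s}$ or $D^{-s}$ — reproduces exactly the threshold $k-\ell\leq -s-2$. Your Taylor-expansion bound on the differentiated symbol and the Leibniz organization are fine and would carry over; the missing ingredient is the integration by parts in $t$ that converts the real oscillation of $\Phi$ into decay.
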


\begin{proof}
By using \eqref{e-gue180305yIII}, we can integrate 
by parts with respect to $t$ several times and deduce that 
$H(x,y)\in\cC^s(X\times X)$. The calculation is elementary 
and straightforward and we omit the details. 
\end{proof}

In the proof of our main result, we need the following.

\begin{theorem}\label{t-gue180514mp}
Let $H\in G_{n-\frac{d}{2},1}(X)$.  Then there exists $C>0$
such that   for any $u\in\cC^\infty(X)$, 
\begin{equation}\label{e-gue180514mpI}
\norm{Hu}\leq C\norm{u}. 
\end{equation}
Moreover, for every $s\in\mathbb N^*$, there exist $N_s\in\mathbb N^*$ 
and $C_s>0$ such that   for any $u\in\cC^\infty(X)$,
\begin{equation}\label{e-gue180514mpII}
\norm{H^{N_s}u}_s\leq C_s\norm{u}.
\end{equation}
\end{theorem}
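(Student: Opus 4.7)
The plan is to prove the two bounds \eqref{e-gue180514mpI} and \eqref{e-gue180514mpII} by reducing everything to pieces in the scale $\widehat G_{k,\ell}(X)$, to which Lemma~\ref{l-gue180514m} applies and yields $\cC^{s}$-kernels, and then combining with the duality--interpolation trick already used in the proof of Theorem~\ref{t-gue180514I}.

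For \eqref{e-gue180514mpI} I will first use the induction on $\ell$ from Theorem~\ref{t-gue180514I} to get
\[
\|Hu\|^{2}\leq \|(H^{*}H)^{2^{\ell}}u\|^{2^{-\ell}}\,\|u\|^{2-2^{-\ell}},
\]
so that it suffices to bound $(H^{*}H)^{2^{\ell}}$ on $L^{2}$ for some single large $\ell$. Taking adjoints preserves the class $G_{n-\frac{d}{2},1}(X)$, so Theorem~\ref{t-gue180514} gives $(H^{*}H)^{2^{\ell}}\in G_{n-\frac{d}{2},\,2^{\ell+1}}(X)$. Near $Y$ I will split $(H^{*}H)^{2^{\ell}}=P_{\ell}+Q_{\ell}$, where the full symbol of $P_{\ell}$ is the leading part $t^{n-\frac{d}{2}}a_{0}$ (whose prefactor $a_{0}$ vanishes to order $2^{\ell+1}$ on $\diag(Y\times Y)$) and $Q_{\ell}$ collects the sub-leading contributions, so $P_{\ell}\in\widehat G_{n-\frac{d}{2},\,2^{\ell+1}}(X)$ and $Q_{\ell}\in G_{n-\frac{d}{2}-1,0}(X)$. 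For $\ell$ large enough that $n-\frac{d}{2}-2^{\ell}\leq-2$, Lemma~\ref{l-gue180514m} produces a $\cC^{0}(X\times X)$ kernel for $P_{\ell}$, while Theorem~\ref{t-gue180514I} controls $Q_{\ell}$ on $L^{2}$. Plugging back into the interpolation inequality yields \eqref{e-gue180514mpI}.

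For \eqref{e-gue180514mpII} I will show that $H^{N_{s}}$ has a $\cC^{s}(X\times X)$ kernel once $N_{s}$ is chosen large compared to $s$. By Theorem~\ref{t-gue180514} we have $H^{N_{s}}\in G_{n-\frac{d}{2},N_{s}}(X)$, and the local amplitude expands as $a^{(N_{s})}(x,y,t)\sim\sum_{j\geq 0}t^{n-\frac{d}{2}-j}a_{j}^{(N_{s})}(x,y)$. Truncating this expansion at a large index $J$, the remainder lies in $S^{n-\frac{d}{2}-J}_{\mathrm{cl}}$ and has $\cC^{s}$-kernel by Lemma~\ref{l-gue180514m} (applied with second index $0$); for $0\leq j\leq J-1$, the operator with full symbol $t^{n-\frac{d}{2}-j}a_{j}^{(N_{s})}$ will lie in $\widehat G_{n-\frac{d}{2}-j,\,N_{s}-2j}(X)$, provided the key claim that each $a_{j}^{(N_{s})}$ vanishes to order at least $N_{s}-2j$ on $\diag(Y\times Y)$ holds. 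Granting this, the choice $N_{s}\geq 2n-d+2s+4$ makes $(n-\frac{d}{2}-j)-(N_{s}-2j)/2=n-\frac{d}{2}-N_{s}/2\leq-s-2$ for every $j$, so Lemma~\ref{l-gue180514m} produces a $\cC^{s}$-kernel for each piece. Outside any neighborhood of $Y$, the first defining property of $G_{k,\ell}(X)$ shows $H^{N_{s}}$ is smoothing; a partition of unity then yields $H^{N_{s}}(x,y)\in \cC^{s}(X\times X)$, and \eqref{e-gue180514mpII} follows.

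The main obstacle is the vanishing-order claim on the subleading coefficients $a_{j}^{(N_{s})}$. I will prove it by induction on $N_{s}$ using the Melin--Sj\"ostrand complex stationary phase expansion underlying the composition calculus of Theorem~\ref{t-gue170301w}: the $j$-th coefficient of $H^{N_{s}}\circ H$ is a finite sum, indexed by triples $p+q+r=j$, of contributions built from $2r$-th derivatives of products $a_{p}^{(N_{s})}(x,\cdot)\,a_{q}^{(1)}(\cdot,y)$ evaluated at the critical manifold. Using the inductive hypothesis that $a_{p}^{(N_{s})}$ vanishes to order at least $N_{s}-2p$ along the diagonal and that $a_{0}^{(1)}$ vanishes to order $1$ there, a routine bookkeeping on how many transverse derivatives can hit a vanishing factor shows that each contribution vanishes to order at least $(N_{s}+1)-2j$, closing the induction and completing the proof.
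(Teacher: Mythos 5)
Your argument for \eqref{e-gue180514mpI} is essentially the paper's: you combine the duality--interpolation inequality $\norm{Hu}^2\leq\norm{(H^*H)^{2^\ell}u}^{2^{-\ell}}\norm{u}^{2-2^{-\ell}}$ with a splitting of $(H^*H)^{2^\ell}\in G_{n-\frac{d}{2},2^{\ell+1}}(X)$ into a piece in $\widehat G_{n-\frac{d}{2},2^{\ell+1}}(X)$ (full symbol equal to the leading term, which by \eqref{e-gue170301uI} vanishes to order $2^{\ell+1}$ on $\diag(Y\times Y)$) plus a remainder in $G_{n-\frac{d}{2}-1,0}(X)$, handled by Lemma~\ref{l-gue180514m} and Theorem~\ref{t-gue180514I} respectively. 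The only difference is the order of the two assertions: the paper first proves \eqref{e-gue180514mpII} and then deduces \eqref{e-gue180514mpI} from it by the same interpolation trick, whereas you prove \eqref{e-gue180514mpI} directly. That part is fine.

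For \eqref{e-gue180514mpII} you take a genuinely different and strictly more demanding route. The paper again uses only the splitting $H^N=H_{1,N}+H_{2,N}$ with $H_{1,N}\in\widehat G_{n-\frac{d}{2},N}(X)$ (leading coefficient only) and $H_{2,N}\in G_{n-\frac{d}{2}-1,0}(X)$ (all subleading terms, with no vanishing condition whatsoever), and then expands $H^{NK_s}=(H_{1,N}+H_{2,N})^{K_s}$: every word either carries a smoothing factor $H_{1,N}$ or equals $H_{2,N}^{K_s}$, which is controlled by \eqref{e-gue180514aII} because composition strictly decreases the order in the second slot. You instead claim that \emph{every} coefficient $a_j^{(N_s)}$ of the full symbol of $H^{N_s}$ vanishes to order $N_s-2j$ on $\diag(Y\times Y)$, so that each homogeneous piece lands in $\widehat G_{n-\frac{d}{2}-j,\,N_s-2j}(X)$ and Lemma~\ref{l-gue180514m} applies termwise. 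This claim is not contained in Theorem~\ref{t-gue170301w}, which only tracks the leading coefficient \eqref{e-gue170301uI}; it is a refinement of the whole composition calculus. Your bookkeeping does close numerically (the extremal term $p=q=0$, $r=j$ gives exactly $(N+1)-2j$), but making it rigorous requires verifying that each of the $2r$ derivatives in the Melin--Sj\"ostrand expansion, together with the substitution of the almost-analytic amplitudes at the $(x,y)$-dependent complex critical point $\alpha(x,y)$ with $\alpha(x_0,x_0)=x_0$ for $x_0\in Y$, lowers the vanishing order at $\diag(Y\times Y)$ by at most one per derivative --- precisely the argument behind \eqref{e-gue170301uI}, but now for all subleading terms simultaneously. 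As written, ``a routine bookkeeping'' is the one unproven step on which your Sobolev estimate rests; you can avoid it entirely by putting all subleading terms into a single $G_{n-\frac{d}{2}-1,0}(X)$ remainder and invoking Theorem~\ref{t-gue180514I}, as the paper does.
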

\begin{proof}
From Theorem~\ref{t-gue170301w}, we see that for every 
$N\in\mathbb N^*$, we have 
\begin{equation}\label{e-gue180515}
\begin{split}
&H^N=H_{1,N}+H_{2,N},\\
&H_{1,N}\in\widehat G_{n-\frac{d}{2},N}(X),\ \ H_{2,N}
\in G_{n-\frac{d}{2}-1,0}(X).
\end{split}
\end{equation}
Fix $s\in\mathbb N^*$. 
Due to Lemma~\ref{l-gue180514m} there exists $N\gg1$ 
such that $H_{1,N}(x,y)\in\cC^s(X\times X)$ and for every 
$j\in\mathbb N^*$, there exists $C_j>0$ such that
for every $u\in\cC^\infty(X)$ we have
\begin{equation}\label{e-gue180515I}
\norm{H^j_{1,N}u}_s\leq C_j\norm{u}.
\end{equation}
Since $H_{2,N}\in G_{n-\frac{d}{2}-1,0}(X)$, 
Theorem~\ref{t-gue180514I} shows that
there exist $K_s\in\mathbb N^{*}$ and $\widehat C_s>0$ so that 
\begin{equation}\label{e-gue180515II}
\norm{H^{K_s}_{2,N}u}_s\leq\widehat C_s\norm{u},\ \ 
\text{ for }u\in\cC^\infty(X).
\end{equation}
We have 
\begin{equation}\label{e-gue180515III}
H^{NK_s}=(H_{1,N}+H_{2,N})^{K_s}
=\sum^{K_s}_{j=0}H^j_{1,N}H^{K_s-j}_{2,N}.
\end{equation}
From \eqref{e-gue180514aI}, 
\eqref{e-gue180515I}, \eqref{e-gue180515II} and  \eqref{e-gue180515III}, 
we get \eqref{e-gue180514mpII} with $N_s=NK_s$. 
From \eqref{e-gue180514mpII}, we can repeat the proof of 
\eqref{e-gue180514aI} and get \eqref{e-gue180514mpI}. 
\end{proof}

Let $H\in G_{n-\frac{d}{2},1}(X)$. From \eqref{e-gue180514mpI}, 
we can extend $I-H$ to a bounded operator in 
\[I-H: L^2(X)\To L^2(X)\]
by density.
In the proof of our main result, we need the following.
\begin{theorem}\label{t-gue180514mpq}
Let $H\in G_{n-\frac{d}{2},1}(X)$ and extend $I-H$ to 
a bounded operator in 
\[I-H: L^2(X)\To L^2(X)\]
by density. Then ${\Ker\,}(I-H)$ is a finite dimensional 
subspace of $\cC^\infty(X)$ and there exists $C>0$ such that 
\begin{equation}\label{e-gue180515q}
\norm{(I-H)u}\geq C\norm{u},\ \  
\text{ for any } u\in L^2(X),\ \ u\perp{\Ker\,}(I-H).
\end{equation}
\end{theorem}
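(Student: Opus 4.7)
The strategy is to exploit the telescoping identity
\[
I - H^{N} \;=\; (I-H)\bigl(I + H + H^{2} + \cdots + H^{N-1}\bigr)
\;=\; \bigl(I + H + H^{2} + \cdots + H^{N-1}\bigr)(I-H)
\]
together with the smoothing property \eqref{e-gue180514mpII} of large powers of $H$. Concretely, by Theorem~\ref{t-gue180514mp}, for every $s\in\N^*$ there exists $N_s\in\N^*$ such that $H^{N_s}\colon L^2(X)\to H^{s}(X)$ is bounded. For $s\geq1$, Rellich's embedding $H^s(X)\hookrightarrow L^2(X)$ is compact, so $H^{N_s}\colon L^2(X)\to L^2(X)$ is a compact operator. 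Consequently, $I-H^{N_s}$ is Fredholm of index zero on $L^2(X)$.

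First I treat the kernel. Since any $u\in\Ker(I-H)$ satisfies $u=H^{N_s}u$ for every $s\in\N^*$, the bound $\|u\|_s=\|H^{N_s}u\|_s\leq C_s\|u\|$ from \eqref{e-gue180514mpII} shows $u\in H^s(X)$ for every $s$, hence $u\in\cC^\infty(X)$. Moreover, $\Ker(I-H)\subset\Ker(I-H^{N_s})$, which is finite dimensional by the Fredholm property, so $\Ker(I-H)$ is itself a finite dimensional subspace of $\cC^\infty(X)$.

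The coercive estimate \eqref{e-gue180515q} is the main point and is proved by contradiction in the standard compactness style. Suppose \eqref{e-gue180515q} fails; then there is a sequence $\{u_k\}\subset L^2(X)$ with $\|u_k\|=1$, $u_k\perp\Ker(I-H)$, and $\|(I-H)u_k\|\to 0$. Set $v_k:=(I-H)u_k$, fix some $s\geq 1$, and apply the telescoping identity to write
\[
u_k \;=\; H^{N_s}u_k \;+\; \bigl(I+H+\cdots+H^{N_s-1}\bigr)v_k.
\]
By Theorem~\ref{t-gue180514mp}, $H$ is bounded on $L^2(X)$, so the second term tends to $0$ in $L^2(X)$. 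For the first term, \eqref{e-gue180514mpII} yields
\[
\|H^{N_s}u_k\|_s \;\leq\; C_s\|u_k\|\;=\;C_s,
\]
so $\{H^{N_s}u_k\}$ is bounded in $H^s(X)$ and therefore, by Rellich, has a subsequence converging in $L^2(X)$. Along that subsequence, $u_k$ itself converges in $L^2(X)$ to some $u$ with $\|u\|=1$ and $u\perp\Ker(I-H)$. Since $I-H$ is continuous on $L^2(X)$, $(I-H)u=\lim_k(I-H)u_k=0$, so $u\in\Ker(I-H)$, contradicting $u\perp\Ker(I-H)$ and $\|u\|=1$.

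The only potentially delicate step is justifying the compactness of $H^{N_s}$ on $L^2(X)$: this is immediate once the $L^2\to H^s$ boundedness of \eqref{e-gue180514mpII} is in hand, but it is precisely here that the whole machinery of $G$-Szeg\H{o} type Fourier integral operators developed in Section~\ref{s-gue180308I}, together with the vanishing of the leading symbol at $\diag(Y\times Y)$ encoded in $G_{n-d/2,1}(X)$ versus $G_{n-d/2,0}(X)$, plays its role through the smoothing power estimate \eqref{e-gue180514mpII}. Everything else is standard Fredholm/compactness manipulation.
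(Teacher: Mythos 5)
Your proof is correct and follows essentially the same route as the paper's: the telescoping identity $I-H^{N_s}=(I+H+\cdots+H^{N_s-1})(I-H)$, the $L^2\To H^s$ boundedness of $H^{N_s}$ from Theorem~\ref{t-gue180514mp}, Rellich compactness, and the same contradiction argument for \eqref{e-gue180515q}. The only cosmetic difference is that you phrase the finite-dimensionality of the kernel via the Fredholm property of $I-H^{N_s}$ as a compact perturbation of the identity, whereas the paper invokes Rellich directly and omits the standard details.
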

\begin{proof}
Fix $s\in\mathbb N^*$. 
Theorem~\ref{t-gue180514mp} shows that we can extend 
$H^{N_s}$ to a bounded operator 
\begin{equation}\label{e-gue180515rI}
H^{N_s}: L^2(X)\To H^s(X),
\end{equation}
 by density. Now, let $u\in{\Ker\,}(I-H)$. Then, 
 \begin{equation}\label{e-gue180515t}
 (I-H^{N_s})u=(I+H+\ldots+H^{N_s-1})(I-H)u=0
 \end{equation}
 and hence $u=H^{N_s}u\in H^s(X)$. Since $s$ is arbitrary, 
 we deduce that $u\in\cC^\infty(X)$. 
 Moreover, from \eqref{e-gue180515t}, 
 we can apply Rellich's theorem and conclude that ${\Ker\,}(I-H)$ 
 is a finite dimensional subspace of $\cC^\infty(X)$. 
 Since the argument is standard, we omit the details. 
 
 We now prove \eqref{e-gue180515q}. Assume that \eqref{e-gue180515q}
 is not true.  For every $j\in\N^*$ we can find $u_j\in L^2(X)$ 
 with $u_j\perp{\Ker\,}(I-H)$ and 
$\norm{u_j}=1$ such that 
\begin{equation}\label{e-gue180515tI}
\norm{(I-H)u_j}\leq\frac{1}{j}\,\cdot  
\end{equation}
Put $v_j:=(I-H)u_j$. 
We have for any $j\in\N^*$,
\begin{equation}\label{e-gue180515tII}
(I-H^{N_s})u_j=(I+H+\ldots+H^{N_s-1})v_j \ . 
\end{equation}
From \eqref{e-gue180514mpI} and \eqref{e-gue180515tI} 
we see that there exists $C>0$ such that 
\begin{equation}\label{e-gue180515tIII}
\norm{(I+H+\ldots+H^{N_s-1})v_j}\leq\frac{C}{j},\ \  j\in\N^*. 
\end{equation}
From \eqref{e-gue180515rI} and since $\|u_j\|=1$ we conclude that 
there exists $\widehat C>0$ such that for any $j\in\N^*$,
\begin{equation}\label{e-gue180515y}
\norm{H^{N_s}u_j}_s\leq\widehat C. 
\end{equation}
By Rellich's theorem, we can find a subsequence 
$H^{N_s}u_{j_k}$, $1\leq j_1<j_2<\ldots$ , such that 
$H^{N_s}u_{j_k}$ converges to some $u$ in $L^2(X)$ 
 as $k\To\infty $.  From this observation, \eqref{e-gue180515tII} and 
 \eqref{e-gue180515tIII}, we deduce that $u_{j_k}$ converges to 
 $u$ in $L^2(X)$ with $\norm{u}=1$ as $k\To\infty $. 
 By \eqref{e-gue180515tI}, we get $u\in{\Ker\,}(I-H)$. 
 Since $u_{j_k}\perp{\Ker\,}(I-H)$, for every $k$, 
 we have $u\perp{\Ker\,}(I-H)$. We get a contradiction and 
 \eqref{e-gue180515q} follows. 
\end{proof}

\subsection{The distribution kernels of $\sigma$ and 
$\sigma^*\sigma$; proof of Theorem \ref{t-gue180428zy}}
\label{s-gue180512}

We are now ready to study the  distribution kernels of 
$\sigma$ and $\sigma\sigma^*$ 
  in \eqref{e-gue180308II}.
We will use the same notations as before. 
Let $\cL_{X_{G},q}$ be the Levi form on $X_{G}$  at $q\in X_{G}$ 
induced naturally from $\cL$. The Hermitian metric $g^{\C TX}$ 
on $\C TX$ restricts to a metric on $T^{1,0}X$ wich in turn 
induces a Hermitian metric  on $T^{1,0}X_{G}$. 
Let $\mu_{d+1},\ldots,\mu_n$
be the eigenvalues of 
$\cL_{X_{G},q}$ with respect to this Hermitian metric.
We set 
\begin{equation}\label{e:ma4.65}
\det\,\cL_{X_{G},q}=\mu_{d+1}\ldots\mu_n.
\end{equation}
Recall that $\pi: Y=\mu^{-1}(0)\To X_{G}$ is the natural projection. 
Let 
\begin{equation}\label{e-gue180512qa}
S_{X_{G}}:L^2(X_{G})\To{\Ker\,}\ddbar_{b,X_{G}}=H^0_b(X_{G}),
\end{equation}
be the Szeg\H{o} projection on $X_G$ (cf.\ \eqref{e-gue180512q}). 
Since $X_{G}$ is assumed to be strictly pseudoconvex and $\ddbar_{b,X_{G}}$
has closed range in $L^2$ on $X_{G}$, $S_{X_{G}}$
is smoothing away the diagonal 
(see~\cite[Theorem 1.2]{Hsiao08}, ~\cite[Theorem 4.7]{HM14}). 
Hence, for any $x, y\in Y$ with $\pi(x)\neq\pi(y)$, there are 
open neighborhoods $U$ of $\pi(x)$ in $X_{G}$ 
and $U_1$ of $\pi(y)$ in $X_{G}$ such that for all 
$\widehat\chi\in\cC^\infty_0(U)$, $\Td\chi\in\cC^\infty_0(U_1)$, 
we have 
\begin{equation}\label{e-gue170304}
\widehat\chi S_{X_{G}}\Td\chi\equiv0\ \ \mbox{on $X_{G}\times X_{G}$}. 
\end{equation}

We will use the same notations as in Section~\ref{s-gue180308I}.  
Fix $p\in Y$ and let $x=(x_1,\ldots,x_{2n+1})$ be the local coordinates 
and $\Omega_3\subset\R^{2n+1-2d}$ be an open set
as in the discussion  at the beginning of
Section~\ref{s-gue180308I}. 
From now on, we identify $\underline{x}''$ as local coordinates of $X_{G}$ 
near $q:=\pi(p)\in X_{G}$ and we identify $W:=\Omega_3$ with 
an neighborhood of $\pi(p)$ in $X_{G}$. In view of 
Theorems~\ref{t-gue180505},~\ref{t-gue161110g}, 
we have
\begin{equation}\label{e-gue170304I}
\begin{split}
&S_{X_{G}}(\underline{x}'',\underline{y}'')\equiv 
\int^{\infty }_0e^{i\varphi(\underline{x}'',\underline{y}'')t}
\beta(\underline{x}'',\underline{y}'',t)dt\ \
\mbox{on $W\times W$},
\end{split}
\end{equation}
where $\beta(\underline{x}'',\underline{y}'',t)\in S^{n-d}_{{\rm cl\,}}
(W\times W\times\mathbb{R}_+)$ with
\begin{equation}\label{e-gue170304II}
\beta_0(\underline{x}'',\underline{x}'') =\frac{1}{2}\pi^{-(n-d)-1}
\abs{\det\,\cL_{X_{G},\underline{x}''}},\ \ \underline{x}''\in W,
\end{equation}
and $\varphi(\underline{x}'',\underline{y}'')\in\cC^\infty(W\times W)$
with
\begin{equation}\label{e-gue170304III}
\begin{split} 
&d_{\underline{x}''}\varphi(\underline{x}'',\underline{x}'')
=-d_{\underline{y}''} \phi(\underline{x}'',\underline{x}'')
=-\lambda(\underline{x}'')\omega_{0,G}(\underline{x}''),\ \ 
\lambda(\underline{x}'')>0,\\
&{\rm Im\,}\varphi(\underline{x}'',\underline{y}'')\geq 
c\sum^{2n}_{j=2d+1}\abs{x_j-y_j}^2,\ \ \mbox{for some $c>0$}, \\
&\mbox{$\ddbar_{b,\underline{x}''}\varphi(\underline{x}'',\underline{y}'')$ 
vanishes to infinite order at $\underline{x}''=\underline{y}''$},\\
&\varphi(\underline{x}'', \underline{y}'')=-x_{2n+1}+y_{2n+1}+
i\sum^{n}_{j=d+1}\abs{\mu_j}\abs{z_j-w_j}^2 \\
&\quad+\sum^{n}_{j=d+1}i\mu_j(\ol z_jw_j-z_j\ol w_j)+
\sum^n_{j=d+1}\frac{1}{2}(b_{2j-1}-ib_{2j})
(-z_jx_{2n+1}+w_jy_{2n+1})\\
&\quad+\sum^n_{j=d+1}\frac{1}{2}(b_{2j-1}+
ib_{2j})(-\ol z_jx_{2n+1}+\ol w_jy_{2n+1})\\
&\quad+(x_{2n+1}-y_{2n+1})r(\underline{x}'', \underline{y}'')
+O(\abs{(\underline{x}'', \underline{y}'')}^3),
\end{split}
\end{equation}
where $r(\underline{x}'', \underline{y}'')\in\cC^\infty(W\times W)$, 
$r(0,0)=0$, 
$z_j=x_{2j-1}+ix_{2j}$, $j=d+1,\ldots,n$.

It is not difficult to see that the phase function 
$\Phi(\underline{x}'',\underline{y}'')$ in Theorem~\ref{t-gue180505I}
satisfies \eqref{e-gue170304III}. Hence,  there is a function 
$h\in\cC^\infty(W\times W)$ 
with $h(\underline{x}'',\underline{x}'')\neq0$ for any 
$\underline{x}''\in W$, such that 
$\varphi(\underline{x}'',\underline{y}'')-h(\underline{x}'',
\underline{y}'')\Phi(\underline{x}'',\underline{y}'')$ 
vanishes to infinite order at $\underline{x}''=\underline{y}''$ 
(see Theorem~\ref{t-gue161110g}). 
We can replace the phase $\varphi(\underline{x}'',\underline{y}'')$ 
by $\Phi(\underline{x}'',\underline{y}'')$ 
and we have 
\begin{equation}\label{e-gue170304ry}
S_{X_{G}}(\underline{x}'',\underline{y}'')\equiv
\int^{\infty }_0e^{i\Phi(\underline{x}'',\underline{y}'')t}
\beta(\underline{x}'',\underline{y}'',t)dt\ \ 
\text{on $W\times W$}.
\end{equation}
\begin{theorem}\label{t-gue170304ry}
If $y\notin Y$, 
then for any open neightborhood $D$ of $y$ with 
$\ol D\cap Y=\emptyset$, we have
\begin{equation}\label{e-gue170304ryI}
\sigma\equiv0\ \ \mbox{on $X_{G}\times D$.}
\end{equation}
Let $x_0, y_0\in Y$.  If $\pi(x_0)\neq\pi(y_0)$, 
then there are open neighborhoods $U_G$ of $\pi(x_0)$ in $X_{G}$ and 
$U_1$ of $y_0$ in $X$ such that 
\begin{equation}\label{e-gue170304ryII}
\sigma\equiv0\ \ \mbox{on $U_G\times U_1$}. 
\end{equation}
Let $p\in Y$  and let $x=(x_1,\ldots,x_{2n+1})$ be the local coordinates,
and $U$ an open neighborhood of $p$
as  at the discussion in the beginning 
of Section~\ref{s-gue180308I}. Then under the notations 
at \eqref{e-fal}, \eqref{e-gue170304I}, there exists
$\alpha(\underline{x}'',y'',t)\in S^{n-\frac{3}{4}d}_{{\rm cl\,}}
(W\times U\times\mathbb{R}_+)$ such that
\begin{equation}\label{e-gue170304ryIII}
\begin{split}
&\sigma(\underline{x}'',y)\equiv
\int^\infty_0e^{i\Phi(\underline{x}'',y'')t}\alpha(\underline{x}'',y'',t)dt\ \ 
\mbox{on $W\times U$},  \\
&\alpha_0(\underline{x}'',\underline{x}'')=2^{-n+2d-1}
\pi^{\frac{d}{2}-n-1} \frac{1}{\sqrt{V_{{\rm eff\,}}(\underline{x}'')}}
\abs{\det\,\cL_{\underline{x}''}}
\abs{\det\,R_{\underline{x}''}}^{-\frac{3}{4}},\ \  \text{ for }
\underline{x}''\in W.
\end{split}\end{equation}
\end{theorem}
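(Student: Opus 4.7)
The plan is to analyze $\sigma = S_{X_G}\circ E\circ \iota_G\circ f_G\circ \iota^*\circ S_G$ by tracking how each successive factor transforms the Fourier integral representation of $S_G$ from Theorem~\ref{t-gue180505I}, using the pseudolocal character of $E$ and $S_{X_G}$ and the composition calculus developed in Theorems~\ref{t-gue170301w} and~\ref{t-gue170301wI}. For \eqref{e-gue170304ryI}, fix any $\chi\in\cC^\infty_0(D)$: by Theorem~\ref{t-gue181021}, $S_G\chi$ is smoothing, and since $\iota^*$, multiplication by $f_G$, $\iota_G$, $E$, and $S_{X_G}$ all carry smooth functions to smooth functions, $\sigma\chi$ has smooth kernel. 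For \eqref{e-gue170304ryII}, choose $U_G\ni\pi(x_0)$ open in $X_G$ and $U_1\ni y_0$ open in $X$ small enough that $\pi^{-1}(\overline{U_G})$ and $G\cdot\overline{U_1}$ are disjoint compact subsets of $X$. By Theorem~\ref{t-gue181021a}, $S_G(u,y)=\int_G S_{\leq\lambda_0}(u,h\cdot y)d\mu(h)$; since $S_{\leq\lambda_0}$ is smoothing off the diagonal (Theorem~\ref{t-gue180505}), this integral is smooth in $(u,y)$ for $u$ near $x_0$ in $Y$ and $y\in U_1$, and consequently $B(\underline u'',y):=(\iota_G\,f_G\,\iota^*S_G)(\underline u'',y)$ is smooth on (a neighborhood of $\pi(x_0)$ in $X_G$)$\times\, U_1$. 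The pseudolocal character of $S_{X_G}$ and $E$, with cutoffs $\psi,\tilde\psi\in\cC^\infty_0(X_G)$ supported near $\pi(x_0)$, then reduces $\psi\sigma$ modulo smoothing to $\psi S_{X_G}\tilde\psi E(\tilde{\tilde\psi}B)$, which has smooth kernel.

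Part \eqref{e-gue170304ryIII} is the main technical content. In the local coordinates of Section~\ref{s-gue180308I} around $p\in Y$, Theorem~\ref{t-gue180505I} gives $S_G(x,y)\equiv\int_0^\infty e^{i\Phi(x,y)t}a(x,y,t)\,dt$ with $a\in S^{n-d/2}_{\rm cl}$. By Theorem~\ref{t-gue170126}, $\Phi$ and (modulo smoothing errors) $a$ are independent of the group coordinates $(x_1,\ldots,x_d)$ on $Y\cap U$, so applying $\iota^*\circ f_G$ and identifying via $\iota_G$ replaces the symbol by $f_G(\underline u'')\,a(\underline u'',y,t)$ while preserving the phase $\Phi(\underline u'',y)$. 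Applying next the pseudodifferential operator $E$ of order $-d/4$ with $p_E(x,\xi)=|\xi|^{-d/4}$ from the left in the $X_G$-variable, the standard symbol calculus for pseudodifferential operators composed with Fourier integral operators produces an oscillatory integral with the same phase, order $n-d/2-d/4=n-3d/4$, and leading symbol on the diagonal multiplied by $|d_{\underline u''}\Phi(\underline u'',\underline u'')|^{-d/4}=\lambda(\underline u'')^{-d/4}$ (using \eqref{e-gue180305yII} and the normalization of the metric on $X_G$ at the base point). Finally, composing from the left with $S_{X_G}$ via its representation \eqref{e-gue170304ry}, which has the same phase $\Phi$, by the $X_G$-analogue of Theorem~\ref{t-gue170301wI}, stationary-phase integration in $(\underline u'',\vartheta)$ yields an FIO on $W\times U$ with phase $\Phi(\underline x'',y)$, order $n-3d/4$, and leading symbol $\alpha_0(\underline x'',\underline x'')$ equal to the product of all the contributions accumulated along the way.

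The main obstacle is the bookkeeping required to bring $\alpha_0$ to the asserted form. Inserting $a_0$ from \eqref{e-gue180305yI}, $\beta_0$ from \eqref{e-gue170304II}, $f_G$ from \eqref{e-gue180308m}, the $\lambda^{-d/4}$ factor from $E$, and the stationary-phase constant for composition on $X_G$ (the analog of $2^{-n+1}\pi^{n-d+1}|\det\cL_x|^{-1}|\det\mR_x|$ with $n$ replaced by $n-d$ and without the $G$-orbit factor $|\det\mR|$), one obtains a product of powers of $2$, $\pi$, $V_{\rm eff}$, $|\det\cL|$, $|\det\cL_{X_G}|$, and $|\det\mR|$ that must be verified to collapse to $2^{-n+2d-1}\pi^{d/2-n-1}V_{\rm eff}^{-1/2}|\det\cL_{\underline x''}|\,|\det\mR_{\underline x''}|^{-3/4}$. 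No new ideas are required beyond the calculus of Section~\ref{s-gue180308I}; the difficulty is purely computational.
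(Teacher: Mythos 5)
Your proposal follows the same route as the paper's proof: parts \eqref{e-gue170304ryI} and \eqref{e-gue170304ryII} via Theorems~\ref{t-gue181021} and \ref{t-gue181021a} together with the off-diagonal smoothing of $S_{\leq\lambda_0}$ and $S_{X_G}$, and part \eqref{e-gue170304ryIII} by localizing with $G$-invariant cutoffs, writing the composition $S_{X_G}\circ E\circ\iota_G\circ f_G\circ\iota^*\circ S_G$ as an iterated oscillatory integral with the common phase $\Phi$, and applying the stationary-phase composition calculus of Theorem~\ref{t-gue170301wI} (the paper likewise leaves the final constant as "a straightforward calculation"). The argument is correct and matches the paper's in substance.
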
 
\begin{proof}
By Theorem~\ref{t-gue181021}, $S_{G}$ is smoothing away $Y$,
which implies \eqref{e-gue170304ryI}. Let $x_0, y_0\in Y$.  
Assume that $\pi(x_0)\neq\pi(y_0)$. Let $V_1$ be a $G$-invariant
neighborhood of $y_0$ and $x_0\notin V_1$. 
We have 
\[S_{G}(x,y)=\int_GS_{\leq\lambda_0}(x,g\circ y)d\mu(g),\]
where $\lambda_0>0$ is as in 
Theorem~\ref{t-gue181021a}. 
Since $S_{\leq\lambda_0}$ is smoothing away the diagonal near $Y$, 
for any neighborhood $U_1$ of $x_0$ in $X$ with 
$\ol U_1\cap V_1=\emptyset$, we have 
\begin{equation}\label{e-gue170305}
S_G\equiv0\ \ \mbox{on $U_1\times V_1$}. 
\end{equation}
Let $\widehat V_G:=\set{\pi(y)\in X_{G};\, y\in V_1\cap Y}$. 
By \eqref{e-gue170304} there is an open set 
$\widehat U_G$ of $\pi(x)$ in $X_{G}$ such that 
\begin{equation}\label{e-gue170305I}
S_{X_{G}}\equiv0\ \ \mbox{on $\widehat U_G\times\widehat V_G$}. 
\end{equation}
The definition \eqref{e-gue180308II} of $\sigma$ and relations
\eqref{e-gue170305} and \eqref{e-gue170305I} yield 
\eqref{e-gue170304ryII}. 

Fix $u=(u_1,\ldots,u_{2n+1})\in Y\cap U$. 
In view of \eqref{e-gue170304ryI} 
and \eqref{e-gue170304ryII}, we only need to show 
that \eqref{e-gue170304ryIII} 
holds near $u$. We may assume that 
$u=(0,\ldots,0,u_{2d+1},\ldots,u_{2n+1})=\underline{u}''$. 
Let $U_2$ be a small neighborhood of $u$. 
Let $\chi(\underline{x}'')\in\cC^\infty_0(\Omega_3)$. 
By \eqref{e-gue170227c} we can extend $\chi(\underline{x}'')$ to
$G\cdot\Omega_3$ 
by $\chi(g\circ \underline{x}''):=\chi(\underline{x}'')$, for every $g\in G$. 
Assume that $\chi=1$ on some neighborhood of $\overline{U}_2$. 
Let $\chi_1\in\cC^\infty_0(X_{G})$ with $\chi_1=1$ 
on some neighborhood of $\pi(U_2\cap Y)\subset X_G$ and 
 $\supp\chi_1\subset\pi(\set{x\in Y;\, \chi(x)=1})$. 
We have by \eqref{e-gue180308II},
\begin{equation}\label{e-gue170227bq}
\begin{split}
\chi_1\sigma&=\chi_1S_{X_{G}}\circ E\circ\iota_G
\circ f_G\circ\iota^*\circ S_{G}\\
&=\chi_1S_{X_{G}}\circ E\circ\iota_G\circ 
f_G\circ \iota^*\circ \chi S_{G}\\
&\quad+\chi_1S_{X_{G}}\circ E\circ\iota_G\circ f_G
\circ \iota^*\circ(1-\chi)S_{G}.
\end{split}
\end{equation}
If $u\in Y$ but $u\notin\set{x\in X;\, \chi(x)=1}$. 
 Then $\pi(u)\notin\supp\chi_1$. 
From this observation and \eqref{e-gue170304}, we get 
\begin{equation}\label{e-gue170227bIIq}
\chi_1S_{X_{G}}\circ E\circ\iota_G\circ f_G\circ
\iota^*\circ(1-\chi)S_{G}\equiv0\ \ \mbox{on $X_{G}\times X$}. 
\end{equation}
From \eqref{e-gue170227bq} and \eqref{e-gue170227bIIq}, we get 
\begin{equation}\label{e-gue170227bIIIq}
\chi_1\sigma\equiv\chi_1S_{X_{G}}\circ E\circ
\iota_G\circ f_G\circ \iota^*\circ \chi S_{G}\ \ 
\mbox{on $X_{G}\times X$}. 
\end{equation}
From Theorem~\ref{t-gue180505I}, \eqref{e-gue170304ry} 
and \eqref{e-gue170227bIIIq}, we can check that on $W\times U$, 
\begin{equation}\label{e-gue170227yq}
\begin{split}
&\chi_1(\underline{x}'')\sigma(\underline{x}'',y)\\
&\equiv\int^\infty_0\int^\infty_0\int e^{i\Phi(\underline{x}'',\Td v'')t}
\beta(\underline{x}'', \Td v'',t)E\circ\Bigr(\chi(\Td v'')
f_G(\Td v'')e^{i\Phi(\Td v'',y)s}a(\Td v'', y,s)\Bigr)dv(\Td v'')dsdt. 
\end{split}
\end{equation}
From  Theorem~\ref{t-gue170301wI}, \eqref{e-gue170227yq}
and a straightforward calculation we see
that \eqref{e-gue170304ryIII} 
holds near $u$. The theorem follows. 
\end{proof} 

Let $\sigma^*: \cC^\infty(X_{G})\To \cD'(X)$ be 
the formal adjoint of $\sigma$. 
From Theorem~\ref{t-gue170304ry} we deduce that 
\begin{equation}\label{e:ma4.73}
\sigma^*: \cC^\infty(X_{G})\To H^0_{b}(X)^G
\cap\cC^\infty(X)^G.
\end{equation}
Let  
\begin{equation}\label{e-gue170305a}
\begin{split}
&A_{1}:=\sigma^*\sigma: \cC^\infty(X)\To H^0_{b}(X)^G,\\
&A_{2}:=\sigma\sigma^*: \cC^\infty(X_{G})
\To H^{0}_{b}(X_{G}).
\end{split}
\end{equation}
Let $A_{1}(x,y)$ and $A_{2}(x,y)$ be the distribution kernels of 
$A_{1}$ and $A_{2}$, respectively. In view of 
Theorems~\ref{t-gue170301w},~\ref{t-gue170301wI}
we can repeat the proof of 
Theorem~\ref{t-gue170304ry} with minor changes 
and deduce the following two theorems.
\begin{theorem}\label{t-gue170305a}
With the notations used above, if $y\notin Y$, then for any neighborhood 
$D$ of $y$ with $\ol D\cap Y=\emptyset$, we have
\begin{equation}\label{e-gue170304ryIp}
A_{1}\equiv0\ \ \mbox{on $X\times D$.}
\end{equation}
Let $x, y\in Y$.  If $\pi(x)\neq\pi(y)$, then there are neighborhoods
$D_1$ of $x$ in $X$ and $D_2$ of $y$ in $X$ such that 
\begin{equation}\label{e-gue170305b}
A_{1}\equiv0\ \ \mbox{on $D_1\times D_2$}. 
\end{equation}

Let $p\in Y$  and let $x=(x_1,\ldots,x_{2n+1})$ be the 
local coordinates as in the discussion in the beginning of 
Section~\ref{s-gue180308I}. Then there exists 
an open neighborhood $U$ of $p$ and a symbol 
$a(x'',y'',t)\in S^{n-\frac{d}{2}}_{{\rm cl\,}}
(U\times U\times\mathbb{R}_+)$
such that the following holds  under the notation \eqref{e-fal}, 
\begin{equation}\label{e-gue170305bI}
A_{1}(x,y)\equiv\int^\infty_0e^{i\Phi(x'',y'')t}a(x'',y'',t)dt\ \ 
\mbox{on $U\times U$}
\end{equation}
 with
\begin{equation}\label{e-gue170305bII}
a_0(\underline{x}'',\underline{x}'')=2^{-3n+4d-1}\pi^{-n-1}
\frac{1}{V_{{\rm eff\,}}(\underline{x}'')}\abs{\det\,\cL_{\underline{x}''}}
\abs{\det\,R_{\underline{x}''}}^{-\frac{1}{2}},\ \ \mbox{ for 
$\underline{x}''\in U\cap Y$}.
\end{equation}
\end{theorem}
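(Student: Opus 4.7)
The strategy is to mimic the proof of Theorem~\ref{t-gue170304ry}, now treating $A_{1} = \sigma^{\,*} \sigma$ as a composition of Fourier integral operators whose kernels were described there. The first two claims follow quickly from Theorem~\ref{t-gue170304ry} applied to the factor $\sigma$, while the local Fourier-integral representation on $U \times U$ requires an application of Theorem~\ref{t-gue170301wI} to compose $\sigma^{\,*}$ with $\sigma$ over $X_{G}$.

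For \eqref{e-gue170304ryIp}, when $y \in D$ with $\overline{D} \cap Y = \emptyset$ Theorem~\ref{t-gue170304ry} gives $\sigma \equiv 0$ on $X_{G} \times D$, so
\[A_{1}(x,y) = \int_{X_{G}} \sigma^{\,*}(x, \underline{u}'') \sigma(\underline{u}'', y)\, dv_{X_{G}}(\underline{u}'')\]
is smooth in $y$ on $D$; a symmetric argument gives smoothness in $x$. For \eqref{e-gue170305b}, assume $x, y \in Y$ with $\pi(x) \neq \pi(y)$. Pick disjoint neighborhoods $U_{G,1}$, $U_{G,2}$ of $\pi(x)$, $\pi(y)$ in $X_{G}$ and a cutoff $\chi \in \cC^{\infty}_0(X_{G})$ equal to $1$ near $\pi(x)$ with $\supp \chi \cap U_{G,2} = \emptyset$. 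Splitting $A_{1} = \sigma^{\,*} \chi \sigma + \sigma^{\,*} (1-\chi) \sigma$ and applying Theorem~\ref{t-gue170304ry} to the second factor of the first term and to the first factor of the second term yields smoothing kernels on suitable neighborhoods $D_1 \times D_2$ of $(x,y)$.

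For the local representation near $p \in Y$, we use the coordinates from Section~\ref{s-gue180308I}. By \eqref{e-gue170304ryIp} and \eqref{e-gue170305b} the only contribution to $A_{1}(x,y)$ modulo $\cC^{\infty}$ near $(p,p)$ comes from integrating $\underline{u}''$ over a small neighborhood $W$ of $\pi(p)$, and there both $\sigma^{\,*}(x, \underline{u}'')$ and $\sigma(\underline{u}'', y)$ take the Fourier integral form \eqref{e-gue170304ryIII} (with the first factor having phase $-\overline{\Phi(\underline{u}'', x'')}$ and leading symbol $\overline{\alpha_0}$). Applying Theorem~\ref{t-gue170301wI} to this composition produces a Fourier integral operator on $U \times U$ of order $2(n - \tfrac{3d}{4}) - (n-d) = n - \tfrac{d}{2}$ whose composed critical-value phase is equivalent, by the mechanism of Theorem~\ref{t-gue170226cw}, to $\Phi(x'', y'')$; this yields \eqref{e-gue170305bI}. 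The leading symbol at the diagonal point $(\underline{x}'', \underline{x}'')$ is then read off from \eqref{e-gue170301ua} with $\alpha_0 = \beta_0$ given by \eqref{e-gue170304ryIII}, and a direct arithmetic check reproduces \eqref{e-gue170305bII}.

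The main obstacle will be the phase-equivalence step: one must verify that the critical-value phase produced by the complex stationary phase formula of Melin--Sj\"ostrand applied to $\sigma^{\,*} \circ \sigma$ satisfies the characterizing conditions \eqref{e-gue180305yII}, \eqref{e-gue180305yIII}, \eqref{e-gue170126} and \eqref{e-gue170126I}, together with $\ddbar_{b,x}$-vanishing to infinite order on $\diag(Y \times Y)$, so that Theorem~\ref{t-gue170226cw} lets us replace the composed phase by $\Phi$ without altering the leading symbol. This reduces to a Hessian/linearization computation at points of $Y$, analogous to the one behind \eqref{e-gue170227yII}, but with $S_G \circ S_G$ replaced by $\sigma^{\,*} \circ \sigma$; the elliptic dressing by $E$ inside $\sigma$ contributes only through the scalar factor $|\xi|^{-d/4}$ in its principal symbol and does not affect the phase identification, merely shifting the symbol order from $n$ down to $n - \tfrac{d}{2}$.
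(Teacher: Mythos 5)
Your proposal is correct and follows exactly the route the paper takes: the paper's own proof of Theorem~\ref{t-gue170305a} is precisely "repeat the proof of Theorem~\ref{t-gue170304ry} with minor changes, in view of Theorems~\ref{t-gue170301w} and~\ref{t-gue170301wI}", which is what you carry out — the localization claims via the cutoff splitting mirroring \eqref{e-gue170227bq}--\eqref{e-gue170227bIIIq}, and the local representation by composing the two copies of \eqref{e-gue170304ryIII} over $X_G$ via Theorem~\ref{t-gue170301wI}. Your order count $2(n-\tfrac{3d}{4})-(n-d)=n-\tfrac{d}{2}$ and the evaluation of \eqref{e-gue170301ua} with $\alpha_0=\beta_0$ from \eqref{e-gue170304ryIII} reproduce \eqref{e-gue170305bII} exactly.
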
 
\begin{theorem}\label{t-gue170305aI}
Let $x, y\in Y$. If $\pi(x)\neq\pi(y)$, then there are neighborhoods $D_G$ 
of  $\pi(x)$ and $V_G$ of $\pi(y)$ in $X_{G}$ such that 
\begin{equation}\label{e-gue170305bp}
A_{2}\equiv0\ \ \mbox{on $D_G\times V_G$}. 
\end{equation}
Let $p\in Y$  and  let $x=(x_1,\ldots,x_{2n+1})$ be the 
local coordinates as in the discussion  at the beginning of 
Section~\ref{s-gue180308I}. Then there exists 
$\widehat a(\underline{x}'',\underline{y}'',t)\in S^{n-d}_{{\rm cl\,}}
(W\times W\times\mathbb{R}_+)$ such that by using the 
notations \eqref{e-fal}, \eqref{e-gue170304I}, we have
\begin{equation}\label{e-gue170305bIp}
\begin{split}
&A_{2}(\underline{x}'',\underline{y}'')
\equiv\int^\infty_0e^{i\Phi(\underline{x}'',\underline{y}'')t}
\widehat a(\underline{x}'',\underline{y}'',t)dt\ \
\mbox{on $W\times W$},\\
\end{split}
\end{equation}
with
\begin{equation}\label{e-gue170305bIIp}
\widehat a_0(\underline{x}'',\underline{x}'')=
2^{-3n+\frac{5}{2}d-1}\pi^{-n+\frac{d}{2}-1}
\abs{\det\,\cL_{X_{G},\underline{x}''}},\ \   \mbox{for}\ \  
\underline{x}''\in W.
\end{equation}
\end{theorem}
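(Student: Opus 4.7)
\medskip

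The plan is to follow the same strategy as in Theorem~\ref{t-gue170305a}, but composing $\sigma$ with $\sigma^{*}$ in the opposite order, so that the result lives on $X_{G}\times X_{G}$ rather than $X\times X$. The kernel is
\[
A_{2}(\underline{x}'',\underline{y}'')=\int_{X}\sigma(\underline{x}'',y)\,\overline{\sigma(\underline{y}'',y)}\,dv(y),
\]
and the symmetry property $\Phi(x,y)=-\ol{\Phi(y,x)}$ from Theorem~\ref{t-gue180505} ensures that the phase of $\overline{\sigma(\underline{y}'',y)}$ is $\Phi(y'',\underline{y}'')$. The output should be a Fourier integral operator on $X_{G}\times X_{G}$ with phase $\Phi(\underline{x}'',\underline{y}'')$, exactly as required.

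For the off-diagonal statement \eqref{e-gue170305bp}, I would argue as in the proof of \eqref{e-gue170305b}: if $\pi(x)\neq\pi(y)$ with $x,y\in Y$, choose small neighborhoods $D_{G}$ of $\pi(x)$ and $V_{G}$ of $\pi(y)$ in $X_{G}$. Using the two off-diagonal properties of $\sigma$ in Theorem~\ref{t-gue170304ry} (equations \eqref{e-gue170304ryI} and \eqref{e-gue170304ryII}), together with the fact that $S_{X_{G}}$ is smoothing away from the diagonal on $X_{G}\times X_{G}$ under the standing hypothesis that $\ddbar_{b,X_{G}}$ has closed range, the integrand in $A_{2}(\underline{x}'',\underline{y}'')$ is smoothing for $(\underline{x}'',\underline{y}'')\in D_{G}\times V_{G}$.

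Near a point $p\in Y$, substitute the local representation \eqref{e-gue170304ryIII} of $\sigma$ (and its conjugate analog for $\sigma^{*}$) into the expression for $A_{2}$, then make the change of variables $s=\vartheta t$:
\[
A_{2}(\underline{x}'',\underline{y}'')\equiv
\int_{U}\!\int_{0}^{\infty}\!\int_{0}^{\infty}\!e^{it\bigl(\Phi(\underline{x}'',y'')+\vartheta\Phi(y'',\underline{y}'')\bigr)}\alpha(\underline{x}'',y'',t)\ol{\alpha(\underline{y}'',y'',t\vartheta)}\,t\,d\vartheta\,dt\,dv(y).
\]
Because $\sigma^{*}$ factors through $S_{G}$, its kernel is $G$-invariant in $y$, so the $y'=(y_{1},\ldots,y_{d})$ integration reduces to a factor involving $V_{{\rm eff\,}}(y'')$. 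The remaining integration in $(y'',\vartheta)$ is precisely the composition handled by Theorem~\ref{t-gue170301wI}: the critical point equations $\Phi(y'',\underline{y}'')=0$ and the $y''$-derivative of the total phase vanishing pick out $y''=\underline{y}''$, $\vartheta=1$, producing the phase $\Phi(\underline{x}'',\underline{y}'')$ and a symbol of order $n-d$ on $W\times W\times\R_{+}$, which gives the form \eqref{e-gue170305bIp}.

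Finally, for the leading coefficient \eqref{e-gue170305bIIp}, substitute
$\alpha_{0}(\underline{x}'',\underline{x}'')=2^{-n+2d-1}\pi^{\frac{d}{2}-n-1}V_{{\rm eff\,}}(\underline{x}'')^{-1/2}|\det\cL_{\underline{x}''}||\det\mR_{\underline{x}''}|^{-3/4}$ into the stationary phase constant from \eqref{e-gue170301ua}, which contributes a universal factor $2^{-n+1}\pi^{n-d+1}|\det\cL_{\underline{x}''}|^{-1}|\det\mR_{\underline{x}''}|$, together with the factor $V_{{\rm eff\,}}(\underline{x}'')$ from the $y'$-integration. Using the eigenvalue structure in the frame \eqref{e-gue161202Im}--\eqref{e-gue161219m}, which gives the factorization relating $|\det\cL_{x}|$ along $\underline{\kg}$-directions to $|\det\mR_{x}|$ and leaves the transversal factor $|\det\cL_{X_{G},\underline{x}''}|=\mu_{d+1}\cdots\mu_{n}$, a direct simplification yields $\widehat{a}_{0}(\underline{x}'',\underline{x}'')=2^{-3n+\frac{5}{2}d-1}\pi^{-n+\frac{d}{2}-1}|\det\cL_{X_{G},\underline{x}''}|$. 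The main obstacle is the careful bookkeeping of the powers of $2$, $\pi$, and of the determinants $\det\cL$, $\det\mR$, $\det\cL_{X_{G}}$, together with the exact cancellation of $V_{{\rm eff\,}}$—this is why the power of $\mR$ disappears entirely from $\widehat{a}_{0}$ while it is present in $\alpha_{0}$.
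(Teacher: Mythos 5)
Your overall plan is the right one and matches the paper's (which simply says to repeat the proof of Theorem~\ref{t-gue170304ry} using Theorems~\ref{t-gue170301w} and~\ref{t-gue170301wI}): the off-diagonal statement \eqref{e-gue170305bp} follows exactly as you argue from \eqref{e-gue170304ryI}, \eqref{e-gue170304ryII} and the off-diagonal smoothing of $S_{X_G}$, and near the diagonal one substitutes \eqref{e-gue170304ryIII} into $A_2(\underline{x}'',\underline{y}'')=\int_X\sigma(\underline{x}'',u)\overline{\sigma(\underline{y}'',u)}\,dv(u)$, reduces the orbit directions by $G$-invariance, and applies stationary phase. The gap is in the key composition step: after the $G$-orbit reduction the remaining integration runs over \emph{all} of $u''=(u_{d+1},\dots,u_{2n+1})\in\Omega_2\times\Omega_3$, i.e.\ over $2n+1-d$ variables including $\widehat{u}''=(u_{d+1},\dots,u_{2d})$, on which the phase $\Phi(\underline{x}'',u'')$ genuinely depends (via the terms $2i\sum_{j\le d}\abs{\mu_j}u_{d+j}^2$ in \eqref{e-gue170126I}). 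This is the situation of Theorem~\ref{t-gue170301w} (Hessian \eqref{e:ma4.18}, second determinant in \eqref{e-gue170226a}), \emph{not} of Theorem~\ref{t-gue170301wI}, which integrates only over $\underline{u}''\in\Omega_3$ and is the one relevant for $A_1=\sigma^*\sigma$, where the composition integral is over $X_G$.

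This is not a harmless mis-citation, because it changes both the order count and the universal constant. The order drop in Theorem~\ref{t-gue170301wI} is $n-d$, which applied to two symbols of order $n-\tfrac{3d}{4}$ gives $n-\tfrac{d}{2}$, contradicting the asserted membership in $S^{n-d}_{{\rm cl\,}}$; the drop $n-\tfrac{d}{2}$ of Theorem~\ref{t-gue170301w} gives $n-d$ as required. Likewise, multiplying the factor you quote from \eqref{e-gue170301ua} by $\abs{\alpha_0(\underline{x}'',\underline{x}'')}^2\,V_{{\rm eff\,}}(\underline{x}'')$ yields $2^{-3n+4d-1}\pi^{-n-1}\abs{\det\cL_{\underline{x}''}}\abs{\det\mR_{\underline{x}''}}^{-1/2}$ (this is just $V_{{\rm eff\,}}$ times the $A_1$ coefficient \eqref{e-gue170305bII}), in which a power of $\det\mR$ survives and the power of $\pi$ is wrong, so the claimed ``direct simplification'' to \eqref{e-gue170305bIIp} cannot go through. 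With the correct factor $2^{-n-\frac{d}{2}+1}\pi^{n-\frac{d}{2}+1}\abs{\det\cL_{\underline{x}''}}^{-1}\abs{\det\mR_{\underline{x}''}}^{1/2}$ from \eqref{e-gue170301u} one obtains $2^{-3n+\frac{7d}{2}-1}\pi^{-n+\frac{d}{2}-1}\abs{\det\cL_{\underline{x}''}}\abs{\det\mR_{\underline{x}''}}^{-1}$, and the identity $\abs{\det\mR_x}=2^{d}\abs{\mu_1\cdots\mu_d}$ (which follows from \eqref{e-gue161202Im} and \eqref{e:ma4.2}) converts this into exactly \eqref{e-gue170305bIIp}. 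So rerun your stationary phase step through Theorem~\ref{t-gue170301w}; the rest of the argument stands.
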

Set 
\begin{equation}\label{e-gue200228yyd}
Q:=C_0\,\sigma^*\sigma+S_{G}:=
C_0A_{1}+S_{G}:\cC^\infty(X)\To H^0_{b}(X)^G, \quad
\text{with $C_0=2^{3(n-d)}\pi^{d/2}$.}
\end{equation}
Since $A_{1}=A_{1}S_{G}=S_GA_{1}$,
it is clear that 
\begin{equation}\label{e-gue180515p}
C_0A_{1}=S_{G}-Q=S_{G}-QS_{G}=(I-Q)S_G=S_G(I-Q)
\end{equation}
and 
\begin{equation}\label{e-gue180515pI}
Q^*=Q,
\end{equation}
where $Q^*$ is the formal adjoint of $Q$. 
From Theorems~\ref{t-gue180505I}, \ref{t-gue180514mpq} 
and \ref{t-gue170305a}, we get: 
\begin{theorem}\label{t-gue180515p}
The operator $Q$ belongs to the class $G_{n-\frac{d}{2},1}(X)$ 
and hence $I-Q$ extends by density to a bounded self-adjoint operator
$I-Q: L^2(X)\To L^2(X)$.
\end{theorem}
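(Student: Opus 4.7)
My plan is to verify directly that $Q$ satisfies the three defining conditions of the class $G_{n-d/2,1}(X)$ in Definition \ref{d-gue180514}, after which the bounded self-adjoint extension of $I-Q$ to $L^2(X)$ will follow from Theorem \ref{t-gue180514mp} combined with the self-adjointness relation $Q^*=Q$ recorded in \eqref{e-gue180515pI}.

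I would start with the two easier conditions. The off-diagonal smoothing property \eqref{e-gue180514} is immediate: both $S_G$ (by Theorem \ref{t-gue181021}) and $A_1=\sigma^*\sigma$ (by the first assertions of Theorem \ref{t-gue170305a}) are smoothing on $D\times X$ and $X\times D$ whenever $\ol D\cap Y=\emptyset$, so the same holds for their linear combination $Q$. Next, near any $p\in Y$, using the normal coordinates of Theorem \ref{t-gue170126}, Theorem \ref{t-gue180505I} represents $S_G$ and Theorem \ref{t-gue170305a} represents $A_1$ as complex Fourier integral operators with the \emph{same} phase $\Phi(x'',y'')$ and classical symbols in $S^{n-d/2}_{\mathrm{cl}}$. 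Linearity then gives the required representation \eqref{e-gue180514I}--\eqref{e-gue180514II} for $Q$.

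The main step will be to check the vanishing of the leading symbol of $Q$ on $\diag(Y\times Y)$ demanded by \eqref{e0gue180514III} with $\ell=1$, namely
\begin{equation*}
a_0^{S_G}(x,x) \;=\; C_0\, a_0^{A_1}(x,x) \qquad \text{for every } x\in Y.
\end{equation*}
This is where the choice of $C_0=2^{3(n-d)}\pi^{d/2}$ and of the operator $E$ with principal symbol $|\xi|^{-d/4}$ pay off. Inserting the explicit formulas \eqref{e-gue180305yI} and \eqref{e-gue170305bII}, the geometric factors $V_{\mathrm{eff}}(x)^{-1}|\det\cL_x|\,|\det\mR_x|^{-1/2}$ match exactly, and the remaining numerical constants give
\begin{equation*}
\frac{a_0^{S_G}(x,x)}{a_0^{A_1}(x,x)}
= \frac{2^{d-1}\pi^{-n-1+d/2}}{2^{-3n+4d-1}\pi^{-n-1}}
= 2^{3(n-d)}\pi^{d/2} = C_0.
\end{equation*}
This identity is the one truly substantive point of the argument; everything else is bookkeeping in the calculus of $G$-Szeg\H{o}-type operators developed in Section \ref{s-gue180308I}.

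Granting $Q\in G_{n-d/2,1}(X)$, the $L^2$-boundedness of $Q$, and hence of $I-Q$, is exactly the content of estimate \eqref{e-gue180514mpI} in Theorem \ref{t-gue180514mp}. Finally, self-adjointness of the extended $I-Q$ is automatic from $Q^*=Q$: this holds because $S_G$ is an orthogonal projector and $A_1=\sigma^*\sigma$ is manifestly self-adjoint, which is what \eqref{e-gue180515pI} records. The only place where the proof could run into difficulty is the leading-symbol cancellation, where one must track the powers of $2$, $\pi$, and the several Jacobian factors carefully; the constants in \eqref{e-gue180305yI} and \eqref{e-gue170305bII} are tailored precisely so that the match comes out cleanly with the prefactor $C_0$ dictated by the normalisation $p_E(x,\xi)=|\xi|^{-d/4}$.
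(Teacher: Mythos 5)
Your proof is correct and is exactly the argument the paper intends (the paper states the theorem as an immediate consequence of Theorems \ref{t-gue180505I} and \ref{t-gue170305a} together with the calculus of Section \ref{s-gue180308I}); the leading-symbol identity $a_0^{S_G}(x,x)=C_0\,a_0^{A_1}(x,x)$ on $Y$, which you verify from \eqref{e-gue180305yI} and \eqref{e-gue170305bII}, is indeed the only substantive point. Note only that for this cancellation to yield membership in $G_{n-\frac{d}{2},1}(X)$ one must read $Q=S_G-C_0A_1$, consistent with \eqref{e-gue180515p}; the sign in \eqref{e-gue200228yyd} is a typo, and your computation implicitly handles the correct convention.
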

By Theorem~\ref{t-gue180515p} there exists $C>0$
such that for every 
$u\in H^0_b(X)^G\cap\cC^\infty(X)$, we have 
\begin{equation}\label{e-gue180515z}
(\,\sigma u,\sigma u\,)_{X_{G}}=
(\,\sigma^*\sigma u,u\,)=\frac{1}{C_0}(\,(I-Q)u,u)
\leq C\norm{u}^2.
\end{equation}
From \eqref{e-gue180515z}, we deduce: 
\begin{corollary}\label{c-gue180515p}
There exists $C>0$ such that 
\begin{equation}\label{e-gue180515zI}
(\,\sigma u,\sigma u\,)_{X_{G}}\leq C\norm{u}^2,\ \  
 \mbox{for any}\ \ 
u\in H^0_b(X)^G\cap\cC^\infty(X).
\end{equation}
Hence we can extend $\sigma$ by density to a bounded operator
\[\sigma: H^0_b(X)^G\To H^0_b(X_{G}).\] 
\end{corollary}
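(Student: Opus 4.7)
The plan is short because the substantive inequality is essentially already in hand from the display \eqref{e-gue180515z}: the corollary is really a packaging step that combines this estimate with a density argument. First I would unpack \eqref{e-gue180515z} to make the bound explicit. From the definition $Q=C_0\sigma^{\,*}\sigma+S_G$ in \eqref{e-gue200228yyd} we get $C_0\sigma^{\,*}\sigma=S_G-Q$, and for $u\in H^0_b(X)^G\cap \cC^\infty(X)$ the identity $S_Gu=u$ gives
\[
(\,\sigma u,\sigma u\,)_{X_G}=(\,\sigma^{\,*}\sigma u,u\,)=\tfrac{1}{C_0}\bigl((I-Q)u,u\bigr).
\]
By Theorem~\ref{t-gue180515p} the operator $I-Q$ extends to a bounded self-adjoint operator on $L^2(X)$; letting $C=\|I-Q\|/C_0$ yields \eqref{e-gue180515zI}.

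Next I would extend $\sigma$ by density. By Theorem~\ref{t-gue200211yyd} the subspace $H^0_b(X)^G\cap \cC^\infty(X)$ is dense in $H^0_b(X)^G$. The estimate \eqref{e-gue180515zI} shows $\sigma$ is uniformly continuous on this dense subspace, so it admits a unique bounded linear extension to $H^0_b(X)^G$ with values in $L^2(X_G)$.

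It remains to verify that the extension actually lands in $H^0_b(X_G)$. From the definition \eqref{e-gue180308II}, $\sigma=S_{X_G}\circ E\circ\iota_G\circ f_G\circ\iota^*\circ S_G$ already has $S_{X_G}$ as its last factor on the left, so on the dense subspace $\sigma$ maps into $H^0_b(X_G)\cap \cC^\infty(X_G)$. Since $H^0_b(X_G)=\Ker \ddbar_{b,X_G}$ is a closed subspace of $L^2(X_G)$ (as $\ddbar_{b,X_G}$ is closed), its closedness ensures that the $L^2$-limits of these images still belong to $H^0_b(X_G)$. Hence the extension is a bounded operator $\sigma:H^0_b(X)^G\To H^0_b(X_G)$, as claimed.

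There is essentially no obstacle here: all the hard work has been absorbed into Theorem~\ref{t-gue180515p} (which identifies $Q$ as a complex Fourier integral operator of $G$-Szeg\H{o} type with bounded $L^2$ extension) and Theorem~\ref{t-gue200211yyd} (density of smooth CR functions). The only minor thing to be careful about is confirming that $H^0_b(X)^G\cap \cC^\infty(X)=H^0_b(X)^G\cap \cC^\infty(X)^G$ as the natural source of dense smooth vectors, which follows because $G$-invariance passes to $L^2$-limits and $H^0_b(X)^G$ is closed in $L^2(X)$.
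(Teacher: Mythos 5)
Your proof is correct and follows essentially the same route as the paper: the paper deduces the corollary directly from the identity $(\sigma u,\sigma u)_{X_G}=(\sigma^{*}\sigma u,u)=\tfrac{1}{C_0}((I-Q)u,u)$ together with the $L^2$-boundedness of $I-Q$ from Theorem~\ref{t-gue180515p}, and then extends by density exactly as you do. Your additional remarks (that the extension lands in the closed subspace $H^0_b(X_G)$, and that $S_Gu=u$ on the dense subspace) are just the standard details the paper leaves implicit.
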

From now on, we consider $\sigma$ as a bounded operator 
$\sigma: H^0_b(X)^G\To H^0_b(X_{G})$.
\begin{theorem}\label{t-gue180515h}
${\Ker\,}\sigma$ is a finite 
dimensional subspace of $H^0_b(X)\cap\cC^\infty(X)$. 
\end{theorem}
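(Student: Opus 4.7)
The strategy is to reduce the statement to Theorem~\ref{t-gue180514mpq} applied to the operator $Q=C_0\sigma^{*}\sigma+S_G$ introduced in \eqref{e-gue200228yyd}. The key point is that the kernel of $\sigma$ sits inside the kernel of $I-Q$, for which we already have the required regularity and finite dimensionality.

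More precisely, I plan to argue as follows. Take $u\in\Ker\sigma\subset H^0_b(X)^G$. Then by the definition of $\sigma^{*}$ as formal adjoint on smooth data (and the fact that $\sigma$ extends boundedly by Corollary~\ref{c-gue180515p}) we have $\sigma^{*}\sigma u=0$ in the sense of distributions, hence $A_1u=0$. Since $u\in H^0_b(X)^G$ we have $S_Gu=u$, and therefore
\begin{equation*}
Qu=C_0A_1u+S_Gu=u,
\end{equation*}
i.e.\ $u\in\Ker(I-Q)$. Thus $\Ker\sigma\subset\Ker(I-Q)$ viewed as subspaces of $L^2(X)$.

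Now I would invoke Theorem~\ref{t-gue180515p}, which asserts that $Q\in G_{n-\frac{d}{2},1}(X)$, together with Theorem~\ref{t-gue180514mpq} applied to $H=Q$. The latter gives that $\Ker(I-Q)$ is a finite-dimensional subspace of $\cC^\infty(X)$. Combining with the inclusion $\Ker\sigma\subset\Ker(I-Q)\cap H^0_b(X)^G$, we conclude that $\Ker\sigma$ is finite-dimensional and contained in $H^0_b(X)^G\cap\cC^\infty(X)\subset H^0_b(X)\cap\cC^\infty(X)$, as required.

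The only point needing a brief verification is the identity $\sigma^{*}\sigma u=0$ for $u\in\Ker\sigma$ interpreted at the $L^2$ level, since $\sigma^{*}$ has only been introduced as a formal adjoint on smooth functions. I expect this to be routine: one uses density of $H^0_b(X)^G\cap\cC^\infty(X)$ in $H^0_b(X)^G$ (guaranteed by Theorem~\ref{t-gue200211yyd}) and the boundedness of $\sigma$ to pass to the limit in the pairing $(\sigma^{*}\sigma u,v)=(\sigma u,\sigma v)_{X_G}=0$ for all $v$ in a dense smooth subspace, which together with the Schwartz kernel description of $A_1$ in Theorem~\ref{t-gue170305a} shows $A_1u=0$ as a distribution. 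No further analysis is required; the heart of the argument is already encapsulated in Theorems~\ref{t-gue180514mpq} and \ref{t-gue180515p}.
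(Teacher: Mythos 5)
Your proposal is correct and follows essentially the same route as the paper: the paper's proof consists precisely of noting $\Ker\sigma\subset H^0_b(X)^G\cap\Ker(I-Q)$ and invoking Theorem~\ref{t-gue180514mpq} (via Theorem~\ref{t-gue180515p}) to get that $\Ker(I-Q)$ is a finite-dimensional subspace of $\cC^\infty(X)$. Your extra verification that $A_1u=0$ holds at the $L^2$ level by density is a reasonable elaboration of the inclusion the paper states without comment.
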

\begin{proof}
From Theorem~\ref{t-gue180514mpq} we see that ${\Ker\,}(I-Q)$ 
is a finite dimensional subspace of the space 
$\cC^\infty(X)$. Note that
${\Ker\,}\sigma\subset H^0_b(X)^G\cap{\Ker\,}(I-Q)$, 
so the theorem follows. 
\end{proof}
We repeat the proof of Corollary~\ref{c-gue180515p} with 
minor changes and deduce that there exists $\widehat C>0$ such that 
\begin{equation}\label{e-gue180515h}
(\,\sigma^*v,\sigma^*v\,)
\leq\widehat C\norm{v}^2_{X_{G}},\ \  
\text{ for any }   \, v\in H^0_b(X_{G})\cap\cC^\infty(X_{G}).
\end{equation}
Therefore we can extend $\sigma^*$ by density to a bounded operator
\[\sigma^*: H^0_b(X_{G})\To H^0_b(X)^G.\]
We repeat the proof of Theorem~\ref{t-gue180515h} 
with minor changes and deduce: 
\begin{theorem}\label{t-gue180515hI}
$\Ker\sigma^*$ is a finite dimensional subspace of 
$H^0_b(X_{G})\cap\cC^\infty(X_{G})$. 
\end{theorem}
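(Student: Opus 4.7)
The plan is to mirror the proof of Theorem~\ref{t-gue180515h}, exchanging the roles of $X$ and $X_G$, of $\sigma$ and $\sigma^*$, and of $A_1=\sigma^*\sigma$ and $A_2=\sigma\sigma^*$. Comparing \eqref{e-gue170305bIIp} and \eqref{e-gue170304II}, I would first fix the unique positive constant $\tilde C_0$ with $\tilde C_0\,\widehat a_0(\underline x'',\underline x'')=\beta_0(\underline x'',\underline x'')$ for $\underline x''\in W$; a direct calculation yields $\tilde C_0=2^{3n-\frac{5}{2}d}\pi^{d/2}$. I would then set
\[ \tilde Q := S_{X_G}-\tilde C_0\,\sigma\sigma^* = S_{X_G}-\tilde C_0\,A_2 \colon \cC^\infty(X_G)\To H^0_b(X_G). \]
Using $A_2 = S_{X_G}A_2 = A_2 S_{X_G}$, this gives $\tilde C_0 A_2 = S_{X_G}-\tilde Q = (I-\tilde Q)S_{X_G} = S_{X_G}(I-\tilde Q)$, so that on $H^0_b(X_G)$ the operator $I-\tilde Q$ acts as $\tilde C_0\,\sigma\sigma^*$. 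Since $\|\sigma^* v\|^2 = (\sigma\sigma^* v,v)_{X_G}$, it follows that $\Ker\sigma^* = H^0_b(X_G)\cap\Ker(I-\tilde Q)$.

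Next I would establish the $X_G$-analogue of the calculus of Section~\ref{s-gue180308I}. Because $X_G$ is strictly pseudoconvex and $\ddbar_{b,X_G}$ has closed range in $L^2$, Boutet de Monvel--Sj\"ostrand theory shows that $S_{X_G}$ is a classical complex Fourier integral operator with phase $\Phi$, order $n-d$, and leading symbol $\beta_0$ from \eqref{e-gue170304II}; by Theorem~\ref{t-gue170305aI}, the same description holds for $A_2$. The choice of $\tilde C_0$ ensures that the leading symbols of $\tilde C_0 A_2$ and $S_{X_G}$ match on $\diag(X_G\times X_G)$, so $\tilde Q$ lies in the $X_G$-analogue of the class $G_{n-d,1}$ from Definition~\ref{d-gue180514} (the set $Y$ of that definition being replaced here by all of $X_G$, since on the quotient there is no residual group action to be invariant under). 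The composition rules of Theorems~\ref{t-gue170301w}--\ref{t-gue170301wI} then transcribe without change, and the iteration and Rellich arguments of Theorems~\ref{t-gue180514I}, \ref{t-gue180514mp} and \ref{t-gue180514mpq} yield that $I-\tilde Q$ extends by density to a bounded self-adjoint operator on $L^2(X_G)$ whose kernel is a finite-dimensional subspace of $\cC^\infty(X_G)$.

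Combining the two steps gives $\Ker\sigma^* = H^0_b(X_G)\cap\Ker(I-\tilde Q) \subset \cC^\infty(X_G)\cap H^0_b(X_G)$, which is finite dimensional, as required. The main obstacle is the second step: the calculus of $G$-Szeg\H{o}-type Fourier integral operators in Section~\ref{s-gue180308I} was formulated on $X$ with the distinguished submanifold $Y = \mu^{-1}(0)$, and one must verify that the $L^2$-boundedness via iteration, the smoothing-by-composition estimates, and the Rellich-compactness argument all carry over to $X_G$ after the diagonal of $Y\times Y$ is replaced by the diagonal of $X_G\times X_G$. Inspection of the proofs shows that the $G$-action enters only through the fixed geometric data (the phase $\Phi$ and the vanishing locus of the leading symbol), both of which have direct counterparts on the strictly pseudoconvex manifold $X_G$; hence the adaptation consists essentially of notational changes, in keeping with the paper's remark that the result is obtained from Theorem~\ref{t-gue180515h} ``with minor changes''.
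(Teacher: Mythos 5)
Your proposal is correct and matches the paper's intent: the paper simply states that one repeats the proof of Theorem~\ref{t-gue180515h} ``with minor changes'', and the changes you spell out (the operator $\tilde Q=S_{X_G}-\tilde C_0\,\sigma\sigma^*$ built from Theorem~\ref{t-gue170305aI}, the constant $\tilde C_0=2^{3n-\frac{5}{2}d}\pi^{d/2}$ matching \eqref{e-gue170305bIIp} with \eqref{e-gue170304II}, and the transfer of the $G_{k,\ell}$ calculus to the strictly pseudoconvex quotient $X_G$) are exactly the ones needed, and are the same ones the authors themselves invoke later in the proof of Theorem~\ref{t-gue201001yyd}. The inclusion $\Ker\sigma^*\subset H^0_b(X_G)\cap\Ker(I-\tilde Q)$ together with the Rellich argument then gives the claim, as you say.
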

Finally, we obtain:
\begin{theorem}\label{t-gue180515hII}
${\Ker\,}\sigma$ and 
 $(\Ran\sigma)^\perp$ are finite dimensional subspaces of 
 $H^0_b(X)^G\cap\cC^\infty(X)$ and
 $H^0_b(X_{G})\cap\cC^\infty(X_{G})$, respectively.
\end{theorem}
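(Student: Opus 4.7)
The statement about $\Ker\sigma$ is essentially already in hand: by Corollary~\ref{c-gue180515p} the operator $\sigma\colon H^0_b(X)^G\to H^0_b(X_G)$ is bounded, and Theorem~\ref{t-gue180515h} asserts exactly that $\Ker\sigma$ is a finite dimensional subspace of $H^0_b(X)^G\cap\cC^\infty(X)$. Thus only the claim about $(\Ran\sigma)^\perp$ remains.

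My plan for $(\Ran\sigma)^\perp$ is to reduce it to $\Ker\sigma^{\,*}$ via the standard Hilbert space identity. The adjoint $\sigma^{\,*}\colon H^0_b(X_G)\to H^0_b(X)^G$ has been extended to a bounded operator by the estimate \eqref{e-gue180515h}, and for any bounded operator between Hilbert spaces one has the orthogonal decomposition
\begin{equation*}
H^0_b(X_G)=\overline{\Ran\sigma}\oplus\Ker\sigma^{\,*},
\qquad
(\Ran\sigma)^\perp=\Ker\sigma^{\,*}.
\end{equation*}
Here the inner product on $H^0_b(X_G)$ is the one induced by Convention~\ref{conv_met}, with respect to which $\sigma^{\,*}$ was defined. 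I would first check that the $\sigma^{\,*}$ obtained by extending the formal adjoint of \eqref{e-gue180308II} (used in Theorem~\ref{t-gue180515hI}) coincides with the Hilbert space adjoint of the bounded operator $\sigma$; this is immediate from the fact that both satisfy $(\sigma u,v)_{X_G}=(u,\sigma^{\,*}v)$ on the dense subspaces $H^0_b(X)^G\cap\cC^\infty(X)$ and $H^0_b(X_G)\cap\cC^\infty(X_G)$, together with continuity.

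Once this identification is in place, Theorem~\ref{t-gue180515hI} applies and gives that $\Ker\sigma^{\,*}$ is finite dimensional and contained in $H^0_b(X_G)\cap\cC^\infty(X_G)$, which translates immediately into the desired statement for $(\Ran\sigma)^\perp$. I do not anticipate any serious obstacle here: the nontrivial analytic content (boundedness of $\sigma$ and $\sigma^{\,*}$, smoothness and finite dimensionality of their kernels) has been absorbed into Corollary~\ref{c-gue180515p} and Theorems~\ref{t-gue180515h}, \ref{t-gue180515hI}, whose proofs in turn rest on the microlocal analysis of $\sigma^{\,*}\sigma$ and $\sigma\sigma^{\,*}$ as complex Fourier integral operators of $G$-Szeg\H{o} type (Theorems~\ref{t-gue170305a}, \ref{t-gue170305aI}) together with the regularity output of Theorem~\ref{t-gue180514mpq}. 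The only thing to verify is the compatibility of the two notions of adjoint, which is a routine density argument.

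As a byproduct, since $(\Ran\sigma)^\perp$ turns out to be finite dimensional, $\Ran\sigma$ is automatically closed in $H^0_b(X_G)$, so $\sigma$ is in fact a Fredholm operator; this observation will be used in the proof of Theorem~\ref{t-180428zy} in Section~\ref{s-gue200926yyd}.
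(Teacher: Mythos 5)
Your proposal is correct and follows essentially the same route as the paper: the kernel statement is Theorem~\ref{t-gue180515h}, and the range statement is reduced to $\Ker\sigma^{\,*}$ (the paper uses the inclusion $(\Ran\sigma)^\perp\subset\Ker\sigma^{\,*}$, which suffices) together with Theorem~\ref{t-gue180515hI}. Your extra check that the extended formal adjoint agrees with the Hilbert space adjoint is the routine density argument the paper leaves implicit.
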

\begin{proof}
We only need to prove that 
 $(\Ran \sigma)^\perp$ is
a finite dimensional subspace of $\cC^\infty(X_{G})$.  Note that 
$ (\Ran \sigma)^\perp\subset{\Ker\,}\sigma^*$. 
From this observation and Theorem~\ref{t-gue180515hI},
the theorem follows. 
\end{proof}
Theorem~\ref{t-gue180515hII} implies
Theorem~\ref{t-gue180428zy}. 
\section{Proof of Theorem~\ref{t-180428zy}}\label{s-gue200926yyd}

The main goal of this section is to prove Theorem~\ref{t-180428zy}. 
Recall that the Riemannian metrics $g^{TX}$ on $X$ and 
$g^{TX_G}$ on $X_G$ are given by the Convention \ref{conv_met}.
Let $\Delta^X$ and $\Delta^{X_G}$ be the (positive) Laplacians
on $(X,g^{TX})$ and $(X_G,g^{TX_G})$, respectively.
For $s\in\R$ we consider the classical pseudodifferential operators
 of order $s$ on $X$ and $X_G$, respectively,
\begin{equation}\label{eq:ma5.1} 
\Lambda_s=( 1+\Delta^X)^{s/2}\,,\:\:
\widehat\Lambda_s=(1 +\Delta^{X_G})^{s/2}\,.
\end{equation}
They are self-adjoint and positive with respect to the inner
products $(\cdot,\cdot)_X$ and $(\cdot,\cdot)_{X_G}$,
respectively.
In particular, the maps
\begin{equation}\label{eq:ma5.2} 
\Lambda_s:H^\ell(X)\to H^{\ell-s}(X)\,,\:\:
\widehat\Lambda_s:H^\ell(X_G)\to H^{\ell-s}(X_G)\,,
\end{equation}
are injective for any $\ell\in\R$. For $u,v\in H^s(X)$,
$u',v'\in H^s(X_G)$, we define the inner products
\begin{equation}\label{eq:ma5.3} 
(u,v)_s:=(\Lambda_su,\Lambda_sv)_X
\,,\:\: (u',v')_s:=(\widehat\Lambda_su,\widehat\Lambda_sv)_{X_G}\,,
\end{equation}
and let $\|\cdot\|_s$, $\|\cdot\|_{X_G,s}$ be the corresponding
norms.

Recall that the space $G_{k,\ell}(X)$ is given by 
Definition~\ref{d-gue180514} and $S_G\in G_{n-\frac{d}{2},0}(X)$. 
\begin{theorem}\label{t-gue200926yyd}
Let $A\in G_{n-\frac{d}{2}+\gamma,0}(X)$, $\gamma\in\mathbb R$. 
Then $A$ can be extended continuously to 
$A: H^s(X)\To H^{s-\gamma}(X)$ for every $s\in\mathbb R$.
\end{theorem}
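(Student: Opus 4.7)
The plan is to reduce the statement to the case $s=\gamma=0$, namely the $L^2$-boundedness of operators in $G_{n-\frac{d}{2},0}(X)$, which I will then establish via a decomposition against $S_G$. The main tools will be a composition calculus between pseudodifferential operators and $G_{k,\ell}(X)$, together with the already-proven Theorem~\ref{t-gue180514mp} on $L^2$-boundedness of $G_{n-\frac{d}{2},1}(X)$.

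\textbf{Step 1 (Composition with pseudodifferential operators).} I would first show that if $P$ is a classical pseudodifferential operator of order $m\in\R$ on $X$ and $A\in G_{k,\ell}(X)$, then $P\circ A,\,A\circ P\in G_{k+m,\ell}(X)$. The smoothing property away from $Y$ is preserved because pseudodifferential operators are pseudolocal. Near a point $p\in Y$, one substitutes the representation $A(x,y)\equiv\int_0^\infty e^{i\Phi(x,y)t}a(x,y,t)\,dt$ into a standard oscillatory integral for $P$ and reduces the auxiliary $(z,\eta)$-integral by the complex stationary phase formula of Melin--Sj\"ostrand, exactly as in the proof of Theorem~\ref{t-gue170301w}. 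Non-degeneracy of the critical point follows from $d_x\Phi\neq 0,\ d_y\Phi\neq 0$ on the diagonal (cf.\ \eqref{eq:ma4.26}). The phase $\Phi$ is preserved, the amplitude order shifts by $m$, the leading symbol of $P\circ A$ is $p_0(x,d_x\Phi(x,y))\,a_0(x,y)$ (with $p_0$ the principal symbol of $P$), and the vanishing-order condition~\eqref{e0gue180514III} on the leading symbol is preserved.

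\textbf{Step 2 (Reduction to $L^2$-boundedness).} For $A\in G_{n-\frac{d}{2}+\gamma,0}(X)$ and $s\in\R$, set $T:=\Lambda_{s-\gamma}\circ A\circ\Lambda_{-s}$ with $\Lambda_r$ as in~\eqref{eq:ma5.1}. By Step~1, $T\in G_{n-\frac{d}{2},0}(X)$. Because $\Lambda_r$ is an isometry between $(H^r(X),\|\cdot\|_r)$ and $L^2(X)$ by construction of the norm~\eqref{eq:ma5.3}, writing $u=\Lambda_{-s}v$ with $v\in\cC^\infty(X)$ yields $\|u\|_s=\|v\|_{L^2}$ and $\|Au\|_{s-\gamma}=\|Tv\|_{L^2}$. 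Hence the continuity of $A:H^s(X)\to H^{s-\gamma}(X)$ is equivalent to the $L^2$-boundedness of $T:L^2(X)\to L^2(X)$, and it suffices to treat the case $k=n-\frac{d}{2}$.

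\textbf{Step 3 ($L^2$-boundedness on $G_{n-\frac{d}{2},0}(X)$).} Let $T\in G_{n-\frac{d}{2},0}(X)$ with local leading coefficient $a_0(x,y)$; by~\eqref{e-gue180305yI} the leading coefficient $s_0$ of $S_G$ satisfies $s_0(x,x)\neq 0$ on $Y$, so $\mu(x):=a_0(x,x)/s_0(x,x)$ is a $G$-invariant smooth function on $Y$. Extending it to $M\in\cC^\infty(X)^G$, Step~1 gives $M\cdot S_G\in G_{n-\frac{d}{2},0}(X)$ with leading coefficient $M(x)s_0(x,y)$, matching $a_0(x,x)$ at every $(x,x)\in\diag(Y\times Y)$. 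Consequently $R:=T-M\cdot S_G$ has leading coefficient vanishing on $\diag(Y\times Y)$, i.e., $R\in G_{n-\frac{d}{2},1}(X)$. By Theorem~\ref{t-gue180514mp}, $R$ extends to a bounded operator on $L^2(X)$; since $S_G$ is an orthogonal projection and $M$ is a bounded multiplication, $M\cdot S_G$ is $L^2$-bounded. Therefore $T=M\cdot S_G+R$ is $L^2$-bounded. The main obstacle is verifying that $a_0(x,x)$ defines a genuine $G$-invariant smooth function on $Y$ independent of the coordinate chart; this follows from the invariance of the principal symbol of a Fourier integral operator along its associated Lagrangian, together with the $G$-equivariance built into Definition~\ref{d-gue180514} and the diagonal normalization of the phase given in \eqref{eq:ma4.25}.
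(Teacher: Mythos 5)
Your proof is correct and follows essentially the same route as the paper: conjugate by $\Lambda_{s-\gamma}$ and $\Lambda_{-s}$ to reduce to $L^2$-boundedness of an element of $G_{n-\frac{d}{2},0}(X)$, subtract a correction of the form (order-zero operator)$\,\circ\, S_G$ so that the remainder lies in $G_{n-\frac{d}{2},1}(X)$, and invoke Theorem~\ref{t-gue180514mp}. The only cosmetic difference is that you use a multiplication operator $M$ where the paper takes a classical pseudodifferential operator $P$ of order $0$ with $\sigma_P(x,\Phi_x(x,x))a_0(x,x)=b_0(x,x)$ on $Y$; both choices work since only the value of the principal symbol along the ray $\R_+\,\omega_0(x)$ enters the leading term of the composition with $S_G$.
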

\begin{proof}
For every $s\in\mathbb R$, 
put $B:=\Lambda_{s-\gamma}A\Lambda_{-s}$. From complex stationary 
phase formula of Melin-Sj\"ostrand, we 
see that $B\in G_{n-\frac{d}{2},0}(X)$. Near $Y$, write
$B(x,y)\equiv\int^{+\infty}_0e^{i\Phi(x,y)t}b(x,y,t)dt$ 
as in \eqref{e-gue180514I} and let $b_0(x,y)$ be the leading term 
of $b(x,y,t)$. 
Let $P$ be a classical pseudodifferential operator of order $0$
on $X$ with 
\begin{equation}\label{eq:ma5.4}
\sigma_P(x,\Phi_x(x,x))a_0(x,x)=b_0(x,x),
\end{equation}
for every $x\in Y$, where 
$\sigma_P$ denotes the principal symbol of $P$ and $a_0$
is the leading term of the expansion of $S_G$
(see \eqref{e-gue180305yI}). 
From the complex stationary phase formula, we have 
\begin{equation}\label{e-gue200926yydb}
B=PS_G+R,\ \ R\in G_{n-\frac{d}{2},1}(X). 
\end{equation}
From Theorem~\ref{t-gue180514mp}, \eqref{e-gue200926yydb} 
and the fact that $S_G$ is $L^2$ bounded, we deduce that 
$B=\Lambda_{s-\gamma}A\Lambda_{-s}$ 
is $L^2$ bounded. From this observation, 
we conclude that there exists $C_{1}>0$ such that for
every $u\in\cC^\infty(X)$,
\begin{align}\label{eq:ma5.5}
\norm{Au}_{s-\ell} 
= \norm{\Lambda_{s-\gamma}A\Lambda_{-s}\Lambda_s u}
\leq C_{1}\norm{\Lambda_su} = C_{1}\norm{u}_s .
\end{align}
The theorem follows. 
\end{proof}
From Theorem~\ref{t-gue200926yyd} and note that 
$S_G\in G_{n-\frac{d}{2},0}(X)$, we deduce
the following regularity property of the $G$-invariant 
Szeg\H{o} projector.

\begin{corollary}\label{c-gue200927yyd}
For every $s\in\mathbb R$, 
$S_G:H^s(X)\To H^s(X)$ is continuous, and
in particular, $H^0_b(X)^G\cap\cC^\infty(X)$ is dense in 
$H^0_b(X)^G_s$ in $H^s(X)$. 
\end{corollary}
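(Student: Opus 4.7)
The plan is to derive both statements directly from Theorem~\ref{t-gue200926yyd}. Since $S_G\in G_{n-\frac{d}{2},0}(X)$ by Theorem~\ref{t-gue180505I}, applying Theorem~\ref{t-gue200926yyd} with $A=S_G$ and $\gamma=0$ immediately furnishes the continuous extension $S_G\colon H^s(X)\to H^s(X)$ for every $s\in\mathbb R$, which is the first assertion.

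For the density statement, fix $u\in H^0_b(X)^G_s$ and choose $u_j\in\cC^\infty(X)$ with $u_j\to u$ in $H^s(X)$. The continuity just established gives $S_Gu_j\to S_Gu$ in $H^s(X)$, while Theorem~\ref{t-gue200211yyd} guarantees $S_Gu_j\in H^0_b(X)^G\cap\cC^\infty(X)$ for every $j$. It therefore suffices to verify $S_Gu=u$ in $H^s(X)$.

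To prove this, I would promote the $L^2$ identity~\eqref{e-gue181017}, namely
$S_G=\mP^{(0)}_G-\ol{\pr}^*_{b,G}N^{(1)}_G\ddbar_{b,G}\mP^{(0)}_G$,
to the Sobolev scale. The projector $\mP^{(0)}_G$ is continuous on $H^s(X)$ by the discussion following~\eqref{eq:ma3.37}, the tangential Cauchy--Riemann operators lose one derivative, and $N^{(1)}_G$ gains one by~\eqref{e-gue181013ab}. Consequently both sides of~\eqref{e-gue181017} define continuous operators $H^s(X)\to H^{s-1}(X)$ that coincide on the dense subspace $\cC^\infty(X)$, so they coincide on all of $H^s(X)$. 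For $u\in H^0_b(X)^G_s$ the distributional conditions $h^*u=u$ and $\ddbar_bu=0$ give $\mP^{(0)}_Gu=u$ and $\ddbar_{b,G}\mP^{(0)}_Gu=0$, whence the right-hand side collapses to $u$. Under Assumption~\ref{a-gue170410I}(ii), exactly the same argument applies with the formula $S_G=S\circ\mP^{(0)}_G$ from~\eqref{eq:ma3.70}, invoking the Sobolev continuity of the classical Szeg\H{o} projector $S$ on the strictly pseudoconvex $X$ (which has closed range in $L^2$).

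The only delicate point is the transfer of the operator identity~\eqref{e-gue181017} from $L^2$ to $H^s$; once the Sobolev mapping properties of its constituent factors (notably $N^{(1)}_G$) collected in Section~\ref{s-gue181012} are in hand, the remainder is formal.
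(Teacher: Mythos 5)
Your proof is correct and follows the paper's route: the continuity is exactly the paper's one-line deduction from Theorem~\ref{t-gue200926yyd} applied to $S_G\in G_{n-\frac{d}{2},0}(X)$ with $\gamma=0$. The density step, which the paper leaves implicit behind the words ``in particular'', is supplied correctly by your argument that the reproducing identity $S_Gu=u$ persists on $H^0_b(X)^G_s$ after promoting \eqref{e-gue181017} (respectively $S_G=S\circ\mP^{(0)}_G$ under Assumption~\ref{a-gue170410I}~(ii)) to the Sobolev scale via the mapping properties of $\mP^{(0)}_G$, $\ddbar_{b,G}$ and $N^{(1)}_G$.
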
 
Due to Theorem~\ref{t-gue200211yyd} the map $\sigma_G$ 
given by \eqref{180308Im} has a well-defined extension
\[\begin{split}
\sigma_G: \cC^\infty(X)\To H^0_b(X_G)\cap\cC^\infty(X_G),\:\:
u\mapsto\iota_G\circ\iota^*\circ S_G  u\,.
\end{split}\]
\begin{theorem}\label{t-gue200926yyda}
For every $s\in\mathbb Z$ there exists $C_s>0$ such that 
\begin{equation}\label{e-gue201005ycd}
\norm{\sigma_Gu}^2_{X_G,s-\frac{d}{4}}\leq C_s\norm{u}^2_s,\ \ 
\mbox{for every $u\in\cC^\infty(X)$}.
\end{equation}
Moreover, the map \eqref{180308Im} can be extended continuously
by density to a bounded operator 
\begin{equation}\label{eq:ma5.8}
\sigma_{G,s}:=\sigma_G: H^0_b(X)^G_s
\To H^0_b(X_G)_{s-\frac{d}{4}},\ \ 
\end{equation} 
for every $s\in\mathbb R$.
\end{theorem}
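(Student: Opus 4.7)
The plan is to reduce to a dense subspace, then realize $\sigma_G$ as a complex Fourier integral operator from $X$ to $X_G$ and apply the calculus of Section~\ref{s-gue180308I} to the composition $\sigma_G^{*}\widehat\Lambda_{s-d/4}^{2}\sigma_G$.

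First I reduce the estimate \eqref{e-gue201005ycd} to the case $u \in H^0_b(X)^G \cap \cC^\infty(X)$. For any $u \in \cC^\infty(X)$ one has $\sigma_G u = \iota_G\circ\iota^*\circ S_G u = \sigma_G(S_G u)$; by Corollary~\ref{c-gue200927yyd} the operator $S_G$ acts continuously on $H^s(X)$, and by Theorem~\ref{t-gue200211yyd} sends $\cC^\infty(X)$ into $H^0_b(X)^G \cap \cC^\infty(X)$. Once the estimate is proved on this dense subspace, it extends by density to all of $H^0_b(X)^G_s$ (using Corollary~\ref{c-gue200927yyd} again), and the image lands in $H^0_b(X_G)_{s-d/4}$ because $\ddbar_{b,X_G}\sigma_G v = 0$ for smooth CR functions $v$ and this identity passes to the $H^{s-d/4}$-limit in the distribution sense.

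Next I realize $\sigma_G$ as a complex Fourier integral operator from $X$ to $X_G$. By Theorem~\ref{t-gue181021}, $\sigma_G$ is smoothing away from $Y$, so localization near a point $p\in Y$ is enough. In the local coordinates from the discussion preceding Theorem~\ref{t-gue170126}, the phase $\Phi$ in Theorem~\ref{t-gue180505I} depends only on $x''$ and $y''$ (Theorem~\ref{t-gue170126}); restricting $x$ to $\widehat x'' = 0$ and applying the identification $\iota_G$ yields
\[
\sigma_G(\underline{x}'', y) \equiv \int_0^\infty e^{i\Phi(\underline{x}'', y'')t}\, a(\underline{x}'', y'', t)\, dt,
\]
with symbol $a \in S^{n-d/2}_{\rm cl}$ inherited from the symbol of $S_G$. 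Hence $\sigma_G$ fits into the Szeg\H{o}-type Fourier integral operator framework of Section~\ref{s-gue180308I}, and the formal adjoint $\sigma_G^{*}$ is of the same type from $X_G$ to $X$.

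The heart of the proof is the bilinear identity
\[
\|\sigma_G v\|_{X_G, s-d/4}^{2} = (\widehat\Lambda_{s-d/4}^{2}\,\sigma_G v,\, \sigma_G v)_{X_G} = (\sigma_G^{*}\,\widehat\Lambda_{s-d/4}^{2}\,\sigma_G v,\, v)_X,
\]
together with the duality bound $|(f,g)_X| \le \|f\|_{-s}\|g\|_s$. It thus suffices to prove $\sigma_G^{*}\,\widehat\Lambda_{s-d/4}^{2}\,\sigma_G:H^s(X)\to H^{-s}(X)$ continuously. Since $\widehat\Lambda_{s-d/4}^{2}$ is a classical elliptic pseudodifferential operator on $X_G$ of order $2s-\tfrac{d}{2}$, its composition with $\sigma_G$ is again a Fourier integral operator from $X$ to $X_G$ with the same phase $\Phi$ and symbol of order $(n-\tfrac{d}{2}) + (2s-\tfrac{d}{2}) = n+2s-d$, by the standard action of a pseudodifferential operator on such an FIO. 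The complex stationary phase composition of Theorem~\ref{t-gue170301wI}, with $k=n-\tfrac{d}{2}$ and $\ell = n+2s-d$, then shows that $\sigma_G^{*}\widehat\Lambda_{s-d/4}^{2}\sigma_G$ is a complex Fourier integral operator of $G$-Szeg\H{o} type on $X$ in the class $G_{n-d/2+2s,\,0}(X)$ of Definition~\ref{d-gue180514}, with symbol order $(n-\tfrac{d}{2}) + (n+2s-d) - (n-d) = n-\tfrac{d}{2}+2s$. Theorem~\ref{t-gue200926yyd} applied with $\gamma=2s$ then yields the continuity $\sigma_G^{*}\widehat\Lambda_{s-d/4}^{2}\sigma_G:H^s(X)\to H^{-s}(X)$, and combined with the bilinear identity this gives \eqref{e-gue201005ycd}.

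The main obstacle I anticipate is justifying rigorously that $\sigma_G$ and its adjoint fit into the complex Fourier integral calculus of Section~\ref{s-gue180308I}, which is formulated for operators acting on $X$ rather than between $X$ and $X_G$. In particular, the composition $\sigma_G^{*}\widehat\Lambda_{s-d/4}^{2}\sigma_G$ requires incorporating the pseudodifferential factor acting in the $X_G$-variable into the Szeg\H{o}-type FIO calculus via complex stationary phase, with careful tracking of the phase equivalence classes and of the symbol orders in order to ensure membership in $G_{k,0}(X)$ as in Definition~\ref{d-gue180514}; once this is in place, the rest of the argument is a straightforward application of Theorem~\ref{t-gue200926yyd} and the density reduction.
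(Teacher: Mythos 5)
Your proposal follows essentially the same route as the paper: both prove the estimate via the identity $\norm{\sigma_Gu}^2_{X_G,s-\frac{d}{4}}=\bigl((\hat\Lambda_{s-\frac{d}{4}}\sigma_G)^*(\hat\Lambda_{s-\frac{d}{4}}\sigma_G)u,u\bigr)$, show that this composition lies in $G_{n-\frac{d}{2}+2s,0}(X)$ by the complex stationary phase calculus of Section~\ref{s-gue180308I} (the paper phrases this as ``repeat the proof of Theorem~\ref{t-gue170305a}''), and then apply Theorem~\ref{t-gue200926yyd} with $\gamma=2s$ together with the duality pairing $H^{-s}\times H^s$ and the density of $H^0_b(X)^G\cap\cC^\infty(X)$ in $H^0_b(X)^G_s$. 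Your symbol-order bookkeeping matches the paper's, and your extra detail on incorporating $\hat\Lambda_{s-d/4}^2$ into the FIO composition is precisely what the paper leaves implicit.
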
 
\begin{proof}
Fix $s\in\mathbb R$. Let $(\hat\Lambda_{s-\frac{d}{4}}\sigma_G)^*: 
\cC^\infty(X_G)\To\cD'(X)$ be the formal adjoint 
of $\hat\Lambda_{s-\frac{d}{4}}\sigma_G: \cC^\infty(X)
\To\cC^\infty(X_G)$. For $u\in\cC^\infty(X)$, we have 
\begin{equation}\label{e-gue200927ycd}
\norm{\sigma_Gu}_{X_G,s-\frac{d}{4}}^{2}
=(\,\hat\Lambda_{s-\frac{d}{4}}\sigma_Gu\,,\,
\hat\Lambda_{s-\frac{d}{4}}\sigma_Gu\,)_{X_G}
=(\,(\hat\Lambda_{s-\frac{d}{4}}\sigma_G)
^*(\hat\Lambda_{s-\frac{d}{4}}\sigma_G)u\,,\,u\,).
\end{equation}
We repeat the proof of Theorem~\ref{t-gue170305a} and conclude 
that 
\begin{equation}\label{e:ma5.10}
(\hat\Lambda_{s-\frac{d}{4}}\sigma_G)^*(\hat\Lambda_{s-\frac{d}{4}}
\sigma_G)\in G_{n-\frac{d}{2}+2s,0}(X).
\end{equation} 
From Theorem~\ref{t-gue200926yyd} and \eqref{e:ma5.10}, 
we deduce that there exist $C, C_1>0$ such that
\begin{equation}\label{e-gue200927ycdI}
\abs{(\,(\hat\Lambda_{s-\frac{d}{4}}
\sigma_G)^*(\hat\Lambda_{s-\frac{d}{4}}\sigma_G)u\,,\,u\,)}\leq 
C\norm{(\hat\Lambda_{s-\frac{d}{4}}\sigma_G)
^*(\hat\Lambda_{s-\frac{d}{4}}\sigma_G)u}_{-s}\norm{u}_s
\leq C_1\norm{u}^2_s,
\end{equation}
for every $u\in\cC^\infty(X)$. 
From \eqref{e-gue200927ycd} and \eqref{e-gue200927ycdI} 
we get \eqref{e-gue201005ycd}. 

By Theorem~\ref{t-gue200211yyd} and 
Corollary~\ref{c-gue200927yyd} we see that 
$H^0_b(X)^G\cap\cC^\infty(X)$ is dense in $H^0_b(X)^G_s$, 
for every $s\in\mathbb R$. From this observation and 
\eqref{e-gue201005ycd}, we deduce that for every $s\in\mathbb R$
the map \eqref{180308Im} 
can be extended continuously by density to a bounded operator 
$\sigma_{G,s}$ as in \eqref{eq:ma5.8}.
\end{proof}

 Let $f_G\in\cC^\infty(Y)^G$ be as in \eqref{e-gue180308m}. 
We identify $f_G$ with a smooth function on $X_G$. 

\begin{theorem}\label{t-gue200930yyd}
For every $s\in\mathbb R$, ${\rm Ker\,}\sigma_{G,s}$
is a finite dimensional subspace of $\cC^\infty(X)$. 
Moreover, ${\rm Ker\,}\sigma_{G,s}$ is independent of $s$. 
\end{theorem}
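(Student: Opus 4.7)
The plan is to factor $\sigma$ through $\sigma_{G,s}$ and reduce the study of $\Ker\sigma_{G,s}$ to that of $\Ker(I-Q)$, where $Q\in G_{n-\frac{d}{2},1}(X)$ is the operator from Theorem~\ref{t-gue180515p}. Let $T:=S_{X_{G}}\circ E\circ m_{f_G}$, where $m_{f_G}$ denotes multiplication by $f_G$ (viewed as a smooth function on $X_G$). Since $E$ is classical elliptic pseudodifferential of order $-d/4$, $m_{f_G}$ is bounded on every Sobolev space, and $S_{X_G}$ extends continuously to $H^t(X_G)\to H^t(X_G)$ for all $t\in\R$ (because $\ddbar_{b,X_G}$ has closed range), the operator $T$ is bounded $H^{t-d/4}(X_G)\to H^t(X_G)$. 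On $H^0_b(X)^G\cap\cC^\infty(X)$ one has the identity $\sigma=T\circ\sigma_G$, and by density (Corollary~\ref{c-gue200927yyd}) this extends to a bounded factorisation
\[
\sigma=T\circ\sigma_{G,s}:H^0_b(X)^G_s\To H^0_b(X_G)_s,\qquad s\in\R.
\]
In particular, $\Ker\sigma_{G,s}\subseteq\{u\in H^0_b(X)^G_s:\sigma u=0\}$.

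Next, I would show that $\Ker\sigma_{G,s}\subseteq\Ker(I-Q)\cap H^0_b(X)^G_s$. For $u\in\Ker\sigma_{G,s}$ we have $\sigma u=T(0)=0$ in $H^s$, and since the composition $\sigma^{\,*}\sigma\in G_{n-\frac{d}{2},0}(X)$ is bounded on each $H^s(X)$ by Theorem~\ref{t-gue200926yyd}, a density argument using the factorisation above yields $\sigma^{\,*}\sigma u=0$. Combined with $S_G u=u$ on $H^0_b(X)^G_s$ and the identity $C_0\sigma^{\,*}\sigma=S_G-Q$ from \eqref{e-gue180515p}, this gives $(I-Q)u=C_0\sigma^{\,*}\sigma u=0$, hence $u=Qu$ and by iteration $u=Q^N u$ for every $N\in\N^*$. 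For $s\geq 0$, Theorem~\ref{t-gue180514mp} provides $Q^{N_t}:L^2(X)\to H^t(X)$ for $N_t$ sufficiently large, so $u=Q^{N_t}u\in H^t(X)$ for every $t$, hence $u\in\cC^\infty(X)$. For $s<0$, the self-adjointness $Q^{*}=Q$ (Theorem~\ref{t-gue180515p}) dualises Theorem~\ref{t-gue180514mp} to yield $Q^{N_{|s|}}:H^{s}(X)\to L^2(X)$; applying this first reduces to the $L^2$ case, and one concludes $u\in\cC^\infty(X)$ as before.

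Finally, set $K:=\{u\in H^0_b(X)^G\cap\cC^\infty(X):\iota^{\,*}u=0\}$. The smoothness step shows $\Ker\sigma_{G,s}\subseteq K$, while conversely any $u\in K$ lies in $H^0_b(X)^G_s$ for every $s\in\R$ with $\sigma_{G,s}u=\iota_G\iota^{\,*}u=0$, so $K\subseteq\Ker\sigma_{G,s}$. Thus $\Ker\sigma_{G,s}=K$ for every $s$, yielding simultaneously independence of $s$, membership in $\cC^\infty(X)$, and (via $K\subseteq\Ker(I-Q)\cap L^2(X)$, which is finite dimensional by Theorem~\ref{t-gue180514mpq}) finite dimensionality. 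The hard part will be the $s<0$ case of the smoothing argument: one must carefully extend the $L^2$-based bounds of Theorem~\ref{t-gue180514mp} to the dual direction $H^s\to L^2$ via self-adjointness of $Q$, and verify this is compatible with the decomposition $Q^N=H_{1,N}+H_{2,N}$ of \eqref{e-gue180515} that is used to produce smoothing.
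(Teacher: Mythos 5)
Your proof is correct and follows essentially the same route as the paper: the paper's proof factors $\sigma_{G,s}$ through $\hat\sigma_G:=S_{X_G}\circ E\circ f_G\circ\sigma_G$ (your $T\circ\sigma_{G,s}$, i.e.\ the map $\sigma$), observes $\Ker\sigma_{G,s}\subseteq\Ker\hat\sigma_G$, and invokes the $I-Q$ regularity machinery of Theorems~\ref{t-gue180514mp} and \ref{t-gue180514mpq} exactly as you do. Your explicit identification of the kernel with $\set{u\in H^0_b(X)^G\cap\cC^\infty(X):\iota^*u=0}$ and the duality treatment of the case $s<0$ merely spell out details the paper leaves implicit.
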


\begin{proof} 
Let $E$ be a classical pseudodifferential operator on $X_G$ 
with principal symbol $\sigma_E(x,\xi)=\abs{\xi}^{-\frac{d}{4}}$.
Let for every $s\in\R$, 
\begin{equation}\label{e:ma5.11}
\hat\sigma_G:=S_{X_G}\circ E\circ f_G\circ
\sigma_G: H^0_b(X)^G_s\To H^0_b(X_G)_s\,.
\end{equation}
We repeat the proof of Theorem~\ref{t-gue180515h} 
and deduce that ${\rm Ker\,}\hat\sigma_G$ is a finite dimensional
subspace of $\cC^\infty(X)$. 
Since ${\rm Ker\,}\sigma_G\subset{\rm Ker\,}\hat\sigma_G$, 
the theorem follows. 
\end{proof} 

\begin{theorem}\label{t-gue201001yyd}
With $\sigma_{G,s}$ as in \eqref{eq:ma5.8},
$(\Ran\sigma_{G,s})^\perp$ in \eqref{eq:ma1.8} is
a finite dimensional subspace of $\cC^\infty(X_G)$
for every $s\in\R$.
\end{theorem}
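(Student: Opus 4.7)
The plan is to prove the cokernel statement by combining the kernel argument of Theorem \ref{t-gue200930yyd} with Hilbert space duality and the complex Fourier integral operator calculus of Section~\ref{s-gue180308I}. First I would identify $(\Ran\sigma_{G,s})^\perp=\Ker\sigma_{G,s}^\dagger$ in $H^0_b(X_G)_{s-d/4}$, where $\sigma_{G,s}^\dagger$ denotes the Hilbert space adjoint with respect to the Sobolev inner products \eqref{eq:ma5.3}. To access the FIO calculus I would pass to the auxiliary map $\hat\sigma_G=S_{X_G}\circ E\circ f_G\circ\sigma_G$ from \eqref{e:ma5.11}, which is continuous $H^0_b(X)^G_s\to H^0_b(X_G)_s$ and factors as $\hat\sigma_G=T\circ\sigma_{G,s}$ with $T=S_{X_G}\circ E\circ f_G$ bounded $H^0_b(X_G)_{s-d/4}\to H^0_b(X_G)_s$. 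Since $E$ is elliptic of order $-d/4$ and $f_G$ is smooth and nowhere vanishing on $X_G$ (because $\det\mR_x\neq 0$ on $Y$ under Assumption \ref{a-gue170410I}, so $f_G$ in \eqref{e-gue180308m} is a positive smooth function), a standard parametrix argument via $\widetilde T:=f_G^{-1}\circ E^{-1}\circ S_{X_G}$ gives that $T$ is Fredholm of index zero, with kernel and cokernel finite-dimensional subspaces of $\cC^\infty(X_G)\cap H^0_b(X_G)$.

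The heart of the argument is to analyze $(\Ran\hat\sigma_G)^\perp=\Ker\hat\sigma_G^\dagger$ in $H^0_b(X_G)_s$. Adapting the $L^2$ argument of Theorem \ref{t-gue180515hII} to Sobolev spaces, a symbol computation parallel to the derivation of \eqref{e-gue200228yyd}---this time comparing the leading symbol \eqref{e-gue170305bIIp} of the composition $\hat\sigma_G\hat\sigma_G^\dagger$ on $X_G$ with the leading symbol \eqref{e-gue170304II} of $S_{X_G}$---produces a constant $C_0'>0$ such that $C_0'\hat\sigma_G\hat\sigma_G^\dagger-S_{X_G}$ is a complex Fourier integral operator of Szeg\H{o} type on $X_G$ whose leading symbol vanishes on the diagonal, namely an element of the $X_G$-analog of the class $G_{n-\frac{d}{2},1}$ (which is just the Szeg\H{o} FIO calculus on $X_G$ since the group acts trivially on the reduced side). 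On $H^0_b(X_G)_s$ this yields $C_0'\hat\sigma_G\hat\sigma_G^\dagger=I-Q'$ with $Q'$ in this class, and since $\Ker\hat\sigma_G^\dagger=\Ker(\hat\sigma_G\hat\sigma_G^\dagger)$ in Hilbert spaces, the analog of Theorem \ref{t-gue180514mpq} on $X_G$ implies that $(\Ran\hat\sigma_G)^\perp$ is a finite-dimensional subspace of $\cC^\infty(X_G)\cap H^0_b(X_G)$.

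The final step is to transfer this to $\sigma_{G,s}$ using the factorization $\hat\sigma_G=T\circ\sigma_{G,s}$. Since $u\in(\Ran\sigma_{G,s})^\perp$ is equivalent to $T^\dagger u'\in(\Ran\sigma_{G,s})^\perp$ for some $u'\in(\Ran\hat\sigma_G)^\perp$ up to the finite-dimensional discrepancy coming from $(\Ran T^\dagger)^\perp=\Ker T$, a Fredholm diagram chase shows $(\Ran\sigma_{G,s})^\perp$ to be finite-dimensional, of dimension bounded by $\dim(\Ran\hat\sigma_G)^\perp+\dim\Ker T$, and to consist of smooth CR functions (since smoothness is preserved by the Fredholm $T^\dagger$ whose kernel and cokernel are smooth). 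Independence of the index on $s$ then follows from these subspaces being contained in the $s$-independent smooth CR spaces, together with the analogous $s$-independence from Theorem~\ref{t-gue200930yyd} for the kernel.

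The main obstacle is the symbol bookkeeping required to establish $C_0'\hat\sigma_G\hat\sigma_G^\dagger-S_{X_G}\in G_{n-\frac{d}{2},1}(X_G)$ in a form strong enough to apply Theorem \ref{t-gue180514mpq} uniformly in $s$: this demands extending the stationary phase computations of Theorems \ref{t-gue170301w} and \ref{t-gue170301wI} to track how the Sobolev-order-correcting pseudodifferential factors $\widehat\Lambda_{s-d/4}$ and $\Lambda_s$, which appear implicitly in the Hilbert space adjoint $\sigma_{G,s}^\dagger$, interact with the $G$-Szeg\H{o} FIO structure of $\sigma_G$, $\sigma_G^T$ and $S_{X_G}$, while maintaining vanishing of the leading symbol of the error along $\diag(X_G\times X_G)$.
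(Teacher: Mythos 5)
Your overall strategy coincides with the paper's: both reduce the statement to showing that $(\Ran\sigma_{G,s})^\perp$ is contained in the kernel of an operator of the form $I-R$ on $X_G$, where $R$ is a Szeg\H{o}-type complex Fourier integral operator with vanishing leading symbol on the diagonal, and then invoke the $X_G$-analogue of Theorem \ref{t-gue180514mpq}. The implementation differs in a way worth noting. You pass to the Hilbert-space adjoint $\sigma_{G,s}^\dagger$ and the composition $\hat\sigma_G\hat\sigma_G^\dagger$, which forces you to track the conjugating factors $\Lambda_{-2s}$ (living on $X$) and $\hat\Lambda_{2s}$ (living on $X_G$) through the stationary-phase computation --- precisely the ``main obstacle'' you flag at the end. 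The paper sidesteps this entirely: it never forms the Sobolev adjoint, but instead writes down by hand the operator
$F_s=S_{X_G}f_G^2\hat\Lambda_{-2s}\sigma_G(\hat\Lambda_{s-\frac{d}{4}}\sigma_G)^*\hat\Lambda_{s-\frac{d}{4}}S_{X_G}$
(using only the formal $L^2$ adjoint), verifies by the duality computation \eqref{eq:ma5.15} that $F_s$ annihilates $(\Ran\sigma_{G,s})^\perp$, and observes that all three $\hat\Lambda$ factors sit on the $X_G$ side, so their principal symbols are evaluated at the same covector and multiply to $\abs{\xi}^{-d/2}$ independently of $s$. The symbol computation then reduces verbatim to the one already done for $\sigma\sigma^*$ in Theorem \ref{t-gue170305aI}, and $F_s=C(I-R)S_{X_G}$ with $C$ an honest constant. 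Your route also dispenses with the Fredholm factorization through $T=S_{X_G}\circ E\circ f_G$ and the ensuing diagram chase, which the paper does not need.

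Two steps of your proposal need repair before they are proofs. First, $\widetilde T=f_G^{-1}\circ E^{-1}\circ S_{X_G}$ is not a parametrix for the Toeplitz operator $T$ on the Hardy--Sobolev space: $\widetilde T T=f_G^{-1}E^{-1}S_{X_G}Ef_Gu$ differs from $u$ by $f_G^{-1}E^{-1}(I-S_{X_G})Ef_Gu$, which is not smoothing; you need the composition theorems of Section \ref{s-gue180308I} (or the Boutet de Monvel--Guillemin Toeplitz calculus) to invert $S_{X_G}Ef_GS_{X_G}$ modulo smoothing, and the index-zero claim is neither justified nor needed. Second, the existence of a \emph{constant} $C_0'$ with $C_0'\hat\sigma_G\hat\sigma_G^\dagger-S_{X_G}$ of vanishing leading symbol on the diagonal is exactly the delicate point: since $\Lambda_{-2s}$ and $\hat\Lambda_{2s}$ are evaluated at covectors on different manifolds, one only gets a constant after checking that the phases are normalized compatibly ($\abs{\omega_0}_{g^*}=\abs{\omega_{0,G}}_{g^*}=1$ and matching $\lambda$); with the paper's choice of $F_s$ this verification is unnecessary. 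Finally, your closing remark that $s$-independence of the index follows from containment in the smooth CR spaces is too quick --- the orthogonal complements are taken with respect to $s$-dependent inner products, and the paper devotes a separate argument (the injective comparison maps $\gamma_s$ of Theorem \ref{t-gue201003yyd}) to this point.
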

\begin{proof}
Fix $s\in\R$. By Corollary~\ref{c-gue200927yyd} and 
Theorem~\ref{t-gue200926yyda} we can extend $\sigma_{G,s}$
in \eqref{eq:ma5.8} to $H^s(X)$ by
\begin{equation}\label{eq:ma5.14}
\sigma_G:H^s(X)
\To H^{s-d/4}(X_G),\:\:
u\mapsto\sigma_GS_Gu. 
\end{equation}
%
%
We have 
$\hat\Lambda_{s-\frac{d}{4}}\sigma_G: \cC^\infty(X)
\To\cC^\infty(X_G)$. Let 
$(\hat\Lambda_{s-\frac{d}{4}}\sigma_G)^*: \cD'(X_G)\To\cD'(X)$ 
be the formal adjoint of $\hat\Lambda_{s-\frac{d}{4}}\sigma_G$. 
We repeat the proof 
of Theorem~\ref{t-gue170304ry} with minor changes and deduce that 
$(\hat\Lambda_{s-\frac{d}{4}}\sigma_G)^*: \cC^\infty(X_G)
\To\cC^\infty(X)$.  
Let 
\begin{equation}\label{eq:ma5.16}
F_s:=S_{X_G}f_G^2\hat\Lambda_{-2s}
\sigma_G(\hat\Lambda_{s-\frac{d}{4}}
\sigma_G)^*\hat\Lambda_{s-\frac{d}{4}}S_{X_G}: \cD'(X_G)
\To\cD'(X_G).
\end{equation} 
For any
$u\in(\Ran\sigma_{G,s})^\perp$ 
and $v\in\cC^\infty(X)$, by \eqref{eq:ma1.8}, we have 
\begin{equation}\label{eq:ma5.15}\begin{split}
\big(v,S_G(\hat\Lambda_{s-\frac{d}{4}}\sigma_G)^*
\hat\Lambda_{s-\frac{d}{4}}S_{X_G}u\big)
&=\big(\hat\Lambda_{s-\frac{d}{4}}\sigma_GS_Gv, 
\hat\Lambda_{s-\frac{d}{4}}S_{X_G}u\big)_{X_G}\\
&=\big(\sigma_Gv,u\big)_{X_G,s-\frac{d}{4}}=0.
\end{split}\end{equation} 
In view of \eqref{eq:ma5.15},     
we see that 
\begin{equation}\label{e-gue201002yydI}
(\Ran\sigma_{G,s})^\perp\subset{\rm Ker\,}F_s\cap H^0_b(X_G)_s.
\end{equation}
We repeat the proof of Theorem~\ref{t-gue170305aI} 
and conclude that $F_s=C(I-R)S_{X_G}$, where $C\neq0$ is 
a constant and $R$ is a complex Fourier integral 
operator with the same phase and order as $S_{X_G}$ and 
vanishing leading term on the diagonal. More precisely, 
in the local coordinates $\underline{x}''$ of $X_G$ defined in an open set 
$W$ of $X_G$, we have 
\begin{equation}\label{eq:ma5.17}
R(\underline{x}'',\underline{y}'')
\equiv\int^\infty_0e^{i\Phi(\underline{x}'',\underline{y}'')t}
\widehat a(\underline{x}'',\underline{y}'',t)dt, 
\end{equation}
where $\Phi$ is 
as in \eqref{e-gue170305bIp}, 
$r\in S^{n-d}_{{\rm cl\,}}(W\times W\times\mathbb R_+)$, 
$r_0(\underline{x}'',\underline{x}'')=0$ for every $\underline{x}''\in W$, 
where $r_0$ is the leading term of $r$. We repeat now the proof of 
Theorem~\ref{t-gue180514mpq} with minor changes and deduce 
that ${\rm Ker\,}(I-R)$ is a finite dimensional subspace of 
$\cC^\infty(X_G)$. From \eqref{e-gue201002yydI} we deduce that 
$(\Ran\sigma_{G,s})^\perp\subset{\rm Ker\,}(I-R)$. The theorem follows. 
\end{proof}

\begin{theorem}\label{t-gue201003yyd}
$\dim(\Ran\sigma_{G,s})^\perp$ is independent 
of $s$. 
\end{theorem}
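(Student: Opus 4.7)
The plan is to construct, for each $s\in\R$, an explicit linear isomorphism from $(\Ran\sigma_{G,s})^\perp$ onto the $L^2$-cokernel
\[
C := \bigl\{w\in H^0_b(X_G) : (\sigma_G v,w)_{X_G}=0\text{ for all } v\in\cC^\infty(X)\cap H^0_b(X)^G\bigr\},
\]
which is manifestly independent of $s$. Note that $C$ coincides with $(\Ran\sigma_{G,d/4})^\perp$, so by Theorem~\ref{t-gue201001yyd} applied with $s=d/4$ it is a finite-dimensional subspace of $\cC^\infty(X_G)\cap H^0_b(X_G)$. First I would give a workable description of $(\Ran\sigma_{G,s})^\perp$. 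Any $u$ in this space lies in $\cC^\infty(X_G)\cap H^0_b(X_G)$ by the proof of Theorem~\ref{t-gue201001yyd}, and for such $u$ and $v\in\cC^\infty(X)\cap H^0_b(X)^G$ (which is dense in $H^0_b(X)^G_s$ by Corollary~\ref{c-gue200927yyd}), the self-adjointness of $\hat\Lambda_{s-d/4}$ with respect to $(\cdot,\cdot)_{X_G}$ gives
\[
(\sigma_{G,s}v,u)_{X_G,s-d/4}=(\hat\Lambda_{s-d/4}\sigma_G v,\hat\Lambda_{s-d/4}u)_{X_G}=(\sigma_G v,\hat\Lambda_{2s-d/2}u)_{X_G}.
\]
Because $\sigma_G v\in H^0_b(X_G)$, this pairing is unchanged upon replacing $\hat\Lambda_{2s-d/2}u$ by its CR component. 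Writing $T_s:=S_{X_G}\hat\Lambda_{2s-d/2}$, we obtain
\[
(\Ran\sigma_{G,s})^\perp=\bigl\{u\in\cC^\infty(X_G)\cap H^0_b(X_G):T_s u\in C\bigr\}.
\]

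Next I would define $\Psi_s:(\Ran\sigma_{G,s})^\perp\to C$ by $\Psi_s(u)=T_s u$ and check it is a bijection. Injectivity follows at once from the positivity identity
\[
(T_s u,u)_{X_G}=(\hat\Lambda_{2s-d/2}u,u)_{X_G}=\|\hat\Lambda_{s-d/4}u\|_{X_G}^2=\|u\|_{X_G,s-d/4}^2,
\]
which forces $u=0$ whenever $\Psi_s(u)=0$ (note $u\in H^0_b(X_G)$ so that $(S_{X_G}f,u)_{X_G}=(f,u)_{X_G}$). For surjectivity, given $w\in C\subset\cC^\infty(X_G)$, the linear functional $v\mapsto(v,w)_{X_G}$ on the Hilbert space $H^0_b(X_G)_{s-d/4}$ is continuous, its norm being dominated by a Sobolev norm of $w$ which is finite because $w$ is smooth. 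Riesz representation yields a unique $u\in H^0_b(X_G)_{s-d/4}$ with $(v,u)_{X_G,s-d/4}=(v,w)_{X_G}$ for every $v$. Unwinding the inner products and using the density of $\cC^\infty(X_G)\cap H^0_b(X_G)$ in $H^0_b(X_G)_{s-d/4}$ yields $S_{X_G}(\hat\Lambda_{2s-d/2}u-w)=0$, i.e.\ $T_s u=w$.

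The remaining and main obstacle is the elliptic regularity step: upgrading this $u$ from $H^0_b(X_G)_{s-d/4}$ to $\cC^\infty(X_G)\cap H^0_b(X_G)$, so that it genuinely lies in $(\Ran\sigma_{G,s})^\perp$. This amounts to showing that $T_s=S_{X_G}\hat\Lambda_{2s-d/2}S_{X_G}$, viewed as a Toeplitz operator on the Hardy space $H^0_b(X_G)$, is elliptic of order $2s-d/2$ and thus admits a parametrix modulo smoothing; from $T_s u=w$ one then reads $u=P_s w+Ru$ with $P_s$ of order $d/2-2s$ and $R$ smoothing, whence $u\in\cC^\infty(X_G)$. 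This ellipticity is a special case of Boutet de Monvel--Guillemin Toeplitz calculus applied to the elliptic factor $\hat\Lambda_{2s-d/2}$, and can also be established in situ by constructing $T_s^{-1}$ as a complex Fourier integral operator with the same phase as $S_{X_G}$ but reciprocal principal symbol, using the composition formula of Theorem~\ref{t-gue170301w} together with the regularity machinery developed via Theorem~\ref{t-gue180514mpq}. Once surjectivity is secured, $\Psi_s$ is a linear isomorphism, and therefore $\dim(\Ran\sigma_{G,s})^\perp=\dim C$ for every $s\in\R$, which is the desired $s$-independence.
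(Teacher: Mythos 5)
Your proposal is correct in outline but takes a genuinely different route from the paper. You build an explicit bijection $\Psi_s=S_{X_G}\hat\Lambda_{2s-d/2}$ from $(\Ran\sigma_{G,s})^\perp$ onto the $L^2$-cokernel $C=(\Ran\sigma_{G,\frac{d}{4}})^\perp$, which forces you to prove surjectivity, and hence to invert the Toeplitz operator $T_s=S_{X_G}\hat\Lambda_{2s-d/2}S_{X_G}$ modulo smoothing operators in order to upgrade the Riesz-representation solution $u\in H^0_b(X_G)_{s-\frac{d}{4}}$ to a smooth CR function. That parametrix step is the crux of your argument and you only sketch it; it is indeed available (on $X_G$ the Szeg\H{o} projector is a Boutet de Monvel--Sj\"ostrand Fourier integral operator since $\ddbar_{b,X_G}$ has closed range, the symbol $\abs{\xi}^{2s-d/2}$ is elliptic on the cone $\R_+\,\omega_{0,G}$, and the composition calculus of Theorem~\ref{t-gue170301w} lets one solve the transport equations), but it is real work that the paper never does. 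The paper instead only constructs a linear \emph{injection} $\gamma_s:(\Ran\sigma_{G,s})^\perp\to(\Ran\sigma_{G,\frac{d}{4}})^\perp$, by orthogonally decomposing $u=\sigma_{G,\frac{d}{4}}v+w$ and sending $u\mapsto w$; injectivity follows from exactly the positivity identity you already use, namely that $\gamma_su=0$ forces $(\,u,(\hat\Lambda_{s-\frac{d}{4}})^2u\,)_{X_G}=\|\hat\Lambda_{s-\frac{d}{4}}u\|^2_{X_G}=0$. Running the same injection in the reverse direction gives the opposite inequality of dimensions, so no surjectivity, no parametrix and no Toeplitz ellipticity are ever needed. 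Your approach buys a canonical isomorphism between the cokernels at different $s$ (not just equality of dimensions), at the price of the elliptic-regularity step; if you want a complete proof with minimal machinery, note that your injectivity computation alone, applied symmetrically at $s$ and at $\frac{d}{4}$, already suffices.
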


\begin{proof}
Fix $s\in\R$ and let $u\in(\Ran\sigma_{G,s})^\perp$.
By Theorem \ref{t-gue201001yyd} we have 
$u\in\cC^\infty(X_G)\cap H^0_b(X_G)$.
We have an orthogonal decomposition
\begin{equation}\label{eq:ma5.19}
u=\sigma_{G,\frac{d}{4}}v+w,\:\:  \quad 
v\in H^0_b(X)^G_{\frac{d}{4}},\: 
w\in(\Ran\sigma_{G,\frac{d}{4}})^\perp. 
\end{equation}
We define a linear map
\begin{equation}\label{eq:ma5.18}
\gamma_s: (\Ran\sigma_{G,s})^\perp
\To (\Ran\sigma_{G,\frac{d}{4}})^\perp,\:\:
\gamma_su:=w\in(\Ran\sigma_{G,\frac{d}{4}})^\perp. 
\end{equation}
We claim that $\gamma_s$ is injective. 
Assume that $\gamma_su=0$. 
Then there exists $v\in H^0_b(X)^G_{\frac{d}{4}}$
such that $u=\sigma_{G,\frac{d}{4}}v$. 
Let $(v_j)$ be a sequence in $\cC^\infty(X)$ with 
$v_j\to v$ in $H^{\frac{d}{4}}(X)$ as $j\To\infty$. 
By Corollary~\ref{c-gue200927yyd} we have 
$S_Gv_j\to S_{G}v=v$ in $H^{\frac{d}{4}}(X)$ as $j\To\infty$. 
By Theorem~\ref{t-gue200926yyda}
we have $\sigma_{G,\frac{d}{4}}S_Gv_j\to
\sigma_{G,\frac{d}{4}}v=u$ in $L^2(X)$ as $j\To\infty$. 
Since $u\in(\Ran\sigma_{G,s})^\perp$, we have as $j\To\infty$
\[0=(\,\hat\Lambda_{s-\frac{d}{4}}\sigma_G S_{G}v_j\,,\,
\hat\Lambda_{s-\frac{d}{4}}u\,)_{X_G}
=(\,\sigma_GS_Gv_j\,,\,(\hat\Lambda_{s-\frac{d}{4}})^2u\,)_{X_G}
\To(\,u\,,(\hat\Lambda_{s-\frac{d}{4}})^2u\,)_{X_G}.
\]
Hence $(\,u\,,(\hat\Lambda_{s-\frac{d}{4}})^2u\,)_{X_G}=0$ 
and thus $\hat\Lambda_{s-\frac{d}{4}}u=0$. Since 
$\hat\Lambda_{s-\frac{d}{4}}$ is injective, 
we get $u=0$ and hence $\gamma_s$ is injective. Since $\gamma_s$
is injective, ${\rm dim\,}(\Ran\sigma_{G,s})^\perp
\leq{\rm dim\,}(\Ran\sigma_{G,\frac{d}{4}})^\perp$. 

Similarly, we can repeat the procedure above and conclude that 
${\rm dim\,}(\Ran\sigma_{G,\frac{d}{4}})^\perp
\leq{\rm dim\,}(\Ran\sigma_{G,s})^\perp$. Thus, 
${\rm dim\,}(\Ran\sigma_{G,s})^\perp
={\rm dim\,}(\Ran\sigma_{G,\frac{d}{4}})^\perp$.
The theorem follows. 
\end{proof}
\begin{proof}[Proof of Theorem ~\ref{t-180428zy}]
Theorems~\ref{t-gue200930yyd}, \ref{t-gue201001yyd} 
and \ref{t-gue201003yyd} yield Theorem~\ref{t-180428zy}
for the case when $\dim X_G\geq3$. 

Assume now that $\dim X_G=1$. Then $X_G$ is a union of circles. 
For simplicity, suppose that $X_G=S^1$.
The circle $X_G$ admits a natural $S^1$
action $e^{i\theta}: S^1\times X_G\To X_G$, $(e^{i\theta},z)
\To e^{i\theta}z$. For every $m\in\mathbb Z$, put 
\[L^2_m(X_G):=\set{u\in L^2(X_G);\, 
\mbox{$(e^{i\theta})^*u=e^{im\theta}u$, for every 
$e^{i\theta}\in S^1$}}.\]
It is clear that $L^2(X_G)=\oplus_{m\in\mathbb Z}L^2_m(X_G)$. 
By definition $H^0_b(X_G)=\oplus_{m\in\N}L^2_m(X_G)$.
The Szeg\H{o} projection on $X_G$ is the orthogonal projection:
$S_{X_G}: L^2(X_G)\To H^0_b(X_G)$. 
The $S^1$ action induces a smooth vector field $\frac{\pr}{\pr\theta}$
on $X_G$. Fix a point $p\in X_G$. Let $x$ be local coordinate of $X_G$
such that $x(p)=0$ and $\frac{\pr}{\pr x}=\frac{\pr}{\pr\theta}$. 
Then
\begin{equation}\label{e-gue201012yyd}
S_{X_G}(x,y)\equiv\frac{1}{2\pi}\int^{\infty}_0e^{it(x-y)}dt.
\end{equation}
In particular, $S_{X_G}(x,y)$ is a Fourier integral operator
with complex phase. 
Therefore the above proof of Theorem~\ref{t-180428zy}
in the case $\dim X_G\geq3$ works also when $\dim X_G=1$. 
\end{proof}
\begin{theorem}\label{t:ma5.7}
Let $X$ be a three dimensional compact orientable 
CR manifold and let $G$ be a connected compact Lie group acting on $X$
such that the $G$-action preserves $J$ and $\omega_{0}$.
We assume that $X$ is pseudoconvex of finite type and
that $\ddbar_{b}$ has closed range in $L^2$ on $X$.
Then the conclusions of Theorem \ref{t-180428zy} hold.
\end{theorem}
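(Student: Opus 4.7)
The plan is to reestablish Theorems~\ref{t-gue181021} and~\ref{t-gue180505I} for $S_G$ under the weaker finite-type pseudoconvex hypothesis, and then appeal verbatim to the subsequent machinery of Sections~\ref{s-gue180308} and~\ref{s-gue200926yyd}. Since $\dim X=3$, the dimension $d=\dim G$ lies in $\{0,1\}$; the case $d=0$ is trivial, so I focus on $d=1$, in which case $\dim X_G=1$ and $X_G$ is a disjoint union of circles. The theorem's hypotheses implicitly require the Levi form to be positive on the compact set $Y=\mu^{-1}(0)$ (otherwise Lemma~\ref{eq:t2.4} and Theorem~\ref{eq:t2.5} would not produce the CR reduction), so $Y\Subset X^+$, where $X^+\subset X$ denotes the open set on which the Levi form is positive definite.

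For the analogue of Theorem~\ref{t-gue180505I}, I would repeat the argument given there under Assumption~\ref{a-gue170410I}(ii). By Theorem~\ref{t-gue140305VIc} (i.e., \cite[Theorem~1.14]{HM14}), the closed-range hypothesis on $\ddbar_b$ ensures that the Szeg\H{o} projector $S$ is a complex Fourier integral operator of the form in Theorems~\ref{t-gue180505}--\ref{t-gue161110g} on $X^+$, and hence on an open neighborhood of $Y$. The averaging identity $S_G(x,y)=\int_G S(x,h\cdot y)\,d\mu(h)$ remains valid since $S$ commutes with the $G$-action and $\ddbar_b$ has closed range, so the complex stationary phase argument of \cite[Theorem~1.5]{HsiaoHuang17} applied to this integral yields the oscillatory integral representation \eqref{e-gue180305y}, the phase properties \eqref{e-gue180305yII}--\eqref{e-gue180305yIII}, the leading symbol \eqref{e-gue180305yI}, and the phase normalization of Theorem~\ref{t-gue170126}. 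For Theorem~\ref{t-gue181021}, the smoothing of $\chi S_G$ and $S_G\chi$ for $\chi$ supported away from $Y$ reduces to showing that $S$ itself is pseudolocal: this follows because, on a pseudoconvex CR manifold of finite type, Kohn's subelliptic estimates render $\Box_b$ globally hypoelliptic, so the partial inverse $N$ of $\Box_b$ is pseudolocal, and therefore so is $S=I-\ol{\pr}^*_b N\ddbar_b$. Averaging over $G$ then transfers this pseudolocality to $S_G$.

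With these two structural results for $S_G$ in hand, the calculus of $G$-Szeg\H{o}-type Fourier integral operators developed in Section~\ref{s-gue180308I} and the Sobolev-level Fredholmness argument of Section~\ref{s-gue200926yyd} apply unchanged, since they depend only on the microlocal structure of $S_G$ near $Y$ and on the structure of $S_{X_G}$; for $\dim X_G=1$, $S_{X_G}$ is given on each circle component by the explicit oscillatory integral \eqref{e-gue201012yyd}, which is a complex Fourier integral operator of the appropriate type, and this case was already addressed at the end of the proof of Theorem~\ref{t-180428zy}. The main obstacle is the pseudolocality of $S$ in the finite-type pseudoconvex setting: unlike in Assumption~\ref{a-gue170410I}(ii), where the Levi form is positive everywhere and $S$ is globally a complex FIO on the strictly pseudoconvex $X$, here the Levi form may degenerate and $S$ is only an FIO on $X^+$. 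The required pseudolocality off $X^+$ is nevertheless furnished by Kohn's subelliptic estimates, which is precisely the role of the finite-type assumption in the theorem.
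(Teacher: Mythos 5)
Your overall strategy coincides with the paper's: establish that $S$ (hence $S_G$) is pseudolocal and preserves $\cC^\infty(X)$ on the finite-type pseudoconvex $X$, use the positivity of the Levi form near $Y$ to get the complex FIO description of $S_G$ there, and then observe that $\dim X_G=1$ so the existing machinery applies. The problem is the justification of the crucial first step. You claim that ``Kohn's subelliptic estimates render $\Box_b$ globally hypoelliptic, so the partial inverse $N$ of $\Box_b$ is pseudolocal, and therefore so is $S=I-\ol{\pr}^{*}_{b}N\ddbar_b$.'' This fails in dimension three. Kohn's subelliptic estimates for $\Box^{(q)}_b$ require condition $Y(q)$, which for $n=1$ fails both for $q=0$ and for $q=1$ even where the Levi form is positive definite; this is exactly why Assumption \ref{a-gue170410I}(i) demands $\dim X\geq5$ and why Theorem \ref{t-gue181012} is stated only in that case. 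Moreover $\Box^{(0)}_b$ cannot be hypoelliptic in any case: its kernel is $H^0_b(X)$, which contains non-smooth $L^2$ CR functions (already for the Hardy space of the disc boundary), and the Szeg\H{o} projector itself is the microlocal obstruction to hypoellipticity of $\Box_b$ on the half of the characteristic variety where it concentrates. The finite-type hypothesis yields subelliptic gain only microlocally away from that half, and extracting from this the off-diagonal smoothing and $\cC^\infty$-preservation of $S$ is a genuinely hard analytic problem, not a formal consequence of an energy estimate.

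The paper closes this gap by invoking Christ's result \cite[Proposition 4.1]{Ch89}: on a compact pseudoconvex three-dimensional CR manifold of finite type on which $\ddbar_b$ has closed range, there is a bounded operator $\mG:\Ran\ddbar_b\to L^2(X)$ with $S=I-\mG\ddbar_b$, where $\mG$ is smoothing away the diagonal and maps smooth functions to smooth functions. That single citation supplies exactly the pseudolocality and regularity of $S$ that your argument needs; your formula $S=I-\ol{\pr}^{*}_{b}N^{(1)}\ddbar_b$ is morally the same identity, but the existence and off-diagonal smoothing of the relevant $\mG$ is the content of Christ's theorem rather than something you may assume. If you replace your hypoellipticity paragraph by an appeal to \cite{Ch89} (or to the Nagel--Rosay--Stein--Wainger/Fefferman--Kohn kernel estimates it relies on), the rest of your proposal --- the averaging identity for $S_G$, the FIO structure near $Y$ via Theorem \ref{t-gue140305VIc}, and the reduction to the $\dim X_G=1$ case --- matches the paper's proof.
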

\begin{proof}
It was shown in \cite[Proposition 4.1]{Ch89} that there exists a 
bounded linear operator $\mG:\Ran\ddbar_{b}\rightarrow L^{2}(X)$
such that $S=I-\mG\bar{\partial}_{b}$. Furthermore $\mG$
 is smoothing away the diagonal and maps smooth functions to 
 smooth functions, where $S$ denotes the Szeg\H{o} projector. 
Hence, the Szeg\H{o} projector is 
smoothing outside the diagonal and preserves the space of
smooth functions.
Moreover, $\dim X_G=1$ in this case, thus the arguments
above apply again.
\end{proof}

\begin{example}
If $X$ be a compact pseudoconvex three dimensional CR manifold
of finite type admitting a transversal CR circle action, then 
$\ddbar_b$ has closed range in $L^2$, see \cite[\S 5.2]{HS20}.

Let $M$ be a compact Riemann surface
and $(L,h^L)$ be semi-positive line bundle over $M$ whose
curvature $R^L$ vanishes to finite order at any point.
Then the Grauert tube $X$ of $L^*$ \eqref{eq:GraTube}
is a compact pseudoconvex three dimensional CR manifold
of finite type (cf.\ \cite{HS20}, \cite[Proposition 11]{MS18}) 
admitting a transversal CR circle action.
If $G$ is a connected compact Lie group 
acting holomorphically on $M$ and whose action lifts to $(L,h^L)$,
Theorem \ref{t:ma5.7} applies to $X$.

\end{example}

\section{An almost complex version of Theorem \ref{t-gue181211}}
\label{s:6}

In this section we will prove a version of Theorem \ref{t-gue181211}
in the case of an almost complex manifold.
We will mostly follow \cite{TZ98} and adopt the notations therein.

Let $(L ,h^{L})$ be a Hermitian line bundle with Hermitian connection
$\nabla^{L}$ and associated curvature $R^{L}$ on a compact
almost complex manifold $(M,J)$. 

Let $G$ be a compact connected Lie group with Lie algebra 
$\kg$ acting (on left) on $M$, whose action lifts on $L$ such that 
$h^{L}$, $\nabla^{L}$ are $G$-equivariant. Then the moment map
$\mu: M\to \kg^{*}$
is defined by the Kostant formula
\begin{align}\label{0.3}
 2 \pi \sqrt{-1}\left\langle \mu, \xi\right\rangle : 
 =\nabla ^L_{\xi_{M}}- L_\xi, \, \,   \text{ for }\xi\in \kg.
 \end{align}
 For any $\xi\in \kg$, we have
 \begin{align}\label{0.4}
 d\left\langle \mu, \xi\right\rangle= i_{\xi_{M}}\omega,
 \quad \text{ with }\omega= \frac{\sqrt{-1}}{2\pi} R^{L}.
 \end{align}

 We assume that the almost complex structure $J$ on 
 $M$ and $R^{L}$ are $J$-invariant,
 $G$ acts freely on $\mu^{-1}(0)$ and $\omega(\cdot, J\cdot)$
 defines a metric on $TM|_{\mu^{-1}(0)}$.
 
 By choosing any $G$- and $J$-invariant Riemannian metric 
 $g^{TM}$ on $TM$, we can define an associated Dirac operator $D^{L}$ 
 on $\Lambda(T^{*(0,1)}M)\otimes L$
 and $D^{L}_{\mp}$ its restriction on $\Omega^{0, 
 \text{even/odd}}(M,L)$ (cf. \cite[Definition 1.1]{TZ98}).
 Its index as a finite dimensional virtual representaion of $G$,
 \begin{align}\label{b0.4}
\mathrm{Ind} (D^{L}_{+}) = \ker   (D^{L}_{+}) 
- \ker   (D^{L}_{-}) \in R(G),
\end{align}
does not depend on the choice of $g^{TM}$.

Moreover $(L,h^{L}, \nabla^{L})$, $J, \omega$ on $M$
induce canonically $(L_{G},h^{L_{G}}, \nabla^{L_{G}})$, $J_{G}, 
\omega_{G}$ on $M_{G}= \mu^{-1}(0)/G$. In particular,
$(M_{G}, \omega_{G})$ is a compact symplectic manifold with compatible 
almost complex structure $J_{G}$. 
Thus $\mathrm{Ind}(D^{L_{G}}_{+})$ is 
well-defined as a virtual vector space.

\begin{theorem}\label{eq:t6.1} 
Let $(M,J)$ be a compact almost complex manifold and 
$(L,h,\nabla^L)$ be a Hermitian line bundle with connection on $M$.
Let $G$ be a compact connected Lie group 
acting (on left) on $M$, whose action lifts on $L$ such that 
$J$, $h^{L}$ and $\nabla^{L}$ are $G$-equivariant.
We assume that $G$ acts freely on $\mu^{-1}(0)$ and 
$\omega(\cdot, J\cdot)$
defines a metric on $TM|_{\mu^{-1}(0)}$.
Then there exists $m_{0}\in \N$ such that 
	for any $m\geq m_{0}$ we have
	 \begin{align}\label{eq:6.4}
\mathrm{Ind} (D^{L^{m}}_{+})^{G} = \mathrm{Ind}(D^{L^{m}_{G}}_{+}).
\end{align}
	\end{theorem}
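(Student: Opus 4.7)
The plan is to adapt the analytic Witten-type deformation approach of Tian--Zhang \cite{TZ98}. Fix an $\mathrm{Ad}$-invariant inner product on $\kg$ with orthonormal basis $\{e_{a}\}_{a=1}^{d}$, and introduce the moment-map vector field $\mH(x):=\sum_{a}\langle\mu(x),e_{a}\rangle (e_{a})_{M}(x)$, which vanishes precisely on $\mu^{-1}(0)$ (since $G$ acts freely there). One deforms
\[
D^{L^{m}}_{T}:=D^{L^{m}}+\sqrt{-1}\,T\,c(\mH),\qquad T\geq 0,
\]
which is a $G$-equivariant zero-order perturbation, so that $\mathrm{Ind}(D^{L^{m}}_{T,+})=\mathrm{Ind}(D^{L^{m}}_{+})\in R(G)$ for every $T$. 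Passing to $G$-invariant sections preserves the grading, so it suffices to compute $\dim\ker(D^{L^{m}}_{T,\pm})^{G}$ for a convenient choice of $m$ and $T$ and compare with $\dim\ker(D^{L^{m}_{G}}_{\pm})$.

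First I would establish the Bochner--Lichnerowicz--Weitzenb\"ock-type identity for $(D^{L^{m}}_{T})^{2}$ of the form given in \cite[Thm.\,1.2]{TZ98}: this produces a dominant positive term $T^{2}|\mH|^{2}$, a curvature term proportional to $m$ coming from $R^{L}$, a cross term $T\cdot \mathrm{Lie}^{L^{m}}_{\mH}$ (which on $G$-invariant sections reduces via Kostant's formula \eqref{0.3} to $-2\pi\sqrt{-1}T\cdot|\mu|^{2}$), and bounded remainder terms. Choose a $G$-invariant tubular neighborhood $U_{0}$ of $\mu^{-1}(0)$ on which $R^{L}$ is positive, together with a strictly smaller neighborhood $U$ with $\overline U\subset U_{0}$. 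The key spectral gap estimate I would prove is: there exist $T_{0},m_{0},c>0$ such that for $m\geq m_{0}$, $T\geq T_{0}$, and every $G$-invariant smooth section $s$ supported in $M\setminus U$,
\[
\|D^{L^{m}}_{T}s\|^{2}\geq c\,(m+T^{2})\,\|s\|^{2}.
\]
Outside $U$, the term $T^{2}|\mH|^{2}$ is bounded below by a positive multiple of $T^{2}$ on each $G$-orbit (by freeness on $\mu^{-1}(0)$ and continuity), which absorbs the lower-order cross terms; no positivity of $R^{L}$ is needed there. Inside $U_{0}$, the positivity of $R^{L}$ supplies the standard lower bound of order $m$ for the Bochner term, so together a partition-of-unity argument gives the estimate on $G$-invariant sections of small eigenvalue.

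The spectral gap implies that $\ker(D^{L^{m}}_{T,\pm})^{G}$ localizes in $U_{0}$ modulo $O(m^{-\infty})$, and on $U_{0}$ we are back in the positive setting, so the Kodaira-type vanishing identifies $\ker(D^{L^{m}}_{T,-})^{G}=0$ and relates $\ker(D^{L^{m}}_{T,+})^{G}$ to $G$-invariant holomorphic-type sections of $L^{m}|_{U_{0}}$. The Ma--Zhang $G$-invariant Bergman kernel asymptotics (here playing the role that Theorem \ref{t-gue181211} played in the integrable case) furnish a restriction-plus-pullback map
\[
\ker(D^{L^{m}}_{T,+})^{G}\longrightarrow \ker(D^{L^{m}_{G}}_{+})
\]
that is an asymptotic isometry for $m$ large; combined with the vanishing on the negative part, this yields \eqref{eq:6.4}. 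The main obstacle, as in Tian--Zhang but sharper here, will be the spectral gap estimate under the assumption that $R^{L}$ is positive only near $\mu^{-1}(0)$: the contribution $T c(\mH)$ must be balanced against the fact that $R^{L}$ provides no positivity far from $\mu^{-1}(0)$, and this is what forces one to make $T$ and $m$ grow together, rather than first $T\to\infty$ then $m\to\infty$ as in the original Tian--Zhang proof.
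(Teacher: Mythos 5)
Your overall strategy --- the Tian--Zhang deformation $D^{L^{m}}_{T}=D^{L^{m}}+\sqrt{-1}\,T\,c(\mH)$, the Bochner formula for its square, a spectral gap for $G$-invariant sections away from $\mu^{-1}(0)$, and localization --- is exactly the route the paper takes. However, two of your steps do not go through as written.

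First, the spectral gap outside $U$. You claim $T^{2}|\mH|^{2}$ is bounded below by a positive multiple of $T^{2}$ on $M\setminus U$ ``by freeness on $\mu^{-1}(0)$ and continuity.'' Freeness is only assumed on $\mu^{-1}(0)$, so continuity gives $|\mH|>0$ only on a punctured neighborhood of $\mu^{-1}(0)$; far from $\mu^{-1}(0)$ the vector field $\mH=\mu_{M}$ can vanish at points where $\mu\neq 0$ (for instance at fixed points of the one-parameter subgroup generated by $\mu(x)$). In \cite{TZ98} these extra components of $\{\mH=0\}$ are exactly what force the long analysis of their Section 4. The way the paper avoids them --- and the only reason $m_{0}$ appears --- is that on $G$-invariant sections the cross term $-\sqrt{-1}\,T\nabla_{\mH}$ reduces, via the Kostant formula \eqref{0.3} and $L_{\xi}=0$, to the \emph{real, nonnegative} multiplication operator $2\pi m T|\mu|^{2}$ plus an $m$-independent $O(T)$ remainder; for $m\geq m_{0}$ this dominates every other $O(T)$ term wherever $|\mu|\geq\epsilon$, i.e.\ on all of $M\setminus U$, whether or not $\mH$ vanishes there. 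You have this term in hand (though with the wrong reality: it is not $-2\pi\sqrt{-1}T|\mu|^{2}$), but you do not deploy it where it is needed, and your gap estimate as stated fails at zeros of $\mH$ outside $U$.

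Second, the comparison with the reduction. Kodaira-type vanishing of $\ker(D^{L^{m}}_{T,-})^{G}$ does not follow from positivity of $R^{L}$ on $U_{0}$: kernel elements are global sections that merely concentrate near $\mu^{-1}(0)$ as $T\to\infty$, they are not supported in $U_{0}$, and in the almost complex, only-locally-positive setting there is no reason for either kernel to vanish --- the theorem asserts only an equality of indices (virtual dimensions). Likewise the Ma--Zhang asymptotic isometry \cite{MZ} is proved for a line bundle positive on all of $M$ and compares spaces of holomorphic sections, not kernels of a deformed almost complex Dirac operator. The paper's final step instead follows \cite[Sections 3b)--3e)]{TZ98}: near $\mu^{-1}(0)$ the deformed operator is compared with a model operator (a harmonic oscillator in the directions normal to $\mu^{-1}(0)$ coupled to $D^{L^{m}_{G}}$ on $M_{G}$), and \eqref{eq:6.4} is obtained as an equality of indices from this comparison, with no vanishing claim on either side.
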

	
Note that by the main result of \cite{M98} and \cite[Theorem 0.1]{TZ98},
if $\omega(\cdot, J\cdot)>0$ on the whole $M$, 
then Theorem \ref{eq:t6.1} holds for $m_{0}=1$.
 
 \begin{proof}
We adapt directly the notation and  argument from \cite{TZ98}.

We fix a $G$- and $J$-invariant metric $g^{TM}$ on $TM$ such that 
near $\mu^{-1}(0)$, $g^{TM}$ is given by $\omega(\cdot, J\cdot)$.
We fix an $\text{Ad}_{G}$-invariant scalar product on $\kg$
and identify $\kg$ and $\kg^{*}$ via this product.

Let $h_{1}, \cdots, h_{d}$ be an orthonormal basis of $\kg$. 
Set
 \begin{align}\label{eq:6.5}
X^{\mH}(x) = 2 (\mu(x))_{M}(x) = 2\sum_{i}\mu_{i}(x) V_{i}(x)
\quad \text{with }  \mu= \sum_{i}\mu_{i}(x) h_{i}
\, \, \text{and   } V_{i}= h_{i, M}.
\end{align}
Following \cite[Definition 1.2]{TZ98}, for $T\in \R$, set
 \begin{align}\label{eq:6.6}
D^{L^{m}}_{T}= D^{L^{m}}+ \frac{\sqrt{-1}}{2} T c (X^{\mH})
: \Omega^{0,*}(M,L) \to \Omega^{0,*}(M,L),
\end{align}
where $c(\cdot)$ is the Clifford action.

Then  we have (cf. \cite[(1.26)]{TZ98})
 \begin{align}\label{eq:6.7}
(D^{L^{m}}_{T})^{2}= (D^{L^{m}})^{2}
+ \frac{\sqrt{-1}}{2} T \sum_{j} c(e_{j})c ( \nabla_{e_{j}}X^{\mH})
- \sqrt{-1} T \nabla_{X^{\mH}}
+  \frac{T^{2}}{4} \Big|X^{\mH}\Big|^{2},
\end{align}
where $\{e_{j}\}_{j}$ is an orthonormal frame
of $(TM, g^{TM})$.

Now as in \cite[(1.27)]{TZ98},
 \begin{align}\label{eq:6.8}
 \nabla_{X^{\mH}}
 = 2\sum_{i}\mu_{i} L_{V_{i}} + 4m \sqrt{-1} T \mH 
 + A,
\end{align}
where $A$ is an  endomorphism of $\Lambda(T^{*(0,1)}M)$
and does not depend on $m$.

We fix sufficiently small $G$-invariant open neighborhoods
$U'\Subset  U$ of $\mu^{-1}(0)$ such that $G$ acts freely on 
$U$ and $\omega(\cdot, J\cdot)>0$ on $U$.

Note that on $\Omega^{0,*}(M,L)^{G}$, $L_{V_{i}}=0$. From 
\eqref{eq:6.7} and \eqref{eq:6.8}, there exists $m_{0}>0$
such that for any $m\geq m_{0}$, 
\cite[Theorem 2.1]{TZ98} holds for $U'$. 
Then on $U$, it is exactly the situation 
considered in \cite[\S 3b)-3e)]{TZ98}. Thus we get 
Theorem \ref{eq:t6.1} as in \cite[(3.36), (3.37)]{TZ98}.
\end{proof}

It is an interesting question to show that in the holomorphic 
situation, under the assumption of this section, we have
\begin{align}\label{eq:6.10}
H^{j}(M, L^{m})^{G}=0\quad \text{  for  any } j>0, m\gg 1.
\end{align}

\comment{

\appendix
\section{Spectrum of the Reeb Lie derivative}
\label{pf:1.8}
In order to be self-contained we include a
sketch of the proof of the following result.
Let $-i\mL_T$ and $-i\mL_{\widehat T}$ the operators defined
in \eqref{e:LT}.
We extend $-i\mL_T$ and $-i\mL_{\widehat T}$ to 
$L^2$ spaces by their weak maximal extensions:
\[\begin{split}
&-i\mL_T: \Dom  (-i\mL_T)\subset L^2(X)^G\To L^2(X)^G,\ \ 
\Dom  (-i\mL_T)=\set{u\in L^2(X)^G;\, i\mL_Tu\in L^2(X)^G},\\
&-i\mL_{\widehat T}: \Dom  (-i\mL_{\widehat T})\subset L^2(X_{G})
\To L^2(X_{G}),\ \ 
\Dom  (-i\mL_{\widehat T})=\set{u\in L^2(X_{G});\,
i\mL_{\widehat T}u\in L^2(X_{G})}.
\end{split}\]
\begin{theorem}\label{t-gue181213mp}
$\spec(-i\mL_T)$ and $\spec(-i\mL_{\widehat T})$ are 
countable and every element in these spectra is an eigenvalue
of the corresponding operator. 
\end{theorem}

\begin{proof}
We will use the same notations as in the discussion before
Theorem~\ref{t-gue180520m}.
Let $g$ be the Riemannian metric on $X$ induced by the Levi form on $X$. 
We denote by  $\operatorname{Iso}(X,g)$ the group of
isometries from $(X,g)$ onto 
itself, that is $F\in \operatorname{Iso}(X,g)$ if and only if $F: X\to X$ is a 
$\cC^\infty$-diffeomorphism and $F^*g=g$.  
Let 
$\operatorname{Aut}_{\operatorname{CR}}(X)$ be the group 
of CR automorphisms on $X$, that is  
$F\in\operatorname{Aut}_{\operatorname{CR}}(X)$ if and only if 
$F\colon X\rightarrow X$ is a $\cC^{\infty}$-diffeomorphism 
satisfying $dF(T^{1,0}X)\subset T^{1,0}X$. 
 It is well-known that 
$\operatorname{Iso}(X,g)$ is a compact Lie group, since $X$ is compact. 
As $\operatorname{Iso}(X,g)\cap
\operatorname{Aut}_{\operatorname{CR}}(X)$ is a closed 
subgroup of $\operatorname{Iso}(X,g)$ (see~\cite[Section 3]{HHL17}), 
we get that
\begin{equation}\label{e-gue181213w} 
\mbox{$\operatorname{Iso}(X,g)\cap
\operatorname{Aut}_{\operatorname{CR}}(X)$ 
is a compact Lie group.}
\end{equation} 
Let $\eta: \mathbb R\times X\to X$ be the $\mathbb R$-action on $X$ 
induced by the flow of $T$. The $\mathbb R$-action on $X$
induces a group homomorphism 
$\gamma:\R\rightarrow \operatorname{Aut}_{\operatorname{CR}}(X)$. 
 Since the $\mathbb R$-action is CR,  
 the  $\mathbb R$-action acts 
 by isometries with respect to the metric $g$. 
 Hence $\gamma(\mathbb R)$ is a subset of 
 $\operatorname{Iso}(X,g)\cap 
 \operatorname{Aut}_{\operatorname{CR}}(X)$. 
 From \eqref{e-gue181213w} we deduce that 
 $\overline{\gamma(\mathbb R)}$ is a topologically closed, 
 abelian subgroup. 
 Here $\overline{\gamma(\mathbb R)}$ is the closure of 
 $\gamma(\mathbb R)$ 
 taken in $\operatorname{Iso}(X,g)\cap 
 \operatorname{Aut}_{\operatorname{CR}}(X)$. 
 From the fact that $\overline{\gamma(\mathbb R)}$
 is an abelian group and $X$ is compact, $\overline{\gamma(\mathbb R)}$
 is a torus in 
 $\operatorname{Iso}(X, g)\cap 
 \operatorname{Aut}_{\operatorname{CR}}(X)$. In other words, the 
 $\mathbb R$-action comes from a CR torus action 
 $T^d\curvearrowright X$ denoted by 
 $(e^{i\theta_1},\ldots,e^{i\theta_d})$ and there exist 
 $\nu_1,\ldots,\nu_d\in\mathbb R$ such that $\eta\circ x=
 (e^{i\nu_1\eta},\ldots,e^{i\nu_d\eta_d})\circ x$, for every 
 $x\in X$ and every $\eta\in\mathbb R$. 
 From this observation, it is straightforward to check that 
 \[{\rm Spec\,}(-i\mL_T)
 =\set{\nu_1p_1+\cdots+\nu_dp_d;\, 
 (p_1,\ldots,p_d)\in\mathbb Z^d}\]
 and every element in ${\rm Spec\,}(-i\mL_T)$ is an 
 eigenvalue of $-i\mL_T$.
\end{proof}
}

\bibliographystyle{plain}

\begin{thebibliography}{99}
\bibitem{Al89} 
C.\ Albert,
\emph{Le th\'eor\`eme de r\'eduction de Marsden-Weinstein 
en g\'eom\'etrie cosymplectique et de contact},
J.\ Geom.\ Phys.\ \textbf{6} (1989), no.\ 4, 627--649.
\bibitem{AS:70}
A.~Andreotti and Y.~T. Siu, \emph{Projective embeddings of 
pseudoconcave spaces}, 
Ann. Sc. Norm. Sup. Pisa \textbf{24} (1970), 231--278.
\bibitem{Biq02}
O.\ Biquard, \emph{M\'etriques autoduales sur la boule},
Invent. Math. \textbf{148} (2002), no.\ 3, 545--607.
\bibitem{BH05}
O.~Biquard and M.~Herzlich, \emph{A {B}urns-{E}pstein invariant for {ACHE}
  $4-$manifolds}, Duke Math.\ J.\ \textbf{126} (2005), no.\ 1, 53--100.
\bibitem{BlDu91}
J. Bland and T. Duchamp, \emph{Moduli for pointed convex domains}, 
Invent. Math. \textbf{104} (1991), 61--112.
\bibitem{BW:58}
W. M. Boothby and H. C. Wang, \emph{On contact manifolds}, 
Ann.\ of Math.\ \textbf{68} (1958), 721--734.
\bibitem{BdM1:74}
L.~Boutet de~Monvel, \emph{{Int\'egration des equations de
Cauchy--Riemann induites formelles}},
S\'eminaire Goulaouic--Lions--Schwartz, 1974--75, Expos\'e No.\ 9.
\bibitem{BG81}
L.~Boutet~de Monvel and V.~Guillemin, 
\emph{The spectral theory of Toeplitz operators},
Annals of Mathematics Studies, 99. Princeton University Press, 
Princeton, NJ, 1981. v+161 pp.
\bibitem{BouSj76}
L.~Boutet~de Monvel and J.~Sj{\"o}strand, \emph{Sur la singularit{\'e} 
des noyaux de {B}ergman et de {S}zeg{\"o}}, 
Ast{\'e}risque, \textbf{34--35} (1976), 123--164. 
\bibitem{BoGa08}
C. Boyer and K. Galicki, 
\emph{Sasakian geometry}, 
Oxford Mathematical Monographs. Oxford University Press, 
Oxford, 2008, xii+613 pp.
\bibitem{Bur:79} D.\ M.\ Burns, 
\emph{Global behavior of some tangential {C}auchy-{R}iemann  equations}, 
Partial differential equations and geometry (Proc. Conf., Park
  City, Utah, 1977), Lecture Notes in Pure and Appl. Math., vol.~48, 
  Dekker, New York, 1979, pp.~51--56.

\bibitem{CS01}
S.-C.~Chen and M.-C.~Shaw, 
\emph{Partial differential equations in several complex variables},
AMS/IP Studies in Advanced Mathematics. 19. Providence, 
RI: American Mathematical Society (AMS).
Somerville, MA: International Press, xii, 380 p., (2001).
\bibitem{Ch89}
M.\ Christ,
\emph{Embedding compact three-dimensional CR manifolds 
of finite type in $\mathbb{C}^n$},
Ann.\ of Math.\ \textbf{129} (1989), no.\ 1, 195--213.

\bibitem{CS18}
T.\ Collins and G.\  Sz\'ekelyhidi, \emph{K-semistability 
for irregular Sasakian manifolds},  
J.\ Differential Geom.\ \textbf{109} (2018), no.\ 1, 81--109.
\bibitem{CT09}
T.\ Col\c{t}oiu and M.\  Tib\u{a}r, \emph{On the disk theorem},  
Math.\ Ann.\ \textbf{345} (2009), 175--183.

\bibitem{Dav95}
E.-B.~Davies, \emph{{Spectral} {Theory} and {Differential} {Operators}}, 
Cambridge Stud. Adv. Math., vol. \textbf{42}, (1995).

\bibitem{Eps92}
C.\ L.\ Epstein, \emph{CR-structures on three dimensional circle bundles}, 
Invent.\ Math.\ \textbf{109} (1992), 351--403. 
\bibitem{Gei97}
H.\ Geiges, 
\emph{Constructions of contact manifolds}, 
Math.\ Proc.\ Cambridge Philos.\ Soc.\ \textbf{121} 
(1997), 455--464.
\bibitem{Gra:94}
H.~Grauert, \emph{{Theory of $q$-convexity and $q$-concavity}}, 
{Several Complex Variables VII}
(H.~Grauert, Th. Peternell, and R.~Remmert, eds.),
  Encyclopedia of mathematical sciences, vol.~74, Springer Verlag, 1994. 

\bibitem{GS:82} V.\ Guillemin and S.\ Sternberg, 
\emph{Geometric quantization and multiplicities of 
group representations}, Invent.\ Math.\ \textbf{67} (1982), 515--538.  

\bibitem{HaLa75}
F.\ R.\ Harvey and H.\ B.\ Lawson,
\emph{On boundaries of complex analytic varieties. I.}
Ann. of Math. \textbf{102} (1975), 
223-290.

\bibitem{HHL17}
H.~Herrmann, C.-Y.~Hsiao and X.~Li, 
\emph{Szeg\H{o} kernels and equivariant embedding theorems 
for CR manifolds},  arXiv:1710.04910
\bibitem{HochsSong17} 
P. Hochs and Y. Song.
\emph{Equivariant indices of Spin-Dirac operators 
for proper moment maps},
Duke Math.\ J.\ \textbf{166} (2017), 1125--1178.

\bibitem{Hor03}
L.~H\"{o}rmander,
\emph{The analysis of linear partial differential operators. {I}}, 
Classics in Mathematics, Springer-Verlag, Berlin,
2003.

\bibitem{Hsiao08}
C.-Y.~Hsiao,
\emph{Projections in several complex variables}, 
M\'em. Soc. Math. France, Nouv. S\'er. \textbf{123} (2010), 131 pp.
\bibitem{HsiaoHuang17}
C.-Y.~Hsiao and R.-T.~Huang, 
\emph{
$G$-invariant Szeg\H{o} kernel asymptotics  and CR reduction}, 
arXiv: 1702.05012, to appear in Calculus of Variations and PDEs. 

\bibitem{HL16}  C.-Y.~Hsiao and X.~ Li,  
 \emph{Morse inequalities for Fourier components of Kohn-Rossi 
 cohomology of CR manifolds with $S^1$-action}, 
 Math. Z. \textbf{284} (2016), no. 1-2, 441--468. 
 
\bibitem{HM14}
C.-Y.~Hsiao and G.~Marinescu,
\emph{On the singularities of the Szeg\H{o} projections on 
lower energy forms}, 
J.~Diff.~Geom.\ \textbf{107} (2017),  83--155. 
\bibitem{HS20}
C.-Y.~Hsiao and N.~Savale, 
\emph{Bergman-Szeg\H{o} kernel asymptotics in weakly pseudoconvex 
finite type cases}, arXiv: 2009.07159.
\bibitem{Koh64} J.~J.~Kohn,
\emph{Boundaries of complex manifolds}, 
1965 Proc. Conf. Complex Analysis (Minneapolis, 1964) pp. 81--94, 
Springer, Berlin.
\bibitem{Koh86} J.~J.~Kohn, 
\emph{The range of the tangential {C}auchy-{R}iemann operator}, 
Duke Math.\ J.\ \textbf{53} (1986), 525--545.
\bibitem{KR65}
J.\ J.\ Kohn and H.\ Rossi, 
\emph{On the extension of holomorphic functions from the boundary 
of a complex manifold},
Ann.\ of Math.\  \textbf{81} (1965), 451--472.
\bibitem{Lem92} L.~Lempert, 
\emph{On three dimensional Cauchy-Riemann manifolds},
J. Amer. Math. Soc.\ \textbf{5} (1992), 1--50.

\bibitem{Ma10}  X.~Ma, 
\emph{Geometric quantization on K{\"a}hler and symplectic manifolds},
International {C}ongress of {M}athematicians, 
vol.\ II, Hyderabad, India, August 19-27 (2010), 785--810.


\bibitem{MM} X. Ma and G. Marinescu, 
\emph{Holomorphic Morse inequalities and Bergman kernels}, 
Progress in Mathematics, \textbf{254}, 
Birkh\"{a}user Verlag, Basel, (2007), xiv+422 pp.

\bibitem{MZ} X. Ma and W. Zhang, 
\emph{Bergman kernels and symplectic reduction}, 
Ast\'{e}risque \textbf{318} (2008), 154 pp .
 
 \bibitem{MZ09}
X.~Ma and W.~Zhang,
\emph{Geometric quantization for proper moment maps},
C.\ R.\ Math.\ Acad.\ Sci.\ Paris \textbf{347} (2009), 389--394.

\bibitem{MZI} X. Ma and W. Zhang, 
\emph{Geometric quantization for proper moment maps: 
the Vergne conjecture}, 
Acta Math. \textbf{212} (2014), no.\ 1, 11--57.

\bibitem{MS18} G.~ Marinescu and N.~Savale,  
\emph{Bochner Laplacian and Bergman kernel expansion of 
semi-positive line bundles on a Riemann surface},
arXiv:1811.00992.
 
\bibitem{MY07} G.~ Marinescu and N.~Yeganefar, 
\emph{Embeddability of some strictly pseudoconvex CR manifolds}, 
Trans.\ Amer.\ Math.\ Soc.\ \textbf{359} (2007), no.\ 10, 4757--4771.
 
\bibitem{M96} E.\ Meinrenken, 
\emph{On Riemann-Roch formulas for multiplicities}, 
J.\ Amer.\ Math.\ Soc.\ \textbf{9} (1996), 
373--389.

\bibitem{M98} E. Meinrenken, 
\emph{Symplectic surgery and the Spinc-Dirac operator},  
Adv. Math. \textbf{134} (1998), 240-277.

 
\bibitem{MS74}
A.~Melin and J.~Sj\"{o}strand,
\emph{Fourier integral operators with complex-valued phase functions},
Springer Lecture Notes in Math., \textbf{459} (1975), 120--223.
\bibitem{MS39}
S.~B.~Myers, and N.~E.~Steenrod, \emph{The group of isometries of 
a Riemannian manifold},
The Annals of Mathematics, Vol 40, No. 2, April 1939, pp 400--416.
\bibitem{Par11} 
P.-E.\ Paradan, \emph{Formal geometric quantization II}. 
Pacific J. Math. \textbf{253} (2011), 169--211.
\bibitem{Ros:65}
H.~Rossi, \emph{Attaching analytic spaces to an analytic space along a
pseudoconcave boundary}, Proc. Conf. Complex. Manifolds (Minneapolis),
Springer--Verlag, New York, 1965, pp.~242--256.
\bibitem{Rud87}
W.~Rudin, 
\emph{Real and complex analysis.} Third edition. 
McGraw-Hill Book Co., New York, 1987, xiv+416 pp.
\bibitem{T}
N.~Tanaka, \emph{A differential geometric study on strictly pseudo-convex
manifolds}, Kinokuniya Book-Store Co. Ltd., Tokyo, 1975, Lectures in
Mathematics, Department of Mathematics, Kyoto University, No. 9.
\bibitem{TZ98} Y. Tian and W. Zhang, 
\emph{An analytic proof of the geometric quantization conjecture 
of Guillemin-Sternberg}, 
Invent. Math. \textbf{132} (1998),  229-259. 
\bibitem{Ver:96}
M.\ Vergne, \emph{Multiplicities formula for geometric quantization I, II}, 
Duke Math.\ J.\ \textbf{82} (1996), 143--179, 181--194.
\bibitem{Ve02}
M.\ Vergne, \emph{Quantification g{\'e}om{\'e}trique et r{\'e}duction 
symplectique}. S{\'e}minaire Bourbaki, Vol.\ 2000/2001.
Ast{\'e}risque No.\ 282 (2002), Exp.\ No. 888, viii, 249--278.
\bibitem{Ve07}
M.\ Vergne, \emph{Applications of equivariant cohomology}, International
{C}ongress of {M}athematicians. {V}ol. {I}, Eur. Math. Soc., 
Z{\"u}rich, 2007, pp.~635--664.
\bibitem{W}
S.~M.~Webster, \emph{Pseudo-{H}ermitian structures on 
a real hypersurface}, J.
Differential Geom.\ \textbf{13} (1978), no.~1, 25--41.
\bibitem{Will02}
C.\ Willett,
\emph{Contact reduction}, 
Trans.\ Amer.\ Math.\ Soc.\ \textbf{354} (2002), no.\ 10, 4245--4260.
\bibitem{Z} W. Zhang, 
\emph{Holomorphic quantization formula in singular reduction}, 
Communications in Contemporary Mathematics, 
\textbf{1} (1999), no. 3, 281-293.

\end{thebibliography}

\end{document}